 \newtheorem{theorem}{Theorem}[section]
 \newtheorem{lemma}[theorem]{Lemma}
 \newtheorem{proposition}[theorem]{Proposition}
 \newtheorem{corollary}[theorem]{Corollary}
 \newtheorem{definition}[theorem]{Definition}
 \newenvironment{proof}{\begin{trivlist} \item[]{\em Proof.}}{\end{trivlist}}
 \def\B{{\text {BMO}}}
 \def\BC{{\text {BMC}}}
\def\CC{{\mathbb C}}
\def\BB{{\mathbb B}}
 \def\RR{{\mathbb R}}
 \def\ZZ{{\mathbb Z}}
 \def\SF{{\mathscr F}}
 \def\SH{{\mathscr H}}
 \def\SS{{\mathscr S}}
 \title{\bf The dual of the Hardy space associated with the Dunkl operators
 \thanks{{Supported by the National Natural
 Science Foundation of China (No. 12071295).}
 \newline
 \indent \,\,$^\dag$Corresponding author.
 \newline
 \indent\,\, E-mail: jiujiaxi78@163.com (J.-X. Jiu); lizk@shnu.edu.cn (Zh.-K. Li).}}
\author{{Jiaxi Jiu and Zhongkai Li$^{\dag}$}\\
{\small Department of Mathematics, Shanghai Normal University, Shanghai 200234, China}
}
\begin{document}
 \maketitle \setcounter{page}{1} \pagestyle{myheadings}
 \markboth{Jiu and Li}{Hardy space associated with the Dunkl operators}

 \begin{abstract}
 \noindent
 The rational Dunkl operators are commuting differential-reflection operators on the Euclidean space $\RR^d$ associated with a root system, that contain some non-local refection terms, and the associated Hardy space is defined by means of the Riesz transforms with respect to the Dunkl operators. The aim of the paper is to prove that its dual can be realized by a class of functions on $\RR^d$, denoted by $\BC_{\kappa}$ in the text, that consists of the underlying functions of a certain type of weighted Carleson measures. Our method is ``purely analytic" and does not depend on the atomic decomposition. As a corollary we obtain the Fefferman-Stein decomposition of functions in $\BC_{\kappa}$.

 \vskip .2in
 \noindent
 {\bf 2020 MS Classification:} 42B30, 42B20 (Primary), 51F15 (Secondary)
 \vskip .2in
 \noindent
 {\bf Key Words and Phrases:}   Hardy space; Dunkl operator; Riesz transform; Carleson measure, weighted BMO space
 \end{abstract}

\setcounter{page}{1}

\section{Introduction and main results}

Let $R$ be a root system in $\RR^d$ normalized so that $\langle\alpha,\alpha\rangle=2$ for $\alpha\in R$ and with $R_+$ a fixed positive subsystem, and $G$ the finite reflection group generated by the reflections $\sigma_\alpha$ ($\alpha\in R$), where $\sigma_{\alpha}(x)=x-\langle\alpha,x\rangle\alpha$ for $x\in\RR^d$ and $\langle\cdot,\cdot\rangle$ stands for the usual inner product on $\RR^d$. For a nonnegative multiplicity function $\kappa$ defined on $R$ (invariant under $G$), the (rational) Dunkl operators $D_j$ ($1\le j\le d)$ are defined by (cf. \cite{Du2})
\begin{eqnarray}\label{Dunkl-operator-1}
D_j=\partial_j+\sum_{\alpha\in R_+}\kappa(\alpha)\alpha_j\frac{1-\sigma_{\alpha}}{\langle\alpha,x\rangle},
 \end{eqnarray}
where $\sigma_{\alpha}f=f\circ\sigma_{\alpha}$ and $\alpha_j$ ($1\le j\le d$) is the $j$th coordinate of $\alpha$.
It is proved in \cite{Du2} that the Dunkl operators generate a commutative algebra and are anti-symmetry associated to the measure
\begin{align*}
d\omega_{\kappa}=W_{\kappa}\,dx,\qquad{\rm where}\quad  W_{\kappa}(x)=\prod_{\alpha\in R_+}|\langle\alpha,x\rangle|^{2\kappa(\alpha)}.
\end{align*}
The weight $W_{\kappa}$ is $G$-invariant and positive homogeneous of degree $2|\kappa|$, so that the homogeneous dimension of $d\omega_{\kappa}$ is
\begin{align*}
N=2|\kappa|+d,\qquad \hbox{where}\quad |\kappa|=\sum_{\alpha\in R_+}k(\alpha).
\end{align*}
It is also noted that $\RR^d$,
equipped with the Euclidean distance $(x,y)\in\RR^d\times\RR^d\mapsto|x-y|$ and with the measure $\omega_{\kappa}$,
is a space of homogeneous type in the sense of Coifman and Weiss \cite{CW1}.
For $0<p<\infty$, the space $L_{\kappa}^p(\RR^d)$ consists of all measurable functions $f$ on $\RR^d$ such
that $\|f\|_{L_{\kappa}^p}<+\infty$, where $\|f\|^p_{L_{\kappa}^p}=c_{\kappa}\int_{\RR^d}|f|^p\,d\omega_{\kappa}$ and $c_{\kappa}^{-1}=\int_{\RR^d}e^{-|x|^2/2}d\omega_{\kappa}$. $L_{\kappa}^{\infty}(\RR^d)$ is identical to the usual $L^{\infty}(\RR^d)$.

A typical example is $G=S_d$ (the type $A_{d-1}$ case), the symmetric group in $d$ elements, a root system of which is given by $R=\{\pm(e_i-e_j):\,1\le i<j\le d\}$. Since $S_d$ has only one orbit in $R$, a multiplicity function $\kappa$ is constant. The Dunkl operators for a given $\kappa\ge0$ have the form
\begin{eqnarray*}
D_j=\partial_j+\kappa\sum_{i\neq j}\frac{1-\sigma_{ij}}{x_j-x_i},\qquad j=1,2,\cdots,d,
 \end{eqnarray*}
where $\sigma_{ij}(\dots,x_i,\dots,x_j,\dots)=(\dots,x_j,\dots,x_i,\dots)$.

In the general setting, the Laplacian $\Delta_{\kappa}$ associated to a root system $R$ and a multiplicity function $\kappa$, called the $\kappa$-Laplacian, is defined by $\Delta_{\kappa}=\sum_{j=1}^{d}D_{j}^{2}$, which can be explicitly expressed by (cf. \cite{Du1})
\begin{align*}
\Delta_{\kappa}=\sum_{j=1}^{d}\partial_{j}^{2}+2\sum_{\alpha\in R_+}\kappa(\alpha)\left(\frac{\langle\alpha,\nabla\rangle}{\langle\alpha,x\rangle}-\frac{1-\sigma_{\alpha}}{\langle\alpha,x\rangle^2}\right),
\end{align*}
where $\nabla$ is the usual gradient operator on $\RR^{d}$. Note that the operator $-\Delta_{\kappa}$ is well defined on ${\mathscr S}(\RR^d)$ (the Schwartz space), symmetric and positive on $L_{\kappa}^2(\RR^d)$, and has a unique positive self-adjoint extension (cf. \cite{AH1}).

During the past several decades, the Dunkl operators have gained considerable interest in various fields of
mathematics and in physical applications; in particular, they admit a far reaching generalization of Fourier analysis which
includes most special functions related to root systems, such as spherical functions
on Riemannian symmetric spaces. See, for example, \cite{DX1,Ro5} on the rational Dunkl theory, \cite{Op1} on the trigonometric Dunkl theory, \cite{Ch1,Ma1} on affine Hecke algebras and the $q$-Dunkl theory, \cite{GRY1} on the probabilistic aspects of Dunkl theory, and \cite{Et1,LV} on integrable systems related to the Dunkl theory.

We consider the Riesz transforms associated with the Dunkl operators defined by
$$
{\mathcal{R}}_j=D_j(-\Delta_{\kappa})^{-1/2},\qquad j=1,\cdots,d,
$$
and call them the $\kappa$-Riesz transforms. They can be interpreted according to the associated Fourier transform, i.e., the Dunkl transform; see Section 3 below. We note that these transforms were considered first in \cite{Liu1}. In the rank-one case ($d=1$), the boundedness in $L_{\kappa}^p$ for $1<p<\infty$, of the only one member named the $\kappa$-Hilbert transform, was proved in \cite{TX2} whenever $2|\kappa|$ is a nonnegative integer, and then in \cite{AGS1} without restriction on $2|\kappa|$; furthermore, the weak (1,1) boundedness of the maximal $\kappa$-Hilbert transform was proved in \cite{LL1,Liao1}. In the higher-dimensional case, the boundedness of the $\kappa$-Riesz transforms in $L_{\kappa}^p$ for $1<p<\infty$ was proved in \cite{AS1}.

Analogously to the classical case, we define the Hardy space in the Dunkl setting by
\begin{align*}
H_{\kappa}^1(\RR^d)=\left\{f\in L_{\kappa}^1(\RR^d):\quad {\mathcal{R}}_jf\in L_{\kappa}^1(\RR^d),\,\,\,j=1,\cdots,d\right\},
\end{align*}
and endow it with the norm $\|f\|_{H_{\kappa}^1}=\|f\|_{L_{\kappa}^1}+\sum_{j=1}^d\|{\mathcal{R}}_jf\|_{L_{\kappa}^1}$. The space $H_{\kappa}^1$ for the rank-one case was studied first in \cite{LL1,Liao1} (including $H_{\kappa}^p$ for some values $p<1$; also see \cite{LL2} for its analogue on the circle), and for the product case in \cite{ABDH1,Dz1}. Associated to general root systems in the higher-dimensional case, the Hardy spaces $H_{\kappa}^1(\RR^d)$ was recently exploited in \cite{ADH1,DH1}, and in particular, it was proved in \cite{DH1} that $H_{\kappa}^1(\RR^d)$ admits the atomic decomposition in the sense of Coifman and Weiss \cite{CW1} basing on a square function characterization of $H_{\kappa}^1(\RR^d)$ given in \cite{ADH1} and a decomposition of the tent space developed in \cite{CMS1,Ru1}. However in \cite{ADH1}, such a square function characterization of $H_{\kappa}^1(\RR^d)$ was obtained with arduous effort, and besides some subtle estimates of the associated heat kernel, through a variety of distinct approaches from \cite{DKKP1,HLMMY,Ru1,SY1}, etc. For other aspects of the Hardy spaces, see \cite{DY2,Dur1,FS1,MS1,St2,St3,SW1,Uch1} for example.

We focus on the ``analytical" presentation of related theories, because in Dunkl's setting there are the relevant Fourier transform, the generalized translation, Green's formula, etc, and we expect that in this way one can better clarify the treatment of some problems, and make more in-depth and extensive research, or in other words, extend the theory of harmonic analysis as far as possible to the Dunkl setting. For example, in a recent work \cite{JL2} a Lusin-type area integral is defined by
\begin{align*}
\left(S_{a,h}u\right)(x)=\left(\iint_{\Gamma_{a}^{h}(0)}
\left[\tau_{x}(\Delta_{\tilde{\kappa}}u^{2})\right](x_0,t)\,x_0^{1-d-2|\kappa|}\,dx_0d\omega_{\kappa}(t)\right)^{1/2}
\end{align*}
for a $C^2$ function $u$ on the upper half-space $\RR^{1+d}_+:=(0,\infty)\times\RR^d$, where $\Gamma^h_a(0)$ is the positive truncated cone of aperture $a>0$ and height $h>0$ with vertex $(0,0)$, $\Delta_{\tilde{\kappa}}=\partial_{x_0}^2+\Delta_{\kappa}$, and $\tau_x$ is the generalized translation in the Dunkl setting. We have applied the function $S_{a,h}u$ in \cite{JL2} to characterize the local existence of non-tangential boundary values of a generalized harmonic function $u$ on $\RR^{1+d}_+$.

In the present paper we take up the $\kappa$-Riesz transforms ${\mathcal{R}}_j$ ($1\le j\le d$) and the Hardy spaces $H_{\kappa}^1(\RR^d)$ from the so-called analytic point of view. These transforms are treated in conjunction with the associated conjugate Poisson integrals, which allows us to borrow some conclusions about the generalized harmonic functions and yields a better understanding of them. We give a direct proof of the boundedness of the $\kappa$-Riesz transforms from $L^\infty(\RR^d)$ to the space $\B_\kappa(\RR^d)$ of functions of bounded mean oscillation with respect to the measure $d\omega_{\kappa}$. The major portion of the the paper is to prove that the dual of the Hardy spaces $H_{\kappa}^1(\RR^d)$ can be realized by a class of functions on $\RR^d$, denoted by $\BC_{\kappa}(\RR^d)$, that consists of the underlying functions of a certain type of Carleson measures associated with $d\omega_{\kappa}$.
The proof is based on the original ideas of Fefferman and Stein \cite{FS1} and does not depend on the atomic decomposition. As a corollary we obtain the Fefferman-Stein decomposition of functions in $\BC_{\kappa}(\RR^d)$. Our argument naturally implies that $\BC_{\kappa}(\RR^d)$ is a subclass of $\B_{\kappa}(\RR^d)$, and the converse of this inclusion is a consequence of the atomic decomposition in \cite{DH1}.

The paper is organized as follows. Some basic facts on the rational Dunkl theory are summarized in Section 2. Section 3 is devoted to the theory of the associated conjugate Poisson integrals, i.e., the conjugate $\kappa$-Poisson integrals, and the $\kappa$-Riesz transforms ${\mathcal{R}}_j$ ($1\le j\le d$); and in particular, it is concluded that the maximal $\kappa$-Riesz transforms ${\mathcal{R}}_j^*$ are of strong type $(p,p)$ for $1<p<\infty$, and of weak type $(1,1)$. The fundamental results of the ``analytic" Hardy space on $\RR^{1+d}_+$ in the Dunkl setting, denoted by $\SH_{\kappa}^1(\RR^{1+d}_+)$, and the ``real" Hardy space $H_{\kappa}^1(\RR^d)$ are contained also in Section 3. That the $\kappa$-Riesz transforms map $L^\infty(\RR^d)$ into the space $\B_\kappa(\RR^d)$ is proved in Section 4. In Section 5, we introduce the class $\BC_{\kappa}(\RR^d)$ of functions on $\RR^d$ in terms of Carleson measures associated with $d\omega_{\kappa}$ (see Definition \ref{bmc-a}), and prove that the $\kappa$-Riesz transforms are bounded from $L^\infty(\RR^d)$ to $\BC_{\kappa}(\RR^d)$. Section 6 is devoted to the proof of that the space $\BC_{\kappa}(\RR^d)$ is the dual of the Hardy spaces $H_{\kappa}^1(\RR^d)$. Several remarks on the space $H_{\kappa}^1(\RR^d)$, $\BC_{\kappa}(\RR^d)$ and $\B_\kappa(\RR^d)$ are given in the last section.

Throughout the paper, $X\lesssim Y$ means that $X\le cY$ for some constant $c>0$ independent of variables, functions, etc.
We say $X\asymp Y$ if both $X\lesssim Y$ and $Y\lesssim X$ hold, and for a measurable set $E\subset\RR^d$, put
$|E|_{\kappa}=\int_{E}d\omega_{\kappa}$.
As usual $\SS(\RR^d)$ designates the space of
$C^{\infty}$ functions on $\RR^d$ rapidly decreasing together with
their derivatives,
$C_0(\RR^d)$ the space of continuous functions on $\RR^d$
vanishing at infinity with norm $\|f\|_{C_0}=\sup_{x\in\RR^d}|f(x)|$,
$L_{\kappa,{\rm loc}}(\RR^d)$ the set of locally integrable functions
on $\RR^d$ with respect to the measure $d\omega_{\kappa}$, and ${\frak
B}_{\kappa}(\RR^d)$ the space of Borel measures $d\nu$ on $\RR^d$ for
which $\|d\nu\|_{{\frak B_{\kappa}}}
=c_{\kappa}\int_{\RR^d}W_{\kappa}\,|d\nu|$ is finite.
We shall use the notation $B(x,r)$ (or $B((x_0,x),r)$) to denote the open ball in $\RR^d$ (or $\RR^{1+d}$) with radius $r$ centered at $x\in\RR^d$ (or $(x_0,x)\in\RR^{1+d}$).

\section{Some facts in the Dunkl theory}

In this section we give an account on results from the Dunkl theory which will be relevant for the sequel. Concerning root systems
and reflection groups, see \cite{Hu}.

\subsection{The Dunkl kernel and the Dunkl transform}

As in the first section, for a root system $R$ let $G$ be the associated reflection group and $\kappa$ a given nonnegative multiplicity function. As shown in \cite{Du3}, there exists a unique degree-of-homogeneity-preserving linear
isomorphism $V_{\kappa}$ on polynomials, which intertwines the associated commutative algebra of Dunkl operators and the algebra of usual partial differential operators, namely,
$D_jV_{\kappa}=V_{\kappa}\partial_j$ for $1\le i\le d$.  $V_{\kappa}$ commutes with the group action of $G$. (For a thorough analysis on $V_{\kappa}$ with general $\kappa$, see \cite{DJO}.)
It was proved in \cite{Ro3} that $V_{\kappa}$ is positive on the space of polynomials, and moreover, for each $x\in\RR^d$ there exists a unique (Borel) probability measure $\mu^{\kappa}_{x}$ of $\RR^d$ such that
$$
V_{\kappa}f(x)=\int_{\RR^d}f(\xi)\,d\mu^{\kappa}_{x}(\xi),
$$
where $\mu^{\kappa}_{x}$ is of compact support with
\begin{align}\label{intertwining-support-1}
\hbox{supp}\,\mu^{\kappa}_{x}\subseteq\hbox{co}\,\{\sigma(x):\,\,\sigma\in G\},
\end{align}
the convex hull of the orbit of $x$ under $G$.
The intertwining operator $V_{\kappa}$ has an extension to $C^{\infty}(\RR^d)$ and establishes a homeomorphism of this space (see \cite{Tr1,Tr2}). For $\delta>0$ and $x\in\RR^d$£¬ we have the distribution estimate (see \cite{JL1})
\begin{align*}
\int_{\langle x,\xi\rangle>|x|^2-\delta^2}\,d\mu^{\kappa}_{x}(\xi)\asymp
\frac{\delta^{2|\kappa|+d}}{|\BB(x,\delta)|_{\kappa}},
\end{align*}
which reveals the behavior of the representing measure $\mu^{\kappa}_{x}$ near the point $x$.

Associated with $G$ and $\kappa$, the Dunkl kernel is defined by
\begin{eqnarray*}
E_{\kappa}(x,z)=V_{\kappa}\left(e^{\langle\cdot,z\rangle}\right)(x)
=\int_{\RR^d}e^{\langle\xi,z\rangle}\,d\mu^{\kappa}_{x}(\xi), \qquad x\in\RR^d,\,\,z\in\CC^d.
\end{eqnarray*}
According to \cite{Op}, for fixed $z\in\CC^d$, $x\mapsto E_{\kappa}(x,z)$ may
be characterized as the unique analytic solution of the system
\begin{align}\label{Dunkl-kernel-eigenfunction-1}
D^x_{j}E_{\kappa}(x,z)=z_jE_{\kappa}(x,z), \qquad x\in\RR^d,\,\,\,j=1,\dots,d,
\end{align}
with the initial value $E_{\kappa}(0,z)=1$; and $E_{\kappa}$ has a unique holomorphic extension to $\CC^d\times\CC^d$ and is symmetric in its arguments (see \cite{dJ,Op}). Furthermore (cf. \cite{dJ,Du3,Ro3}), for $z,w\in\CC^d$, $\lambda\in\CC$ and $\sigma\in G$, one has
\begin{align}\label{Dunkl-kernel-2-3}
E_{\kappa}(\lambda z,w)=E_{\kappa}(z,\lambda w),\qquad E_{\kappa}(\sigma(z),\sigma(w))=E_{\kappa}(z,w),
\end{align}
and for $x\in\RR^d$, $z\in\CC^d$, and all multi-indices $\nu=(\nu_1,\dots,\nu_d)$,
\begin{align}\label{Dunkl-kernel-2-4}
\left|\partial_z^{\nu}E_{\kappa}(x,z)\right|\le|x|^{|\nu|}\max_{\sigma\in G}e^{{\rm Re}\,\langle\sigma(x),z\rangle}.
\end{align}

For $f\in L_{\kappa}^1(\RR^d)$, its Dunkl transform is defined by
\begin{eqnarray*}
\left(\SF_{\kappa}f\right)(\xi)=c_{\kappa}\int_{\RR^d}f(x)E_{\kappa}(-i\xi,x)\,d\omega_{\kappa}(x),\qquad \xi\in\RR^d.
\end{eqnarray*}
The Dunkl transform $\SF_{\kappa}$ commutes with the $G$-action, and shares many of the important properties with the
usual Fourier transform (see \cite{Du4,dJ,Ro3,RV1}), part of which are listed as follows.

\begin{proposition} \label{transform-a} {\rm(i)} If $f\in L_{\kappa}^1(\RR^d)$, then ${\SF}_{\kappa}f\in C_0({\RR^d})$ and $\|{\SF}_{\kappa}f\|_{C_0}\leq\|f\|_{L_{\kappa}^1}$.

{\rm (ii)} {\rm (Inversion)} \ If $f\in L_{\kappa}^1(\RR^d)$
such that $\SF_{\kappa}f\in L_{\kappa}^1(\RR^d)$, then
$f(x)=[\SF_{\kappa}(\SF_{\kappa}f)](-x)$.

{\rm(iii)} For $f\in{\mathscr S}({\RR^d})$, we have $[\SF_{\kappa}(D_jf)](\xi)=i\xi_j(\SF_{\kappa}f)(\xi)$, $[\SF_{\kappa}(x_jf)](\xi)=i[D_j(\SF_{\kappa}f)](\xi)$ for
$\xi\in\RR^d$ and $1\le j\le d$; and $\SF_{\kappa}$ is a homeomorphism of ${\mathscr S}(\RR^d)$.

{\rm(iv)} {\rm (product formula)} For $f_1,f_2\in
L_{\kappa}^1(\RR^d)$, we have $\int_{\RR^d}f_1\cdot\SF_{\kappa}f_2\,d\omega_{\kappa}=\int_{\RR^d}f_2\cdot\SF_{\kappa}f_1\,d\omega_{\kappa}$.

{\rm (v)} {\rm (Plancherel)} There exists a unique extension of
$\SF_{\kappa}$ to $L_{\kappa}^2(\RR^d)$ with
$\|\SF_{\kappa}f\|_{L_{\kappa}^2}=\|f\|_{L_{\kappa}^2}$ and $\SF_{\kappa}^{-1}f=(\SF_{\kappa}f)(-\cdot)$.

{\rm (vi)} If $f\in L_{\kappa}^1(\RR^d)\cup L_{\kappa}^2(\RR^d)$ is radial, then its Dunkl transform ${\SF}_{\kappa}f$ is also radial.

{\rm (vii)} If $f\in L_{\kappa}^1(\RR^d)\cup L_{\kappa}^2(\RR^d)$, then  $[\SF_{\kappa}(\sigma f)](\xi)=(\SF_{\kappa}f)(\sigma(\xi))$ for $\xi\in\RR^d$ and $\sigma\in G$; and $[\SF_{\kappa}(f(a\cdot))](\xi)=|a|^{-2|\kappa|-d}(\SF_{\kappa}f)(a^{-1}\xi)$ for $\xi\in\RR^d$ and $a\in\RR\setminus\{0\}$.
\end{proposition}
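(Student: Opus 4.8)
The plan is to deduce all seven items from the integral representation of the Dunkl kernel together with the structural identities already recorded --- the eigenfunction system \eqref{Dunkl-kernel-eigenfunction-1}, the symmetry and scaling relations \eqref{Dunkl-kernel-2-3}, the growth bound \eqref{Dunkl-kernel-2-4}, the anti-symmetry of $D_j$ with respect to $d\omega_{\kappa}$, and the $G$-invariance and homogeneity of $W_{\kappa}$; these are classical (see \cite{Du4,dJ,Ro3,RV1}). I would begin with (i) and the two differential relations of (iii). Taking $\nu=0$ and $z=-i\xi$ in \eqref{Dunkl-kernel-2-4}, the exponent ${\rm Re}\,\langle\sigma(x),-i\xi\rangle$ vanishes, so $|E_{\kappa}(x,-i\xi)|\le1$ and hence $|\SF_{\kappa}f(\xi)|\le\|f\|_{L_{\kappa}^1}$; continuity of $\SF_{\kappa}f$ is dominated convergence, and the vanishing at infinity is obtained by approximating $f$ in $L_{\kappa}^1$ by Schwartz functions and using that $\SF_{\kappa}$ preserves $\SS(\RR^d)$. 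For the first identity in (iii), I integrate by parts via the anti-symmetry of $D_j$ and invoke \eqref{Dunkl-kernel-eigenfunction-1}, which after the symmetry $E_{\kappa}(-i\xi,x)=E_{\kappa}(x,-i\xi)$ gives $D^x_jE_{\kappa}(-i\xi,x)=-i\xi_jE_{\kappa}(-i\xi,x)$; for the second, I differentiate under the integral and use $E_{\kappa}(\lambda z,w)=E_{\kappa}(z,\lambda w)$ with $\lambda=-i$ to rewrite $D^{\xi}_jE_{\kappa}(-i\xi,x)=-ix_jE_{\kappa}(-i\xi,x)$. Iterating these two relations shows that $\SF_{\kappa}$ intertwines multiplication by $x^{\mu}$ with $D^{\nu}$ up to constants, hence maps $\SS(\RR^d)$ continuously into itself.

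Next come (iv) and the key Gaussian computation. The product formula is Fubini's theorem, legitimate because $|E_{\kappa}(-ix,y)|\le1$ makes $f_1(x)f_2(y)E_{\kappa}(-ix,y)$ integrable on $\RR^d\times\RR^d$, together with $E_{\kappa}(-ix,y)=E_{\kappa}(-iy,x)$ coming from the symmetry of $E_{\kappa}$ and \eqref{Dunkl-kernel-2-3}. The linchpin for the remaining items is that $g(x)=e^{-|x|^2/2}$ is self-reciprocal, $\SF_{\kappa}g=g$. Indeed $g$ is radial, so each reflection term of $D_j$ annihilates it and $D_jg=\partial_jg=-x_jg$; applying $\SF_{\kappa}$ and the two identities of (iii) yields $D_j(\SF_{\kappa}g)=-\xi_j\SF_{\kappa}g$. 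Writing $v=e^{|\xi|^2/2}\SF_{\kappa}g$ and using the product rule $D_j(mu)=(D_ju)m+u\,\partial_jm$ valid for $G$-invariant $m$, one finds $D_jv=0$ for all $j$; since $V_{\kappa}$ is a homeomorphism of $C^{\infty}(\RR^d)$ intertwining $D_j$ and $\partial_j$ (\cite{Tr1,Tr2}), this forces $v$ to be constant, and the normalization $\SF_{\kappa}g(0)=c_{\kappa}\int e^{-|x|^2/2}\,d\omega_{\kappa}=1$ gives $\SF_{\kappa}g=g$.

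With $\SF_{\kappa}g=g$ in hand, inversion (ii) follows by Gaussian summability and is the main obstacle. For $f,\SF_{\kappa}f\in L_{\kappa}^1$ I set $I_t(x)=c_{\kappa}\int\SF_{\kappa}f(\xi)E_{\kappa}(ix,\xi)e^{-t^2|\xi|^2/2}\,d\omega_{\kappa}(\xi)$; dominated convergence gives $I_t(x)\to\SF_{\kappa}(\SF_{\kappa}f)(-x)$ as $t\to0$, while the product formula rewrites $I_t(x)=c_{\kappa}\int f(y)W_t(x,y)\,d\omega_{\kappa}(y)$, where $W_t(x,\cdot)$ is the Dunkl transform of the modulated Gaussian $\xi\mapsto E_{\kappa}(ix,\xi)e^{-t^2|\xi|^2/2}$. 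The crux is to identify $W_t(x,\cdot)$ with the generalized translation $\tau_x$ of a dilated Gaussian and to show that it is an approximate identity, i.e.\ nonnegative, of unit $\omega_{\kappa}$-mass, and concentrating at $x$ as $t\to0$; this rests on the positivity of $\tau_x$ on radial functions and is where the genuine structural content of Dunkl analysis enters. Equating the two limits yields (ii). The homeomorphism claim in (iii) is then immediate, $\SF_{\kappa}$ being continuous $\SS\to\SS$ and bijective with continuous inverse $f\mapsto\SF_{\kappa}f(-\cdot)$. For Plancherel (v) I combine the product formula with (ii) on $\SS(\RR^d)$ to get $\int|\SF_{\kappa}f|^2\,d\omega_{\kappa}=\int|f|^2\,d\omega_{\kappa}$, then extend $\SF_{\kappa}$ to $L_{\kappa}^2$ by density, inversion identifying the inverse as $f\mapsto\SF_{\kappa}f(-\cdot)$.

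Finally (vii) and (vi) are covariance statements. For the reflection relation I change variables $x\mapsto\sigma(x)$, using the $G$-invariance of $d\omega_{\kappa}$ and $E_{\kappa}(\sigma(z),\sigma(w))=E_{\kappa}(z,w)$ to obtain $E_{\kappa}(-i\xi,\sigma(x))=E_{\kappa}(-i\sigma(\xi),x)$; for the dilation relation I change variables $y=ax$, so that $d\omega_{\kappa}$ scales by $|a|^{2|\kappa|+d}$ and $E_{\kappa}(-i\xi,a^{-1}y)=E_{\kappa}(-ia^{-1}\xi,y)$ by \eqref{Dunkl-kernel-2-3}. For radiality (vi), the Dunkl transform of a radial function reduces to a Hankel transform of order $|\kappa|+d/2-1$ of its radial profile, which depends only on $|\xi|$; this is the reduction of R\"osler--Voit \cite{RV1}, and alternatively one may note that dilated Gaussians have radial transforms by (vii) and $\SF_{\kappa}g=g$ and appeal to the continuity in (i). The only step demanding substantial new input is the approximate-identity property underlying the inversion formula; all the rest is bookkeeping with \eqref{Dunkl-kernel-eigenfunction-1}--\eqref{Dunkl-kernel-2-4}.
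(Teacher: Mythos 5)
First, a point of reference: the paper does not prove Proposition \ref{transform-a} at all; it is stated as background with a pointer to the literature (\cite{Du4,dJ,Ro3,RV1}). So your proposal has to be measured against the classical proofs rather than against an internal argument. Most of your outline reconstructs those proofs correctly: the bound in (i) from (\ref{Dunkl-kernel-2-4}), the two differential identities in (iii) via anti-symmetry of $D_j$, (\ref{Dunkl-kernel-eigenfunction-1}) and (\ref{Dunkl-kernel-2-3}), the Fubini proof of (iv), the covariance computations in (vii), and the reduction of (v), (vi) and the homeomorphism claim to (ii) are all sound. The eigenfunction argument forcing $\SF_{\kappa}(e^{-|\cdot|^2/2})=e^{-|\cdot|^2/2}$ (namely $D_jv=0$ for all $j$ forces $v$ to be constant, via $V_{\kappa}$) is correct and is exactly the standard device. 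Two details are glossed but are routine: converting rapid decay of $\xi^{\mu}D^{\nu}\SF_{\kappa}f$ into rapid decay of ordinary derivatives (needed for the Schwartz mapping property), and the density of spans of dilated Gaussians among radial functions in your alternative argument for (vi).

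The genuine gap is at the step you yourself single out as the crux, and it is not just an omission but a circularity. For (ii) you need the kernel $W_t(x,y)=\SF_{\kappa}[E_{\kappa}(ix,\cdot)e^{-t^2|\cdot|^2/2}](y)$ to be nonnegative, of unit $\omega_{\kappa}$-mass, and concentrating at $x$, and you propose to get this from the positivity of $\tau_x$ on radial functions, i.e.\ from (\ref{translation-2-4}) and Proposition \ref{translation-2-e}. But those facts (R\"osler's radial product formula \cite{Ro4} and the Thangavelu--Xu theorems \cite{TX}, quoted as Propositions \ref{translation-2-e} and \ref{convolution-2-b} in the paper) are themselves proved in the literature by means of the inversion and Plancherel theorems for $\SF_{\kappa}$, so they cannot be assumed when proving (ii). The non-circular input is Dunkl's Gaussian master formula $c_{\kappa}\int_{\RR^d}E_{\kappa}(x,\xi)E_{\kappa}(y,\xi)e^{-|\xi|^2/2}\,d\omega_{\kappa}(\xi)=e^{(|x|^2+|y|^2)/2}E_{\kappa}(x,y)$, valid for $x,y\in\CC^d$ and established via the intertwining operator independently of inversion (\cite{Du3,Ro2}); it extends your identity $\SF_{\kappa}g=g$, which is its special case with one argument zero. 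Applied with imaginary arguments and rescaled, it identifies $W_t(x,y)$ with the heat kernel $h_{t^2/2}(x,y)$ of (\ref{heat-kernel-1}), whose positivity is immediate from $E_{\kappa}>0$ on $\RR^d\times\RR^d$ (positivity of the measures $\mu^{\kappa}_x$), and the same formula with one argument zero gives the unit mass. Even then, the concentration-at-$x$ step is delicate: the support bound (\ref{intertwining-support-1}) only yields $W_t(x,y)\le t^{-2|\kappa|-d}e^{-d(x,y)^2/(2t^2)}$, a bound whose $\omega_{\kappa}$-mass is \emph{not} uniformly bounded as $t\to0$ for $x\neq0$, and which cannot distinguish $x$ from its reflected images $\sigma(x)$; one needs the sharper kernel estimates (as in \cite{DH1}) or the Hermite-basis route of \cite{Ro2,dJ} to finish. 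So the architecture of your proof is the classical one and most of it is correct, but as written the decisive step rests on facts that presuppose the result being proved.
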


\subsection{The generalized translation and convolution}

For $x\in\RR^d$, the generalized translation $\tau_x$ is defined in $L_{\kappa}^2(\RR^d)$ by (see \cite{TX})
\begin{align}\label{translation-2-0}
\left[\SF_{\kappa}(\tau_xf)\right](\xi)=E_{\kappa}(i\xi,x)(\SF_{\kappa}f)(\xi),\qquad \xi\in\RR^d.
\end{align}
By Proposition \ref{transform-a}(v), $\tau_x$ is well defined and satisfies $\|\tau_xf\|_{L_{\kappa}^2}\le\|f\|_{L_{\kappa}^2}$ in view of (\ref{Dunkl-kernel-2-4}).

If $f$ is in an appropriate subclass, for example, $A_{\kappa}(\RR^d)=\left\{f\in L_{\kappa}^1(\RR^d):\,\SF_{\kappa}f\in L_{\kappa}^1(\RR^d)\right\}$ (cf. \cite{TX}) or ${\mathscr S}(\RR^d)$, $\tau_xf$ may be expressed pointwise by (see \cite{Ro2,TX})
\begin{align}\label{translation-2-2}
(\tau_x f)(t)=c_{\kappa}\int_{\RR^d}E_{\kappa}(i\xi,x)E_k(i\xi,t)(\SF_{\kappa}f)(\xi)\,d\omega_{\kappa}(\xi),\qquad x\in\RR^d.
 \end{align}

\begin{proposition}\label{translation-2-a}
{\rm(i)} If $f,g\in L_{\kappa}^2(\RR^d)$, then for $x\in\RR^d$,
\begin{align}\label{translation-2-1}
\int_{\RR^d}(\tau_xf)(t)g(t)\,d\omega_{\kappa}(t)=\int_{\RR^d}f(t)(\tau_{-x}g)(t)\,d\omega_{\kappa}(t),
 \end{align}
and both sides are continuous in $x$.

{\rm(ii)} For $f\in A_{\kappa}(\RR^d)$, the function $(x,t)\mapsto(\tau_x f)(t)$ is continuous on $\RR^d\times\RR^d$.
\end{proposition}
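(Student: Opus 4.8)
The plan is to transfer everything to the Dunkl transform side, where by (\ref{translation-2-0}) the operator $\tau_x$ acts as multiplication by $E_{\kappa}(i\xi,x)$, and then to invoke the Plancherel identity of Proposition \ref{transform-a}(v). The one preliminary fact I need is the bilinear Parseval relation
\begin{align*}
\int_{\RR^d}u(t)\,v(t)\,d\omega_{\kappa}(t)=\int_{\RR^d}\SF_{\kappa}u(\xi)\,\SF_{\kappa}v(-\xi)\,d\omega_{\kappa}(\xi),\qquad u,v\in L_{\kappa}^2(\RR^d),
\end{align*}
which I would derive by writing $v=\overline{\bar v}$, applying the (polarized) sesquilinear Plancherel formula to $u$ and $\bar v$, and then using the conjugation identity $\overline{\SF_{\kappa}\bar v(\xi)}=\SF_{\kappa}v(-\xi)$; the latter follows from the definition of $\SF_{\kappa}$ and the relation $\overline{E_{\kappa}(-i\xi,t)}=E_{\kappa}(i\xi,t)$ for real $\xi,t$, itself a consequence of the positivity of the representing measure $\mu^{\kappa}_{t}$ and the symmetry of $E_{\kappa}$.

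For part (i), applying this relation to the left-hand side with $u=\tau_xf$, $v=g$ and using (\ref{translation-2-0}) gives
\begin{align*}
\int_{\RR^d}(\tau_xf)(t)\,g(t)\,d\omega_{\kappa}(t)=\int_{\RR^d}E_{\kappa}(i\xi,x)\,\SF_{\kappa}f(\xi)\,\SF_{\kappa}g(-\xi)\,d\omega_{\kappa}(\xi).
\end{align*}
Applying it to the right-hand side with $u=f$, $v=\tau_{-x}g$, and using $\SF_{\kappa}(\tau_{-x}g)(-\xi)=E_{\kappa}(-i\xi,-x)\SF_{\kappa}g(-\xi)$ together with the homogeneity relation $E_{\kappa}(-i\xi,-x)=E_{\kappa}(i\xi,x)$ from (\ref{Dunkl-kernel-2-3}) (take $\lambda=-1$ in $E_{\kappa}(\lambda z,w)=E_{\kappa}(z,\lambda w)$), yields exactly the same integral, which proves (\ref{translation-2-1}). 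For the continuity in $x$, I note that (\ref{Dunkl-kernel-2-4}) with $\nu=0$ and $z=i\xi$ gives $|E_{\kappa}(i\xi,x)|\le1$, so the integrand displayed above is dominated by $|\SF_{\kappa}f(\xi)\,\SF_{\kappa}g(-\xi)|$, which lies in $L_{\kappa}^1(\RR^d)$ by Cauchy--Schwarz and Plancherel; since $x\mapsto E_{\kappa}(i\xi,x)$ is continuous, dominated convergence gives continuity of both sides in $x$.

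For part (ii), I would argue directly from the pointwise representation (\ref{translation-2-2}), valid for $f\in A_{\kappa}(\RR^d)$. Its integrand $E_{\kappa}(i\xi,x)\,E_{\kappa}(i\xi,t)\,\SF_{\kappa}f(\xi)$ is jointly continuous in $(x,t,\xi)$ and, by $|E_{\kappa}(i\xi,x)|,|E_{\kappa}(i\xi,t)|\le1$, is bounded uniformly in $(x,t)$ by $|\SF_{\kappa}f(\xi)|$, which is integrable with respect to $d\omega_{\kappa}$ precisely because $f\in A_{\kappa}(\RR^d)$. Dominated convergence then yields the joint continuity of $(x,t)\mapsto(\tau_xf)(t)$ on $\RR^d\times\RR^d$. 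The only genuine care needed throughout is in the bookkeeping of the conjugation and homogeneity identities for $E_{\kappa}$ and in establishing the uniform bound $|E_{\kappa}(i\xi,x)|\le1$ on the imaginary axis; once these are in hand, both assertions reduce to routine applications of Plancherel and dominated convergence, and in particular no density or approximation argument is required.
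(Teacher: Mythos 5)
Your proof is correct, but it takes a genuinely different route from the paper. The paper disposes of (\ref{translation-2-1}) by citation: it invokes \cite{TX}, where the identity is proved for $f\in A_{\kappa}(\RR^d)$ and $g\in L_{\kappa}^1(\RR^d)\cap L^{\infty}(\RR^d)$, and then extends to general $f,g\in L_{\kappa}^2(\RR^d)$ by a density argument; the continuity statements are likewise delegated to \cite{JL2}. You instead work entirely on the transform side: since $\tau_x$ is \emph{defined} on $L_{\kappa}^2(\RR^d)$ by the multiplier relation (\ref{translation-2-0}), the bilinear Parseval identity (obtained from Proposition \ref{transform-a}(v) by polarization and the conjugation identity $\overline{E_{\kappa}(-i\xi,t)}=E_{\kappa}(i\xi,t)$) reduces both sides of (\ref{translation-2-1}) to the single integral $\int_{\RR^d}E_{\kappa}(i\xi,x)\,(\SF_{\kappa}f)(\xi)\,(\SF_{\kappa}g)(-\xi)\,d\omega_{\kappa}(\xi)$, with the homogeneity relation of (\ref{Dunkl-kernel-2-3}) handling the sign bookkeeping; continuity in $x$, and the joint continuity in part (ii) via (\ref{translation-2-2}), then follow from the bound $|E_{\kappa}(i\xi,x)|\le1$ (from (\ref{Dunkl-kernel-2-4})) and dominated convergence. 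What your approach buys is a self-contained argument that needs no approximation step and treats the identity and its continuity simultaneously; what the paper's approach buys is brevity and reuse of the literature, and it is the natural formulation when one starts (as \cite{TX} does) from the pointwise kernel picture of $\tau_x$ rather than its spectral definition. Two small points worth keeping explicit in your write-up: the polarized Plancherel formula requires linearity of $\SF_{\kappa}$ on the complex Hilbert space $L_{\kappa}^2(\RR^d)$ (routine, but it is the hinge of your bilinear relation), and the validity of the pointwise formula (\ref{translation-2-2}) for $f\in A_{\kappa}(\RR^d)$ is itself a cited fact, so part (ii) is not entirely citation-free either.
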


The equality (\ref{translation-2-1}) was proved in \cite{TX} for $f\in A_{\kappa}(\RR^d)$ and $g\in L_{\kappa}^1(\RR^d)\cap L^{\infty}(\RR^d)$, and the general case follows from a density argument; for the continuity, see \cite{JL2}.

In \cite{Tr2} an abstract form of $\tau_x$ is given by $(\tau_xf)(t)=V_{\kappa}^tV_{\kappa}^x\left[(V_{\kappa}^{-1}f)(x+t)\right]$ for $f\in C^{\infty}(\RR^d)$ and $x,t\in\RR^d$.

\begin{proposition}\label{translation-2-b} {\rm(\cite{Tr2})}
{\rm(i)} For fixed $x\in\RR^d$, $\tau_x$ is a continuous linear mapping from $C^{\infty}(\RR^d)$ into itself, and satisfies $D_j(\tau_xf)=\tau_x(D_jf)$ ($j=1,\dots,d$) on $\RR^d$ for $f\in C^{\infty}(\RR^d)$;

{\rm(ii)} for fixed $x,t\in\RR^d$, the mapping $f\mapsto(\tau_xf)(t)$ defines a compactly supported distribution, whose support is contained in the ball $\{\xi\in\RR^d:\,|\xi|\le|x|+|t|\}$;

{\rm(iii)} for fixed $x\in\RR^d$, if $f\in{\mathscr S}(\RR^d)$, then $\tau_xf\in{\mathscr S}(\RR^d)$ too, and both (\ref{translation-2-0}) and (\ref{translation-2-2}) hold.

{\rm(iv)} If $f\in C^{\infty}(\RR^d)$, then the function $(x,t)\mapsto (\tau_x f)(t)$ is in $C^{\infty}(\RR^d\times\RR^d)$, and for $x,t\in\RR^d$, $(\tau_tf)(x)=(\tau_xf)(t)$.
\end{proposition}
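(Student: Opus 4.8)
The plan is to prove Proposition \ref{translation-2-b} by passing the known Fourier-analytic definition of $\tau_x$ through the intertwining operator $V_\kappa$, exploiting its homeomorphism property on $C^\infty(\RR^d)$. First I would establish the abstract formula $(\tau_x f)(t)=V_\kappa^t V_\kappa^x\big[(V_\kappa^{-1}f)(x+t)\big]$ for Schwartz $f$ by direct computation with the Dunkl transform, using the reproducing identity (\ref{translation-2-2}) together with $E_\kappa(i\xi,x)=V_\kappa^x(e^{i\langle\cdot,\xi\rangle})(x)$ and the eigenfunction relation $D_j V_\kappa=V_\kappa\partial_j$; once this holds on a dense class one defines $\tau_x$ on all of $C^\infty(\RR^d)$ by the abstract formula and checks the two definitions agree. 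Since $V_\kappa$ and $V_\kappa^{-1}$ are continuous self-maps of $C^\infty(\RR^d)$ and ordinary translation $g\mapsto g(x+\cdot)$ is continuous there, part (i)'s continuity is immediate, and the intertwining identity $D_j(\tau_x f)=\tau_x(D_j f)$ follows by commuting $D_j^t$ through $V_\kappa^t$ (turning it into $\partial_j$ acting on the argument $x+t$) and symmetrically through $V_\kappa^x$.

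For part (ii), I would use the support property (\ref{intertwining-support-1}) of the representing measures $\mu_x^\kappa$: since $\mathrm{supp}\,\mu_y^\kappa\subseteq\mathrm{co}\{\sigma(y):\sigma\in G\}$, each application of $V_\kappa$ does not increase the distance to the origin beyond $|y|$, and the extra ordinary translation by $x+t$ inside the formula produces a distribution supported in $\{\xi:|\xi|\le|x|+|t|\}$. The cleanest route is to view $f\mapsto(\tau_x f)(t)$ as a composition of compactly-supported integral operators and track the supports; this finite-propagation-speed statement is the geometrically delicate point, so I would spell out the convex-hull containments carefully. Part (iv) then follows by smoothness of all the ingredients in the composite formula, and the symmetry $(\tau_t f)(x)=(\tau_x f)(t)$ is visible directly from the manifestly $x\leftrightarrow t$ symmetric expression $V_\kappa^t V_\kappa^x[(V_\kappa^{-1}f)(x+t)]$, since both $V_\kappa^x$ and $V_\kappa^t$ commute and $x+t$ is symmetric.

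Finally, to secure part (iii) I would show that $\tau_x$ preserves $\SS(\RR^d)$: from (\ref{translation-2-0}) and the Plancherel identity one sees $\SF_\kappa(\tau_x f)$ is a product of $(\SF_\kappa f)\in\SS$ with the smooth multiplier $E_\kappa(i\cdot,x)$, whose derivatives grow at most polynomially by (\ref{Dunkl-kernel-2-4}), so the product stays Schwartz; since $\SF_\kappa$ is a homeomorphism of $\SS$ by Proposition \ref{transform-a}(iii), $\tau_x f\in\SS$, and both pointwise formulas (\ref{translation-2-0}), (\ref{translation-2-2}) are then justified by Dunkl inversion. The main obstacle I anticipate is part (ii): controlling the support requires the sharp convex-hull estimate (\ref{intertwining-support-1}) and a careful verification that composing the three building blocks $V_\kappa^t$, $V_\kappa^x$, and ordinary translation does not enlarge the support beyond the predicted ball, rather than any analytic estimate.
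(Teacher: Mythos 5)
The paper offers no proof of Proposition \ref{translation-2-b}: it is quoted from Trim\`eche \cite{Tr2}, and the abstract formula $(\tau_xf)(t)=V_{\kappa}^tV_{\kappa}^x\left[(V_{\kappa}^{-1}f)(x+t)\right]$ that you take as your starting point is precisely Trim\`eche's definition of $\tau_x$ on $C^{\infty}(\RR^d)$. Your derivation of that formula for Schwartz $f$ from (\ref{translation-2-2}) (via $E_{\kappa}(i\xi,y)=V_{\kappa}^{y}(e^{i\langle\cdot,\xi\rangle})$ and $f=V_{\kappa}\bigl[c_{\kappa}\int e^{i\langle\cdot,\xi\rangle}(\SF_{\kappa}f)(\xi)\,d\omega_{\kappa}(\xi)\bigr]$), and your arguments for (i), (iii) and the symmetry statement in (iv), are essentially sound, granted the homeomorphism property of $V_{\kappa}$ on $C^{\infty}(\RR^d)$ which the paper itself only quotes from \cite{Tr1,Tr2}.

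The genuine gap is part (ii), exactly the point you flag as delicate. Unwinding the abstract formula, the distribution in question is $f\mapsto\langle({}^tV_{\kappa})^{-1}(\mu^{\kappa}_{x}\ast\mu^{\kappa}_{t}),f\rangle$, where $\ast$ is ordinary convolution. By (\ref{intertwining-support-1}) the measure $T=\mu^{\kappa}_{x}\ast\mu^{\kappa}_{t}$ is indeed supported in $B(0,|x|+|t|)$; but what part (ii) asserts is that $u=({}^tV_{\kappa})^{-1}T$ is supported there. The support property (\ref{intertwining-support-1}) controls only $V_{\kappa}$, and hence yields only the easy implication: if ${\rm supp}\,u\subseteq B(0,R)$ then ${\rm supp}\,{}^tV_{\kappa}u\subseteq B(0,R)$, because $f$ vanishing near $B(0,R)$ forces $V_{\kappa}f$ to vanish near $B(0,R)$. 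What you need is the converse implication, ${\rm supp}\,{}^tV_{\kappa}u\subseteq B(0,R)\Rightarrow{\rm supp}\,u\subseteq B(0,R)$, i.e.\ a support theorem for the inverse of the dual intertwining operator. Nothing about the representing measures $\mu^{\kappa}_{y}$ gives this: $V_{\kappa}^{-1}$ admits no positive compactly supported integral representation, so ``tracking convex hulls'' through the composition breaks down precisely at the factor $V_{\kappa}^{-1}$. That converse support theorem is one of the principal results of \cite{Tr1,Tr2} (Trim\`eche's Paley--Wiener-type theorems) --- that is, it is the very content being cited --- so your plan, as written, assumes at its hardest point what is to be proved. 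A smaller issue: in (iv), joint $C^{\infty}$ smoothness in $(x,t)$ requires that $V_{\kappa}$ applied in one variable preserve smooth dependence on the other variable (a parameter version of the homeomorphism property), which ``smoothness of all the ingredients'' does not by itself supply.
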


\begin{proposition}\label{translation-2-b-1}
If $f\in C^{\infty}(\RR^d)\cup L_{\kappa}^2(\RR^d)$, then {\rm (i)} $\left[\tau_x(\sigma f)\right](t)=(\tau_{\sigma(x)}f)(\sigma(t))$ for $x,t\in\RR^d$ and $\sigma\in G$; {\rm (ii)} $\left[\tau_x(f(a\cdot))\right](t)=(\tau_{ax}f)(at)$ for $x,t\in\RR^d$ and $a\in\RR\setminus\{0\}$; {\rm (iii)} $\left[\tau_t(\tau_xf)\right](z)=\left[\tau_x(\tau_tf)\right](z)$ for $x,t,z\in\RR^d$.
\end{proposition}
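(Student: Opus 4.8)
The plan is to reduce all three identities to the Fourier-side description (\ref{translation-2-0}) of $\tau_x$ together with the injectivity of the Dunkl transform, first for $f\in L_\kappa^2(\RR^d)$ and then, by a finite-propagation argument, for $f\in C^\infty(\RR^d)$. In the $L^2$ case every function occurring stays in $L_\kappa^2(\RR^d)$ --- $\tau_xf$ by the contractivity noted after (\ref{translation-2-0}), and $\sigma f=f\circ\sigma$ and $f(a\cdot)$ because $\omega_\kappa$ is $G$-invariant and homogeneous --- so each identity may be read as an equality in $L_\kappa^2(\RR^d)$ and verified after applying $\SF_\kappa$.

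For (i), (\ref{translation-2-0}) and Proposition \ref{transform-a}(vii) give
\[
\SF_\kappa[\tau_x(\sigma f)](\xi)=E_\kappa(i\xi,x)(\SF_\kappa f)(\sigma(\xi)),
\]
whereas the right-hand side $g$, with $g(t)=(\tau_{\sigma(x)}f)(\sigma(t))$, i.e.\ $g=\sigma(\tau_{\sigma(x)}f)$, yields by (vii) and (\ref{translation-2-0})
\[
\SF_\kappa g(\xi)=\SF_\kappa(\tau_{\sigma(x)}f)(\sigma(\xi))=E_\kappa(i\sigma(\xi),\sigma(x))(\SF_\kappa f)(\sigma(\xi)).
\]
These agree because $E_\kappa(i\sigma(\xi),\sigma(x))=E_\kappa(\sigma(i\xi),\sigma(x))=E_\kappa(i\xi,x)$ by (\ref{Dunkl-kernel-2-3}), and injectivity of $\SF_\kappa$ (Proposition \ref{transform-a}(v)) closes the case. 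Part (ii) runs along the same lines: the dilation rule in (vii) contributes the factor $|a|^{-2|\kappa|-d}$ to both sides, and the homogeneity relation $E_\kappa(\lambda z,w)=E_\kappa(z,\lambda w)$ of (\ref{Dunkl-kernel-2-3}), used with $\lambda=a^{-1}$, turns $E_\kappa(ia^{-1}\xi,ax)$ into $E_\kappa(i\xi,x)$. Part (iii) is immediate, since two applications of (\ref{translation-2-0}) give
\[
\SF_\kappa[\tau_t(\tau_xf)](\xi)=E_\kappa(i\xi,t)\,E_\kappa(i\xi,x)\,(\SF_\kappa f)(\xi),
\]
which is symmetric in $x$ and $t$.

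To extend to arbitrary $f\in C^\infty(\RR^d)$ I would invoke the compact-support bound of Proposition \ref{translation-2-b}(ii): for fixed arguments each side of (i)--(iii) depends on $f$ only through its restriction to a fixed ball. For instance, in (iii) the value $[\tau_t(\tau_xf)](z)$ is determined by $\tau_xf$ on $\{|y|\le|t|+|z|\}$, and each $(\tau_xf)(y)$ there by $f$ on $\{|\cdot|\le|x|+|t|+|z|\}$; the same ball governs the other side, while in (i) and (ii) the controlling balls $\{|\cdot|\le|x|+|t|\}$ and their images under the orthogonal map $\sigma$ or the dilation by $a$ are likewise common to both sides. Choosing $\tilde f\in C_c^\infty(\RR^d)\subset\SS(\RR^d)$ equal to $f$ on a slightly larger ball, both sides for $f$ coincide with those for $\tilde f$; as $\SS(\RR^d)\subset L_\kappa^2(\RR^d)$ and $\tau_x$ preserves $\SS(\RR^d)$ by Proposition \ref{translation-2-b}(iii), the $L^2$ identities apply to $\tilde f$, and since every function in sight is $C^\infty$ (Proposition \ref{translation-2-b}(i),(iv)) the resulting equalities hold pointwise.

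The one delicate point I anticipate is the bookkeeping in this localization: one must check that the support bound of Proposition \ref{translation-2-b}(ii) is stable under the composition of two translations and under reflection and dilation, so that a single compactly supported $\tilde f$ simultaneously controls both sides of each identity. Granting this, the transform-side manipulations are routine, being nothing more than the equivariance of $\SF_\kappa$ in Proposition \ref{transform-a}(vii) and the functional equations (\ref{Dunkl-kernel-2-3}) of the Dunkl kernel.
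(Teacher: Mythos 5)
Your proposal is correct and follows essentially the same route as the paper: the $L_\kappa^2$ case is settled on the transform side via Proposition \ref{transform-a}(vii), the kernel symmetries (\ref{Dunkl-kernel-2-3}) and the definition (\ref{translation-2-0}), and the $C^\infty$ case is reduced to it by multiplying $f$ with a smooth compactly supported cutoff equal to $1$ on the ball $\{|\xi|\le|x|+|t|\}$ and invoking the support property of Proposition \ref{translation-2-b}(ii). Your extra bookkeeping for the composed translations in part (iii) is exactly the verification the paper leaves implicit in ``Parts (ii) and (iii) can be verified similarly.''
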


If $f\in L_{\kappa}^2(\RR^d)$, the required identities in the above proposition are valid by Proposition \ref{transform-a}(vii), (\ref{Dunkl-kernel-2-3}) and (\ref{translation-2-0}). For general $f\in C^{\infty}(\RR^d)$ and for fixed  $x,t\in\RR^d$, we take a compactly supported $\phi\in C^{\infty}(\RR^d)$ with $\phi(\xi)=1$ for $|\xi|\le|x|+|t|$. It then follows that $\left[\tau_x(\sigma(\phi f))\right](t)=\left[\tau_{\sigma(x)}(\phi f)\right](\sigma(t))$, which is identical with $\left[\tau_x(\sigma f)\right](t)=(\tau_{\sigma(x)}f)(\sigma(t))$ by Proposition \ref{translation-2-b}(ii), so that part (i) is done. Parts (ii) and (iii) can be verified similarly.

\begin{proposition}\label{translation-2-b-2} {\rm (\cite[Corollary 4.21]{DH0})}
If $f\in{\mathscr S}(\RR^d)$, then $\sup_{x\in\RR^d}\|\tau_xf\|_{L^{1}_{\kappa}}<\infty$.
\end{proposition}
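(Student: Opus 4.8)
The plan is to deduce the uniform bound from a pointwise estimate on the generalized translation, controlled by the $G$-invariant (reflection) distance $d(x,t)=\min_{\sigma\in G}|x-\sigma(t)|$. I claim it suffices to show that for each $N\in\NN$ there is a constant $C_N$, depending on $f$ and $N$ but \emph{not} on $x$, with
\begin{align*}
|\tau_x f(t)|\le C_N\,\frac{1}{|B(x,1+d(x,t))|_{\kappa}}\,\bigl(1+d(x,t)\bigr)^{-N}\qquad(x,t\in\RR^d).
\end{align*}
Granting this, I would integrate in $t$ by slicing $\RR^d$ into the shells $S_k=\{t:\,2^{k-1}\le 1+d(x,t)<2^{k}\}$, $k\ge0$. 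Since $\{t:\,d(x,t)<r\}=\bigcup_{\sigma\in G}\sigma\bigl(B(x,r)\bigr)$ and $\omega_{\kappa}$ is $G$-invariant and doubling, one gets $|S_k|_{\kappa}\le|G|\,|B(x,2^{k})|_{\kappa}$ and $|B(x,1+d(x,t))|_{\kappa}\asymp|B(x,2^{k})|_{\kappa}$ on $S_k$; hence the $k$-th piece contributes $\lesssim 2^{-kN}$ and, summing over $k$, $\sup_x\|\tau_x f\|_{L^1_{\kappa}}\le C_N\sum_k 2^{-kN}<\infty$ as soon as $N\ge1$. As a consistency check and as the source of the leading normalization, note that $\SF_{\kappa}(\tau_x f)(0)=E_{\kappa}(0,x)\,\SF_{\kappa}f(0)=\SF_{\kappa}f(0)$, so $\int_{\RR^d}\tau_x f\,d\omega_{\kappa}=\int_{\RR^d}f\,d\omega_{\kappa}$ for every $x$.

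The core is therefore the pointwise estimate, and the obstruction is that $\tau_x$ is not a positive operator for non-radial $f$, so one cannot simply dominate $|\tau_x f|$ by the translation of a radial majorant. My strategy is to route the estimate through the Dunkl heat kernel $h_s(x,t)$, which equals $\tau_x$ applied to the radial Gaussian and is therefore positive by R\"osler's theorem, and which satisfies Gaussian upper bounds (together with bounds on its $t$-derivatives) expressed precisely through $d(x,t)$, as supplied in \cite{DH0}. Starting from the pointwise formula $\tau_x f(t)=c_{\kappa}\int E_{\kappa}(i\xi,x)E_{\kappa}(i\xi,t)\,(\SF_{\kappa}f)(\xi)\,d\omega_{\kappa}(\xi)$, I would decompose $f$ into dyadic frequency blocks $f_j$ with $\SF_{\kappa}f_j$ supported where $|\xi|\asymp 2^{j}$ (legitimate since $\SF_{\kappa}$ is a homeomorphism of $\SS(\RR^d)$). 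On each block I pair the factor $E_{\kappa}(i\xi,t)$ with a Gaussian $e^{-s|\xi|^{2}}$ at the matched scale $s\asymp 2^{-2j}$, so that the $t$-dependence is carried by a heat-type kernel at time $s$, whose Gaussian bound yields block decay of the type $\bigl(1+2^{j}d(x,t)\bigr)^{-N}$. The polynomial weights in the distance are produced by integrating by parts in $\xi$, using the anti-symmetry of $D_j$ on $L^2_{\kappa}$, the relation $D^{\xi}_jE_{\kappa}(i\xi,x)=ix_jE_{\kappa}(i\xi,x)$ derived from (\ref{Dunkl-kernel-eigenfunction-1}) and the symmetry of $E_{\kappa}$, together with $|E_{\kappa}(i\xi,\cdot)|\le1$ and the derivative bounds $|\partial_{\xi}^{\nu}E_{\kappa}(i\xi,t)|\le|t|^{|\nu|}$ from (\ref{Dunkl-kernel-2-4}); the rapid decay of $\SF_{\kappa}f$ makes the resulting constants summable in $j$, and summing the blocks delivers the claimed estimate.

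I expect the main difficulty to be exactly this pointwise bound, and within it the non-local reflection terms: in the Euclidean case $\kappa=0$ one has $\tau_x f(t)=f(x+t)$ and the statement is immediate, whereas the reflection part of $D_j$ makes $\tau_x$ genuinely non-local and non-positive. The resolution is that all the positivity and concentration one needs is available \emph{for the heat kernel} (R\"osler positivity plus Gaussian bounds in the $G$-invariant distance $d$), so the reflection structure is absorbed into $d(x,t)$ rather than estimated term by term; the price paid is the frequency decomposition, whose blocks must be resummed using the Schwartz decay of $\SF_{\kappa}f$. The verification of the differentiated Gaussian bounds for $h_s$ and the bookkeeping of the $j$-summation are the technically heaviest steps, and these are precisely the ingredients provided by \cite{DH0}.
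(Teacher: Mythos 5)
The paper offers no proof of this proposition at all: it is quoted verbatim from \cite[Corollary 4.21]{DH0}, so your attempt can only be measured against that cited source. Your overall architecture is in fact the one used there — a pointwise bound for the translated Schwartz function with arbitrarily fast decay in the orbit distance, followed by integration over dyadic shells using the $G$-invariance and doubling of $\omega_{\kappa}$ — and your shell-integration step and the normalization check $\int\tau_xf\,d\omega_{\kappa}=\int f\,d\omega_{\kappa}$ are correct as written. The gaps are both located in the pointwise estimate, which is where all the work lies.

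First, the estimate you formulate is false as stated. With the paper's convention $[\SF_{\kappa}(\tau_xf)](\xi)=E_{\kappa}(i\xi,x)(\SF_{\kappa}f)(\xi)$, in the case $\kappa\equiv0$ one has $(\tau_xf)(t)=f(x+t)$, while $d(x,t)=\min_{\sigma\in G}|t-\sigma(x)|$. Take $\kappa\equiv0$, $G=S_d$ (or any $G$ not containing the antipodal map $x\mapsto-x$), $x$ in the open positive chamber and $t=-x$: then the left-hand side of your claimed bound equals $|f(0)|$, whereas $d(x,-x)=\min_{\sigma}|x+\sigma(x)|\gtrsim|x|$, so your right-hand side tends to $0$ as $|x|\to\infty$. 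The correct statement — and the one proved in \cite{DH0} — concerns $(\tau_xf)(-t)$ with decay in $d(x,t)$, equivalently $(\tau_xf)(t)$ with decay in $d(x,-t)$; since $G$ need not contain $-\operatorname{id}$, these distances are genuinely different. The slip is harmless for your shell integration (the weight $W_{\kappa}$ is even, so substitute $t\mapsto-t$), but the key claim must be re-oriented before anything can be proved from it.

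Second, and more seriously, the mechanism you propose for producing the distance decay cannot work. Integration by parts in $\xi$ via the eigenrelation $D_j^{\xi}E_{\kappa}(i\xi,x)=ix_jE_{\kappa}(i\xi,x)$ together with $|\partial_{\xi}^{\nu}E_{\kappa}(i\xi,t)|\le|t|^{|\nu|}$ produces separate powers of $|x|$ and of $|t|$, never a factor measuring the separation of the orbits of $x$ and $-t$: the Dunkl operators satisfy no Leibniz rule, so $D_j^{\xi}$ applied to the product $E_{\kappa}(i\xi,x)E_{\kappa}(i\xi,t)$ cannot be resolved through the eigenrelations of the two factors, and, unlike the Fourier case where $e^{i\langle\xi,x\rangle}e^{i\langle\xi,t\rangle}=e^{i\langle\xi,x+t\rangle}$, a product of two Dunkl kernels is not a Dunkl kernel at a combined argument. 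Separate powers of $|x|$ and $|t|$ are useless here, since they are large precisely in the regime where the orbits are far apart but $|x|$ and $|t|$ are comparable. In \cite{DH0} the distance decay comes entirely from the Gaussian bounds for the Dunkl heat kernel, $h_v(x,t)\lesssim e^{-cd(x,t)^2/v}\,|B(x,\sqrt{v})|_{\kappa}^{-1}$ (the bound from \cite{DH1} quoted in the proof of Lemma \ref{conjugate-Poisson-b} of this paper), transported to each frequency block through the convolution and semigroup structure; your sentence about pairing each block with a Gaussian at matched scale points in this direction, but your attribution of the ``polynomial weights in the distance'' to integration by parts re-introduces exactly the step that fails. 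As written, the proof of the pointwise bound has a hole at its crucial point, and it must be rerouted entirely through the heat-kernel estimates, which is what the cited reference does.
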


As for radial functions in $\RR^d$, more information about the translation operator $\tau_x$ has been found. In fact, for $x,t\in\RR^d$, by \cite[Theorem 5.1]{Ro4} there exists a unique compactly supported, radial (Borel) probability measure $\rho^{\kappa}_{x,t}$ on $\RR^d$ such that for all radial $f\in C^{\infty}(\RR^d)$,
\begin{align}\label{translation-2-3}
(\tau_xf)(t)=\int_{\RR^d}f\,d\rho^{\kappa}_{x,t};
\end{align}
and the support of $\rho^{\kappa}_{x,t}$ is contained in
\begin{align}\label{translation-support-2-1}
\left\{\xi\in\RR^d:\quad \min_{\sigma\in G}|x+\sigma(t)|\le|\xi|\le\max_{\sigma\in G}|x+\sigma(t)|\right\}.
\end{align}
In particular, if $0\in{\rm supp}\,\rho^{\kappa}_{x,t}$, then the $G$-orbits of $x$ and $-t$ coincide. Since ${\rm supp}\,\rho^{\kappa}_{x,t}$ is compact, (\ref{translation-2-3}) leads to a natural extension of  the translation operator $\tau_x$ to all radial $f\in C(\RR^d)$. Furthermore, if $f\in C^{\infty}(\RR^d)$ is radial, say $f(x)=f_0(|x|)$, from the proof of \cite[Theorem 5.1]{Ro4} it follows that, for $x,t\in\RR^d$,
\begin{align}\label{translation-2-4}
(\tau_xf)(t)=\int_{\RR^d}f_0(\sqrt{|x|^2+|t|^2+2\langle t,\xi\rangle})\,d\mu^{\kappa}_{x}(\xi).
\end{align}
Again, since $\mu^{\kappa}_{x}$ is of compact support, (\ref{translation-2-4}) allows an extension of $\tau_x$ to all radial $f\in C(\RR^d)$ by a density argument.

\begin{proposition}\label{translation-2-d} {\rm (\cite[Propositions 3.1 and 3.2]{JL2})}
{\rm(i)} If $f\in C^{\infty}(\RR^d)$, and $g\in C(\RR^d)$ is radial and of compact support, or if $f\in C^{\infty}(\RR^d)$ is of compact support and $g\in C(\RR^d)$ is radial, then (\ref{translation-2-1}) holds.

{\rm(ii)} If $f\in C(\RR^d)$ is radial, then $\left[\tau_x(f(a\cdot))\right](t)=(\tau_{ax}f)(at)$ for $x,t\in\RR^d$ and $a\in\RR\setminus\{0\}$.
\end{proposition}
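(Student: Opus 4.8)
The plan is to reduce both parts to facts already in hand: the $L_\kappa^2$ adjoint relation of Proposition \ref{translation-2-a}(i) for part (i), and the integral representation (\ref{translation-2-4}) together with a homogeneity property of the measures $\mu^\kappa_x$ for part (ii). For part (i) the difficulty is that neither hypothesis places $f$ and $g$ simultaneously in $L_\kappa^2(\RR^d)$, so Proposition \ref{translation-2-a}(i) does not apply directly; the heart of the matter is a localization. Consider first $f\in C^\infty(\RR^d)$ and $g\in C(\RR^d)$ radial with $\mathrm{supp}\,g\subseteq\overline B(0,\rho)$. I would fix a cutoff $\psi\in C_c^\infty(\RR^d)$ with $\psi\equiv1$ on $\overline B(0,|x|+\rho)$ and compare $f$ with $f\psi\in C_c^\infty(\RR^d)\subset L_\kappa^2(\RR^d)$. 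On the left side of (\ref{translation-2-1}) only values $|t|\le\rho$ matter, and for such $t$ Proposition \ref{translation-2-b}(ii) shows the distribution $h\mapsto(\tau_x h)(t)$ is supported in $\overline B(0,|x|+|t|)\subseteq\overline B(0,|x|+\rho)$, so $(\tau_x f)(t)=(\tau_x(f\psi))(t)$ there. On the right side, the support property (\ref{translation-support-2-1}) of $\rho^\kappa_{-x,t}$ together with $\min_{\sigma}|{-x}+\sigma(t)|\ge|t|-|x|$ forces $(\tau_{-x}g)(t)=0$ once $|t|>|x|+\rho$, so on the effective domain $\psi\equiv1$ and $f$ may again be replaced by $f\psi$. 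Both integrals are thereby rewritten with $f\psi,g\in L_\kappa^2(\RR^d)$, and Proposition \ref{translation-2-a}(i) yields the identity. (One must also check that for radial $g\in L_\kappa^2(\RR^d)$ the translate defined by the measure (\ref{translation-2-3}) coincides with the $L_\kappa^2$-translate; this follows by approximating $g$ by radial functions in $C_c^\infty(\RR^d)$, using $\|\tau_{-x}\cdot\|_{L_\kappa^2}\le\|\cdot\|_{L_\kappa^2}$ and uniform convergence against the probability measure $\rho^\kappa_{-x,t}$.)

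For the second case of (i), $f\in C_c^\infty(\RR^d)$ with $\mathrm{supp}\,f\subseteq\overline B(0,M)$ and $g\in C(\RR^d)$ radial, I would truncate $g$: choose radial $\chi_n\in C(\RR^d)$, $0\le\chi_n\le1$, with $\chi_n\equiv1$ on $\overline B(0,n)$ and compact support, and apply the first case to $(f,g\chi_n)$. On the right side $(\tau_{-x}(g\chi_n))(t)$ for $t\in\mathrm{supp}\,f$ depends only on $g\chi_n$ on $\overline B(0,|x|+M)$, where $\chi_n\equiv1$ once $n>|x|+M$; hence the right side stabilizes to $\int f\,(\tau_{-x}g)\,d\omega_\kappa$, which converges absolutely since $f$ has compact support. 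The genuinely delicate point, and the main obstacle, is the convergence of the left side, because $\tau_x f\in\SS(\RR^d)$ is only rapidly decreasing, not compactly supported, while $g$ is an arbitrary continuous radial function that may grow. I would resolve this by extracting from the first case the following: for $h$ radial, continuous, compactly supported and vanishing on $\overline B(0,|x|+M)$, one has $\int(\tau_x f)\,h\,d\omega_\kappa=\int f\,(\tau_{-x}h)\,d\omega_\kappa=0$, since $\tau_{-x}h$ vanishes on $\mathrm{supp}\,f$. Written in polar coordinates this shows that the weighted spherical mean $r\mapsto\int_{S^{d-1}}(\tau_xf)(r\theta)\,W_\kappa(r\theta)\,d\theta$ vanishes for $r>|x|+M$. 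Consequently $\int(\tau_xf)\,g\,d\omega_\kappa$ reduces to an integral over the compact shell $\{|t|\le|x|+M\}$, converges absolutely for every continuous radial $g$, and coincides with $\int(\tau_xf)\,(g\chi_n)\,d\omega_\kappa$ for large $n$; letting $n\to\infty$ gives (\ref{translation-2-1}).

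For part (ii) I would argue directly from the representation (\ref{translation-2-4}), valid for radial $f\in C(\RR^d)$ after the density extension noted in the text. The only extra ingredient is the scaling of the representing measures: since $V_\kappa$ preserves the degree of homogeneity, for every homogeneous polynomial $p$ of degree $n$ and every real $a$ one has $\int p\,d\mu^\kappa_{ax}=V_\kappa p(ax)=a^nV_\kappa p(x)=\int p(a\,\cdot)\,d\mu^\kappa_x$, whence by linearity $\mu^\kappa_{ax}$ is the pushforward of $\mu^\kappa_x$ under $\xi\mapsto a\xi$. Substituting $\xi=a\eta$ in (\ref{translation-2-4}) for $(\tau_{ax}f)(at)$ and using $|ax|^2+|at|^2+2\langle at,a\eta\rangle=a^2(|x|^2+|t|^2+2\langle t,\eta\rangle)$ turns the radial profile $f_0$ into $f_0(|a|\,\cdot)$, which is exactly the profile of $f(a\cdot)$; this gives $[\tau_x(f(a\cdot))](t)=(\tau_{ax}f)(at)$. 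I expect part (i) to be the substantial part of the work, with the convergence of its left-hand side, handled through the spherical-mean vanishing just described, the principal technical hurdle.
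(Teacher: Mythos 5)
The paper itself contains no proof of this proposition; it is quoted from \cite[Propositions 3.1 and 3.2]{JL2}, so your argument must be judged on its own merits. Part (ii) is correct: homogeneity preservation by $V_\kappa$ gives $\int p\,d\mu^{\kappa}_{ax}=\int p(a\,\cdot)\,d\mu^{\kappa}_{x}$ for homogeneous polynomials $p$, compactly supported measures are determined by their moments, so $\mu^{\kappa}_{ax}$ is the pushforward of $\mu^{\kappa}_{x}$ under $\xi\mapsto a\xi$, and the substitution in (\ref{translation-2-4}) is then routine. The first case of part (i) is also sound: the cutoff argument using the compact support of the distribution $h\mapsto(\tau_xh)(t)$ from Proposition \ref{translation-2-b}(ii), the vanishing of $(\tau_{-x}g)(t)$ for $|t|>|x|+\rho$ via (\ref{translation-support-2-1}), and the consistency check between the $L^2_\kappa$-translate and the measure translate are all legitimate (take $\psi\equiv1$ on a slightly larger ball, so that $f(1-\psi)$ vanishes on a \emph{neighborhood} of the distribution's support; this is a trivial repair).

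The second case of part (i), however, has a genuine gap exactly at the point you single out as the main obstacle. Your spherical-mean lemma shows that $m(r):=\int_{S^{d-1}}(\tau_xf)(r\theta)W_\kappa(r\theta)\,d\theta$ vanishes for $r>|x|+M$, but this controls only the \emph{signed} angular average; it does not show that $(\tau_xf)(t)$ vanishes for $|t|>|x|+M$, nor that $(\tau_xf)g$ is Lebesgue integrable. The step ``consequently $\int(\tau_xf)g\,d\omega_\kappa$ reduces to an integral over the compact shell and converges absolutely for every continuous radial $g$'' is circular: to pass to polar coordinates and exploit $m\equiv0$ outside the shell you need Fubini, i.e.\ precisely the absolute convergence being asserted. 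And absolute convergence is genuinely at stake: $\tau_xf\in\SS(\RR^d)$ only decays faster than polynomials, while $g$ is an arbitrary continuous radial function and may grow like $e^{|t|^2}$; a function with vanishing spherical means need not vanish (compare $t\mapsto t_1e^{-|t|^2}$), so nothing you have proved excludes $\int|(\tau_xf)g|\,d\omega_\kappa=\infty$, in which case the left-hand side of (\ref{translation-2-1}) does not even exist as a Lebesgue integral. Indeed, the left side exists for \emph{every} continuous radial $g$ if and only if $\tau_xf$ vanishes a.e.\ outside some compact set, so any complete proof must establish the compact support of $\tau_xf$. This is true, but it is a theorem rather than an observation: by the Paley--Wiener theorem for the Dunkl transform (see \cite{Tr2}), $\SF_{\kappa}(\tau_xf)=E_\kappa(i\cdot,x)\SF_{\kappa}f$ extends to an entire function of exponential type at most $M+|x|$ (use (\ref{Dunkl-kernel-2-4})) that is rapidly decreasing on $\RR^d$, whence $\mathrm{supp}\,\tau_xf\subseteq\overline{B}(0,|x|+M)$. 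Once this is in hand, your truncation argument closes immediately and the spherical-mean detour becomes unnecessary; without it, your treatment of the left-hand side is incomplete.
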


\begin{proposition}\label{translation-2-e} {\rm (\cite[Theorems 3.6, 3.7 and 3.8]{TX})}
Let $x\in\RR^d$ be given.

{\rm(i)} The generalized translation operator $\tau_x$, initially defined on $L_{\kappa}^1(\RR^d)\cap L^{\infty}(\RR^d)$, can be extended to all radial functions $f$ in $L_{\kappa}^p(\RR^d)$, $1\le p\le2$, and for these $f$, one has $\|\tau_xf\|_{L_{\kappa}^p}\le\|f\|_{L_{\kappa}^p}$, and moreover, $\left[\SF_{\kappa}(\tau_xf)\right](\xi)=E_{\kappa}(i\xi,x)(\SF_{\kappa}f)(\xi)$ for $\xi\in\RR^d$.


{\rm(ii)} If $f\in L_{\kappa}^1(\RR^d)\cap L^{\infty}(\RR^d)$ is radial and nonnegative, then $(\tau_xf)(t)\ge0$ for almost every $t\in\RR^d$.

{\rm(iii)} If $f\in L_{\kappa}^1(\RR^d)$ is radial, then
$\int_{\RR^d}(\tau_xf)(t)\,d\omega_{\kappa}(t)=\int_{\RR^d}f(t)\,d\omega_{\kappa}(t)$.
\end{proposition}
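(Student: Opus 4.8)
The plan is to prove the three assertions together, deriving (ii) and (iii) on a dense class of smooth radial functions and then propagating everything by interpolation and density. Two elementary contraction bounds are available immediately. On $L_\kappa^2$ the operator is contractive without any radiality: by Plancherel (Proposition~\ref{transform-a}(v)), the defining relation (\ref{translation-2-0}), and the estimate (\ref{Dunkl-kernel-2-4}) with $z=i\xi$, which gives $|E_\kappa(i\xi,x)|\le\max_{\sigma\in G}e^{\mathrm{Re}\langle\sigma(x),i\xi\rangle}=1$, we obtain $\|\tau_xf\|_{L_\kappa^2}=\|E_\kappa(i\cdot,x)\,\SF_\kappa f\|_{L_\kappa^2}\le\|f\|_{L_\kappa^2}$ for every $f\in L_\kappa^2$. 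For radial $f\in C^\infty(\RR^d)$, the representation (\ref{translation-2-3}) writes $\tau_xf$ as the integral of $f$ against the \emph{probability} measure $\rho^\kappa_{x,t}$; this yields at once the pointwise positivity in (ii) for $f\ge0$, the bound $|(\tau_xf)(t)|\le\|f\|_{L^\infty}$, and the pointwise inequality $|(\tau_xf)(t)|\le(\tau_x|f|)(t)$, which will drive the $L^1_\kappa$ estimate.

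Next I would establish (iii) on a dense class and use it to produce the $L^1_\kappa$ contraction. For radial $f\in C^\infty(\RR^d)$ of compact support, Proposition~\ref{translation-2-d}(i) permits inserting $g\equiv1$ into (\ref{translation-2-1}); since $(\tau_{-x}1)(t)=\int_{\RR^d}d\rho^\kappa_{-x,t}=1$ by (\ref{translation-2-3}), this gives $\int_{\RR^d}(\tau_xf)\,d\omega_\kappa=\int_{\RR^d}f\,d\omega_\kappa$, which is (iii) for such $f$. Applying this to $|f|$ and combining with $|\tau_xf|\le\tau_x|f|$ gives $\|\tau_xf\|_{L^1_\kappa}\le\int_{\RR^d}\tau_x|f|\,d\omega_\kappa=\|f\|_{L^1_\kappa}$. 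Here $|f|$ is only radial continuous of compact support, so I would approximate it uniformly and in $L^1_\kappa$ by nonnegative radial $C^\infty_c$ functions; the support localization (\ref{translation-support-2-1}) confines the $t$-mass of all these translates to one bounded set depending on $x$ and $\mathrm{supp}\,f$, which legitimizes passing to the limit in the $t$-integral. Thus $\tau_x$ is an $L^1_\kappa$-contraction on radial $C^\infty_c$ functions, and (iii) then extends to all radial $f\in L^1_\kappa$ by density once this contraction is in hand.

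Finally I would interpolate and pass to the limit. Radial functions form a closed subspace of each $L_\kappa^p$ that is stable under $\tau_x$, so applying the Riesz--Thorin theorem to the $L^1_\kappa$- and $L^2_\kappa$-contractions on the dense subspace of radial $C^\infty_c$ functions yields $\|\tau_xf\|_{L_\kappa^p}\le\|f\|_{L_\kappa^p}$ for radial $f$ and $1\le p\le2$. Choosing radial $C^\infty_c$ functions $f_n\to f$ in $L_\kappa^p$ then defines $\tau_xf:=\lim_n\tau_xf_n$ unambiguously and preserves the contraction, giving the extension in (i). The transform identity follows by continuity: for $1\le p\le2$ the Dunkl transform is defined on $L_\kappa^p$ (Hausdorff--Young), the factor $E_\kappa(i\cdot,x)$ is bounded by $1$, and (\ref{translation-2-0}) passes to the limit, so $\SF_\kappa(\tau_xf)=E_\kappa(i\cdot,x)\,\SF_\kappa f$. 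Assertion (ii) for radial nonnegative $f\in L^1_\kappa\cap L^\infty$ then follows by approximating $f$ in $L^1_\kappa$ by nonnegative radial $C^\infty_c$ functions and passing to an almost-everywhere convergent subsequence of their (nonnegative) translates.

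The main obstacle is the $L^1_\kappa$ step: every later stage is soft (Plancherel, Riesz--Thorin, density), but the $L^1_\kappa$ contraction rests entirely on the measure-preservation identity (iii), whose delicate point is justifying the interchange of integrations and the limiting argument for the non-smooth function $|f|$. It is precisely the positivity of $\rho^\kappa_{x,t}$ together with the support control (\ref{translation-support-2-1}) that supplies the uniform integrability needed to pass to the limit; absent these two structural facts there is no elementary route to the $L^1_\kappa$ bound.
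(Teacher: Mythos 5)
Your argument is correct, but note that the paper itself offers no proof of this proposition: it is imported verbatim from Thangavelu and Xu \cite[Theorems 3.6, 3.7 and 3.8]{TX} and merely recorded for later use. What you have built is a self-contained reconstruction from the other facts the paper does quote: R\"osler's radial product formula (\ref{translation-2-3}), with the positivity of the probability measure $\rho^{\kappa}_{x,t}$ and the support control (\ref{translation-support-2-1}); the duality formula of Proposition \ref{translation-2-d}(i), which you correctly invoke with $g\equiv1$ (legitimate there, since $g$ need only be radial and continuous once $f$ is smooth and compactly supported, and $\tau_{-x}1\equiv1$ because $\rho^{\kappa}_{-x,t}$ has total mass one); the $L_{\kappa}^2$ contraction already noted after (\ref{translation-2-0}); and Riesz--Thorin. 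Your ordering --- mass conservation and positivity on radial $C^\infty$ functions of compact support first, then the $L_{\kappa}^1$ contraction via $|\tau_xf|\le\tau_x|f|$, then interpolation and density --- is sound, and the two delicate points are handled: applying (iii) to the merely continuous function $|f|$ is justified by uniform approximation together with the common support bound furnished by (\ref{translation-support-2-1}), and Riesz--Thorin may indeed be run inside the radial category, since radial $L_{\kappa}^p(\RR^d)$ is isometrically an $L^p$ space of the one-dimensional profile measure, so restricting to the invariant subspace costs nothing. The two facts you use without proof --- Hausdorff--Young for $\SF_\kappa$ and the consistency of the $L_{\kappa}^1$-density extension with the $L_{\kappa}^2$ definition on the overlap --- follow at once from Proposition \ref{transform-a}(i),(v) by interpolation and from an almost-everywhere subsequence argument, so they are not gaps. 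Compared with the paper's citation, your route buys self-containedness and makes visible that the whole $L_{\kappa}^1$ theory for radial functions rests on exactly two structural inputs, the positivity and the compact support of $\rho^{\kappa}_{x,t}$; the citation buys brevity and defers these verifications to \cite{TX} and \cite{Ro4}.
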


\begin{corollary}\label{translation-2-f}
If  $f\in L_{\kappa}^p(\RR^d)$ ($1\le p\le2$) is radial, then $\lim_{t\rightarrow0}\|\tau_tf-f\|_{L_{\kappa}^p}=0$.
\end{corollary}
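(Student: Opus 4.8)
The plan is the classical three-term reduction to a dense subclass, made possible by the contraction property of $\tau_t$ on radial $L_{\kappa}^p$ functions. First I would record that, by Proposition \ref{translation-2-e}(i), for a radial $f\in L_{\kappa}^p(\RR^d)$ with $1\le p\le2$ the translation satisfies $\|\tau_tf\|_{L_{\kappa}^p}\le\|f\|_{L_{\kappa}^p}$ uniformly in $t$; this is the only place the restriction $1\le p\le2$ enters, and it is exactly what drives the approximation argument.

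Second, I would prove the statement for a radial Schwartz function $g$, say $g(x)=g_0(|x|)$, using the explicit pointwise formula (\ref{translation-2-4}), namely
\[
(\tau_tg)(x)=\int_{\RR^d}g_0\!\left(\sqrt{|t|^2+|x|^2+2\langle x,\xi\rangle}\right)d\mu^{\kappa}_{t}(\xi),
\]
which for $g\in{\mathscr S}(\RR^d)$ coincides with the $L_{\kappa}^p$-extension of $\tau_t$ by Proposition \ref{translation-2-b}(iii). Since $\mu^{\kappa}_{t}$ is a probability measure supported in the convex hull of the $G$-orbit of $t$ (by (\ref{intertwining-support-1})), hence in $\overline{B(0,|t|)}$, every $\xi$ in the integral obeys $|\xi|\le|t|$, so $|t|^2+|x|^2+2\langle x,\xi\rangle\ge(|x|-|t|)^2$ and the argument of $g_0$ tends to $|x|$ as $t\to0$; continuity of $g_0$ then gives $(\tau_tg)(x)\to g_0(|x|)=g(x)$ pointwise. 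For the domination, the same inequality shows that when $|x|\ge2|t|$ the argument of $g_0$ is at least $|x|/2$, so the rapid decay of $g_0$ yields $|(\tau_tg)(x)|\lesssim(1+|x|)^{-M}$ there for every $M$, while on the bounded region $|x|<2|t|\le1$ one bounds crudely by $\|g\|_{\infty}$. Hence, for all $|t|\le 1/2$, the quantity $|(\tau_tg)(x)-g(x)|^p$ is dominated by the fixed $L_{\kappa}^1$ function $(2\|g\|_{\infty})^p\mathbf{1}_{\{|x|<1\}}+C(1+|x|)^{-Mp}$ (taking $Mp>N$), and dominated convergence gives $\|\tau_tg-g\|_{L_{\kappa}^p}\to0$.

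Finally, given an arbitrary radial $f\in L_{\kappa}^p(\RR^d)$ and $\varepsilon>0$, I would choose a radial $g\in{\mathscr S}(\RR^d)$ with $\|f-g\|_{L_{\kappa}^p}<\varepsilon$; such approximants exist because, in polar coordinates, radial $L_{\kappa}^p$ functions reduce to the one-dimensional weighted space $L^p((0,\infty),r^{N-1}dr)$, in which smooth compactly supported functions (extending to radial Schwartz functions on $\RR^d$) are dense. Writing $\tau_tf-f=\tau_t(f-g)+(\tau_tg-g)+(g-f)$ and applying the contraction bound to the first term yields $\|\tau_tf-f\|_{L_{\kappa}^p}\le2\|f-g\|_{L_{\kappa}^p}+\|\tau_tg-g\|_{L_{\kappa}^p}$, so $\limsup_{t\to0}\|\tau_tf-f\|_{L_{\kappa}^p}\le2\varepsilon$; letting $\varepsilon\to0$ completes the proof. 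The one genuine obstacle is the second step: verifying that the pointwise formula furnishes a single $t$-uniform $L_{\kappa}^p$-dominating function both near the origin and at infinity, and here the geometric estimate $|t|^2+|x|^2+2\langle x,\xi\rangle\ge(|x|-|t|)^2$ supplied by the support property (\ref{intertwining-support-1}) is precisely what resolves the difficulty.
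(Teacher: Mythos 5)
Your proof is correct, and while it shares the paper's outer skeleton, the core step is handled by a genuinely different argument. Like the paper, you reduce to radial Schwartz functions via the contraction property of Proposition \ref{translation-2-e}(i), and you exploit the same geometric fact that (\ref{intertwining-support-1}) together with (\ref{translation-2-4}) forces the argument of $g_0$ to stay within $|t|$ of $|x|$. But the paper settles the dense class on the Fourier side: for $p=2$ it invokes Plancherel, writing $\|\tau_tf-f\|_{L_{\kappa}^2}^2=c_{\kappa}\int_{\RR^d}\left|(E_{\kappa}(i\xi,t)-1)(\SF_{\kappa}f)(\xi)\right|^2 d\omega_{\kappa}(\xi)$ and applying dominated convergence there, and then transfers to $1\le p<2$ by splitting $\RR^d$ into a ball $B_M(0)$ and its complement, controlling the ball by H\"older's inequality against the $L_{\kappa}^2$ result and the tail by the pointwise decay estimate, finally letting $M\to\infty$. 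You instead never touch the Dunkl transform of $f$: the probability-measure representation gives uniform convergence $(\tau_tg)(x)\to g(x)$ (the difference is bounded by the modulus of continuity of $g_0$ at scale $|t|$), and your $t$-uniform dominating function lets dominated convergence run directly in $L_{\kappa}^p$, for every $1\le p<\infty$ simultaneously. What the paper's route buys is brevity -- Plancherel makes the $L^2$ case immediate and no pointwise convergence is ever needed. What your route buys is that it is Fourier-free, it makes transparent that the restriction $p\le 2$ enters only through the contraction property (not through the dense-class argument), and it supplies explicitly the density of radial Schwartz functions inside radial $L_{\kappa}^p$ via the one-dimensional weighted reduction, a point the paper leaves implicit.
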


By Proposition \ref{translation-2-e}(i), it suffices to show the limit for $f$ in a dense set of $L_{\kappa}^p(\RR^d)$. For a radial $f\in{\mathscr S}(\RR^d)$, the Plancherel formula (Propositions \ref{transform-a}(v)) implies that
$$
\|\tau_tf-f\|_{L_{\kappa}^2}^2= c_{\kappa}\int_{\RR^d}\left|(E_{\kappa}(i\xi,t)-1)(\SF_{\kappa}f)(\xi)\right|^2\,d\omega_{\kappa}(\xi),
$$
which, in view of (\ref{Dunkl-kernel-2-4}), approaches to zero as $t\rightarrow0$ by the dominated convergence theorem. To deal with the case for $1\le p<2$, assume that $|f(x)|\lesssim(1+|x|^2)^{-m}$ with a suitably large $m>0$. If $|t|<1$ and $|x|\ge2$, then by (\ref{intertwining-support-1}) and (\ref{translation-2-4}),
\begin{align*}
|(\tau_tf)(x)|\lesssim \int_{|\xi|\le|t|}(1+|x|^2+|t|^2+2\langle x,\xi\rangle)^{-m}\,d\mu^{\kappa}_{t}(\xi)
\lesssim (1+|x|^2)^{-m}.
\end{align*}
Thus for arbitrary $M>2$, H\"older's inequality gives
\begin{align*}
\|\tau_tf-f\|_{L_{\kappa}^p}^p \lesssim |B_{M}(0)|_{\kappa}^{\frac{2-p}{2}}\|\tau_tf-f\|_{L_{\kappa}^2}^{p} +\int_{|x|>M}(1+|x|^2)^{-mp}\,d\omega_{\kappa}(x),
\end{align*}
so that $\limsup_{t\rightarrow0}\|\tau_tf-f\|_{L_{\kappa}^p}^p\lesssim \int_{|x|>M}(1+|x|^2)^{-mp}\,d\omega_{\kappa}(x)$. Letting $M\rightarrow\infty$ concludes the proof.

Now we consider the associated convolution $\ast_{\kappa}$. For suitable $f,g$ on $\RR^d$, define $f\ast_{\kappa}g$ by
\begin{align}\label{convolution-2-1}
(f\ast_{\kappa}g)(x)
=c_{\kappa}\int_{\RR^d}(\SF_{\kappa}f)(\xi)(\SF_{\kappa}g)(\xi)E_{\kappa}(i\xi,x)\,d\omega_{\kappa}(\xi),\qquad x\in\RR^d.
 \end{align}

\begin{proposition}\label{convolution-2-a}
{\rm(i)} If $f,g\in L_{\kappa}^2(\RR^d)$, or if $f\in L_{\kappa}^1(\RR^d)$ and $g\in A_{\kappa}(\RR^d)$, then $f\ast_{\kappa}g$ is well defined and continuous on $\RR^d$, and moreover,
\begin{eqnarray}\label{convolution-2-2}
(f\ast_{\kappa}g)(x)=c_{\kappa}\int_{\RR^d}f(t)(\tau_xg)(-t)\,d\omega_{\kappa}(t),\qquad x\in\RR^d.
\end{eqnarray}

{\rm(ii)} If $f\in L_{\kappa}^2(\RR^d)$ and $g\in  L_{\kappa}^1(\RR^d)\cap L_{\kappa}^2(\RR^d)$, then
$\|f\ast_{\kappa}g\|_{L_{\kappa}^2}\le\|f\|_{L_{\kappa}^2}\|g\|_{L_{\kappa}^1}$.

{\rm(iii)} If $f\in L_{\kappa}^1(\RR^d)$, and $g\in A_{\kappa}(\RR^d)$ is radial, then
$\|f\ast_{\kappa}g\|_{L_{\kappa}^1}\le\|f\|_{L_{\kappa}^1}\|g\|_{L_{\kappa}^1}$.

{\rm(iv)} If $f\in L_{\kappa}^1(\RR^d)$, and $g\in \SS(\RR^d)$, then $\|f\ast_{\kappa}g\|_{L_{\kappa}^1}\le C_{g}\|f\|_{L_{\kappa}^1}$, where $C_g>0$ is a constant depending on $g$.

{\rm(v)} In all cases (i), (ii) and (iii),
\begin{align*}
[\SF_{\kappa}(f\ast_{\kappa}g)](\xi)
=(\SF_{\kappa}f)(\xi)(\SF_{\kappa}g)(\xi),\qquad \xi\in\RR^d.
 \end{align*}
\end{proposition}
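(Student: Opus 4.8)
The plan is to prove the five assertions in the order (i), (v), (\ref{convolution-2-2}), and finally the three norm estimates (ii)--(iv), exploiting throughout that $\ast_{\kappa}$ is by construction a Fourier multiplier operator, since (\ref{convolution-2-1}) says precisely that $f\ast_{\kappa}g=\SF_{\kappa}^{-1}\big[(\SF_{\kappa}f)(\SF_{\kappa}g)\big]$.

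First I would settle the well-definedness in (i). Taking $\nu=0$ in (\ref{Dunkl-kernel-2-4}) and using that $\langle\sigma(x),i\xi\rangle$ is purely imaginary gives $|E_{\kappa}(i\xi,x)|\le1$ for $x,\xi\in\RR^d$, so the integrand in (\ref{convolution-2-1}) is dominated by $|(\SF_{\kappa}f)(\SF_{\kappa}g)|$; it then remains to check that this product lies in $L_{\kappa}^1(\RR^d)$. In the case $f,g\in L_{\kappa}^2(\RR^d)$ it is a product of two $L_{\kappa}^2$ functions (Proposition \ref{transform-a}(v)), while for $f\in L_{\kappa}^1$ and $g\in A_{\kappa}$ one has $\SF_{\kappa}f\in C_0(\RR^d)$ bounded (Proposition \ref{transform-a}(i)) and $\SF_{\kappa}g\in L_{\kappa}^1$ by the definition of $A_{\kappa}$. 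Absolute convergence then yields continuity in $x$ by the dominated convergence theorem, as $x\mapsto E_{\kappa}(i\xi,x)$ is continuous.

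Next, (v) is built into the definition through Fourier inversion (Proposition \ref{transform-a}(ii),(v)): it is immediate when the multiplier product lies in $L_{\kappa}^2$ (the cleanest instance being (ii), where $\SF_{\kappa}g\in L^{\infty}$ forces $(\SF_{\kappa}f)(\SF_{\kappa}g)\in L_{\kappa}^2$), and in the remaining cases, where the product only lies in $L_{\kappa}^1$, it follows from the inversion identity after an approximation argument. To obtain the physical-space representation (\ref{convolution-2-2}) I would rewrite $(\SF_{\kappa}g)(\xi)E_{\kappa}(i\xi,x)=[\SF_{\kappa}(\tau_xg)](\xi)$ by the definition (\ref{translation-2-0}) of the generalized translation, so that $(f\ast_{\kappa}g)(x)=c_{\kappa}\int(\SF_{\kappa}f)(\xi)[\SF_{\kappa}(\tau_xg)](\xi)\,d\omega_{\kappa}(\xi)$, and then apply the Parseval relation $c_{\kappa}\int(\SF_{\kappa}u)(\SF_{\kappa}v)\,d\omega_{\kappa}=c_{\kappa}\int u(t)v(-t)\,d\omega_{\kappa}(t)$ (a consequence of the product formula of Proposition \ref{transform-a}(iv) together with inversion and the invariance of $d\omega_{\kappa}$ under $t\mapsto-t$) with $u=f$ and $v=\tau_xg$. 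This turns the frequency integral into $c_{\kappa}\int f(t)(\tau_xg)(-t)\,d\omega_{\kappa}(t)$, which is (\ref{convolution-2-2}).

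With these in hand the three estimates are short. For (ii), (v) and the Plancherel identity (Proposition \ref{transform-a}(v)) give $\|f\ast_{\kappa}g\|_{L_{\kappa}^2}=\|(\SF_{\kappa}f)(\SF_{\kappa}g)\|_{L_{\kappa}^2}\le\|\SF_{\kappa}f\|_{L_{\kappa}^2}\|\SF_{\kappa}g\|_{C_0}\le\|f\|_{L_{\kappa}^2}\|g\|_{L_{\kappa}^1}$, the last step by Proposition \ref{transform-a}(i). For (iii) and (iv) I would start from (\ref{convolution-2-2}), apply Tonelli's theorem, and use the symmetry $(\tau_xg)(t)=(\tau_tg)(x)$ (valid for $g\in\SS(\RR^d)$ by Proposition \ref{translation-2-b}(iv), and for radial $g\in A_{\kappa}$ by the pointwise formula (\ref{translation-2-2})) to reduce the inner integral in $x$ to $\|\tau_{-t}g\|_{L_{\kappa}^1}$; this is bounded by $\|g\|_{L_{\kappa}^1}$ for radial $g$ by the contractivity in Proposition \ref{translation-2-e}(i), giving (iii), and by the uniform bound $\sup_y\|\tau_yg\|_{L_{\kappa}^1}=:C_g<\infty$ of Proposition \ref{translation-2-b-2} for $g\in\SS(\RR^d)$, giving (iv). I expect the main obstacle to be the derivation of (\ref{convolution-2-2}): one must justify the Parseval manipulation across the different function classes appearing in (i), where the integrability of $\SF_{\kappa}f$ and of $\tau_xg$ need not be matched directly and a density argument is required, and one must invoke the correct symmetry and contractivity of $\tau_x$, which are available in full only for radial or smooth $g$---precisely the source of the case distinctions in (iii) and (iv).
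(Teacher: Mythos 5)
Your proposal is correct and follows essentially the same route as the paper: the well-definedness and the representation (\ref{convolution-2-2}) come from the product formula/Plancherel theorem applied to the defining identity (\ref{convolution-2-1}), part (ii) from Plancherel, parts (iii) and (iv) from (\ref{convolution-2-2}) together with Fubini's theorem, the symmetry of $\tau_x$, and the bounds of Propositions \ref{translation-2-e}(i) and \ref{translation-2-b-2} respectively, and part (v) by taking the Dunkl transform of the definition. The paper's proof is just a terse listing of these same ingredients, so your write-up is an expanded version of the same argument rather than a different one.
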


Part (i) follows from the product formula and the Plancherel formula, part (ii) from (\ref{convolution-2-1}) and the Plancherel formula, part (iii) from (\ref{convolution-2-2}), Propositions \ref{translation-2-a}(ii) and \ref{translation-2-e}(i), and Fubini's theorem, and part (iv) from (\ref{convolution-2-2}), Proposition \ref{translation-2-b-2} and Fubini's theorem. Part (v) is a consequence of (\ref{convolution-2-1}) and parts (i)-(iii).

\begin{proposition}\label{convolution-2-c}
{\rm(i)} If $f\in L_{\kappa}^1(\RR^d)$ and $\varphi\in\SS(\RR^d)$, then $f\ast_{\kappa}\varphi\in C^{\infty}(\RR^d)$.

{\rm(ii)} If $f\in C(\RR^d)$ and radial $\varphi\in\SS(\RR^d)$ satisfy ${\rm supp}\,f\subseteq\{x:\, r_2\le|x|\le r_1\}$ and ${\rm supp}\,\varphi\subseteq\{x:\,|x|\le r_3\}$ separately, where $r_1>r_2>r_3>0$, then
$$
{\rm supp}\, f\ast_{\kappa}\varphi\subseteq\{x:\,r_2-r_3\le|x|\le r_1+r_3\}.
$$
\end{proposition}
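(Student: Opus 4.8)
The plan is to handle the two parts separately, each starting from the defining integral (\ref{convolution-2-1}). For part (i) I would work directly from
$$(f\ast_{\kappa}\varphi)(x)=c_{\kappa}\int_{\RR^d}(\SF_{\kappa}f)(\xi)(\SF_{\kappa}\varphi)(\xi)E_{\kappa}(i\xi,x)\,d\omega_{\kappa}(\xi),$$
which converges absolutely because $\varphi\in\SS(\RR^d)\subseteq A_{\kappa}(\RR^d)$ forces $\SF_{\kappa}\varphi\in\SS(\RR^d)$ by Proposition \ref{transform-a}(iii), while $\SF_{\kappa}f\in C_0(\RR^d)$ is bounded and $|E_{\kappa}(i\xi,x)|\le1$. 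To obtain $f\ast_{\kappa}\varphi\in C^{\infty}$ I would differentiate under the integral sign in $x$, so the crux is a polynomial bound on $\partial_x^{\nu}E_{\kappa}(i\xi,x)$ that is uniform in $x$.

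The estimate (\ref{Dunkl-kernel-2-4}) controls derivatives only in the second slot and only when the first slot is a real vector, so it cannot be applied to $\partial_x^{\nu}E_{\kappa}(i\xi,x)$ directly. The device I would use is the scaling identity (\ref{Dunkl-kernel-2-3}) with $\lambda=i$, giving $E_{\kappa}(i\xi,x)=E_{\kappa}(\xi,ix)$; differentiating through the second argument yields $\partial_x^{\nu}E_{\kappa}(\xi,ix)=i^{|\nu|}(\partial_z^{\nu}E_{\kappa})(\xi,ix)$, and now (\ref{Dunkl-kernel-2-4}) applies with the genuine real vector $\xi$ in the first slot:
$$\left|\partial_x^{\nu}E_{\kappa}(i\xi,x)\right|=\left|(\partial_z^{\nu}E_{\kappa})(\xi,ix)\right|\le|\xi|^{|\nu|}\max_{\sigma\in G}e^{{\rm Re}\,\langle\sigma(\xi),ix\rangle}=|\xi|^{|\nu|},$$
the exponent vanishing since $\langle\sigma(\xi),ix\rangle$ is purely imaginary for $\xi,x\in\RR^d$. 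As $|(\SF_{\kappa}\varphi)(\xi)|\,|\xi|^{|\nu|}$ is rapidly decreasing and $\SF_{\kappa}f$ is bounded, the $\nu$th $x$-derivative of the integrand is dominated, locally uniformly in $x$, by a fixed $L_{\kappa}^1$ function; the standard differentiation-under-the-integral theorem then gives $f\ast_{\kappa}\varphi\in C^{\infty}(\RR^d)$ together with the expected formula for its derivatives.

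For part (ii), since $f\in C(\RR^d)$ is compactly supported (hence in $L_{\kappa}^1(\RR^d)\cap L^{\infty}(\RR^d)$) and $\varphi\in\SS(\RR^d)\subseteq A_{\kappa}(\RR^d)$, Proposition \ref{convolution-2-a}(i) lets me pass to the pointwise formula (\ref{convolution-2-2}),
$$(f\ast_{\kappa}\varphi)(x)=c_{\kappa}\int_{\RR^d}f(t)(\tau_x\varphi)(-t)\,d\omega_{\kappa}(t).$$
Because $\varphi$ is radial, (\ref{translation-2-3}) represents $(\tau_x\varphi)(-t)=\int_{\RR^d}\varphi\,d\rho^{\kappa}_{x,-t}$ with $\mathrm{supp}\,\rho^{\kappa}_{x,-t}$ confined as in (\ref{translation-support-2-1}). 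Since $\mathrm{supp}\,\varphi\subseteq\{|\xi|\le r_3\}$, the integrand can be nonzero only when $\mathrm{supp}\,\rho^{\kappa}_{x,-t}$ meets $\{|\xi|\le r_3\}$, that is, only when $\min_{\sigma\in G}|x-\sigma(t)|\le r_3$ (using $\sigma(-t)=-\sigma(t)$). Thus $(f\ast_{\kappa}\varphi)(x)\ne0$ forces the existence of $t$ with $r_2\le|t|\le r_1$ and of $\sigma\in G$ with $|x-\sigma(t)|\le r_3$; as $|\sigma(t)|=|t|$, the triangle inequality yields $r_2-r_3\le|x|\le r_1+r_3$, which is precisely the asserted support.

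I expect the only genuine obstacle to be the kernel estimate in part (i): the available bound (\ref{Dunkl-kernel-2-4}) is ``one slot off'' from what the differentiation-under-the-integral argument requires, and the resolution — inserting $\lambda=i$ in the scaling identity (\ref{Dunkl-kernel-2-3}) to move the imaginary unit into the second argument so that the first argument becomes a bona fide real vector, producing a purely imaginary exponent — is the one step that is not purely mechanical. Everything else, namely the convergence of the integrals and the triangle-inequality bookkeeping in part (ii), is routine once the representations (\ref{convolution-2-1}) and (\ref{convolution-2-2}) and the radial translation measure $\rho^{\kappa}_{x,-t}$ are in hand.
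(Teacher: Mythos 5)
Your proposal is correct and follows essentially the same route as the paper: part (i) differentiates under the integral sign in (\ref{convolution-2-1}) using the boundedness of $\SF_{\kappa}f$, the rapid decay of $\SF_{\kappa}\varphi$, and the derivative bound $|\partial_x^{\nu}E_{\kappa}(i\xi,x)|\le|\xi|^{|\nu|}$ obtained from (\ref{Dunkl-kernel-2-4}) (your insertion of $\lambda=i$ via (\ref{Dunkl-kernel-2-3}) merely makes explicit the step the paper leaves as ``follows immediately''), and part (ii) is the paper's argument in contrapositive form, combining (\ref{convolution-2-2}), (\ref{translation-2-3}), (\ref{translation-support-2-1}) and the triangle inequality.
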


For part (i), since ${\SF}_{\kappa}f\in C_0({\RR^d})$ and ${\SF}_{\kappa}\varphi\in\SS(\RR^d)$ by Proposition \ref{transform-a}(i) and (iii), that $f\ast_{\kappa}\varphi\in C^{\infty}(\RR^d)$ follows immediately from (\ref{Dunkl-kernel-2-4}) and (\ref{convolution-2-1}). As for part (ii), if $|x|<r_2-r_3$ or $|x|>r_1+r_3$, then for $t\in{\rm supp}\,f$, $r_3<r_2-|x|\le|t|-|x|\le\min_{\sigma\in G}|x-\sigma(t)|$, or $r_3<|x|-r_1\le|x|-|t|\le\min_{\sigma\in G}|x-\sigma(t)|$, so that in view of (\ref{translation-support-2-1}), ${\rm supp}\,\varphi\cap{\rm supp}\,\rho^{\kappa}_{x,-t}=\emptyset$, and from (\ref{translation-2-3}), $(\tau_xg)(-t)=0$. Thus by (\ref{convolution-2-2}), $(f\ast_{\kappa}\varphi)(x)=0$ for $|x|<r_2-r_3$ or $|x|>r_1+r_3$, and part (ii) is done.

\begin{proposition}\label{convolution-2-b}{\rm (\cite[Theorems 4.1 and 4.2]{TX})}
Let $\varphi\in A_{\kappa}(\RR^d)$ be radial. Then

{\rm(i)} for $1\le p\le\infty$, the operator $T_{\varphi}f=f\ast_{\kappa}\varphi$, initially defined on $L_{\kappa}^1(\RR^d)\cap L^{\infty}(\RR^d)$, has a bounded extension to $L_{\kappa}^p(\RR^d)$, denoted by $T_{\varphi}f=f\ast_{\kappa}\varphi$ as well, and moreover, $\|f\ast_{\kappa}\varphi\|_{L_{\kappa}^p}\le\|\varphi\|_{L_{\kappa}^1}\|f\|_{L_{\kappa}^p}$;

{\rm(ii)} if, in addition, $c_{\kappa}\int_{\RR^d}\varphi(x)\,d\omega_{\kappa}(x)=1$, and set $\varphi_{\epsilon}(x)=\epsilon^{-2|\kappa|-d}\varphi(\epsilon^{-1}x)$ for $\epsilon>0$, then for $f\in X=L_{\kappa}^p(\RR^d)$, $1\le
p<\infty$, or $C_0(\RR^d)$, $\lim_{\epsilon\rightarrow0+}\|f\ast_{\kappa}\varphi_{\epsilon}-f\|_{X}=0$.
\end{proposition}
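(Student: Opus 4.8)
The plan is to reduce part (i) to the two endpoint estimates $\|f\ast_{\kappa}\varphi\|_{L_{\kappa}^1}\le\|\varphi\|_{L_{\kappa}^1}\|f\|_{L_{\kappa}^1}$ and $\|f\ast_{\kappa}\varphi\|_{L_{\kappa}^\infty}\le\|\varphi\|_{L_{\kappa}^1}\|f\|_{L_{\kappa}^\infty}$, and then interpolate. Everything rests on the pointwise formula (\ref{convolution-2-2}), which for radial $\varphi$ lets me regard $(\tau_x\varphi)(-t)$ as the integration kernel. First I would record three facts about the radial translation: the contraction $\|\tau_x\varphi\|_{L_{\kappa}^1}\le\|\varphi\|_{L_{\kappa}^1}$ (Proposition \ref{translation-2-e}(i) with $p=1$), the positivity $\tau_x\psi\ge0$ for radial $\psi\ge0$ (Proposition \ref{translation-2-e}(ii)), and the mass conservation $c_{\kappa}\int_{\RR^d}\tau_x\psi\,d\omega_{\kappa}=c_{\kappa}\int_{\RR^d}\psi\,d\omega_{\kappa}$ for radial $\psi$ (Proposition \ref{translation-2-e}(iii)); I also use that $d\omega_{\kappa}$ is invariant under $t\mapsto-t$ and the symmetry $(\tau_x\varphi)(-t)=(\tau_{-t}\varphi)(x)$. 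Splitting $\varphi=\varphi_+-\varphi_-$ into radial nonnegative parts gives $|\tau_x\varphi|\le\tau_x|\varphi|$, whence $c_{\kappa}\int|(\tau_x\varphi)(-t)|\,d\omega_{\kappa}(t)\le c_{\kappa}\int\tau_x|\varphi|\,d\omega_{\kappa}=\|\varphi\|_{L_{\kappa}^1}$; this yields the $L^\infty$ bound directly from (\ref{convolution-2-2}). For the $L^1$ bound I would apply Fubini (justified on $L_{\kappa}^1\cap L_{\kappa}^\infty$) and then, for each fixed $t$, estimate $c_{\kappa}\int|(\tau_x\varphi)(-t)|\,d\omega_{\kappa}(x)=\|\tau_{-t}\varphi\|_{L_{\kappa}^1}\le\|\varphi\|_{L_{\kappa}^1}$ using the symmetry and the contraction. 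Riesz--Thorin interpolation on the $\sigma$-finite space $(\RR^d,d\omega_{\kappa})$ then gives the full range $1\le p\le\infty$, and a density argument extends $T_{\varphi}$ from $L_{\kappa}^1\cap L_{\kappa}^\infty$ to all of $L_{\kappa}^p$.

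For part (ii) I would first note that the dilation is mass preserving: the homogeneity of $W_{\kappa}$ of degree $2|\kappa|$ gives $\|\varphi_{\epsilon}\|_{L_{\kappa}^1}=\|\varphi\|_{L_{\kappa}^1}$ and $c_{\kappa}\int\varphi_{\epsilon}\,d\omega_{\kappa}=c_{\kappa}\int\varphi\,d\omega_{\kappa}=1$, so by part (i) the operators $T_{\varphi_{\epsilon}}$ are uniformly bounded on $X$. By the standard density principle it then suffices to prove $\|g\ast_{\kappa}\varphi_{\epsilon}-g\|_{X}\to0$ for $g$ ranging over a dense subclass, for which I would take $g\in\SS(\RR^d)$ (or continuous of compact support), dense in each $L_{\kappa}^p$ ($1\le p<\infty$) and in $C_0(\RR^d)$. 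Since the mass of $t\mapsto(\tau_x\varphi_{\epsilon})(-t)$ equals $1$, I would write
\[
(g\ast_{\kappa}\varphi_{\epsilon})(x)-g(x)=c_{\kappa}\int_{\RR^d}\bigl[g(t)-g(x)\bigr](\tau_x\varphi_{\epsilon})(-t)\,d\omega_{\kappa}(t),
\]
and estimate the right-hand side using $|(\tau_x\varphi_{\epsilon})(-t)|\le(\tau_x|\varphi_{\epsilon}|)(-t)$.

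The main obstacle is the concentration analysis needed in part (ii): unlike the Euclidean translate, the Dunkl translate $\tau_x\varphi_{\epsilon}$ of a shrinking radial bump does not collapse to the single point $x$ but a priori spreads mass over the whole $G$-orbit of $x$, as the support description (\ref{translation-support-2-1}) shows. To conclude I would split the last integral into a near-diagonal region, where $|g(t)-g(x)|$ is small by the uniform continuity of $g$, and a complementary region, where the remaining mass of $\tau_x|\varphi_{\epsilon}|$ is shown to vanish as $\epsilon\to0$ by means of the representation (\ref{translation-2-4}) together with the distribution estimate for $\mu^{\kappa}_{x}$ recorded just after (\ref{intertwining-support-1}); the key point is that $\mu^{\kappa}_{x}$ assigns asymptotically full weight to the extreme point corresponding to $x$ itself, so the off-orbit and wrong-orbit-point contributions are negligible. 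This yields uniform convergence on compact sets, which together with the support localization of Proposition \ref{convolution-2-c}(ii) (and the decay of $g$) upgrades to convergence in $L_{\kappa}^p$ and in $C_0(\RR^d)$; for radial $g$ one may alternatively invoke Corollary \ref{translation-2-f} directly.
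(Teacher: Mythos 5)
Part (i) of your proposal is correct: for radial $\varphi\in A_{\kappa}(\RR^d)$ the domination $|\tau_x\varphi|\le\tau_x|\varphi|$ (via Proposition \ref{translation-2-e}(ii) applied to $\varphi_{\pm}$), the mass identity of Proposition \ref{translation-2-e}(iii), and the symmetry $(\tau_x\varphi)(-t)=(\tau_{-t}\varphi)(x)$ give both endpoint bounds, and Schur/Riesz--Thorin finishes. This is essentially the argument of \cite{TX}; note the paper itself offers no proof, only the citation.

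Part (ii), however, has a genuine gap exactly at what you call the key point. After the near/far splitting, the dangerous region is not $\{t:\,d(x,t)\ge\eta\}$ (there, by (\ref{translation-2-3})--(\ref{translation-support-2-1}) and Proposition \ref{translation-2-e}(iii), the mass is at most $\|\,|\varphi_{\epsilon}|\chi_{\{|\cdot|\ge\eta\}}\|_{L_{\kappa}^1}\to0$), but the neighborhoods of the reflected points $\sigma(x)$ with $|\sigma(x)-x|\ge\eta$, where $d(x,t)$ is small yet $|g(t)-g(x)|$ is not. There, via (\ref{translation-2-4}), you need an \emph{upper} bound for the $\mu^{\kappa}_{x}$-mass of caps at the wrong extreme points $\sigma(x)\neq x$, of strictly smaller order than the cap at $x$. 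The estimate you invoke (the one after (\ref{intertwining-support-1})) is a two-sided bound for the cap at $x$ only — the slab $\langle x,\xi\rangle>|x|^2-\delta^2$ excludes every $\sigma(x)\neq x$ for small $\delta$ — so it says nothing about those caps. Moreover your summary of it is false: $\mu^{\kappa}_{x}$ does not put "asymptotically full weight" near $x$; that cap mass is $\asymp\delta^{2|\kappa|+d}/|B(x,\delta)|_{\kappa}$, which tends to $0$. What actually makes the argument work (e.g.\ in rank one, where the cap mass at $-x$ is smaller than the cap mass at $x$ by a factor $(\delta/|x|)^2$) is a comparison between caps at different vertices, and in general this requires sharp kernel estimates of the type of \cite[Theorem 3.1]{DH1} or (\ref{Poisson-ker-bound-1}), not the quoted distribution estimate.

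The gap can be avoided entirely by changing the dense-class argument: for $g\in\SS(\RR^d)$ one has $\SF_{\kappa}(g\ast_{\kappa}\varphi_{\epsilon}-g)=\SF_{\kappa}g\cdot\left[(\SF_{\kappa}\varphi)(\epsilon\,\cdot)-1\right]\in L_{\kappa}^1(\RR^d)$, so Proposition \ref{transform-a}(ii) and dominated convergence give $\|g\ast_{\kappa}\varphi_{\epsilon}-g\|_{C_0}\to0$ (since $\SF_{\kappa}\varphi$ is continuous, bounded by $\|\varphi\|_{L_{\kappa}^1}$, and equals $1$ at the origin); the $L_{\kappa}^p$ tails $\int_{|x|>R}$ are then controlled uniformly in $\epsilon$ by the same mass-conservation and support argument as above, and no concentration analysis of $\mu^{\kappa}_{x}$ is needed.
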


\subsection{The generalized spherical mean operator}

We note that it is still open whether the translation operator $\tau_x$ has a bounded extension to non-radial functions in $L_{\kappa}^1(\RR^d)$. However we may consider the generalized spherical mean operator $f\mapsto M_f$ associated to $G$ and $\kappa$, which is defined in \cite{MT}, for $x\in\RR^d$ and $r\in[0,\infty)$, by
\begin{align*}
M_f(x,r)=d_{\kappa}\int_{S^{d-1}}(\tau_{x}f)(rt')W_{\kappa}(t')dt',
\end{align*}
where $dt'$ denotes the area element on the unit sphere $S^{d-1}$, and $d_{\kappa}^{-1}=\int_{S^{d-1}}W_{\kappa}(t')dt'$. By Proposition \ref{translation-2-b}(i), $M_f$ is well defined for $f\in C^{\infty}(\RR^d)$ or $A_{\kappa}(\RR^d)$. Moreover, by \cite[Theorems 3.1, 4.1, and Corollary 5.2]{Ro4} the mapping $f\mapsto M_f$ is positivity-preserving on $C^{\infty}(\RR^d)$, and for $x\in\RR^d$ and $r\in[0,\infty)$, there exists a unique compactly supported probability (Borel) measure $\sigma^{\kappa}_{x,r}$ on $\RR^d$ such that for all $f\in C^{\infty}(\RR^d)$,
\begin{align}\label{spherical-mean-2-2}
M_f(x,r)=\int_{\RR^d}f\,d\sigma^{\kappa}_{x,r};
\end{align}
the support of $\sigma^{\kappa}_{x,r}$ is contained in
\begin{align*}
\{\xi\in\RR^d:\,\,|\xi|\ge||x|-r|\}\cap\left[\cup_{\sigma\in G} \{\xi\in\RR^d:\,\,|\xi-\sigma(x)|\le r\}\right];
\end{align*}
the mapping $(x,r)\mapsto\sigma_{x,r}^{\kappa}$ is
continuous with respect to the weak topology on $M^1(\RR^d)$ (the
space of probability measures);
and moreover,
\begin{align*}
\sigma_{\sigma(x),r}^{\kappa}(A)=\sigma_{x,r}^{\kappa}(\sigma^{-1}(A)), \qquad \sigma_{ax,ar}^{\kappa}(A)=\sigma_{x,r}^{\kappa}(a^{-1}A)
\end{align*}
for all $\sigma\in G$, $a>0$, and all Borel sets $A\in{\mathcal{B}}(\RR^d)$.

Since ${\rm supp}\,\rho^{\kappa}_{x,t}$ is compact, (\ref{spherical-mean-2-2}) gives a natural extension of the generalized spherical mean operator $f\mapsto M_f$ to all $f\in C(\RR^d)$.

\begin{proposition}\label{harmonic-2-a-1} {\rm (\cite[Corollary 3.6]{JL2})}
Assume that $\Omega$ is a $G$-invariant domain of $\RR^d$ and $f\in C^2(\Omega)$. Then $f$ satisfies $\Delta_{\kappa}f=0$ in $\Omega$ if and only if for all $x\in\Omega$, $M_f(x,r)=f(x)$
provided $\overline{B(x,r)}\subset\Omega$ for $r>0$. Moreover, in both cases, $f\in C^{\infty}(\Omega)$.
\end{proposition}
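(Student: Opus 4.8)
The plan is to reduce the whole statement to a one–dimensional Euler--Poisson--Darboux equation in the radial variable $r$. The organizing identity I would prove first is the \emph{generalized Darboux equation}: for every $f\in C^2$ and every $(x,r)$ with $\overline{B(x,r)}\subset\Omega$,
\[
\Big(\partial_r^2+\tfrac{N-1}{r}\,\partial_r\Big)M_f(x,r)=M_{\Delta_{\kappa}f}(x,r),\qquad N=2|\kappa|+d .
\]
Granting this, both implications and the regularity are short, so the substance is in establishing the displayed identity and then reading off the correct behavior at $r=0$.

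For the Darboux equation I would argue on the Dunkl-transform side. For $f\in\SS(\RR^d)$, inserting the pointwise formula (\ref{translation-2-2}) into the definition of $M_f$ gives
\[
M_f(x,r)=c_{\kappa}\int_{\RR^d}E_{\kappa}(i\xi,x)\,j_r(\xi)\,(\SF_{\kappa}f)(\xi)\,d\omega_{\kappa}(\xi),\qquad
j_r(\xi)=d_{\kappa}\int_{S^{d-1}}E_{\kappa}(i\xi,rt')W_{\kappa}(t')\,dt'.
\]
By the symmetry $E_{\kappa}(\sigma z,\sigma w)=E_{\kappa}(z,w)$ the factor $j_r(\xi)$ depends only on $r|\xi|$ and is the normalized generalized Bessel function, so that $\big(\partial_r^2+\frac{N-1}{r}\partial_r\big)j_r(\xi)=-|\xi|^2 j_r(\xi)$. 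Since $\SF_{\kappa}(\Delta_{\kappa}f)(\xi)=-|\xi|^2(\SF_{\kappa}f)(\xi)$ by Proposition \ref{transform-a}(iii), applying $\partial_r^2+\frac{N-1}{r}\partial_r$ under the integral turns the integrand into that of $M_{\Delta_{\kappa}f}(x,r)$, proving the identity for $f\in\SS(\RR^d)$. For a general $f\in C^2(\Omega)$ I would localize: at a fixed $(x,r)$ the value $M_f(x,r)=\int f\,d\sigma^{\kappa}_{x,r}$ depends on $f$ only on $\cup_{\sigma\in G}\overline{B(\sigma(x),r)}$, which lies in $\Omega$ precisely because $\Omega$ is $G$-invariant and $\overline{B(x,r)}\subset\Omega$; I then replace $f$ by Schwartz functions agreeing with it near that compact set and converging to it in $C^2$, and pass to the limit, the second $r$-derivatives surviving $C^2$ convergence.

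The two directions now follow. If $\Delta_{\kappa}f=0$ then $M_{\Delta_{\kappa}f}\equiv0$, so $u(r):=M_f(x,r)$ solves $u''+\frac{N-1}{r}u'=0$ on $(0,\operatorname{dist}(x,\partial\Omega))$; because $W_{\kappa}$ is even and the sphere symmetric, $M_f(x,r)$ is even in $r$ and (as $f\in C^2$) of class $C^2$ in $r$, hence it is the solution regular at the origin, forcing $u\equiv u(0)=f(x)$, which is the mean-value identity. Conversely, if $M_f(x,r)=f(x)$ for all admissible $r$, the left side of the Darboux equation vanishes, so $M_{\Delta_{\kappa}f}(x,r)\equiv0$; letting $r\to0$ and using $M_g(x,0)=(\tau_xg)(0)=g(x)$ yields $\Delta_{\kappa}f(x)=M_{\Delta_{\kappa}f}(x,0)=0$.

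For $f\in C^{\infty}(\Omega)$ I would exploit the mean-value property itself. Taking a radial mollifier $\psi_{\epsilon}\in\SS(\RR^d)$, $\psi_{\epsilon}\ge0$, supported in a small ball with $c_{\kappa}\int\psi_{\epsilon}\,d\omega_{\kappa}=1$, and writing the integral in polar coordinates, the identity $M_f(x,\rho)=f(x)$ gives $c_{\kappa}\int(\tau_xf)\psi_{\epsilon}\,d\omega_{\kappa}=f(x)$; since $\psi_{\epsilon}$ is radial one has $(\tau_{-x}\psi_{\epsilon})(t)=(\tau_x\psi_{\epsilon})(-t)$ (from (\ref{translation-2-4})), so by Proposition \ref{translation-2-d}(i) this equals $(f\ast_{\kappa}\psi_{\epsilon})(x)$, i.e. $f=f\ast_{\kappa}\psi_{\epsilon}$ on any ball whose $G$-orbit $\epsilon$-neighborhood lies in $\Omega$. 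After a cutoff making $f$ compactly supported, Proposition \ref{convolution-2-c}(i) shows $f\ast_{\kappa}\psi_{\epsilon}\in C^{\infty}$, so $f\in C^{\infty}(\Omega)$. I expect the main obstacle to be the rigorous proof of the Darboux equation, and in particular controlling the nonlocal reflection terms of $\Delta_{\kappa}$; on the transform side this is exactly where the radiality of the spherical average and the eigenrelation $\SF_{\kappa}(\Delta_{\kappa}f)=-|\xi|^2\SF_{\kappa}f$ do the work, together with the localization required to bring a merely $C^2$ function on a domain into range of the Schwartz-space computation.
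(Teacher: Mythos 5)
The paper never proves this proposition: it is imported verbatim from \cite{JL2}, so there is no in-paper argument to compare against, and your proof must stand on its own. Your overall strategy --- a generalized Darboux equation for the spherical mean proved on the Dunkl-transform side, the Euler--Poisson--Darboux ODE in $r$, and smoothness via $f=f\ast_\kappa\psi_\epsilon$ --- is the standard and correct route (it is essentially how such mean-value theorems are proved in the Dunkl literature, e.g.\ in \cite{MT}). However, the single most important step is justified by an invalid inference. You claim that the symmetry $E_{\kappa}(\sigma z,\sigma w)=E_{\kappa}(z,w)$ forces $j_r(\xi)=d_{\kappa}\int_{S^{d-1}}E_{\kappa}(i\xi,rt')W_{\kappa}(t')\,dt'$ to depend only on $r|\xi|$. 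That symmetry only makes $\xi\mapsto j_r(\xi)$ $G$-invariant, and $G$-invariance under a finite reflection group is far from radiality. The radiality of the spherical average of the Dunkl kernel (its identification with the normalized Bessel function $j_{N/2-1}(r|\xi|)$) is a true but genuinely nontrivial fact, and your Darboux equation collapses without it. It can be repaired with tools from this paper: the weighted spherical measure is the weak limit, as $\delta\rightarrow0+$, of $|A_{r,\delta}|_{\kappa}^{-1}\chi_{A_{r,\delta}}\,d\omega_{\kappa}$ with $A_{r,\delta}=\{r\le|x|\le r+\delta\}$, so $j_r$ is a pointwise limit of Dunkl transforms of radial $L^1_{\kappa}$ functions and is therefore radial by Proposition \ref{transform-a}(vi); the scaling identity in (\ref{Dunkl-kernel-2-3}) gives $j_r(\xi)=j_1(r\xi)=b(r|\xi|)$; and since $\Delta_{\kappa}^{\xi}E_{\kappa}(i\xi,t')=-E_{\kappa}(i\xi,t')$ for $t'\in S^{d-1}$ while $\Delta_{\kappa}$ acts on radial functions as $\partial_s^2+\frac{N-1}{s}\partial_s$, the profile $b$ satisfies the Bessel equation, which is the eigenrelation you need.

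The second gap is the passage from Schwartz functions to $f\in C^2(\Omega)$. Uniform ($C^0$) convergence of $M_{f_n}$ and $M_{\Delta_{\kappa}f_n}$ is what the $C^2$ convergence of $f_n\to f$ actually gives you; it does not let you pass second $r$-derivatives to the limit, and you never establish that $M_f(x,\cdot)$ is twice differentiable in $r$ at all (you assert it ``as $f\in C^2$''), so the Darboux equation for $f$ itself is not yet proved. The repair is to integrate the ODE for the approximants before taking limits: $(r^{N-1}\partial_rM_{f_n})'=r^{N-1}M_{\Delta_{\kappa}f_n}$ with $\partial_rM_{f_n}(x,0)=0$, hence
\begin{align*}
M_{f_n}(x,r)=M_{f_n}(x,0)+\int_0^r\rho^{1-N}\int_0^{\rho}s^{N-1}M_{\Delta_{\kappa}f_n}(x,s)\,ds\,d\rho,
\end{align*}
and this identity survives uniform convergence; the limiting formula shows at once that $M_f(x,\cdot)\in C^2$, that the Darboux equation holds, that $M_f(x,r)\equiv f(x)$ when $\Delta_{\kappa}f=0$, and (by differentiating twice and letting $s\to0$, using weak continuity of $\sigma^{\kappa}_{x,s}$ and continuity of $\Delta_{\kappa}f$, which itself deserves a line via the Taylor-remainder form of the difference quotients) the converse. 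A smaller issue of the same kind occurs in the smoothness step: $\tau_xf$ has no pointwise meaning for $f$ merely continuous, and Proposition \ref{translation-2-d}(i) requires $f\in C^{\infty}$, so the identity $(f\ast_{\kappa}\psi_{\epsilon})(x)=c_{\kappa}d_{\kappa}^{-1}\int_0^{\infty}\psi_{\epsilon}^0(\rho)\rho^{N-1}M_f(x,\rho)\,d\rho$ should be stated through the measures $\sigma^{\kappa}_{x,\rho}$ and extended from smooth to continuous compactly supported $f$ (after your cutoff) by uniform approximation, before invoking Proposition \ref{convolution-2-c}(i). With these three repairs your argument is complete.
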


\section{Some fundamental aspects on the $\kappa$-Riesz transforms and the Hardy space $H_{\kappa}(\RR^d)$}

\subsection{The Green formulas}

In what follows, associated to the reflection group $G$ on $\RR^{d}$ and the multiplicity function $\kappa$, we consider the reflection group $\tilde{G}=\ZZ_2 \otimes G$ on $\RR^{1+d}=\RR\times\RR^d$ and the multiplicity function $\tilde{\kappa}=(0,\kappa)$. In this case, we write $\tilde{x}=(x_0,x)\in\RR^{1+d}$ and $\partial_0=\partial/\partial x_0$. The associated Laplacian, denoted by $\Delta_{\tilde{\kappa}}$ and called the $\tilde{\kappa}$-Laplacian, is given by $\Delta_{\tilde{\kappa}}=\partial_{0}^2+\Delta_{\kappa}$ or explicitly
\begin{align*}
\Delta_{\tilde{\kappa}}=\sum_{j=0}^{d}\partial_{j}^{2}+2\sum_{\alpha\in R_+}\kappa(\alpha)\left(\frac{\langle\alpha,\nabla\rangle}{\langle\alpha,x\rangle}-\frac{1-\sigma_{\alpha}}{\langle\alpha,x\rangle^2}\right),
\end{align*}
where $\nabla$ is the gradient operator on $\RR^{d}$.

Since the multiplicity value associated to $\ZZ_2$ is zero, the $\ZZ_2 \otimes G$-invariance of the domain $\Omega$ in \cite[Theorem 4.11]{MT} and Proposition \ref{harmonic-2-a-1} can be relaxed to the $G$-invariance. We reformulate the Green formulas in \cite[Theorem 4.11]{MT} and \cite[Proposition 3.8]{JL2} as follows.

\begin{proposition}\label{Green-formula-2-a}
Let $\Omega$ be a bounded, regular, and $G$-invariant domain of $\RR^{1+d}$. Then for $u,v\in C^2(\overline{\Omega})$,
\begin{align}\label{Green-formula-2-1}
\iint_{\Omega}(v\Delta_{\tilde{\kappa}}u-u\Delta_{\tilde{\kappa}}v)\,d\omega_{\kappa}(x)dx_0
=\int_{\partial\Omega}\left(v\partial_{\mathbf{n}}u
-u\partial_{\mathbf{n}}v\right)W_{\kappa}(x)d\sigma(x_0,x),
\end{align}
where $\partial_{\mathbf{n}}$ denotes the directional derivative of the outward normal, and $d\sigma(x_0,x)$ the area element on $\partial\Omega$; moreover the formula has the following variant
\begin{align*}
\iint_{\Omega}(v\Delta_{\tilde{\kappa}}u-u\Delta_{\tilde{\kappa}}v)\,d\omega_{\kappa}(x)dx_0
=\int_{\partial\Omega}\left(vD_{\mathbf{n}}u
-uD_{\mathbf{n}}v\right)W_{\kappa}(x)d\sigma(x_0,x),
\end{align*}
where $D_{\mathbf{n}}=\langle\nabla_{\tilde{\kappa}},\mathbf{n}\rangle$ is the directional Dunkl operator and $\nabla_{\tilde{\kappa}}=(\partial_0,D_1,\dots,D_d)$ is the $\tilde{\kappa}$-gradient.
\end{proposition}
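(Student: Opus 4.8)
The plan is to deduce the statement from the known Green formulas of \cite{MT} and \cite{JL2} by splitting $\Delta_{\tilde{\kappa}}$ into a divergence-form differential part and a nonlocal reflection part, and then to verify that the only place group invariance of $\Omega$ enters is a symmetrization that requires invariance under $G$ alone. First I would record the pointwise identity $W_{\kappa}\Delta_{\tilde{\kappa}}f=\operatorname{div}_{\tilde{x}}(W_{\kappa}\nabla_{\tilde{x}}f)+W_{\kappa}R_{\kappa}f$, where $\nabla_{\tilde{x}}=(\partial_0,\partial_1,\dots,\partial_d)$ is the full Euclidean gradient on $\RR^{1+d}$ and $R_{\kappa}f=-2\sum_{\alpha\in R_+}\kappa(\alpha)\langle\alpha,x\rangle^{-2}(1-\sigma_{\alpha})f$ is the reflection part. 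This is checked directly from $\partial_0 W_{\kappa}=0$ and $\partial_j W_{\kappa}=2W_{\kappa}\sum_{\alpha}\kappa(\alpha)\alpha_j\langle\alpha,x\rangle^{-1}$ for $1\le j\le d$, which exhibit the local part of $\Delta_{\tilde{\kappa}}$ as the symmetric weighted operator $\operatorname{div}(W_{\kappa}\nabla\cdot)$; crucially $W_{\kappa}$ is independent of $x_0$, reflecting that the $\ZZ_2$-factor carries multiplicity $0$.

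For the differential part I would apply the classical Green's second identity to $\operatorname{div}(W_{\kappa}\nabla\cdot)$, which produces exactly the boundary term $\int_{\partial\Omega}(v\partial_{\mathbf{n}}u-u\partial_{\mathbf{n}}v)W_{\kappa}\,d\sigma$ of \eqref{Green-formula-2-1}. For the reflection part I would use the elementary identity $v R_{\kappa}u-u R_{\kappa}v=2\sum_{\alpha}\kappa(\alpha)\langle\alpha,x\rangle^{-2}(v\,\sigma_{\alpha}u-u\,\sigma_{\alpha}v)$ and show that its integral against $W_{\kappa}\,dx_0\,dx$ over $\Omega$ vanishes. The vanishing follows term by term from the change of variables $x\mapsto\sigma_{\alpha}(x)$: it preserves $\Omega$, preserves $W_{\kappa}$ and $dx$, fixes $\langle\alpha,x\rangle^2$, and interchanges $v\,\sigma_{\alpha}u$ with $u\,\sigma_{\alpha}v$, so each summand cancels. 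This is the one and only use of invariance of $\Omega$, and it invokes only the reflections $\sigma_{\alpha}$ with $\alpha\in R_+$, that is, invariance under $G$ rather than $\tilde{G}$ — precisely the relaxation claimed.

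The variant with $D_{\mathbf{n}}$ I would obtain by computing $D_{\mathbf{n}}-\partial_{\mathbf{n}}=\sum_{\alpha}\kappa(\alpha)\langle\alpha,\mathbf{n}\rangle\langle\alpha,x\rangle^{-1}(1-\sigma_{\alpha})$ on the spatial variables, so that the difference of the two boundary integrands is $\sum_{\alpha}\kappa(\alpha)\langle\alpha,\mathbf{n}\rangle\langle\alpha,x\rangle^{-1}(u\,\sigma_{\alpha}v-v\,\sigma_{\alpha}u)$. A boundary version of the same symmetrization closes the argument: $\partial\Omega$ is $G$-invariant, the outward normal transforms by $\mathbf{n}(\sigma_{\alpha}p)=\sigma_{\alpha}\mathbf{n}(p)$, hence $\langle\alpha,\mathbf{n}\rangle\langle\alpha,x\rangle^{-1}$ is $\sigma_{\alpha}$-invariant while the bracket is $\sigma_{\alpha}$-odd, so the extra boundary integral is zero and the two forms of the formula coincide.

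The main obstacle is analytic, not algebraic. Because $\Omega$ is $G$-invariant it generally meets the reflection hyperplanes $\langle\alpha,x\rangle=0$, where $W_{\kappa}$ degenerates and the coefficients $\langle\alpha,x\rangle^{-1}$, $\langle\alpha,x\rangle^{-2}$ are singular, so neither the classical Green identity nor the change-of-variable and Fubini steps are immediately licensed. I would handle this by excising the neighborhoods $\{|\langle\alpha,x\rangle|<\varepsilon\}$, running the smooth theory on the remainder, and letting $\varepsilon\to0$. The limit is controlled because for $C^2$ functions $(1-\sigma_{\alpha})u=O(\langle\alpha,x\rangle)$ near each hyperplane, so that the apparently singular integrands are in fact integrable against $W_{\kappa}\asymp|\langle\alpha,x\rangle|^{2\kappa(\alpha)}$ and the contributions from the artificial boundaries $\partial\{|\langle\alpha,x\rangle|<\varepsilon\}$ tend to $0$. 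This degeneracy bookkeeping is exactly what is carried out in \cite{MT,JL2}; the present statement differs only in that the excision and the symmetrization above need invariance under $G$ alone.
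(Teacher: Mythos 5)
Your proposal is correct, but it takes a genuinely different route from the paper, which offers no derivation at all: the paper simply cites the Green formulas of \cite[Theorem 4.11]{MT} and \cite[Proposition 3.8]{JL2}, proved there for $\tilde{G}=\ZZ_2\otimes G$-invariant domains, and asserts that the hypothesis can be relaxed to $G$-invariance because the multiplicity attached to the $\ZZ_2$ factor is zero. You instead rederive the formula: the decomposition $W_{\kappa}\Delta_{\tilde{\kappa}}f=\operatorname{div}(W_{\kappa}\nabla_{\tilde{x}}f)+W_{\kappa}R_{\kappa}f$ and the algebraic identities $vR_{\kappa}u-uR_{\kappa}v=2\sum_{\alpha}\kappa(\alpha)\langle\alpha,x\rangle^{-2}(v\,\sigma_{\alpha}u-u\,\sigma_{\alpha}v)$ and $vD_{\mathbf{n}}u-uD_{\mathbf{n}}v-(v\partial_{\mathbf{n}}u-u\partial_{\mathbf{n}}v)=\sum_{\alpha}\kappa(\alpha)\langle\alpha,\mathbf{n}\rangle\langle\alpha,x\rangle^{-1}(u\,\sigma_{\alpha}v-v\,\sigma_{\alpha}u)$ all check out; the classical weighted Green identity handles the divergence part; the $\sigma_{\alpha}$-symmetrization kills the volume and boundary reflection terms; and the excision works because $(1-\sigma_{\alpha})u=O(|\langle\alpha,x\rangle|)$ makes the singular integrands $O(|\langle\alpha,x\rangle|^{2\kappa(\alpha)-1})$, hence integrable, with artificial boundary contributions of size $O(\varepsilon^{2\kappa(\alpha)})$. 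What the paper's approach buys is brevity; what yours buys is a self-contained proof that verifies, rather than asserts, the relaxation of invariance, since it exhibits that invariance of $\Omega$ is used only under the spatial reflections $\sigma_{\alpha}$, $\alpha\in R_+$, the $x_0$-direction never entering because $\tilde{\kappa}=(0,\kappa)$ makes $W_{\kappa}$ independent of $x_0$. One detail to keep straight in the write-up: excise the full union $\bigcup_{\alpha\in R_+}\{|\langle\alpha,x\rangle|<\varepsilon\}$, which is $G$-invariant because $G$ permutes the normalized roots; excising a single slab would destroy the invariance needed for the symmetrization with respect to the remaining reflections.
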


A $C^2$ function $u$ defined on a $G$-invariant domain $\Omega$ of $\RR^{1+d}$ is said to be $\tilde{\kappa}$-harmonic if $\Delta_{\tilde{\kappa}}u=0$ there.

\subsection{The $\kappa$-Poisson integral}

We define the function $P$ by $P(x)=c_{d,\kappa}(1+|x|^2)^{-|\kappa|-(d+1)/2}$, where $c_{d,\kappa}=2^{|\kappa|+d/2}\pi^{-1/2}\Gamma(|\kappa|+(d+1)/2)$. It follows that $c_{\kappa}\int_{\RR^d}P\,d\omega_{\kappa}=1$ and $\left(\SF_{\kappa}P\right)(\xi)=e^{-|\xi|}$ (cf. \cite{RV1,TX}). For $f\in L_{\kappa}^p(\RR^d)$, $1\le p\le\infty$, the $\kappa$-Poisson integral of $f$ is defined by
\begin{align}\label{Poisson-2-2}
(Pf)(x_0,x)=c_{\kappa}\int_{\RR^{d}}f(t)(\tau_{x}P_{x_0})(-t)\,d\omega_{\kappa}(t),\qquad (x_0,x)\in\RR^{1+d}_+,
\end{align}
where $(\tau_{x}P_{x_0})(-t)$ is the $\kappa$-Poisson kernel with $P_{x_0}(x)=x_0^{-2|\kappa|-d}P(x/x_0)$ for $x_0>0$, i.e.,
\begin{align*}
P_{x_0}(x)=\frac{c_{d,\kappa}\,x_0}{(x_0^2+|x|^2)^{|\kappa|+(d+1)/2}},\qquad x\in\RR^d.
\end{align*}
Note that the function $(x_0,x)\mapsto P_{x_0}(x)$ is $\tilde{\kappa}$-harmonic in $\RR_+^{1+d}$, and so is the function $(x_0,x)\mapsto(\tau_{x}P_{x_0})(-t)$ by Propositions \ref{translation-2-b}(i) and (iv). In view of \cite[Proposition 5.1]{ADH1}, taking differentiation under the integral sign on the right hand side of (\ref{Poisson-2-2}) is legitimate, and hence, the $\kappa$-Poisson integral $(Pf)(x_0,x)$ of $f\in L_{\kappa}^p(\RR^d)$ is $\tilde{\kappa}$-harmonic in $\RR_+^{1+d}$.

Since $(\SF_{\kappa}P_{x_0})(\xi)=e^{-x_0|\xi|}$, from (\ref{translation-2-0}) and Proposition \ref{translation-2-b-1}(ii) one has
\begin{align}\label{Poisson-kernel-1}
\SF_{\kappa}[(\tau_{x}P_{x_0})(-\cdot)](\xi)=e^{-x_0|\xi|}E_{\kappa}(-ix,\xi),
\end{align}
and then for $f\in L_{\kappa}^1(\RR^d)$, the product formula (Propositions \ref{transform-a}(iv)) gives
\begin{align}\label{Poisson-2-3}
(Pf)(x_0,x)=c_{\kappa}\int_{\RR^{d}}e^{-x_0|\xi|}\left(\SF_{\kappa}f\right)(\xi)E_{\kappa}(i\xi,x)\,d\omega_{\kappa}(\xi),\qquad (x_0,x)\in\RR^{1+d}_+.
\end{align}
If $f\in L_{\kappa}^2(\RR^d)$, the formula (\ref{Poisson-2-3}) holds too by the Plancherel formula (Propositions \ref{transform-a}(v)).

For the following two propositions, see \cite[Theorems 4.1 and 5.4]{TX} and \cite[Theorem 7.5]{ADH1}.

\begin{proposition} \label{Poisson-c} For $f\in L_{\kappa}^p(\RR^d)$ ($1\le p\le\infty$) and $x_0>0$, $\|(Pf)(x_0,\cdot)\|_{L_{\kappa}^p}\le\|f\|_{L_{\kappa}^p}$; and for $f\in X=L_{\kappa}^p(\RR^d)$, $1\le
p<\infty$, or $C_0(\RR^d)$, $\lim_{x_0\rightarrow0+}\|(Pf)(x_0,\cdot)-f\|_{X}=0$.
\end{proposition}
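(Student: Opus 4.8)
The plan is to recognize that the $\kappa$-Poisson integral is nothing more than a convolution with a dilated radial kernel, and then to read both assertions off Proposition \ref{convolution-2-b}. Comparing the defining formula (\ref{Poisson-2-2}) with the convolution identity (\ref{convolution-2-2}), I read off $(Pf)(x_0,\cdot)=f\ast_{\kappa}P_{x_0}$, where $P_{x_0}(x)=x_0^{-2|\kappa|-d}P(x/x_0)$ is precisely the dilate $\varphi_{\epsilon}$ in the notation of Proposition \ref{convolution-2-b}(ii) with $\varphi=P$ and $\epsilon=x_0$. Thus the whole proposition should follow by specializing Proposition \ref{convolution-2-b} to $\varphi=P$.

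Before invoking that result I would verify its hypotheses for $\varphi=P$. The function $P(x)=c_{d,\kappa}(1+|x|^2)^{-|\kappa|-(d+1)/2}$ is manifestly radial; by its polynomial decay it lies in $L_{\kappa}^1(\RR^d)$, and its Dunkl transform $\SF_{\kappa}P=e^{-|\cdot|}$ lies in $L_{\kappa}^1(\RR^d)$ as well, so $P\in A_{\kappa}(\RR^d)$. Moreover $c_{\kappa}\int_{\RR^d}P\,d\omega_{\kappa}=1$, as recorded just after the definition of $P$, whence $\|P\|_{L_{\kappa}^1}=1$ since $P\ge0$. For the contraction estimate I would note that the dilation leaves the $L_{\kappa}^1$-mass invariant: substituting $y=x/x_0$ and using that $W_{\kappa}$ is homogeneous of degree $2|\kappa|$ gives $\|P_{x_0}\|_{L_{\kappa}^1}=\|P\|_{L_{\kappa}^1}=1$ for every $x_0>0$. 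Hence Proposition \ref{convolution-2-b}(i), applied with the radial $A_{\kappa}$-function $P_{x_0}$, yields $\|(Pf)(x_0,\cdot)\|_{L_{\kappa}^p}=\|f\ast_{\kappa}P_{x_0}\|_{L_{\kappa}^p}\le\|P_{x_0}\|_{L_{\kappa}^1}\|f\|_{L_{\kappa}^p}=\|f\|_{L_{\kappa}^p}$ for all $1\le p\le\infty$. The approximate-identity assertion is then exactly Proposition \ref{convolution-2-b}(ii) with $\varphi=P$ (whose integral is normalized to $1$) and $\epsilon=x_0$, giving $\lim_{x_0\to0+}\|f\ast_{\kappa}P_{x_0}-f\|_X=0$ for $X=L_{\kappa}^p(\RR^d)$, $1\le p<\infty$, and for $X=C_0(\RR^d)$.

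The only point requiring a little care, and the sole mild obstacle, is the compatibility of the two definitions: the pointwise formula (\ref{Poisson-2-2}) is meant for every $f\in L_{\kappa}^p$ with $1\le p\le\infty$, whereas Proposition \ref{convolution-2-b} constructs $f\ast_{\kappa}P_{x_0}$ first on $L_{\kappa}^1(\RR^d)\cap L^{\infty}(\RR^d)$ and extends it by continuity. I would reconcile the two by observing that, for the radial $P_{x_0}$, the kernel $(\tau_xP_{x_0})(-\cdot)$ lies in $L_{\kappa}^1$ by Proposition \ref{translation-2-e}(i) and is bounded by (\ref{translation-2-4}), hence lies in every $L_{\kappa}^{q}$; consequently (\ref{Poisson-2-2}) defines a bounded linear functional in $f\in L_{\kappa}^p$, it agrees with $f\ast_{\kappa}P_{x_0}$ on the dense subclass $L_{\kappa}^1\cap L^{\infty}$ through Proposition \ref{convolution-2-a}(i), and therefore coincides with the extended operator throughout $L_{\kappa}^p$. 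Once this identification is in place, the proposition is an immediate corollary of Proposition \ref{convolution-2-b}.
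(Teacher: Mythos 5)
Your proof is correct and takes essentially the same route as the paper: the paper gives no argument of its own but cites Thangavelu--Xu (Theorems 4.1 and 5.4) and \cite[Theorem 7.5]{ADH1}, and the Thangavelu--Xu convolution results are exactly what is reproduced as Proposition \ref{convolution-2-b}, which you invoke with $\varphi=P$ and $\epsilon=x_0$ after checking $P\in A_{\kappa}(\RR^d)$ is radial with unit mass. The only cosmetic slip is in the reconciliation step for $p=\infty$, where $L_{\kappa}^1\cap L^{\infty}$ is not dense in $L^{\infty}$; there the identification of (\ref{Poisson-2-2}) with $f\ast_{\kappa}P_{x_0}$ is instead immediate because the kernel $(\tau_{x}P_{x_0})(-\cdot)$ lies in $L_{\kappa}^1(\RR^d)$, so both expressions are the same absolutely convergent integral.
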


\begin{proposition} \label{Poi-Int-Cha}
Let $u$ be $\tilde{\kappa}$-harmonic in $\RR^{1+d}_+$. Then
{\rm (i)} for $1<p<\infty$, $u$ is the $\kappa$-Poisson integral of some $f\in L^p_{\kappa}(\RR^d)$ if and only if $u$ satisfies
\begin{eqnarray}\label{Poi-3}
A:=\sup_{x_0>0}\|u(x_0,\cdot)\|_{L^p_{\kappa}}<+\infty,
\end{eqnarray}
and moreover $\|f\|_{L^p_{\kappa}}=A$; and {\rm (ii)} $u$ is the $\kappa$-Poisson integral of $d\mu\in {\frak B}_{\kappa}(\RR^d)$ if and only if $u$ satisfies (\ref{Poi-3}) for $p=1$.
\end{proposition}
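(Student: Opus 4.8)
The plan is to prove the two statements by splitting each into necessity and sufficiency, and, for the sufficiency direction, to reduce everything to the semigroup identity
$u(x_0+\epsilon,x)=\bigl(P[u(\epsilon,\cdot)]\bigr)(x_0,x)$,
whose verification will be the crux. Necessity is immediate from Proposition \ref{Poisson-c}: if $u=Pf$ with $f\in L^p_\kappa$, then $\|u(x_0,\cdot)\|_{L^p_\kappa}\le\|f\|_{L^p_\kappa}$ for every $x_0>0$, so $A\le\|f\|_{L^p_\kappa}$; and since $\|u(x_0,\cdot)-f\|_{L^p_\kappa}\to0$ as $x_0\to0^+$ for $1\le p<\infty$, the reverse inequality follows, giving $\|f\|_{L^p_\kappa}=A$. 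For part (ii), if $u=P[d\mu]$ then Fubini's theorem together with the nonnegativity of the kernel (Proposition \ref{translation-2-e}(ii)) and its total $\kappa$-mass one yields $A\le\|d\mu\|_{{\frak B}_{\kappa}}<\infty$.

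For sufficiency when $1<p<\infty$, I would exploit the reflexivity of $L^p_\kappa$: the bounded family $\{u(x_0,\cdot)\}_{x_0>0}$ admits a sequence $x_0^{(n)}\downarrow0$ with $u(x_0^{(n)},\cdot)\rightharpoonup f$ weakly for some $f\in L^p_\kappa$, and weak lower semicontinuity of the norm gives $\|f\|_{L^p_\kappa}\le A$. Granting the semigroup identity, take $\epsilon=x_0^{(n)}$ and let $n\to\infty$. The left side $u(x_0+x_0^{(n)},x)$ tends to $u(x_0,x)$ by continuity of $u$. For the right side, the slice of the $\kappa$-Poisson kernel $t\mapsto(\tau_xP_{x_0})(-t)$ is nonnegative, bounded, and of total $\kappa$-mass one, hence belongs to $L^{p'}_\kappa$ for every $p'$; therefore the weak convergence passes the limit inside and produces $(Pf)(x_0,x)$. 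Thus $u=Pf$, and the norm identity $\|f\|_{L^p_\kappa}=A$ follows from the necessity part.

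The heart of the matter is the identity $u(x_0+\epsilon,x)=\bigl(P[u(\epsilon,\cdot)]\bigr)(x_0,x)$. Fixing $\epsilon>0$ and writing $g=u(\epsilon,\cdot)$, which is smooth and lies in $L^p_\kappa$, set $w(x_0,x)=u(x_0+\epsilon,x)-(Pg)(x_0,x)$. Then $w$ is $\tilde\kappa$-harmonic on $\RR^{1+d}_+$, satisfies $\sup_{x_0>0}\|w(x_0,\cdot)\|_{L^p_\kappa}\le2A$, and extends continuously to $\{x_0=0\}$ with boundary value $g-g=0$, the continuity of $Pg$ up to the boundary coming from the approximate-identity property of Proposition \ref{Poisson-c}. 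Because $\Delta_{\tilde\kappa}=\partial_0^2+\Delta_\kappa$ carries no reflection term in the $x_0$ variable, the odd reflection $w(-x_0,x):=-w(x_0,x)$ yields a function that is $\tilde\kappa$-harmonic on all of $\RR^{1+d}$, and a Liouville-type estimate then forces $w\equiv0$: by the mean value property (Proposition \ref{harmonic-2-a-1}, integrated over balls) one has $|w(\tilde x)|\le|B(\tilde x,r)|_{\tilde\kappa}^{-1/p}\|w\|_{L^p(B(\tilde x,r))}$, and since $|B(\tilde x,r)|_{\tilde\kappa}\asymp r^{2|\kappa|+d+1}$ while slicing the $L^p$ norm against the uniform slice bound gives $\|w\|_{L^p(B(\tilde x,r))}\lesssim r^{1/p}$, the right side is $O(r^{-(2|\kappa|+d)/p})\to0$ as $r\to\infty$. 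I expect this to be the main obstacle, in particular the justification of the Schwarz reflection and of the boundary continuity of $Pg$; an alternative route is to derive the same representation directly from the Green formula (Proposition \ref{Green-formula-2-a}) applied on half-balls, with the boundary integral controlled by the slice bound as the radius tends to infinity.

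Finally, for part (ii) the only change is to replace reflexivity by weak-$*$ compactness: the family $\{u(x_0,\cdot)\,dx\}$ is bounded in ${\frak B}_{\kappa}$, so a subsequence converges weak-$*$, tested against $C_0(\RR^d)$, to some $d\mu\in{\frak B}_{\kappa}$. Since the kernel slice $t\mapsto(\tau_xP_{x_0})(-t)$ is continuous and vanishes at infinity, the same passage to the limit in the semigroup identity yields $u=P[d\mu]$, completing the proof.
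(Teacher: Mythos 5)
Your skeleton --- necessity via Proposition \ref{Poisson-c}, sufficiency via weak (resp.\ weak-$*$) compactness of the slices once the semigroup identity $u(x_0+\epsilon,x)=\bigl(P[u(\epsilon,\cdot)]\bigr)(x_0,x)$ is available --- is the classical Stein--Weiss argument, and it is in substance what the sources cited for this proposition (\cite{TX}, \cite{ADH1}) carry out; the paper itself gives no proof, only the citation. The necessity halves and the weak-compactness limit passage for $1<p<\infty$ are sound. The genuine gap is that the crux identity is not established by your reflection argument. To conclude that the odd extension $W$ of $w$ is $\tilde{\kappa}$-harmonic \emph{across} $\{x_0=0\}$ you need a converse mean-value theorem valid for functions that are merely continuous there; the only such tool in the paper, Proposition \ref{harmonic-2-a-1}, assumes $f\in C^2(\Omega)$, while your $W$ is smooth off the hyperplane but a priori only continuous across it. The classical repair --- solve the Dirichlet problem on small balls centred on $\{x_0=0\}$ and compare by the maximum principle --- does not transplant: $\Delta_{\tilde{\kappa}}$ is non-local in the $x$-variables, every relevant tool in the paper (mean value property, Green formula, R\"osler's maximum principle) is stated only on $G$-invariant domains, and a ball centred at $(0,x')$ with $x'\neq 0$ is in general not $G$-invariant. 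Establishing the semigroup identity for harmonic functions with only slice bounds is precisely the nontrivial content of \cite[Propositions 7.5--7.6]{ADH1}, so your proof has an unproved core. (Two lesser, fixable inaccuracies in the same step: Proposition \ref{Poisson-c} gives only norm convergence, so continuity of $Pg$ up to the boundary needs a separate local argument from the kernel bounds (\ref{Poisson-ker-bound-1}); and ``integrating the mean value property over balls'' must be done through the generalized translation $\tau_{\tilde{x}}\chi_{B_r}$, whose support is the orbit of the ball under the reflection group rather than the ball itself --- harmless here because your slice bound is uniform in $x_0$.)

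There is a second gap, in part (ii): the weak-$*$ limit does not land in ${\frak B}_{\kappa}(\RR^d)$. Compactness applied to $u(x_0^{(n)},\cdot)\,d\omega_{\kappa}$ produces a finite signed Borel measure $dm$ and the representation $u(x_0,x)=c_{\kappa}\int_{\RR^d}(\tau_{x}P_{x_0})(-t)\,dm(t)$; it does not produce $d\mu\in{\frak B}_{\kappa}(\RR^d)$. Indeed, the measures of the form $W_{\kappa}\,d\mu$ with $c_{\kappa}\int W_{\kappa}|d\mu|<\infty$ are exactly the finite measures assigning no mass to the zero set of $W_{\kappa}$, and this family is not weak-$*$ closed. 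Concretely, for $\kappa\neq 0$ take $u(x_0,x)=P_{x_0}(x)$: it is $\tilde{\kappa}$-harmonic with $\|u(x_0,\cdot)\|_{L^1_{\kappa}}\equiv 1$, its slices converge weak-$*$ to $c_{\kappa}^{-1}\delta_0$, and since $W_{\kappa}(0)=0$ no measure with $\int W_{\kappa}|d\mu|<\infty$ can represent $u$ through the pairing $c_{\kappa}\int(\tau_{x}P_{x_0})(-t)\,W_{\kappa}(t)\,d\mu(t)$ (the representing measure is unique, e.g.\ by injectivity of the Dunkl transform on finite measures). So your final sentence ``converges \dots to some $d\mu\in{\frak B}_{\kappa}$'' cannot be justified as written: what the weak-$*$ argument actually proves --- and what the cited references assert --- is representation by an arbitrary finite Borel measure paired directly against the kernel, and reconciling that with the ${\frak B}_{\kappa}$-formulation of part (ii) is an issue your proof (and arguably the statement itself) leaves open.
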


As usual, we denote by $\Gamma_a(x)$ the positive cone of aperture $a>0$ with vertex $(0,x)\in \partial\RR^{1+d}_+=\RR^d$, and $\Gamma^h_a(x)$ the truncated cone with height $h>0$, that is,
$$
\Gamma^h_a(x)=\left\{(x_0,t):\quad |t-x|<ax_0 \quad\hbox{for}\,\,t\in\RR^d,\,\,0<x_0<h\right\}.
$$
For a function $u$ on $\RR^{1+d}_+$ and for $a>0$, the non-tangential maximal function $u^*_a(x)$ is defined by
$$
u^*_a(x)=\sup_{(x_0,t)\in\Gamma_a(x)}|u(x_0,t)|;
$$
and for $x\in\RR^d$, that $u$ has a non-tangential limit $A$ at $(0,x)$ means that for every $a>0$, $\lim u(x_0,t)=A$ as $(x_0,t)\in\Gamma_a(x)$ approaching to $(0,x)$.

If $u(x_0,x)$ is the $\kappa$-Poisson integral of $f\in
L^p_{\kappa}(\RR^d)$, $1\le p\le\infty$, we write $(P^*_af)(x)=u^*_a(x)$, and $(P^+f)(x)=\sup_{x_0>0}|(Pf)(x_0,x)|$, the associated perpendicular maximal function.

\begin{proposition} \label{Poisson-Integral-Max-N}
Both $P^+$ and $P^*_a$ for $a>0$ are bounded from $L_{\kappa}^p(\RR^d)$ ($1<p\le\infty$) to itself and from $L_{\kappa}^1(\RR^d)$ to $L_{\kappa}^{1,\infty}(\RR^d)$, where
$$
L_{\kappa}^{1,\infty}(\RR^d)=\{f:\,\|f\|_{L_{\kappa}^{1,\infty}}:=\sup_{\lambda>0}\lambda|\{x\in\RR^d:\,|f(x)|>\lambda\}|_{\kappa}<\infty\}.
$$
\end{proposition}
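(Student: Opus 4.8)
The plan is to dominate both maximal operators pointwise by the Hardy--Littlewood maximal operator
\[
M_{\kappa}f(x)=\sup_{r>0}\frac{1}{|B(x,r)|_{\kappa}}\int_{B(x,r)}|f|\,d\omega_{\kappa},
\]
and then to invoke the maximal theorem on the space of homogeneous type $(\RR^d,|\cdot|,\omega_{\kappa})$, which delivers the $L_{\kappa}^p$ bound for $1<p\le\infty$ together with the weak type $(1,1)$ bound. Because $(x_0,x)\in\Gamma_a(x)$ for every $x_0>0$, one has $P^+f\le P^*_af$, so it suffices to treat $P^*_a$. The endpoint $p=\infty$ is immediate: by Proposition~\ref{translation-2-e}(iii) the radial kernel preserves mass, $c_{\kappa}\int_{\RR^d}(\tau_xP_{x_0})(-t)\,d\omega_{\kappa}(t)=c_{\kappa}\int_{\RR^d}P_{x_0}\,d\omega_{\kappa}=1$, while Proposition~\ref{translation-2-e}(ii) makes it nonnegative, whence $|(Pf)(x_0,t)|\le\|f\|_{L^{\infty}}$.

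The heart of the matter is the pointwise comparison
\[
P^*_af(x)\lesssim_a\sum_{\sigma\in G}M_{\kappa}f(\sigma(x)),
\]
which already suffices: since $\omega_{\kappa}$ is $G$-invariant and $G$ is finite, each summand has the same $L_{\kappa}^p$ (resp. weak $L_{\kappa}^1$) norm as $M_{\kappa}f$. To reach it I would begin from the explicit radial representation (\ref{translation-2-4}). Writing $P_{x_0}(y)=\psi_{x_0}(|y|)$ with $\psi_{x_0}$ nonincreasing, and using $|\xi|\le|t|$ on ${\rm supp}\,\mu^{\kappa}_{t}$ (see (\ref{intertwining-support-1})) together with monotonicity,
\[
(\tau_tP_{x_0})(-s)=\int_{\RR^d}\psi_{x_0}\!\Big(\sqrt{|t|^2+|s|^2-2\langle s,\xi\rangle}\Big)\,d\mu^{\kappa}_{t}(\xi)\le\int_{\RR^d}\psi_{x_0}(|\xi-s|)\,d\mu^{\kappa}_{t}(\xi).
\]
Substituting into the Poisson integral and applying Fubini reduces matters, for each fixed $\xi$ in the support of $\mu^{\kappa}_{t}$, to estimating the Euclidean-type radial-decreasing average $\int_{\RR^d}|f(s)|\psi_{x_0}(|\xi-s|)\,d\omega_{\kappa}(s)$ by $M_{\kappa}f(\xi)$ via the layer-cake decomposition of $\psi_{x_0}$ into normalized indicators of balls.

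The main obstacle is that $\omega_{\kappa}$ is not translation invariant, so the mass $\int_{\RR^d}\psi_{x_0}(|\xi-s|)\,d\omega_{\kappa}(s)$ of the shifted kernel depends on $\xi$ and is not controlled by $c_{\kappa}^{-1}$; a naive estimate diverges. To control it one must exploit the concentration of $\mu^{\kappa}_{t}$ near $t$ quantified by the distribution estimate $\int_{\langle t,\xi\rangle>|t|^2-\delta^2}d\mu^{\kappa}_{t}(\xi)\asymp\delta^{N}/|B(t,\delta)|_{\kappa}$, combined with the doubling of $\omega_{\kappa}$, so that the $\mu^{\kappa}_{t}$-average effectively sees only those $\xi$ within distance $\asymp x_0$ of $t$, where $|B(\xi,x_0)|_{\kappa}\asymp|B(t,x_0)|_{\kappa}$, and the layer-cake averages collapse to $M_{\kappa}f(t)$. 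Finally the cone constraint $|t-x|<ax_0$ is absorbed through (\ref{translation-support-2-1}): at scale $r$ the translated kernel is supported in $\bigcup_{\sigma\in G}B(\sigma(x),r+ax_0)$, and doubling turns the extra $ax_0$ into a constant depending only on $a$, producing the sum over $G$. Making the interplay of the distribution estimate, the layer-cake decomposition, and doubling quantitatively precise is the delicate point; the remainder is a routine appeal to the homogeneous-space maximal theorem.
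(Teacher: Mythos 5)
Your overall strategy (pointwise domination by the Hardy--Littlewood maximal function $M_\kappa$ of the homogeneous space $(\RR^d,|\cdot|,\omega_\kappa)$, followed by the maximal theorem) is viable in principle, and your treatment of $p=\infty$ and the reduction $P^+f\le P^*_af$ are fine; but the step on which you build everything is irreparably lossy. Writing $A(t,s;\xi)^2=|t|^2+|s|^2-2\langle s,\xi\rangle=|\xi-s|^2+\bigl(|t|^2-|\xi|^2\bigr)$, your majorization $(\tau_tP_{x_0})(-s)\le\int\psi_{x_0}(|\xi-s|)\,d\mu^{\kappa}_{t}(\xi)$ discards the term $|t|^2-|\xi|^2$, which is the \emph{only} source of decay when $\xi$ lies in the bulk of ${\rm supp}\,\mu^{\kappa}_{t}$ (i.e.\ $|\xi|\ll|t|$) and $s$ is close to such a $\xi$. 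The resulting majorant kernel genuinely violates the bound you are aiming for, so no interplay of the distribution estimate, layer-cake and doubling can rescue it. Concretely, in the rank-one case $G=\ZZ_2$, $d\omega_\kappa=|x|^{2\kappa}dx$, the measure $\mu^{\kappa}_{t}$ ($t>0$) has density $\asymp(1+\xi/t)(1-\xi^2/t^2)^{\kappa-1}t^{-1}$ on $[-t,t]$, so $\mu^{\kappa}_{t}([-x_0,x_0])\asymp x_0/t$ for $x_0\ll t$. Take $f=\chi_{[-2x_0,2x_0]}$ and evaluate your majorant at the cone vertex $x=t$: it is at least $c\,x_0^{-2\kappa-1}\,\mu^{\kappa}_{t}([-x_0,x_0])\,\omega_\kappa([-2x_0,2x_0])\asymp x_0/t$, whereas $M_\kappa f(t)+M_\kappa f(-t)\asymp(x_0/t)^{2\kappa+1}$. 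Since $\kappa>0$ and $x_0/t$ is small, $x_0/t\gg(x_0/t)^{2\kappa+1}$, so the claimed domination $P^*_af(x)\lesssim_a\sum_{\sigma\in G}M_\kappa f(\sigma(x))$ is false for your majorant (it holds for the true kernel only because of the factor you dropped). Note also that the distribution estimate of \cite{JL1} quantifies how \emph{little} mass $\mu^{\kappa}_{t}$ carries near the cap at $t$; it says nothing that suppresses the bulk mass, and $\mu^{\kappa}_{t}$ does not depend on $x_0$ at all, so it cannot ``effectively see only those $\xi$ within distance $\asymp x_0$ of $t$''.

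The paper's proof is different and much shorter: $P^+$ is handled by citing the Hopf--Dunford--Schwartz/Stein semigroup maximal theorem (\cite{St11}, \cite{TX}), and $P^*_a$ is then reduced to $P^+$ by a cone comparison of kernels. The comparison rests on exactly the equivalence $x_0^2+|x|^2+|t|^2+2\langle x,\xi\rangle\asymp\bigl(x_0+|x+\xi|+\sqrt{|t|^2-|\xi|^2}\bigr)^2$ --- i.e.\ on \emph{keeping} the term you discarded --- which yields $(\tau_{x'}P_{x_0})(-t)\lesssim(1+a)^{2|\kappa|+d+1}(\tau_{x}P_{x_0})(-t)$ whenever $|x'-x|<ax_0$, hence $P^*_af\lesssim(1+a)^{2|\kappa|+d+1}P^+(|f|)$ pointwise. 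If you want to keep your Hardy--Littlewood route, the correct input is the sharp upper bound (\ref{Poisson-ker-bound-1}) for the $\kappa$-Poisson kernel, expressed through $|B(x,x_0+d(x,t))|_\kappa$ and the orbit distance $d(x,t)$; a dyadic decomposition in $d(x,t)$ then does give $P^*_af(x)\lesssim_a\sum_{\sigma\in G}M_\kappa f(\sigma(x))$, but that bound cannot be reached through the Euclidean majorization you propose.
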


The conclusions in the proposition are known. Those for $P^+$ are consequences of the Hopf-Dunford-Schwartz ergodic theorem, see \cite[pp. 48-9]{St11} and \cite[p. 46]{TX}. As for $P^*_a$,
since $(\tau_{x}P_{x_0})(-t)=(\tau_{-t}P_{x_0})(x)$ by (\ref{translation-2-2}), it follows from (\ref{translation-2-4}) that, for $x,t\in\RR^d$,
\begin{align*}
(\tau_{x}P_{x_0})(-t)=\int_{\RR^d}\frac{c_{d,\kappa}\,x_0}{(x_0^2+|x|^2+|t|^2+2\langle x,\xi\rangle)^{|\kappa|+(d+1)/2}}\,d\mu^{\kappa}_{-t}(\xi);
\end{align*}
but since $|\xi|\le|t|$ for $\xi\in\hbox{supp}\,\mu^{\kappa}_{-t}$ by (\ref{intertwining-support-1}), one has
$$
x_0^2+|x|^2+|t|^2+2\langle x,\xi\rangle \asymp\left(x_0+|x+\xi|+\sqrt{|t|^2-|\xi|^2}\right)^2,
$$
and then, for $x'\in\RR^d$ satisfying $|x'-x|<ax_0$,
\begin{align*}
x_0^2+|x|^2+|t|^2+2\langle x,\xi\rangle
\lesssim(a+1)^2\left(x_0^2+|x'|^2+|t|^2+2\langle x',\xi\rangle\right).
\end{align*}
This implies that $(\tau_{x'}P_{x_0})(-t)\lesssim(a+1)^{2|\kappa|+d+1}(\tau_{x}P_{x_0})(-t)$ for $(x_0,x')\in\Gamma_a(x)$, and hence
\begin{align*}
(P^*_af)(x)\lesssim (a+1)^{2|\kappa|+d+1}\left[P^+(|f|)\right](x).
\end{align*}
The conclusions for $P^*_a$ follow immediately.

\begin{corollary} \label{Poisson-d}
If $f\in L^p_{\kappa}(\RR^d)$, $1\le p\le\infty$, then its $\kappa$-Poisson integral $Pf$ converges non-tangentially
to $f$ almost every on $\RR^d$.
\end{corollary}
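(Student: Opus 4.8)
The plan is to follow the classical two-step scheme for almost-everywhere non-tangential convergence: first establish the conclusion on a dense subclass on which it is transparent, and then transfer it to general $f$ by means of the maximal bounds of Proposition \ref{Poisson-Integral-Max-N}, which supply the required ``control'' ingredient. For an aperture $a>0$ I would introduce the non-tangential oscillation
$$
\Lambda_a f(x)=\limsup_{\substack{(x_0,t)\in\Gamma_a(x)\\ (x_0,t)\to(0,x)}}\left|(Pf)(x_0,t)-f(x)\right|,
$$
so that $Pf$ converges non-tangentially to $f$ at $x$ precisely when $\Lambda_a f(x)=0$ for every $a$. Since $\Gamma_a(x)\subseteq\Gamma_{a'}(x)$ whenever $a\le a'$, it suffices to prove $\Lambda_a f=0$ almost everywhere for each fixed $a$ and then take a union over a sequence $a_n\uparrow\infty$.

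For the dense class I would use $C_0(\RR^d)$ (equivalently the continuous compactly supported functions, which are dense in $L_{\kappa}^p$ for $1\le p<\infty$). If $g\in C_0(\RR^d)$, Proposition \ref{Poisson-c} gives the uniform convergence $\|(Pg)(x_0,\cdot)-g\|_{C_0}\to0$ as $x_0\to0^+$; splitting
$$
\left|(Pg)(x_0,t)-g(x)\right|\le\left\|(Pg)(x_0,\cdot)-g\right\|_{C_0}+\left|g(t)-g(x)\right|
$$
and noting that $|t-x|<ax_0\to0$ inside $\Gamma_a(x)$, the continuity of $g$ forces $\Lambda_a g(x)=0$ for \emph{every} $x\in\RR^d$.

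Next, for $1\le p<\infty$ and general $f\in L_{\kappa}^p(\RR^d)$ I would write $f=g+h$ with $g\in C_0(\RR^d)$ and $\|h\|_{L_{\kappa}^p}$ as small as desired. Subadditivity of $\Lambda_a$ and the previous step give $\Lambda_a f\le\Lambda_a g+\Lambda_a h=\Lambda_a h$, while the crude bound $\Lambda_a h(x)\le(P^*_ah)(x)+|h(x)|$ yields
$$
\left\{\Lambda_a f>\lambda\right\}\subseteq\left\{P^*_ah>\lambda/2\right\}\cup\left\{|h|>\lambda/2\right\}.
$$
Applying the weak-type $(1,1)$ estimate of Proposition \ref{Poisson-Integral-Max-N} (when $p=1$) or the strong $(p,p)$ estimate (when $1<p<\infty$), together with Chebyshev's inequality, bounds $|\{\Lambda_a f>\lambda\}|_{\kappa}$ by a constant multiple of $(\|h\|_{L_{\kappa}^p}/\lambda)^p$. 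Letting $\|h\|_{L_{\kappa}^p}\to0$ shows this set is $\omega_{\kappa}$-null for every $\lambda>0$, whence $\Lambda_a f=0$ almost everywhere.

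The one genuinely separate case is $p=\infty$, where $C_0$ is no longer dense and the argument must be localized. Fixing a ball $B=B(y,R)$ and splitting $f=f\chi_{B(y,2R)}+f\chi_{\RR^d\setminus B(y,2R)}=:f_1+f_2$, the piece $f_1$ lies in every $L_{\kappa}^q$, so the case already proved gives non-tangential convergence of $Pf_1$ to $f_1=f$ almost everywhere on $B$. For the tail $f_2$, which vanishes on $B$, I would show that $(Pf_2)(x_0,t)\to0$ as $(x_0,t)\to(0,x)\in\Gamma_a(x)$ for $x\in B$: since such $t$ stays near $x\in B$ while $\mathrm{supp}\,f_2$ is separated from $B$, the pointwise representation of the kernel $(\tau_tP_{x_0})(-s)$ recorded in the proof of Proposition \ref{Poisson-Integral-Max-N} makes $c_{\kappa}\int_{|s-y|\ge2R}(\tau_tP_{x_0})(-s)\,d\omega_{\kappa}(s)$ tend to $0$ as $x_0\to0^+$, uniformly for $t\in B$. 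This tail estimate for the generalized Poisson kernel is the main obstacle; the rest is the standard oscillation/maximal-function reduction.
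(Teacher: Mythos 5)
For $1\le p<\infty$ your argument is correct and is exactly what the paper intends: the paper disposes of this corollary in one sentence as ``a direct consequence of Proposition \ref{Poisson-Integral-Max-N}'', which is precisely the reduction you spell out — a.e.\ vanishing of the non-tangential oscillation on the dense class $C_0(\RR^d)$ (via the uniform convergence in Proposition \ref{Poisson-c}) transferred to general $f$ by the strong $(p,p)$ and weak $(1,1)$ bounds for $P^*_a$. The paper also records a second route, namely that the statement is a special case of the local Fatou theorem (Theorem \ref{local-Fatou-1}).

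The genuine gap is in your $p=\infty$ step, exactly at the point you yourself flag as ``the main obstacle''. Your justification of the tail estimate rests on Euclidean separation — ``$t$ stays near $x\in B$ while $\mathrm{supp}\,f_2$ is separated from $B$'' — and in the Dunkl setting this reasoning is not sufficient: the kernel $(\tau_t P_{x_0})(-s)$ does not become small merely because $|s-t|\gtrsim R$. Its decay is governed by the orbit distance $d(t,s)=\min_{\sigma\in G}|s-\sigma(t)|$, and the region $\{|s-y|\ge 2R\}$ carrying $f_2$ in general contains the reflected points $\sigma(t)$, $\sigma\in G\setminus\{e\}$, of points $t\in B$; near such points $d(t,s)$ is small and the kernel is far larger than any bound in terms of $|s-t|$ alone, so pointwise smallness of the kernel on $\mathrm{supp}\,f_2$ simply fails. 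The estimate you need is nevertheless true: by the sharp bound (\ref{Poisson-ker-bound-1}) (which the paper only states in Section 4, quoting \cite{ADH1,DH1,JL2}), the factor $x_0/(x_0^2+|t-s|^2)$ forces the integral of the kernel over the orbit-near but Euclidean-far part $\bigcup_{\sigma\neq e}\{|s-\sigma(t)|\lesssim R\}\cap\{|s-y|\ge2R\}$ to be $O(x_0/R)$, and a dyadic decomposition in $d(t,s)$ handles the orbit-far part $\{d(t,s)\gtrsim R\}$ with the same bound; but this argument genuinely uses the orbit structure and more than the representation recorded in the proof of Proposition \ref{Poisson-Integral-Max-N}. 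Alternatively, you can bypass the tail estimate entirely by the paper's second route: for $f\in L^\infty(\RR^d)$, $Pf$ is bounded, hence non-tangentially bounded at every point, so Theorem \ref{local-Fatou-1} gives a.e.\ existence of non-tangential limits, and the limit function is identified with $f$ a.e.\ by testing against $g\in L^1_{\kappa}(\RR^d)$, using the symmetry of the $\kappa$-Poisson kernel (so that $\int (Pf)(x_0,\cdot)\,g\,d\omega_\kappa=\int f\,(Pg)(x_0,\cdot)\,d\omega_\kappa\to\int fg\,d\omega_\kappa$ by Proposition \ref{Poisson-c}) together with dominated convergence.
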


The above corollary is a direct consequence of Proposition \ref{Poisson-Integral-Max-N} (cf. \cite[Corollary 5.4]{ADH1})), and is also a special case of the following local Fatou theorem for $\tilde{\kappa}$-harmonic functions.

\begin{theorem}\label{local-Fatou-1} {\rm (\cite[Theorem 2.1]{JL2})}
Suppose that the function $u$ is $\tilde{\kappa}$-harmonic in $\RR^{1+d}_+$ and $E$ is a $G$-invariant measurable subset of $\partial\RR^{1+d}_+=\RR^d$. Then $u$ has a finite non-tangential limit at $(0,x)$ for almost every $x\in E$ if and only if $u$ is non-tangentially bounded at $(0,x)$ for almost every $x\in E$.
\end{theorem}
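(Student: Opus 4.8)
The plan is to follow Calderón's approach to local Fatou theorems, transplanted to the Dunkl setting, using the Luzin-type area integral $S_{a,h}u$ as the bridge. The ``only if'' direction is immediate: a finite non-tangential limit at $(0,x)$ forces $u$ to stay bounded on each truncated cone $\Gamma^h_a(x)$, so $u$ is non-tangentially bounded there. For the converse, I fix an aperture $a$ and height $h$ and set $u^*_{a,h}(x)=\sup_{\Gamma^h_a(x)}|u|$; non-tangential boundedness a.e.\ on $E$ means $u^*_{a,h}(x)<\infty$ for a.e.\ $x\in E$ and every $a$. Writing $E$ as a countable union of the sets $\{x\in E:\,u^*_{a,h}(x)\le M\}$ and using inner regularity, it suffices to produce a non-tangential limit at a.e.\ point of a compact $F\subseteq E$ on which $u^*_{a,h}\le M$ uniformly. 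Since $E$ and $\omega_\kappa$ are $G$-invariant, $F$ can be chosen $G$-invariant; this is essential because both Green's formula (Proposition \ref{Green-formula-2-a}) and the mean-value property (Proposition \ref{harmonic-2-a-1}) require $G$-invariant domains.

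Next I would build the sawtooth region $\Omega=\bigcup_{x\in F}\Gamma^{h}_{a'}(x)$ with a slightly smaller aperture $a'<a$, arranged so that every point of $\Omega$ lies in a fixed cone of aperture $a-a'$ issuing from $F$; consequently $|u|\le M'$ throughout $\Omega$, and $\Omega$ is (after truncation) a bounded, regular, $G$-invariant domain. The first quantitative step is to dominate the area integral by the boundary values. Applying Green's formula (\ref{Green-formula-2-1}) on $\Omega$ to the pair $(u^2,\Phi)$, where $\Phi$ is a barrier comparable to the distance to $\partial\Omega$ and vanishing there, converts $\iint_\Omega \Phi\,\Delta_{\tilde\kappa}(u^2)\,d\omega_\kappa\,dx_0$ into a boundary integral bounded by $(M')^2$ times the $W_\kappa$-weighted surface measure of $\partial\Omega$, which is finite. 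Here one uses the pointwise identity that, for $\tilde\kappa$-harmonic $u$, expresses $\tfrac12\Delta_{\tilde\kappa}(u^2)$ as $|\nabla_{\tilde\kappa}u|^2$ plus a nonnegative non-local term built from the differences $u-\sigma_\alpha u$; both summands are nonnegative and together they are exactly the quantity appearing under $S_{a,h}u$. Unravelling the generalized translation in the definition of $S_{a,h}u$ then yields $\int_F (S_{a',h}u)^2\,d\omega_\kappa\lesssim (M')^2\times(\text{finite})$, so $S_{a',h}u(x)<\infty$ for a.e.\ $x\in F$.

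The harder, converse half is to recover the non-tangential limit from the finiteness of the area integral a.e.\ on $F$, and this is where I expect the main obstacle. At a point $x\in F$ that is simultaneously a Lebesgue density point of $F$ and a point with $S_{a',h}u(x)<\infty$, I would verify a Cauchy criterion along the cone: the oscillation of $u$ between two heights $x_0>x_0'$ inside a slightly narrower cone $\Gamma_{a''}(x)$ is estimated, through the mean-value property and integration of $\nabla_{\tilde\kappa}u$ along paths in the cone, by a tail $\iint_{\Gamma(x)\cap\{0<t_0<x_0\}}\Delta_{\tilde\kappa}(u^2)\,(\cdots)$ of the total energy, which tends to $0$ as $x_0\to0$ since that energy is finite. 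The delicate point, absent in the Euclidean case, is that $\Delta_{\tilde\kappa}(u^2)$ couples the value of $u$ at a point with its $G$-reflections, so the pointwise gradient estimate must be supplemented by control of the reflected contributions; this is precisely where the $G$-invariance of $F$ together with the support and positivity properties of the representing measures $\mu^\kappa_x$ and $\rho^\kappa_{x,t}$ (and hence of $\tau_x$) are brought to bear.

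Combining the two implications gives, for a.e.\ $x\in F$ and hence (after exhausting $E$ by such sets and letting $a,h,M$ vary) for a.e.\ $x\in E$, that non-tangential boundedness yields finiteness of the area integral, which yields a finite non-tangential limit; together with the trivial direction this proves the stated equivalence. As an alternative to the third paragraph one could instead solve the Dirichlet problem on $\Omega$, represent the bounded $\tilde\kappa$-harmonic $u$ by a $\tilde\kappa$-harmonic measure on $\partial\Omega$, and transfer to the half-space to invoke the Fatou theorem for the $\kappa$-Poisson integral (Corollary \ref{Poisson-d}); but constructing the harmonic measure on the sawtooth in the Dunkl setting appears harder than the direct energy estimate, so I would favour the area-integral route.
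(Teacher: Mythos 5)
A preliminary remark: the paper does not prove this theorem at all --- it is imported verbatim from \cite[Theorem 2.1]{JL2} --- so there is no in-paper proof to compare with; the introduction only tells us that in \cite{JL2} the area integral $S_{a,h}u$ is the tool used to study local existence of non-tangential boundary values. Measured on its own terms, your outline follows the Calder\'on--Stein scheme, and its first half is essentially right: the ``only if'' direction is indeed trivial, and the implication ``non-tangential boundedness $\Rightarrow$ finiteness of the area integral'', via Green's formula on a $G$-invariant sawtooth together with the identity $\tfrac12\Delta_{\tilde{\kappa}}(u^2)=|\nabla_{\tilde{\kappa}}u|^2+\sum_{\alpha\in R_+}\kappa(\alpha)\langle\alpha,x\rangle^{-2}(u-\sigma_{\alpha}u)^2$ for $\tilde{\kappa}$-harmonic $u$, is the standard argument, modulo technicalities you gloss over (the sawtooth has only Lipschitz boundary, so Proposition \ref{Green-formula-2-a}, which requires a regular domain and $C^2(\overline{\Omega})$ data, must be applied to smoothed, shrunken domains and passed to the limit; and since $u$ itself is not $G$-invariant, the set $\{u^*_{a,h}\le M\}$ is not $G$-invariant, so $F$ has to be extracted from the intersection of its $G$-translates, not merely ``chosen $G$-invariant'').

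The genuine gap is your third paragraph, i.e.\ the implication ``area integral finite a.e.\ $\Rightarrow$ non-tangential limits a.e.'', which is the deepest step of the whole theory. The mechanism you propose --- integrate $\nabla_{\tilde{\kappa}}u$ along paths in a single cone and bound the oscillation by the tail of the energy --- fails already in the Euclidean case: interior estimates bound the oscillation of $u$ across the dyadic level $x_0\approx 2^{-k}$ of the cone by $Ce_k^{1/2}$, where $e_k$ is the weighted energy of that level, and the resulting series $\sum_k e_k^{1/2}$ can diverge even when $\sum_k e_k<\infty$ (take $e_k=k^{-2}$), so no Cauchy criterion along the cone follows from finiteness of the area integral. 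Moreover, the classical proof of this implication (\cite[Ch.~VII]{St2}) deduces from the finite area integral the non-tangential \emph{boundedness} of $u$ and then cites the local Fatou theorem --- precisely the statement you are proving --- so your plan is circular unless you supply an independent argument. The non-circular route is exactly the ``alternative'' you discarded, and it is much softer than you fear: no Dirichlet problem or harmonic measure on $\Omega$ is needed, only the maximum principle (\cite[Theorem 4.2]{Ro2}, already invoked in Section 6 of this paper) to dominate $|u-P[f]|$ on the sawtooth by $CM\bigl(P[\chi_{F^{c}}]+x_0\bigr)$, where $f$ is a weak-$*$ limit of slices $u(\epsilon_k,\cdot)$, followed by Corollary \ref{Poisson-d} and the observation that this majorant tends to $0$ non-tangentially at a.e.\ density point of $F$. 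Finally, one assertion in that paragraph is false as stated: positivity of the generalized translation $\tau_x$, which you want to apply to the non-radial function $\Delta_{\tilde{\kappa}}(u^2)$, is known only for radial functions (via $\rho^{\kappa}_{x,t}$); for general functions it is open, and in the rank-one case the explicit translation kernel actually takes negative values, so this ingredient cannot simply be ``brought to bear''.
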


\subsection{The conjugate $\kappa$-Poisson integrals and the $\kappa$-Riesz transforms}

In this subsection we consider the $\kappa$-Riesz transforms by the ``analytic" approach, i.e., starting from the associated conjugate Poisson integrals. This is different from the way in \cite{AS1} and some conclusions about these transform can be implied by the know results of the $\tilde{\kappa}$-harmonic functions. The detailed proofs of the claims in this subsection will be given in a separate paper (cf. \cite{JL3}).

For $f\in L^p_{\kappa}(\RR^d)$ ($1\le p<\infty$) and for $j=1,\cdots,d$, the $j$th conjugate $\kappa$-Poisson integral of $f$ is defined by
\begin{align*}
(Q_jf)(x_0,x)=c_{\kappa}\int_{\RR^{d}}f(t)(\tau_{x}Q^{(j)}_{x_0})(-t)\,d\omega_{\kappa}(t),\qquad (x_0,x)\in\RR^{1+d}_+,
\end{align*}
where
\begin{align*}
Q^{(j)}_{x_0}(x)=\frac{c_{d,\kappa}x_j}{(x_0^2+|x|^2)^{|\kappa|+(d+1)/2}},\qquad (x_0,x)\in\RR^{1+d}_+.
\end{align*}
The function $(\tau_{x}Q^{(j)}_{x_0})(-t)$ is called the $j$th conjugate $\kappa$-Poisson kernel.

\begin{proposition}\label{conjugate-kernel-a}
{\rm (i)} If we set $u_0(x_0,x)=P_{x_0}(x), u_1(x_0,x)=Q^{(1)}_{x_0}(x),\cdots,u_d(x_0,x)=Q^{(d)}_{x_0}(x)$ for $(x_0,x)\in\RR^{1+d}_+$, then they
satisfies the generalized Cauchy-Riemann equations (the M. Riesz system)
\begin{align}\label{C-R-1}
\left\{\begin{array}{l}
D_ju_k=D_ku_j,\qquad 0\le j<k\le d,\\
\sum_{j=0}^dD_ju_j=0,
\end{array}\right.
\end{align}
where $D_0=\partial_0$.

{\rm (ii)} For $x_0>0$ and $j=1,\cdots,d$, $(\SF_{\kappa}Q^{(j)}_{x_0})(\xi)=-i(\xi_j/|\xi|)e^{-x_0|\xi|}$.

{\rm (iii)} For $j=1,\cdots,d$, and for $(x_0,x)\in\RR^{1+d}_+$, the function $t\mapsto(\tau_{x}Q^{(j)}_{x_0})(-t)$ is in $L^q_{\kappa}(\RR^d)$ for $1<q\le\infty$.

{\rm (iv)} For $j=1,\cdots,d$, and for $(x_0,x)\in\RR^{1+d}_+$ and $t\in\RR^d$,
\begin{align}\label{conjugate-kernel-1}
(\tau_{x}Q^{(j)}_{x_0})(-t)=\frac{x_j-t_j}{x_0}(\tau_{x}P_{x_0})(-t)
\end{align}
and
\begin{align}\label{conjugate-kernel-2}
(\tau_{x}Q^{(j)}_{x_0})(-t)=\frac{c_{d,\kappa}}{2|\kappa|+d-1}\int_{\RR^d}D_j^t\left[\frac{1}{(x_0^2+A(x,t;\xi)^2)^{|\kappa|+(d-1)/2}}\right]\,d\mu^{\kappa}_{x}(\xi),
\end{align}
where
\begin{align*}
A(x,t;\xi)=\sqrt{|x|^2+|t|^2-2\langle t,\xi\rangle}.
\end{align*}
\end{proposition}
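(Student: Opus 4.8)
The plan is to handle all four parts through a single device: the radial potential
\[
\Phi_{x_0}(x)=-\frac{c_{d,\kappa}}{2(\beta-1)}\,(x_0^2+|x|^2)^{-(\beta-1)},\qquad \beta=|\kappa|+\tfrac{d+1}{2},
\]
together with the elementary identity $Q^{(j)}_{x_0}(x)=\frac{x_j}{x_0}P_{x_0}(x)$. Since $\Phi_{x_0}$ depends on $x$ only through $|x|$ it is $G$-invariant, so the reflection terms in $D_j$ drop out and $D_j\Phi_{x_0}=\partial_j\Phi_{x_0}$; a direct differentiation then gives $\partial_0\Phi_{x_0}=P_{x_0}$ and $D_j\Phi_{x_0}=Q^{(j)}_{x_0}$ for $1\le j\le d$. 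Thus, with $D_0=\partial_0$, every $u_j$ ($0\le j\le d$) has the form $u_j=D_j\Phi_{x_0}$ for the common potential $\Phi_{x_0}$. This yields part (i) at once: the equations $D_ju_k=D_ku_j$ follow from the commutativity of $D_0,\dots,D_d$ (the $D_j$ commute by the Dunkl theory, and each commutes with $\partial_0$, which acts on the separate variable $x_0$), while $\sum_{j=0}^d D_ju_j=\Delta_{\tilde{\kappa}}\Phi_{x_0}$ vanishes because $2(\beta-1)=2|\kappa|+d-1$ is exactly the exponent that makes $(x_0^2+|x|^2)^{-(\beta-1)}$ a constant multiple of the fundamental solution of $\Delta_{\tilde{\kappa}}$ on $\RR^{1+d}$; equivalently, one checks $\Delta_{\tilde{\kappa}}r^{-2(\beta-1)}=0$ for $r=(x_0^2+|x|^2)^{1/2}>0$ using the radial form of $\Delta_{\tilde{\kappa}}$ in homogeneous dimension $2|\kappa|+d+1$.

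For part (ii) I would avoid transforming $\Phi_{x_0}$ directly and instead use $Q^{(j)}_{x_0}=x_0^{-1}x_jP_{x_0}$ together with $(\SF_{\kappa}P_{x_0})(\xi)=e^{-x_0|\xi|}$. By Proposition \ref{transform-a}(iii), $\SF_{\kappa}(x_jP_{x_0})=iD_j\SF_{\kappa}P_{x_0}=iD_j e^{-x_0|\xi|}$, and since $e^{-x_0|\xi|}$ is radial the reflection part of $D_j$ again vanishes, so $D_je^{-x_0|\xi|}=\partial_{\xi_j}e^{-x_0|\xi|}=-x_0(\xi_j/|\xi|)e^{-x_0|\xi|}$. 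Dividing by $x_0$ gives precisely $(\SF_{\kappa}Q^{(j)}_{x_0})(\xi)=-i(\xi_j/|\xi|)e^{-x_0|\xi|}$. The only delicate points are that $P_{x_0}$ is smooth but has merely polynomial decay (so it is not Schwartz) and that $x_jP_{x_0}\notin L^1_{\kappa}(\RR^d)$, so the identity $\SF_{\kappa}(x_jf)=iD_j\SF_{\kappa}f$ must be justified for this $f$ by density/approximation, and the mild singularity of $\xi_j/|\xi|$ at the origin must be absorbed.

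Part (iv) splits into two genuinely different formulas. For (\ref{conjugate-kernel-1}) the key is the covariance identity $(\tau_x(x_jh))(t)=(x_j+t_j)(\tau_xh)(t)$ valid for radial $h$ (here $x_jh$ denotes the function $y\mapsto y_jh(y)$), which I would prove on the transform side: from $\SF_{\kappa}(\tau_x(x_jh))(\xi)=E_{\kappa}(i\xi,x)\,iD_j\SF_{\kappa}h(\xi)$ and the fact that $\SF_{\kappa}h$ is radial, the Dunkl--Leibniz rule collapses to the ordinary product rule, so with the eigenfunction relation $D_j^\xi E_{\kappa}(i\xi,x)=ix_jE_{\kappa}(i\xi,x)$ one finds that $\SF_{\kappa}((x_j+t_j)\tau_xh)(\xi)$ equals the same expression; inverting and then replacing $t$ by $-t$ with $h=P_{x_0}$ yields (\ref{conjugate-kernel-1}). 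For (\ref{conjugate-kernel-2}) I would use $Q^{(j)}_{x_0}=D_j\Phi_{x_0}$ and the commutation $\tau_x(D_j\Phi_{x_0})=D_j(\tau_x\Phi_{x_0})$ of Proposition \ref{translation-2-b}(i); the radial translation formula (\ref{translation-2-4}) expresses $(\tau_x\Phi_{x_0})(s)$ as $-\frac{c_{d,\kappa}}{2(\beta-1)}\int(x_0^2+|x|^2+|s|^2+2\langle s,\xi\rangle)^{-(\beta-1)}\,d\mu^{\kappa}_{x}(\xi)$, whose value at $s=-t$ has argument $x_0^2+A(x,t;\xi)^2$. It then remains to move $D_j$ inside the integral and to verify the reflection bookkeeping $[D_j^sG(s)]_{s=-t}=-D_j^t[G(-t)]$, which, with $2(\beta-1)=2|\kappa|+d-1$, produces exactly (\ref{conjugate-kernel-2}).

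Finally, part (iii) is most easily deduced from (\ref{conjugate-kernel-1}): since $P_{x_0}\ge0$ is radial, $(\tau_xP_{x_0})(-t)\ge0$, and the explicit formula $(\tau_xP_{x_0})(-t)=\int c_{d,\kappa}x_0(x_0^2+A(x,t;\xi)^2)^{-\beta}\,d\mu^{\kappa}_{x}(\xi)$ with $|\xi|\le|x|$ gives $(\tau_xP_{x_0})(-t)\lesssim x_0|t|^{-2\beta}$ for large $|t|$; multiplying by $|x_j-t_j|/x_0\lesssim|t|/x_0$ shows that $(\tau_xQ^{(j)}_{x_0})(-t)$ decays like $|t|^{1-2\beta}=|t|^{-N}$, which lies in $L^q_{\kappa}(\RR^d)$ precisely for $1<q\le\infty$ (the borderline $q=1$ failing logarithmically). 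I expect the principal obstacle to be not any single computation but the uniform functional-analytic justification: the kernels $P_{x_0}$ and $x_jP_{x_0}$ are not Schwartz, and $x_jP_{x_0}\notin L^1_{\kappa}$, so every use of the transform identities, of differentiation under the integral sign, and of the Leibniz and reflection manipulations must be legitimized by approximation and by the support and decay properties of the measures $\mu^{\kappa}_{x}$.
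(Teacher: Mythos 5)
Your proposal is correct in substance, and on two of the four claims it takes a genuinely different route from the paper's. For (\ref{conjugate-kernel-2}) you do exactly what the paper does: write $Q^{(j)}_{x_0}=D_j\Phi_{x_0}$ for the radial potential $\Phi_{x_0}$, commute $\tau_x$ with $D_j$ via Proposition \ref{translation-2-b}(i), apply the radial translation formula (\ref{translation-2-4}), and track the sign flip $[D_j^sG(s)]_{s=-t}=-D_j^t[G(-t)]$. The real differences are in (ii) and (\ref{conjugate-kernel-1}). For (ii) the paper never applies a transform identity to the non-Schwartz, non-$L^1_{\kappa}$ function $x_jP_{x_0}$: it defines $\phi$ as the inverse transform of the claimed symbol, computes $\partial_{x_0}\phi=D_j^x[P_{x_0}]$ by differentiating an absolutely convergent integral and moving $D_j$ onto $E_{\kappa}(i\xi,x)$ via (\ref{Dunkl-kernel-eigenfunction-1}), observes that this equals $\partial_{x_0}Q^{(j)}_{x_0}$ explicitly, and concludes $\phi=Q^{(j)}_{x_0}$ since both vanish as $x_0\to+\infty$. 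Your route through $\SF_{\kappa}(x_jP_{x_0})=iD_j\SF_{\kappa}P_{x_0}$ is shorter on paper, but it shifts the entire burden onto the density/approximation step you flag and do not execute (it is doable: regularize by $e^{-\epsilon|x|^2}$, pass to the limit in $L^2_{\kappa}$, and move $D_j$ to test functions by antisymmetry; but this is precisely the work the paper's ODE-in-$x_0$ device is designed to avoid). For (\ref{conjugate-kernel-1}) the paper integrates the relation $\partial_{x_0}[(\tau_{x}Q^{(j)}_{x_0})(-t)]=-D_j^t[(\tau_{x}P_{x_0})(-t)]$ (which rests on part (i)) from $x_0$ to $\infty$ and uses the subordination formula with the heat-kernel identity $D_j^th_v(x,t)=\frac{x_j-t_j}{2v}h_v(x,t)$; this buys, as a byproduct, the representation (\ref{conjugate-kernel-4}) that is reused later for the truncated kernels (\ref{truncated-Riesz-kernel-2}). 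Your multiplication-covariance identity $(\tau_x(y_jh))(t)=(x_j+t_j)(\tau_xh)(t)$ for radial $h$ is a correct and attractive alternative (the Leibniz/eigenfunction computation on the transform side checks out, and it makes (\ref{conjugate-kernel-1}) independent of part (i)), but again every step applies (\ref{translation-2-0}) and Proposition \ref{transform-a}(iii) outside the classes in which the paper states them, so the same approximation caveat attaches. Finally, parts (i) and (iii) are not proved in the paper at all (they are deferred to \cite{JL3}); your potential argument for (i) — commutativity of the $D_j$ plus $\tilde{\kappa}$-harmonicity of $r^{-(2|\kappa|+d-1)}$ in homogeneous dimension $2|\kappa|+d+1$ — and your deduction of (iii) from (\ref{conjugate-kernel-1}) together with the decay $(\tau_xP_{x_0})(-t)\lesssim x_0|t|^{-2|\kappa|-d-1}$, giving membership in $L^q_{\kappa}$ exactly for $q>1$, are both sound.
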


To show part (ii), define
\begin{align*}
\phi(x_0,x)=c_{\kappa}\int_{\RR^{d}}(-i\xi_j/|\xi|)e^{-x_0|\xi|}E_{\kappa}(i\xi,x)\,d\omega_{\kappa}(\xi),\qquad (x_0,x)\in\RR^{1+d}_+.
\end{align*}
The Lebesgue dominated convergence theorem asserts that $\lim_{x_0\rightarrow+\infty}\phi(x_0,x)=0$ uniformly for $x\in\RR^d$, and by use of (\ref{Dunkl-kernel-eigenfunction-1}),
\begin{align*}
\partial_{x_0}\phi(x_0,x) &=D_j^x\left(c_{\kappa}\int_{\RR^{d}}e^{-x_0|\xi|}E_{\kappa}(i\xi,x)\,d\omega_{\kappa}(\xi)\right)\\
&=D_j^x\left[P_{x_0}(x)\right]
=-\frac{c_{d,\kappa}(2|\kappa|+d+1)\,x_0x_j}{(x_0^2+|x|^2)^{|\kappa|+(d+3)/2}},\qquad (x_0,x)\in\RR^{1+d}_+.
\end{align*}
It follows that $\phi(x_0,x)=Q^{(j)}_{x_0}(x)$, and then part (ii) is concluded.

As for part (iv), we first prove (\ref{conjugate-kernel-2}). Since
\begin{align*}
(\tau_{x}Q^{(j)}_{x_0})(t)
=-\frac{c_{d,\kappa}}{2|\kappa|+d-1}\tau_x\left[D_j\left(\frac{1}{(x_0^2+|\cdot|^2)^{|\kappa|+(d-1)/2}}\right)\right](t),
\end{align*}
by Proposition \ref{translation-2-b}(i) and (\ref{translation-2-4}) we have
\begin{align*}
(\tau_{x}Q^{(j)}_{x_0})(t)
&=-\frac{c_{d,\kappa}}{2|\kappa|+d-1}D_j^t\left[\tau_x\left(\frac{1}{(x_0^2+|\cdot|^2)^{|\kappa|+(d-1)/2}}\right)\right](t)\\
&=-\frac{c_{d,\kappa}}{2|\kappa|+d-1}\int_{\RR^d}D_j^t\left[\frac{1}{(x_0^2+|x|^2+|t|^2+2\langle t,\xi\rangle)^{|\kappa|+(d-1)/2}}\right]\,d\mu^{\kappa}_{x}(\xi).
\end{align*}
From this the formula (\ref{conjugate-kernel-2}) follows.

To show (\ref{conjugate-kernel-1}), we use the subordination formula for the $\kappa$-Poisson kernel (see \cite[(5.1)]{ADH1}, for example) $(\tau_{x}P_{x_0})(-t)=\int_0^{\infty}e^{-u}h_{x_0^2/4u}(x,t)\,du/\sqrt{\pi u}$, or its equivalent form,
\begin{align}\label{Poisson-kernel-2}
(\tau_{x}P_{x_0})(-t)=\frac{x_0}{2\sqrt{\pi}}\int_0^{\infty}e^{-\frac{x_0^2}{4v}}\,h_v(x,t)\,\frac{dv}{v^{3/2}},
\end{align}
where $h_v(x,t)$ is the heat kernel associated to the Dunkl operators given by (cf. \cite{Ro2})
\begin{align}\label{heat-kernel-1}
h_v(x,t)=\frac{1}{(2v)^{|\kappa|+d/2}}\exp\left(-\frac{|x|^2+|t|^2}{4v}\right)
E_{\kappa}\left(\frac{x}{\sqrt{2v}},\frac{t}{\sqrt{2v}}\right),\quad x,t\in\RR^d,\,\,v>0.
\end{align}
By Propositions \ref{translation-2-b}(i), \ref{translation-2-b-1}(ii) and \ref{conjugate-kernel-a}(i),
$$
\partial_{x_0}\left[(\tau_{x}Q^{(j)}_{x_0})(-t)\right]=-D_j^t\left[(\tau_{x}P_{x_0})(-t)\right],
$$
and since, in view of (\ref{conjugate-kernel-2}), $\lim_{x_0\rightarrow+\infty}(\tau_{x}Q^{(j)}_{x_0})(-t)=0$ uniformly for $x,t\in\RR^d$, it follows that
\begin{align}\label{conjugate-kernel-3}
(\tau_{x}Q^{(j)}_{x_0})(-t)=\int_{x_0}^{\infty}D_j^t\left[(\tau_{x}P_{\rho})(-t)\right]d\rho.
\end{align}
Now applying (\ref{Dunkl-kernel-eigenfunction-1}), direction calculations yield
\begin{align}\label{heat-kernel-2}
D_j^th_v(x,t)=\frac{x_j-t_j}{2v}h_v(x,t),
\end{align}
and then from (\ref{Poisson-kernel-2}) and (\ref{conjugate-kernel-3}), Fubini's theorem gives
\begin{align}\label{conjugate-kernel-4}
(\tau_{x}Q^{(j)}_{x_0})(-t)=\frac{x_j-t_j}{2\sqrt{\pi}}\int_0^{\infty}e^{-\frac{x_0^2}{4v}}\,h_v(x,t)\,\frac{dv}{v^{3/2}},
\end{align}
Thus the formula (\ref{conjugate-kernel-1}) is proved.

\begin{proposition}\label{conjugate-Poisson-a}
{\rm (i)} If $f\in L^p_{\kappa}(\RR^d)$ for $1\le p<\infty$, then the functions
\begin{align*}
u_0(x_0,x)=(Pf)(x_0,x),\,\, u_1(x_0,x)=(Q_1f)(x_0,x),\,\, \cdots,\,\, u_d(x_0,x)=(Q_df)(x_0,x)
\end{align*}
satisfy the equations (\ref{C-R-1}) in $\RR^{1+d}_+$.

{\rm (ii)} If $f\in L^p_{\kappa}(\RR^d)$ for $1\le p\le2$, then for $j=1,\cdots,d$,
\begin{align*}
(Q_jf)(x_0,x)=c_{\kappa}\int_{\RR^{d}}\left(-i\frac{\xi_j}{|\xi|}\right)e^{-x_0|\xi|}\left(\SF_{\kappa}f\right)(\xi)E_{\kappa}(i\xi,x)\,d\omega_{\kappa}(\xi),\,\,\, (x_0,x)\in\RR^{1+d}_+.
\end{align*}
\end{proposition}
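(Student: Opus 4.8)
The plan is to establish part (ii) first as an identity for the Dunkl transform, and then to derive part (i) from the fact, recorded in Proposition~\ref{conjugate-kernel-a}(i), that the (conjugate) $\kappa$-Poisson kernels already solve the M.~Riesz system~(\ref{C-R-1}); the system is transferred from the kernels to $Pf$ and the $Q_jf$ by differentiating under the integral sign.

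For part (ii), I would first record that $Q^{(j)}_{x_0}\in L_{\kappa}^2(\RR^d)$: indeed $Q^{(j)}_{x_0}(x)=O(|x|^{-2|\kappa|-d})$ as $|x|\to\infty$ and vanishes where $x_j=0$, so $\int_{\RR^d}|Q^{(j)}_{x_0}|^2\,d\omega_{\kappa}<\infty$. For $f\in L_{\kappa}^2(\RR^d)$, comparing the definition of $Q_jf$ with the convolution formula~(\ref{convolution-2-2}) shows that $(Q_jf)(x_0,\cdot)=f\ast_{\kappa}Q^{(j)}_{x_0}$; Proposition~\ref{convolution-2-a}(i) then identifies this with the integral~(\ref{convolution-2-1}), and substituting $(\SF_{\kappa}Q^{(j)}_{x_0})(\xi)=-i(\xi_j/|\xi|)e^{-x_0|\xi|}$ from Proposition~\ref{conjugate-kernel-a}(ii) gives the asserted formula for $p=2$. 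To pass to $1\le p<2$, I would approximate $f$ in $L_{\kappa}^p(\RR^d)$ by bounded, compactly supported functions, which lie in $L_{\kappa}^p\cap L_{\kappa}^2$ and are dense, and on which the identity already holds. For fixed $(x_0,x)$, the left-hand side is a bounded functional of $f\in L_{\kappa}^p$ by H\"older's inequality and Proposition~\ref{conjugate-kernel-a}(iii); the right-hand side is bounded because $|\xi_j|/|\xi|\le1$, $|E_{\kappa}(i\xi,x)|\le1$ and $e^{-x_0|\cdot|}\in L_{\kappa}^p(\RR^d)$, so H\"older together with the Hausdorff--Young bound $\|\SF_{\kappa}f\|_{L_{\kappa}^{p'}}\le\|f\|_{L_{\kappa}^p}$, obtained by interpolating Proposition~\ref{transform-a}(i) and (v), controls it by $\|f\|_{L_{\kappa}^p}$. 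Since both sides are continuous in $\|\cdot\|_{L_{\kappa}^p}$ and agree on a dense subspace, they agree throughout.

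For part (i), fix $t\in\RR^d$ and view $v_0(x_0,x)=(\tau_{x}P_{x_0})(-t)$ and $v_k(x_0,x)=(\tau_{x}Q^{(k)}_{x_0})(-t)$ ($1\le k\le d$) as functions on $\RR^{1+d}_+$. The crucial point is the commutation identity $D_j^x[(\tau_{x}g)(-t)]=[\tau_{x}(D_jg)](-t)$, valid for $g\in C^{\infty}(\RR^d)$ and $1\le j\le d$: it follows from $D_j(\tau_{x}g)=\tau_{x}(D_jg)$ and the symmetry $(\tau_{x}g)(s)=(\tau_{s}g)(x)$ in Proposition~\ref{translation-2-b}(i) and (iv), which together let one move the Dunkl operator out of the $x$-slot. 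Likewise $\partial_{x_0}$ passes through $\tau_{x}$, because in the representation~(\ref{translation-2-4}) the measure $\mu^{\kappa}_{x}$ does not depend on $x_0$. Applying $\tau_{x}$ to the system~(\ref{C-R-1}) for the kernels $u_0=P_{x_0}$, $u_k=Q^{(k)}_{x_0}$ (Proposition~\ref{conjugate-kernel-a}(i)) and evaluating at $-t$, each equation for $v_0,v_k$ reduces term by term to the corresponding equation for $u_0,u_k$; for instance $\sum_{j=0}^{d}D_jv_j=[\tau_{x}(\partial_{x_0}P_{x_0}+\sum_{k=1}^{d}D_kQ^{(k)}_{x_0})](-t)=0$, while the remaining equations follow from $\partial_{x_0}Q^{(k)}_{x_0}=D_kP_{x_0}$ and $D_jQ^{(k)}_{x_0}=D_kQ^{(j)}_{x_0}$. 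Finally, differentiating the integrals defining $Pf$ and $Q_kf$ under the integral sign, legitimate by the kernel estimates exactly as for the $\kappa$-Poisson integral in the remark following~(\ref{Poisson-2-2}), transfers~(\ref{C-R-1}) from $v_0,v_k$ to $u_0=Pf$, $u_k=Q_kf$.

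I expect the main obstacle to be the rigorous justification of the two interchanges in part (i): the commutation $D_j^x[(\tau_{x}g)(-t)]=[\tau_{x}(D_jg)](-t)$ together with the passage of $\partial_{x_0}$ through $\tau_{x}$, where the reflection $s\mapsto-s$ and the $x_0$-dependence of the kernels must be handled carefully, and the differentiation under the integral sign, which requires uniform local bounds on the kernels and their $(x_0,x)$-derivatives so that the integrals converge and may be differentiated for every $f\in L_{\kappa}^p(\RR^d)$, $1\le p<\infty$.
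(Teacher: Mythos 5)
Your attempt cannot be compared against an in-paper argument, because the paper gives none: Section 3.3 states explicitly that the detailed proofs of the claims in that subsection are deferred to the separate paper \cite{JL3}. Measured against the toolkit the paper does supply, your proof is essentially correct. For part (ii), the chain ``$Q^{(j)}_{x_0}\in L^2_{\kappa}(\RR^d)$, hence $(Q_jf)(x_0,\cdot)=f\ast_{\kappa}Q^{(j)}_{x_0}$ by (\ref{convolution-2-2}), hence the spectral formula from Proposition \ref{convolution-2-a}(i) and Proposition \ref{conjugate-kernel-a}(ii), hence the range $1\le p<2$ by density, H\"older against Proposition \ref{conjugate-kernel-a}(iii), and Hausdorff--Young'' is the natural route, and each step is backed by a quoted result. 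One point you should make explicit there: the kernel $(\tau_{x}Q^{(j)}_{x_0})(-t)$ in the definition of $Q_jf$ is the pointwise ($C^{\infty}$-theoretic) translation, while Proposition \ref{convolution-2-a}(i) concerns the $L^2_{\kappa}$-theoretic one defined through (\ref{translation-2-0}); the two must be identified, e.g.\ by approximating $Q^{(j)}_{x_0}$ in $L^2_{\kappa}$ by Schwartz functions, for which Proposition \ref{translation-2-b}(iii) provides the compatibility. For part (i), transferring (\ref{C-R-1}) from the kernels (Proposition \ref{conjugate-kernel-a}(i)) to $Pf,Q_1f,\dots,Q_df$ via the commutation identity plus differentiation under the integral is the right strategy, and your derivation of $D_j^x[(\tau_{x}g)(-t)]=[\tau_{x}(D_jg)](-t)$ from Proposition \ref{translation-2-b}(i) and (iv) is sound; since you differentiate in $x$ rather than $t$, no sign from the reflection $s\mapsto-s$ intervenes, which is precisely why the system survives unchanged.

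The one concrete flaw is your justification that $\partial_{x_0}$ passes through $\tau_{x}$: the representation (\ref{translation-2-4}) is valid only for radial functions, so it covers $P_{x_0}$ but not the conjugate kernels $Q^{(k)}_{x_0}$, which are odd in $x_k$ and hence not radial. The repair stays inside the paper: the difference quotients $h^{-1}\bigl(Q^{(k)}_{x_0+h}-Q^{(k)}_{x_0}\bigr)$ converge to $\partial_{x_0}Q^{(k)}_{x_0}$ in the topology of $C^{\infty}(\RR^d)$ as $h\to0$, and $\tau_{x}$ is a continuous linear map of $C^{\infty}(\RR^d)$ into itself by Proposition \ref{translation-2-b}(i), whence $\partial_{x_0}[(\tau_{x}Q^{(k)}_{x_0})(-t)]=[\tau_{x}(\partial_{x_0}Q^{(k)}_{x_0})](-t)$. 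Similarly, for the differentiation under the integral sign you should not merely say ``exactly as'' in the remark after (\ref{Poisson-2-2}): the locally uniform, $t$-integrable bounds on the conjugate kernels and their first derivatives have to be produced, e.g.\ from (\ref{conjugate-kernel-1}) combined with (\ref{Poisson-ker-bound-1}) and (\ref{Poisson-ker-bound-2}), or from (\ref{conjugate-kernel-5}). With these two repairs your outline is complete.
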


The following theorem extends the theorem of M. Riesz and that of Kolmogorov to the conjugate $\kappa$-Poisson integrals.

\begin{theorem} \label{conjugacy-thm-a}
{\rm (i)} There is $C_1>0$ such that $\int_{\{x:\,|(Q_jf)(x_0,x)|>s\}}d\omega_{\kappa}\le C_1\|f\|_{L^1_{\kappa}}/s$ holds for all $f \in
L^1_{\kappa}(\RR^d)$, $s>0$ and for all $x_0>0$, where $j=1,\cdots,d$.

{\rm (ii)} If $1 < p < \infty$, then there is $C_p>0$ such that
$\|(Q_jf)(x_0,\cdot)\|_{L^p_{\kappa}} \le C_p \|f\|_{L^p_{\kappa}}$ for all $f \in
L^p_{\kappa}(\RR^d)$ and $x_0>0$, where $j=1,\cdots,d$.
\end{theorem}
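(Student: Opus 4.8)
The plan is to treat parts (i) and (ii) simultaneously through the vector-valued function $F=(Pf,Q_1f,\dots,Q_df)$ on $\RR^{1+d}_+$, exploiting that it satisfies the generalized Cauchy--Riemann system (\ref{C-R-1}) and hence that a suitable power $|F|^q$ with $q<1$ is $\tilde\kappa$-subharmonic. The two endpoints $p=2$ and $p=1$ are treated directly, and the intermediate range $1<p<\infty$ is then filled in by interpolation and duality. I would first record the $L^2$ bound: by Proposition \ref{conjugate-Poisson-a}(ii), for $f\in L^2_\kappa(\RR^d)$ the Dunkl transform of $(Q_jf)(x_0,\cdot)$ is $-i(\xi_j/|\xi|)e^{-x_0|\xi|}(\SF_\kappa f)(\xi)$; since this multiplier has modulus at most $1$, Plancherel's identity (Proposition \ref{transform-a}(v)) yields $\|(Q_jf)(x_0,\cdot)\|_{L^2_\kappa}\le\|f\|_{L^2_\kappa}$ uniformly in $x_0$, which is (ii) for $p=2$.

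The same Fourier formula shows that, at fixed level $x_0$, the operator $T_{x_0}\colon f\mapsto(Q_jf)(x_0,\cdot)$ has a purely imaginary multiplier, so that $T_{x_0}^{*}=-T_{x_0}$ on $L^2_\kappa$. Consequently, once (ii) is known for some $p\in(1,2)$ it follows for the conjugate exponent $p'\in(2,\infty)$ by duality, and once (i) is known, Marcinkiewicz interpolation between the weak $(1,1)$ bound and the strong $(2,2)$ bound on the space of homogeneous type $(\RR^d,|\cdot|,\omega_\kappa)$ gives (ii) for all $1<p<2$. Thus the whole theorem reduces to proving the weak-type estimate (i) and, as its engine, the subharmonicity below.

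The analytic heart of the matter is a subharmonicity estimate. By Proposition \ref{conjugate-Poisson-a}(i) the functions $u_0=Pf,\,u_1=Q_1f,\dots,u_d=Q_df$ satisfy (\ref{C-R-1}) in $\RR^{1+d}_+$, and I would prove that $|F|^q=(\sum_{k=0}^{d}u_k^2)^{q/2}$ is $\tilde\kappa$-subharmonic, i.e. $\Delta_{\tilde\kappa}|F|^q\ge 0$, for every $q\ge\frac{N-1}{N}$, where $N=2|\kappa|+d$ is the homogeneous dimension. For $f\ge 0$ the $\kappa$-Poisson integral $u_0$ is strictly positive, so $|F|\ge u_0>0$ and $|F|^q$ is smooth, allowing a direct computation of $\Delta_{\tilde\kappa}|F|^q$. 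The local (Euclidean) part produces the classical Stein--Weiss expression, in which the algebraic constraints (\ref{C-R-1}) force a lower bound on $|\nabla_{\tilde\kappa}F|^2$ in terms of $|\nabla_{\tilde\kappa}|F||^2$; the nonlocal reflection terms $-2\sum_{\alpha\in R_+}\kappa(\alpha)\langle\alpha,x\rangle^{-2}(1-\sigma_\alpha)|F|^q$ of $\Delta_{\tilde\kappa}$ must then be combined with this, and it is their contribution that replaces the Euclidean dimension $d$ by $N$ and yields the sharp threshold $\frac{N-1}{N}$. Carrying out this nonlocal computation --- verifying that the reflection terms do not destroy, but rather sharpen, the positivity of $\Delta_{\tilde\kappa}|F|^q$ --- is the step I expect to be the main obstacle, since this is precisely where the Dunkl structure departs from the classical theory; when $\kappa=0$ one has $N=d$ and recovers the usual exponent $\frac{d-1}{d}$.

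Granting the subharmonicity, I would finish as follows. For (i), split $f$ into its positive and negative (and real and imaginary) parts to reduce to $f\ge 0$, and argue first for $f\in L^1_\kappa\cap L^2_\kappa$. The subharmonic majorization method of Kolmogorov--Stein--Weiss then applies: because $|F|^q$ with $q=\frac{N-1}{N}<1$ is $\tilde\kappa$-subharmonic, one can dominate $|F(x_0,\cdot)|$ by quantities governed by the maximal $\kappa$-Poisson integral of $f$, whose weak $(1,1)$ bound is supplied by Proposition \ref{Poisson-Integral-Max-N}; feeding this into a distributional (Kolmogorov-type) estimate controls $\omega_\kappa\{x:|(Q_jf)(x_0,x)|>s\}$ by $\|f\|_{L^1_\kappa}/s$ uniformly in $x_0$, which is (i). Interpolation and duality as described in the second paragraph then deliver (ii) for all $1<p<\infty$, and a routine density argument, using that $L^1_\kappa\cap L^2_\kappa$ and $L^p_\kappa\cap L^2_\kappa$ are dense in the respective spaces, extends both conclusions to all $f$ in the stated classes.
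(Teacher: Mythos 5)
The paper itself never proves Theorem \ref{conjugacy-thm-a}: as announced at the start of Subsection 3.3, the detailed proofs of all claims in that subsection are deferred to the companion paper \cite{JL3}, so your proposal can only be judged against the machinery this paper actually sets up (Sections 3--4 and 6). Against that standard, your outer scaffolding is sound: the $L^2$ bound from Proposition \ref{conjugate-Poisson-a}(ii) and Plancherel, the transfer from $p\in(1,2)$ to $p'\in(2,\infty)$ via anti-self-adjointness of the multiplier $-i(\xi_j/|\xi|)e^{-x_0|\xi|}$, and Marcinkiewicz interpolation correctly reduce everything to the weak $(1,1)$ estimate (i).

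The genuine gap is the engine you propose for (i). First, the claim that $|F|^q$ itself is $\tilde{\kappa}$-subharmonic for $q\ge(N-1)/N$ is unproved and almost certainly false: $\Delta_{\tilde{\kappa}}|F|^q$ contains the nonlocal terms
\[
2\sum_{\alpha\in R_+}\kappa(\alpha)\,\frac{|F(x_0,\sigma_\alpha x)|^q-|F(x_0,x)|^q}{\langle\alpha,x\rangle^2},
\]
and the system (\ref{C-R-1}) imposes no pointwise relation between $|F|$ at $x$ and at $\sigma_\alpha x$, so nothing local can absorb these terms when $|F(x_0,\sigma_\alpha x)|\ll|F(x_0,x)|$. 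Your heuristic that the reflection terms ``sharpen'' positivity is exactly backwards: this obstruction is why \cite{ADH1} --- and this paper, in Lemma \ref{subbarmonicity-a} --- work not with $F$ but with the $(d+1)|G|$-dimensional orbit vector $U_F$ of (\ref{UF-function-1}), whose modulus is $G$-invariant, so that the difference terms assemble into the nonnegative quantity $V$ of (\ref{subbarmonicity-3}); even then one only gets $\Delta_{\tilde{\kappa}}(|U_F|^2+\epsilon)^{q/2}\ge0$ for some unspecified $q\in(0,1)$ from \cite[(6.12)]{ADH1}, not for $q\ge(N-1)/N$. Second, even granting subharmonicity, your passage to the weak $(1,1)$ bound is circular: subharmonic majorization of $|U_F|^q$ requires a uniform $L^{1/q}_{\kappa}$ bound on the boundary data $|U_F(\delta,\cdot)|^q$, i.e.\ $\sup_{x_0}\|F(x_0,\cdot)\|_{L^1_{\kappa}}\lesssim\|f\|_{L^1_{\kappa}}$, and this is precisely what fails for general $f\in L^1_{\kappa}$ (it would force $\|f\|_{H^1_{\kappa}}\lesssim\|f\|_{L^1_{\kappa}}$, contradicting Proposition \ref{Hardy-integral-a} for nonnegative $f\neq0$). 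A non-circular route, and the one the paper's own toolkit points to, is to compare $(Q_jf)(\epsilon,\cdot)$ with the truncated singular integral ${\mathcal{R}}_j^{\epsilon}f$ --- Theorem \ref{truncated-Riesz-a}(ii) bounds the difference pointwise by $P^+(|f|)$, which is weak $(1,1)$ by Proposition \ref{Poisson-Integral-Max-N} --- and to prove weak $(1,1)$ for the truncated operators by Calder\'on--Zygmund theory on the homogeneous-type space $(\RR^d,|\cdot|,\omega_{\kappa})$, using the $L^2$ bound together with the H\"ormander-type kernel estimates of Lemma \ref{Riesz-kernel-a} and Corollary \ref{Riesz-truncated-kernel-a}.
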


\begin{corollary} \label{Riesz-2-a}
If $1 < p < \infty$, then there exists a function in $L^p_{\kappa}(\RR^d)$, denoted by
${\mathcal{R}}_jf$, such that $(Q_jf)(x_0,\cdot)$ converges to ${\mathcal{R}}_jf$ as $x_0\rightarrow 0+$, both in the
$L^p_{\kappa}$-norm and almost everywhere non-tangentially on $\RR^d$, and
$\|{\mathcal{R}}_jf\|_{L^p_{\kappa}} \leq C_p \|f\|_{L^p_{\kappa}}$, where $j=1,\cdots,d$.
Moreover,
\begin{eqnarray}\label{conjugate-Poisson-4}
(Q_jf)(x_0,x)=[P({\mathcal{R}}_jf)](x_0,x),
\end{eqnarray}
and for $1<p\le2$,
\begin{align}\label{Riesz-2-1}
\left[\SF_{\kappa}({\mathcal{R}}_jf)\right](\xi)=-i(\xi_j/|\xi|)\left(\SF_{\kappa}f\right)(\xi).
\end{align}
\end{corollary}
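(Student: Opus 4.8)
The plan is to deduce this corollary from Theorem \ref{conjugacy-thm-a} by combining the uniform $L^p_{\kappa}$-bounds with the harmonicity machinery already established for the $\kappa$-Poisson integral. First I would fix $1<p<\infty$ and observe that by Theorem \ref{conjugacy-thm-a}(ii) the family $\{(Q_jf)(x_0,\cdot)\}_{x_0>0}$ is uniformly bounded in $L^p_{\kappa}(\RR^d)$. Since each $u_j:=Q_jf$ is $\tilde{\kappa}$-harmonic in $\RR^{1+d}_+$ (as the $\kappa$-Poisson-type integral against a $\tilde{\kappa}$-harmonic kernel, cf. the discussion following (\ref{Poisson-2-2}) and Proposition \ref{conjugate-Poisson-a}(i)), the uniform bound $\sup_{x_0>0}\|u_j(x_0,\cdot)\|_{L^p_{\kappa}}<\infty$ lets me invoke Proposition \ref{Poi-Int-Cha}(i): there exists a unique $g_j\in L^p_{\kappa}(\RR^d)$ with $u_j=Pg_j$ and $\|g_j\|_{L^p_{\kappa}}=\sup_{x_0>0}\|u_j(x_0,\cdot)\|_{L^p_{\kappa}}\le C_p\|f\|_{L^p_{\kappa}}$. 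I would then define ${\mathcal{R}}_jf:=g_j$; this immediately furnishes both the norm bound and the identity (\ref{conjugate-Poisson-4}), $(Q_jf)(x_0,x)=[P({\mathcal{R}}_jf)](x_0,x)$.

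Next I would establish the two modes of convergence of $(Q_jf)(x_0,\cdot)$ to ${\mathcal{R}}_jf$ as $x_0\to0+$. Having realized $Q_jf=P({\mathcal{R}}_jf)$ as the $\kappa$-Poisson integral of an $L^p_{\kappa}$ function, the $L^p_{\kappa}$-norm convergence is exactly the second assertion of Proposition \ref{Poisson-c} applied to $g_j={\mathcal{R}}_jf$, while the almost-everywhere non-tangential convergence is precisely Corollary \ref{Poisson-d} for $g_j$. Thus both convergence statements follow with no new estimates—the point is simply that the reconstruction through Proposition \ref{Poi-Int-Cha} converts the conjugate Poisson integral into an ordinary Poisson integral, to which the boundary-value theory of Section 3.2 applies verbatim.

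It remains to verify the Fourier-multiplier identity (\ref{Riesz-2-1}) in the range $1<p\le2$. Here I would use Proposition \ref{conjugate-Poisson-a}(ii), which gives the explicit Dunkl-transform representation of $(Q_jf)(x_0,\cdot)$ with symbol $-i(\xi_j/|\xi|)e^{-x_0|\xi|}$. On the other hand, from $Q_jf=P({\mathcal{R}}_jf)$ and the Poisson representation (\ref{Poisson-2-3}) (valid on $L^2_{\kappa}$, extended to $1<p\le2$ via the Hausdorff–Young-type mapping in Proposition \ref{transform-a}), the same integral equals $c_{\kappa}\int e^{-x_0|\xi|}(\SF_{\kappa}{\mathcal{R}}_jf)(\xi)E_{\kappa}(i\xi,x)\,d\omega_{\kappa}(\xi)$. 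Matching the two representations for every $x_0>0$ and $x$, and using the injectivity of the Dunkl transform together with the uniqueness of the expansion as $x_0\to0+$, forces $(\SF_{\kappa}{\mathcal{R}}_jf)(\xi)=-i(\xi_j/|\xi|)(\SF_{\kappa}f)(\xi)$ for a.e.\ $\xi$.

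I expect the only genuine subtlety to lie in this last matching step, specifically in justifying the passage $x_0\to0+$ inside the transform for $1<p<2$, where $\SF_{\kappa}f$ need not be integrable and one must argue by density (taking $f\in L^1_{\kappa}\cap L^2_{\kappa}$, where the pointwise identity is clean, and then extending by the $L^p_{\kappa}$-boundedness of ${\mathcal{R}}_j$ together with the continuity of $\SF_{\kappa}$ on the relevant spaces). Everything else is an essentially formal assembly of the uniform bounds from Theorem \ref{conjugacy-thm-a} with the $\kappa$-Poisson boundary theory (Propositions \ref{Poisson-c}, \ref{Poi-Int-Cha} and Corollary \ref{Poisson-d}) already in hand.
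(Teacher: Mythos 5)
Your proof is correct, and it is evidently the intended derivation: the paper itself defers the detailed proofs of this subsection to \cite{JL3}, but the corollary is meant to be assembled exactly as you do it, from the uniform $L^p_{\kappa}$ bounds of Theorem \ref{conjugacy-thm-a}(ii), the $\tilde{\kappa}$-harmonicity of $Q_jf$ (Proposition \ref{conjugate-Poisson-a}(i) together with the fact, noted just before the definition of $\SH_{\kappa}^1(\RR^{1+d}_+)$, that $C^2$ solutions of (\ref{C-R-1}) are $\tilde{\kappa}$-harmonic by commutativity of the Dunkl operators), the Poisson-integral characterization of Proposition \ref{Poi-Int-Cha}(i), and the boundary results of Proposition \ref{Poisson-c} and Corollary \ref{Poisson-d}. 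The one subtlety you flag at the end is in fact unnecessary: to get (\ref{Riesz-2-1}) you may fix a single $x_0>0$, observe that both $e^{-x_0|\xi|}(-i\xi_j/|\xi|)(\SF_{\kappa}f)(\xi)$ and $e^{-x_0|\xi|}(\SF_{\kappa}{\mathcal{R}}_jf)(\xi)$ lie in $L^1_{\kappa}(\RR^d)$ (by H\"older, since the transforms lie in $L^{p'}_{\kappa}$ via Hausdorff--Young and $e^{-x_0|\cdot|}$ lies in every $L^q_{\kappa}$), and apply the inversion/injectivity statement of Proposition \ref{transform-a}(ii) to their difference, whose inverse transform vanishes identically by the two representations of $(Q_jf)(x_0,\cdot)$; this gives equality a.e., the factor $e^{-x_0|\xi|}$ cancels, and no limit $x_0\rightarrow0+$ or density argument is required.
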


\begin{corollary}\label{Riesz-2-b} If $f\in L_{\kappa}^p(\RR^d)$ for $1<p<\infty$, and $g\in L_{\kappa}^{p'}(\RR^d)$, then
\begin{align*}
\int_{\RR^d}({\mathcal{R}}_jf)(x)g(x)\,d\omega_{\kappa}=-\int_{\RR^d}f(x)({\mathcal{R}}_jg)(x)\,d\omega_{\kappa}.
 \end{align*}
\end{corollary}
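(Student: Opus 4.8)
The plan is to establish the duality relation
$$
\int_{\RR^d}({\mathcal{R}}_jf)(x)g(x)\,d\omega_{\kappa}=-\int_{\RR^d}f(x)({\mathcal{R}}_jg)(x)\,d\omega_{\kappa}
$$
by first verifying it on the dense subclass where everything is transparent on the Fourier side, and then extending by continuity using the $L^p_{\kappa}$ bounds on ${\mathcal{R}}_j$ already recorded in Corollary \ref{Riesz-2-a}. The natural dense class is $A_{\kappa}(\RR^d)$ (or ${\mathscr S}(\RR^d)$), where the Dunkl transform and its inverse behave well and the multiplier identity (\ref{Riesz-2-1}) is available without the $p\le2$ restriction being an obstacle.

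First I would assume $f\in A_{\kappa}(\RR^d)\cap L^p_{\kappa}(\RR^d)$ and $g\in A_{\kappa}(\RR^d)\cap L^{p'}_{\kappa}(\RR^d)$. On this class the symbol of ${\mathcal{R}}_j$ is $m_j(\xi)=-i\xi_j/|\xi|$, which is \emph{purely imaginary and odd}, so $\overline{m_j(\xi)}=m_j(\xi)$ is real-valued after the sign flip and, crucially, $m_j(-\xi)=-m_j(\xi)$. Using the product formula (Proposition \ref{transform-a}(iv)) together with the Plancherel identity (Proposition \ref{transform-a}(v)) and the inversion relation $\SF_{\kappa}^{-1}h=(\SF_{\kappa}h)(-\cdot)$, I would write
$$
\int_{\RR^d}({\mathcal{R}}_jf)\,g\,d\omega_{\kappa}
=c_{\kappa}^{-1}\int_{\RR^d}\left[\SF_{\kappa}({\mathcal{R}}_jf)\right](\xi)\,(\SF_{\kappa}g)(-\xi)\,d\omega_{\kappa}(\xi)
=c_{\kappa}^{-1}\int_{\RR^d}m_j(\xi)(\SF_{\kappa}f)(\xi)\,(\SF_{\kappa}g)(-\xi)\,d\omega_{\kappa}(\xi).
$$
Performing the substitution $\xi\mapsto-\xi$ and invoking the oddness $m_j(-\xi)=-m_j(\xi)$ transfers the multiplier onto $\SF_{\kappa}g$ at the cost of a minus sign; recognizing the result as $-\int_{\RR^d}f\,({\mathcal{R}}_jg)\,d\omega_{\kappa}$ via the same Plancherel/product identities finishes the computation on the dense class. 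Here one should note that the $G$-invariance of $d\omega_{\kappa}$ guarantees the change of variables $\xi\mapsto-\xi$ leaves the measure unchanged, so no Jacobian factor intrudes.

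The extension to general $f\in L^p_{\kappa}(\RR^d)$ and $g\in L^{p'}_{\kappa}(\RR^d)$ proceeds by density. Both sides are bounded bilinear forms in $(f,g)$: on the left, H\"older's inequality and $\|{\mathcal{R}}_jf\|_{L^p_{\kappa}}\le C_p\|f\|_{L^p_{\kappa}}$ give $|\int({\mathcal{R}}_jf)g\,d\omega_{\kappa}|\le C_p\|f\|_{L^p_{\kappa}}\|g\|_{L^{p'}_{\kappa}}$, and symmetrically on the right using the $L^{p'}_{\kappa}$ bound for ${\mathcal{R}}_j$. Since $A_{\kappa}(\RR^d)$ is dense in every $L^r_{\kappa}(\RR^d)$, $1\le r<\infty$, approximating $f$ and $g$ and passing to the limit yields the identity in full generality.

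\emph{The main obstacle} is a minor one of bookkeeping rather than depth: the multiplier identity (\ref{Riesz-2-1}) is stated only for $1<p\le2$, so for $2<p<\infty$ I cannot directly write $\SF_{\kappa}({\mathcal{R}}_jf)=m_j\SF_{\kappa}f$ for $f\in L^p_{\kappa}$. The cleanest way around this is to prove the identity first for $f,g\in A_{\kappa}(\RR^d)$ where (\ref{Riesz-2-1}) certainly applies (such $f$ lie in $L^2_{\kappa}$), and \emph{only then} extend by the density/boundedness argument above, which never again needs the Fourier formula. One must also be slightly careful that the dense approximants can be chosen simultaneously in $L^p_{\kappa}$ and $L^{p'}_{\kappa}$ when reducing to the dense class; this is handled by approximating $f$ in $L^p_{\kappa}$ and $g$ in $L^{p'}_{\kappa}$ separately and using the joint continuity of the bilinear form.
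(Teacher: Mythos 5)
Your proof is correct. Note first that the paper itself contains no argument for this corollary: all proofs in Subsection 3.3 are deferred to the separate paper \cite{JL3}, so there is nothing internal to compare against line by line. Your route — verify the identity for $f,g$ in a dense class inside $L_{\kappa}^2(\RR^d)$ via the Plancherel/product formula and the multiplier identity (\ref{Riesz-2-1}), then extend by joint continuity of both bilinear forms using the $L_{\kappa}^p$ and $L_{\kappa}^{p'}$ bounds of Corollary \ref{Riesz-2-a} — is complete and handles the only real pitfall (that (\ref{Riesz-2-1}) is unavailable for $p>2$) in the right way: the Fourier formula is used only where the approximants live in $L_{\kappa}^2$, and consistency of the two definitions of ${\mathcal{R}}_jf$ for $f\in\SS(\RR^d)\subset L_{\kappa}^2\cap L_{\kappa}^p$ is automatic because Corollary \ref{Riesz-2-a} gives almost-everywhere convergence as well as norm convergence. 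The alternative proof that the paper's ``analytic'' framework suggests is to work at the level $x_0>0$: the antisymmetry $(\tau_{x}Q^{(j)}_{x_0})(-t)=-(\tau_{t}Q^{(j)}_{x_0})(-x)$, which follows from (\ref{conjugate-kernel-1}) and the symmetry of the $\kappa$-Poisson kernel, gives $\int (Q_jf)(x_0,\cdot)\,g\,d\omega_{\kappa}=-\int f\,(Q_jg)(x_0,\cdot)\,d\omega_{\kappa}$ by Fubini, and one then lets $x_0\rightarrow0+$ using the $L_{\kappa}^p$-convergence in Corollary \ref{Riesz-2-a}; this avoids the Dunkl transform entirely and works for all $1<p<\infty$ in one stroke, whereas your argument isolates the Fourier-side computation on a dense class. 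One small slip in your write-up: the sentence claiming $\overline{m_j(\xi)}=m_j(\xi)$ ``is real-valued after the sign flip'' is garbled — in fact $\overline{m_j(\xi)}=-m_j(\xi)=m_j(-\xi)$ — but this is harmless, since your bilinear (conjugate-free) pairing only uses the oddness $m_j(-\xi)=-m_j(\xi)$ together with the evenness of $d\omega_{\kappa}$, both of which are correct.
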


From Proposition \ref{conjugate-kernel-a}(ii) and (\ref{Riesz-2-1}) it is obvious that
$$
{\mathcal{R}}_jP_{x_0}=Q^{(j)}_{x_0}.
$$

A stronger version of Theorem \ref{conjugacy-thm-a}, with respect to the associated maximal functions $(Q^*_{j,a}f)(x)=\sup_{(x_0,t)\in\Gamma_a(x)}|(Q_jf)(x_0,t)|$ for $a>0$ and $(Q_j^+f)(x)=\sup_{x_0>0}|(Q_jf)(x_0,x)|$, does hold too.

\begin{theorem} \label{conjugacy-maximal-a}
The operator $Q^*_{j,a}$ for $a>0$ is bounded from $L_{\kappa}^p(\RR^d)$ ($1<p<\infty$) to itself, and the operator $Q_j^+$ is bounded from $L_{\kappa}^1(\RR^d)$ to $L_{\kappa}^{1,\infty}(\RR^d)$.
\end{theorem}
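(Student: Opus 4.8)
The goal is to prove boundedness of the maximal conjugate $\kappa$-Poisson operators $Q^*_{j,a}$ (strong $(p,p)$ for $1<p<\infty$) and $Q_j^+$ (weak $(1,1)$). The natural strategy is to reduce the maximal control back to the already-established Poisson maximal estimates of Proposition~\ref{Poisson-Integral-Max-N}, exactly as in the analysis of the ordinary maximal Riesz transform. The key structural tool is the subordination representation
\begin{align*}
(\tau_{x}Q^{(j)}_{x_0})(-t)=\int_{x_0}^{\infty}D_j^t\left[(\tau_{x}P_{\rho})(-t)\right]d\rho,
\end{align*}
from (\ref{conjugate-kernel-3}), together with the pointwise identity (\ref{conjugate-kernel-1}), which rewrites the conjugate kernel as $\tfrac{x_j-t_j}{x_0}$ times the Poisson kernel. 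The idea is that the conjugate kernel is dominated by (a constant times) a shifted, dilated Poisson kernel, so that $(Q_jf)(x_0,x)$ is controlled nontangentially by $P^*_a(|f|)$.

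\emph{First}, I would establish a pointwise kernel domination. Using (\ref{conjugate-kernel-4}) and the heat-kernel bound for $h_v(x,t)$ (of Gaussian type in the variable $A(x,t;\xi)=\sqrt{|x|^2+|t|^2-2\langle t,\xi\rangle}$ after integrating against $\mu^{\kappa}_{x}$), the factor $|x_j-t_j|$ times the Gaussian can be absorbed: $|x_j-t_j|\,e^{-cA^2/v}\lesssim \sqrt{v}\,e^{-c'A^2/v}$. Carrying this through the $v$-integral in (\ref{conjugate-kernel-4}) and comparing with (\ref{Poisson-kernel-2}) should yield $|(\tau_{x}Q^{(j)}_{x_0})(-t)|\lesssim (\tau_{x}\widetilde{P}_{x_0})(-t)$ for a Poisson-type kernel $\widetilde{P}$ of the same homogeneity, uniformly in $x_0$. \emph{Second}, for $(x_0,x')\in\Gamma_a(x)$ I would run the same cone-comparison argument already used in the proof of Proposition~\ref{Poisson-Integral-Max-N}: for $\xi\in\operatorname{supp}\mu^{\kappa}_{x'}$ one has $|\xi|\le|x'|$ and the quadratic form $x_0^2+|x'|^2+|t|^2-2\langle t,\xi\rangle$ is comparable, up to a factor $(a+1)^2$, to the corresponding form centered at $x$, giving $(\tau_{x'}Q^{(j)}_{x_0})(-t)\lesssim (a+1)^{N}(\tau_{x}\widetilde{P}_{x_0})(-t)$.

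\emph{Third}, combining these two steps yields the pointwise domination
\begin{align*}
(Q^*_{j,a}f)(x)\lesssim (a+1)^{N}\bigl[P^+(|f|)\bigr](x),
\end{align*}
after which the strong $(p,p)$ and weak $(1,1)$ bounds follow immediately from Proposition~\ref{Poisson-Integral-Max-N} applied to $P^+$. The weak $(1,1)$ bound for $Q_j^+$ is then the special case obtained by suppressing the horizontal shift (or equivalently aperture $a\to 0$), again reducing to the known bound for $P^+$ on $L^1_{\kappa}$.

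\emph{The main obstacle} I anticipate is the \emph{first} step: rigorously absorbing the non-radial factor $x_j-t_j$ into a clean radial Poisson-type majorant. The difficulty is that the conjugate kernel is expressed through the intertwining measure $\mu^{\kappa}_{x}$ and the translation is not known to act boundedly on general (non-radial) $L^1_{\kappa}$ functions, so one cannot simply invoke convolution bounds. The estimate $|x_j-t_j|\lesssim A(x,t;\xi)+|\langle t,\xi\rangle$-error terms$|$ must be handled carefully under the integral against $d\mu^{\kappa}_{x}(\xi)$, controlling the cross term $\langle t,\xi\rangle$ using $|\xi|\le|x|$ from (\ref{intertwining-support-1}); one wants the resulting majorant to again be expressible as a translation of a genuine radial Poisson kernel so that Proposition~\ref{Poisson-Integral-Max-N} applies verbatim. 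Once the majorant is identified as $(\tau_{x}\widetilde P_{x_0})(-t)$ with $\widetilde P$ radial and integrable, the remaining cone and maximal-function arguments are routine transcriptions of the Poisson case.
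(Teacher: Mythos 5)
You should first be aware that the paper itself contains no proof of Theorem \ref{conjugacy-maximal-a}: Section 3.3 explicitly defers the detailed proofs of its claims to the companion paper \cite{JL3}. So your proposal must stand on its own merits, and unfortunately its first step fails in an unrepairable way. The claimed absorption $|x_j-t_j|\,e^{-cA^2/v}\lesssim \sqrt{v}\,e^{-c'A^2/v}$ requires $|x_j-t_j|\lesssim \sqrt{v}+A(x,t;\xi)$, but for $\xi\in{\rm supp}\,\mu^{\kappa}_{x}$ the quantity $A(x,t;\xi)$ is bounded below only by the \emph{orbit} distance $d(x,t)$, which can vanish while $|x_j-t_j|\asymp|x-t|$ is arbitrarily large (rank one, $t$ near $-x$, where $\mu^{\kappa}_{x}$ has mass near $\xi=-x$). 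Worse, even in the classical case $\kappa=0$, where $A=|x-t|$ and the absorption is valid, carrying it through the subordination integral (\ref{conjugate-kernel-4}) produces a majorant comparable to $\left[(x_0+|x-t|)\,|B(x,x_0+|x-t|)|_{\kappa}\right]^{-1}$, which lacks the extra Poisson factor $x_0/(x_0+|x-t|)$ and is \emph{not integrable} in $t$; this is consistent with Proposition \ref{conjugate-kernel-a}(iii), which places the conjugate kernel in $L^q_{\kappa}$ only for $q>1$. No integrable radial majorant $\widetilde{P}$ can exist at all: if it did, Proposition \ref{translation-2-e}(iii) would force the conjugate kernel into $L^1_{\kappa}$, which is false.

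The decisive contradiction is with your step three. Since the $\kappa$-Poisson kernel is nonnegative with unit mass, $[P^+(|f|)](x)\le\|f\|_{L^\infty}$, so the claimed bound $(Q^*_{j,a}f)(x)\lesssim(a+1)^{N}[P^+(|f|)](x)$ would give $\|Q_j^+f\|_{L^\infty}\lesssim\|f\|_{L^\infty}$ and hence, letting $x_0\rightarrow0+$ and using (\ref{conjugate-Poisson-4}), boundedness of ${\mathcal{R}}_j$ on $L^\infty(\RR^d)$. The paper's own example in Section 7.2 (${\mathcal{R}}\chi_{[-1,1]}$ grows logarithmically) shows this is false, as it is classically for the Hilbert transform. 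The point is structural: the conjugate kernel is odd in the relevant variable, and the maximal bounds hold only because of this cancellation, which absolute-value domination destroys. A correct argument splits the two assertions: for $1<p<\infty$ use the identity (\ref{conjugate-Poisson-4}), $Q_jf=P({\mathcal{R}}_jf)$, with Corollary \ref{Riesz-2-a}, so that $Q^*_{j,a}f\le P^*_a({\mathcal{R}}_jf)$ and Proposition \ref{Poisson-Integral-Max-N} finishes; for the weak $(1,1)$ bound of $Q_j^+$ one needs genuine singular-integral technology — a Calder\'on--Zygmund decomposition using the uniform H\"ormander-type regularity of the kernels (as in (\ref{Riesz-kernel-2})--(\ref{Riesz-Hormander-2})) together with the $L^2$ case, or equivalently the comparison of Theorem \ref{truncated-Riesz-a}(ii) combined with Corollary \ref{Riesz-2-d}. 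Note that it is precisely the \emph{difference} $(Q_jf)(\epsilon,\cdot)-{\mathcal{R}}_j^{\epsilon}f$, not $Q_jf$ itself, that admits the positive domination by $P^+(|f|)$ you were aiming for.
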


Now we turn to a truncated version of the $\kappa$-Riesz transforms ${\mathcal{R}}_j$ ($1\le j\le d$).
At first, from (\ref{translation-2-3}), (\ref{translation-support-2-1}) and (\ref{translation-2-4}) it follows that, for $\xi\in\hbox{supp}\,\mu^{\kappa}_{x}$,
\begin{align*}
\min_{\sigma\in G}|t-\sigma(x)|\le A(x,t;\xi)\le\max_{\sigma\in G}|t-\sigma(x)|.
\end{align*}
(cf. \cite[(3.9)]{AS1}. Thus for $j=1,\cdots,d$, and for $x,t\in\RR^d$ satisfying $\min_{\sigma\in G}|t-\sigma(x)|>0$, the function
\begin{align*}
K_j(x,t)=\frac{c_{d,\kappa}}{2|\kappa|+d-1}\int_{\RR^d}D_j^t\left[\frac{1}{A(x,t;\xi)^{2|\kappa|+d-1}}\right]\,d\mu^{\kappa}_{x}(\xi)
\end{align*}
is well defined, and from (\ref{conjugate-kernel-2}) and (\ref{conjugate-kernel-4}), it can be rewritten as
\begin{align}\label{truncated-Riesz-kernel-2}
K_j(x,t)=\frac{x_j-t_j}{2\sqrt{\pi}}\int_0^{\infty}h_v(x,t)\,\frac{dv}{v^{3/2}}.
\end{align}
For $f\in L^p_{\kappa}(\RR^d)$ ($1\le p<\infty$) and $\epsilon>0$, we consider
\begin{align*}
({\mathcal{R}}_j^{\epsilon}f)(x)=c_{\kappa}\int_{d(x,t)>\epsilon}f(t)K_j(x,t)\,d\omega_{\kappa}(t),\qquad x\in\RR^d,
\end{align*}
and call ${\mathcal{R}}_j^{\epsilon}$ the $j$th truncated $\kappa$-Riesz transform ($1\le j\le d$). Here
$$
d(x,t):=\min_{\sigma\in G}|t-\sigma(x)|
$$
denotes the distance between the $G$-orbits of $x,t\in\RR^d$.

\begin{theorem} \label{truncated-Riesz-a}
{\rm (i)} For $1\le p<\infty$, there is $C_p>0$ such that for $f\in L^p_{\kappa}(\RR^d)$ and $\epsilon>0$,
$\|(Q_jf)(\epsilon,\cdot)-{\mathcal{R}}_j^{\epsilon}f\|_{L^p_{\kappa}}\le C_p\|f\|_{L^p_{\kappa}}$, where $j=1,\cdots,d$; and moreover,
\begin{align*}
\lim_{\epsilon\rightarrow0+}\|(Q_jf)(\epsilon,\cdot)-{\mathcal{R}}_j^{\epsilon}f\|_{L^p_{\kappa}}=0.
\end{align*}

{\rm (ii)} For $1\le p<\infty$, there is $C_p'>0$ such that for $f\in L^p_{\kappa}(\RR^d)$,
\begin{align*}
\sup_{\epsilon>0}\left|(Q_jf)(\epsilon,x)-({\mathcal{R}}_j^{\epsilon}f)(x)\right|
\le C_p'\left[P^+(|f|)\right](x),\qquad x\in\RR^d,
\end{align*}
where $j=1,\cdots,d$; and moreover, $\lim\limits_{\epsilon\rightarrow0+}[(Q_jf)(\epsilon,x)-({\mathcal{R}}_j^{\epsilon}f)(x)]=0$ for almost every $x\in\RR^d$.
\end{theorem}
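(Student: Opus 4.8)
The plan is to derive both assertions from a single pointwise estimate on the difference of the two kernels. Writing the operator $f\mapsto(Q_jf)(\epsilon,\cdot)-{\mathcal R}_j^{\epsilon}f$ as integration against
$$
L_j^{\epsilon}(x,t)=(\tau_xQ^{(j)}_{\epsilon})(-t)-\mathbf{1}_{\{d(x,t)>\epsilon\}}\,K_j(x,t),
$$
and using the subordinated forms (\ref{conjugate-kernel-4}) and (\ref{truncated-Riesz-kernel-2}), one obtains the uniform representation
$$
L_j^{\epsilon}(x,t)=\frac{x_j-t_j}{2\sqrt{\pi}}\int_0^{\infty}\Big(e^{-\epsilon^2/4v}-\mathbf{1}_{\{d(x,t)>\epsilon\}}\Big)h_v(x,t)\,\frac{dv}{v^{3/2}}.
$$
Thus on the far set $\{d(x,t)>\epsilon\}$ the bracket equals $e^{-\epsilon^2/4v}-1$, while on the near set $\{d(x,t)\le\epsilon\}$ the truncation drops out and, by (\ref{conjugate-kernel-1}), $L_j^{\epsilon}(x,t)=\tfrac{x_j-t_j}{\epsilon}\,(\tau_xP_{\epsilon})(-t)$.

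The crux is the pointwise domination $|L_j^{\epsilon}(x,t)|\lesssim(\tau_xP_{\epsilon})(-t)$ for all $x,t\in\RR^d$ and $\epsilon>0$. Granting it, all three quantitative bounds follow. Integrating against $|f|$ and using $(\tau_xP_{\epsilon})(-t)=(\tau_{-t}P_{\epsilon})(x)$ gives, for each $\epsilon>0$, $|(Q_jf)(\epsilon,x)-({\mathcal R}_j^{\epsilon}f)(x)|\lesssim(P|f|)(\epsilon,x)\le[P^+(|f|)](x)$, which is the maximal bound of part (ii) after taking the supremum in $\epsilon$; the $L^p_{\kappa}$ bound of part (i) for $1<p<\infty$ is then immediate from the $L^p_{\kappa}$-boundedness of $P^+$ in Proposition \ref{Poisson-Integral-Max-N}. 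The endpoint $p=1$ cannot pass through $P^+$; instead one integrates the kernel bound in the $x$-variable, and since $P_{\epsilon}$ is radial, Proposition \ref{translation-2-e}(iii) yields
$$
\sup_{t}\,c_{\kappa}\int_{\RR^d}|L_j^{\epsilon}(x,t)|\,d\omega_{\kappa}(x)\lesssim\sup_{t}\,c_{\kappa}\int_{\RR^d}(\tau_{-t}P_{\epsilon})(x)\,d\omega_{\kappa}(x)=c_{\kappa}\int_{\RR^d}P_{\epsilon}\,d\omega_{\kappa}=1,
$$
so that Fubini's theorem gives the uniform $L^1_{\kappa}$ bound.

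To establish the domination I would treat the two sets separately. On the near set the identity $L_j^{\epsilon}=\tfrac{x_j-t_j}{\epsilon}(\tau_xP_{\epsilon})(-t)$ reduces the matter to controlling $|x_j-t_j|/\epsilon$; if the orbit distance $d(x,t)$ is realized by the identity element this factor is $\lesssim1$, but the genuinely delicate case is when the minimizing reflection $\sigma_0$ is nontrivial, for then $|x_j-t_j|$ may be comparable to $|x|$ while $d(x,t)\le\epsilon$. Here one must exploit that the representing measure $\mu^{\kappa}_x$ carries correspondingly little mass near the vertex $\sigma_0(x)$ of its support, combining the distribution estimate for $\mu^{\kappa}_x$ recalled in Section~2.1 with the exact Gaussian form $h_v(x,t)=(2v)^{-|\kappa|-d/2}\int_{\RR^d}e^{-A(x,t;\xi)^2/4v}\,d\mu^{\kappa}_x(\xi)$, so that the smallness of $(\tau_xP_{\epsilon})(-t)$ offsets the large factor. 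On the far set one uses $0\le1-e^{-\epsilon^2/4v}\le\min(1,\epsilon^2/4v)$ to extract a gain of $\epsilon/\sqrt{v}$ and then performs the $v$-integration against sharp heat-kernel bounds that decay in both the orbit distance $d(x,t)$ and the Euclidean distance $|x-t|$, the latter absorbing $|x_j-t_j|$. This kernel analysis, and above all the reflected near-diagonal regime where $|x-t|\gg d(x,t)$, is the main obstacle.

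Finally, the two limit assertions follow from the uniform bounds by a density argument. On a dense subclass of $L^p_{\kappa}(\RR^d)$, say $A_{\kappa}(\RR^d)$ or $C_c^{\infty}(\RR^d)$, one checks that $L_j^{\epsilon}(x,t)\to0$ as $\epsilon\to0+$ for every $t$ off the $G$-orbit of $x$, while the contribution of the shrinking near set is controlled through the approximate-identity behavior of $(\tau_xP_{\epsilon})(-t)$ together with the cancellation carried by the odd factor $x_j-t_j$ about the vertex $t=x$; dominated convergence then gives $(Q_jf)(\epsilon,\cdot)-{\mathcal R}_j^{\epsilon}f\to0$ both pointwise almost everywhere and in $L^p_{\kappa}$. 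For arbitrary $f\in L^p_{\kappa}(\RR^d)$ the $L^p_{\kappa}$ statement of part (i) follows by approximation using the uniform operator bounds just proved, and the almost-everywhere statement of part (ii) follows from the maximal estimate $\sup_{\epsilon}|(Q_jf)(\epsilon,x)-({\mathcal R}_j^{\epsilon}f)(x)|\lesssim[P^+(|f|)](x)$ --- which is of weak type $(1,1)$ and strong type $(p,p)$ by Proposition~\ref{Poisson-Integral-Max-N} --- via the standard reduction of almost-everywhere convergence to a maximal inequality on a dense class.
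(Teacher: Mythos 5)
Your proof has a fatal gap at its self-declared crux: the pointwise domination $|L_j^{\epsilon}(x,t)|\lesssim(\tau_{x}P_{\epsilon})(-t)$ is not merely delicate in the reflected regime --- it is false there. On the near set $\{d(x,t)\le\epsilon\}$ you correctly observe, via (\ref{conjugate-kernel-1}), that
$$
L_j^{\epsilon}(x,t)=(\tau_{x}Q^{(j)}_{\epsilon})(-t)=\frac{x_j-t_j}{\epsilon}\,(\tau_{x}P_{\epsilon})(-t)
$$
\emph{exactly}. Since the Dunkl--Poisson kernel is strictly positive (lower bound in (\ref{Poisson-ker-bound-1})), your claimed domination on this set is \emph{equivalent} to $|x_j-t_j|\lesssim\epsilon$; but the near set is defined by the orbit distance $d(x,t)=\min_{\sigma\in G}|t-\sigma(x)|$, not by $|x-t|$, so $|x_j-t_j|/\epsilon$ is unbounded there. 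Concretely, take $d=1$, $G=\ZZ_2$, $\kappa>0$, $x=1$, $t=-1+\epsilon/2$ with $\epsilon$ small. Then $d(x,t)=\epsilon/2<\epsilon$ while $|x-t|\approx2$, and both bounds in (\ref{Poisson-ker-bound-1}) give $(\tau_{x}P_{\epsilon})(-t)\asymp\epsilon$; hence $|L^{\epsilon}(x,t)|=\frac{2-\epsilon/2}{\epsilon}(\tau_{x}P_{\epsilon})(-t)\asymp1$, and the ratio $|L^{\epsilon}(x,t)|/(\tau_{x}P_{\epsilon})(-t)$ blows up like $1/\epsilon$. (When $\kappa=0$ the near set is $\{|x-t|\le\epsilon\}$ and your inequality is the true classical statement; its failure here is exactly the non-local reflection phenomenon of the Dunkl setting.) Your proposed repair also cannot work even in principle: you hope that the smallness of $(\tau_{x}P_{\epsilon})(-t)$ near $\sigma_0(x)$, extracted from the mass distribution of $\mu^{\kappa}_{x}$, will ``offset the large factor''; but on the near set $L_j^{\epsilon}$ \emph{equals} $\frac{x_j-t_j}{\epsilon}$ times that kernel, so the ratio is exactly $|x_j-t_j|/\epsilon$ no matter how small the kernel is --- smallness of the Poisson kernel makes the domination harder, not easier.

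Since every conclusion in your proposal (the uniform $L^p_{\kappa}$ bound, the Schur/Fubini argument at $p=1$, the maximal bound by $P^{+}(|f|)$, and the density/limit arguments) is routed through this single inequality, the argument collapses with it. What survives is the correct subordination identity for $L_j^{\epsilon}$ from (\ref{conjugate-kernel-4}) and (\ref{truncated-Riesz-kernel-2}), and the far-set treatment via $0\le1-e^{-\epsilon^2/4v}\le\min(1,\epsilon^2/4v)$ against Gaussian heat-kernel bounds, which are sound starting points. But the reflected near-diagonal contribution, where $|L_j^{\epsilon}(x,t)|$ is of the order $\epsilon\,|x-t|^{-1}|B(x,\epsilon)|_{\kappa}^{-1}$, must be handled at the level of the \emph{operator} rather than the kernel: the integral of $|f|$ over the small balls $\{|t-\sigma(x)|\lesssim\epsilon\}$, $\sigma\neq e$, has to be compared with Poisson integrals of $|f|$ at heights comparable to $|x-\sigma(x)|$ (which is how one lands on the full maximal function $P^{+}(|f|)$ rather than on the single height $\epsilon$), and this in turn requires genuinely sharper two-sided information on the kernels in the reflected region than (\ref{Poisson-ker-bound-1}) provides. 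Note finally that the paper itself contains no proof to compare against: Section 3.3 explicitly defers the proofs of the claims of that subsection, including this theorem, to the separate paper \cite{JL3}, so the burden of the reflected regime cannot be discharged by citation either.
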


\begin{corollary} \label{Riesz-2-c}
If $1<p<\infty$, then for $f\in L^p_{\kappa}(\RR^d)$, ${\mathcal{R}}_j^{\epsilon}f$ converges to ${\mathcal{R}}_jf$ as $\epsilon\rightarrow 0+$, both in the
$L^p_{\kappa}$-norm and almost everywhere on $\RR^d$.
\end{corollary}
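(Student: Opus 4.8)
The plan is to obtain both modes of convergence at once by inserting the intermediary $(Q_jf)(\epsilon,\cdot)$ and splitting
\begin{align*}
{\mathcal{R}}_j^{\epsilon}f-{\mathcal{R}}_jf=\left[{\mathcal{R}}_j^{\epsilon}f-(Q_jf)(\epsilon,\cdot)\right]+\left[(Q_jf)(\epsilon,\cdot)-{\mathcal{R}}_jf\right],
\end{align*}
so that each bracket is governed by exactly one of the two results already in hand. The first bracket measures the discrepancy between the truncated transform and the conjugate Poisson integral at height $\epsilon$, and is handled by Theorem \ref{truncated-Riesz-a}; the second bracket measures the convergence of the conjugate Poisson integral to its boundary value, and is handled by Corollary \ref{Riesz-2-a}.

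For the $L^p_{\kappa}$-norm convergence I would apply the triangle inequality to this decomposition. The first bracket tends to zero in $L^p_{\kappa}$ by the limit assertion of Theorem \ref{truncated-Riesz-a}(i), and the second bracket tends to zero in $L^p_{\kappa}$ by the norm-convergence part of Corollary \ref{Riesz-2-a}, which asserts precisely that $(Q_jf)(x_0,\cdot)\to{\mathcal{R}}_jf$ in $L^p_{\kappa}$ as $x_0\to0+$. Summing the two estimates gives $\|{\mathcal{R}}_j^{\epsilon}f-{\mathcal{R}}_jf\|_{L^p_{\kappa}}\to0$.

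For the almost-everywhere convergence I would argue pointwise on the same decomposition. The first bracket tends to zero for a.e.\ $x$ by the limit assertion of Theorem \ref{truncated-Riesz-a}(ii). For the second bracket I would specialize the non-tangential convergence in Corollary \ref{Riesz-2-a} to the vertical approach: since $(Q_jf)(x_0,t)$ converges non-tangentially to $({\mathcal{R}}_jf)(x)$ for almost every $x$, in particular the perpendicular limit along $t=x$ exists and equals $({\mathcal{R}}_jf)(x)$, so that $(Q_jf)(\epsilon,x)\to({\mathcal{R}}_jf)(x)$ as $\epsilon\to0+$ for a.e.\ $x$. Adding the two pointwise limits yields $({\mathcal{R}}_j^{\epsilon}f)(x)\to({\mathcal{R}}_jf)(x)$ for a.e.\ $x$.

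Since every ingredient has already been established, I expect no genuine obstacle here; the corollary is essentially a bookkeeping combination of Theorem \ref{truncated-Riesz-a} and Corollary \ref{Riesz-2-a}. The only point deserving a word of care is the reduction of the second bracket from non-tangential to perpendicular convergence, which is immediate because non-tangential convergence (valid for every aperture $a>0$) contains the vertical path $t=x$ as a special case.
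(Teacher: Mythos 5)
Your proposal is correct and is exactly the intended derivation: the paper states this corollary immediately after Theorem \ref{truncated-Riesz-a} (with detailed proofs of this subsection deferred to a companion paper), and the evident route is precisely your decomposition ${\mathcal{R}}_j^{\epsilon}f-{\mathcal{R}}_jf=[{\mathcal{R}}_j^{\epsilon}f-(Q_jf)(\epsilon,\cdot)]+[(Q_jf)(\epsilon,\cdot)-{\mathcal{R}}_jf]$, with the first bracket controlled by Theorem \ref{truncated-Riesz-a}(i)--(ii) and the second by Corollary \ref{Riesz-2-a}. Your remark that non-tangential convergence subsumes the vertical approach $t=x$ is the right (and only) point needing care, and it is handled correctly.
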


\begin{corollary} \label{Riesz-2-d}
The maximal $\kappa$-Riesz transform ${\mathcal{R}}_j^*$ for $j=1,\cdots,d$, defined by $({\mathcal{R}}_j^*f)(x)=\sup_{\epsilon>0}|({\mathcal{R}}_j^{\epsilon}f)(x)|$, is bounded from $L_{\kappa}^p(\RR^d)$ ($1<p<\infty$) to itself, and from $L_{\kappa}^1(\RR^d)$ to $L_{\kappa}^{1,\infty}(\RR^d)$.
\end{corollary}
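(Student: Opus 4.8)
The plan is to deduce both maximal estimates directly from the pointwise control already in hand, without analyzing the kernels $K_j(x,t)$ anew. The starting point is the triangle inequality inside the supremum over $\epsilon$: for each $x\in\RR^d$ and each $\epsilon>0$,
$$
|({\mathcal{R}}_j^{\epsilon}f)(x)|\le|(Q_jf)(\epsilon,x)|+\left|(Q_jf)(\epsilon,x)-({\mathcal{R}}_j^{\epsilon}f)(x)\right|.
$$
Taking the supremum over $\epsilon>0$ on both sides, the first term becomes the conjugate $\kappa$-Poisson maximal function $Q_j^+f$, while the second is controlled by Theorem \ref{truncated-Riesz-a}(ii). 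This yields the pointwise majorization
$$
({\mathcal{R}}_j^*f)(x)\le(Q_j^+f)(x)+C_p'\,[P^+(|f|)](x),\qquad x\in\RR^d,
$$
so that ${\mathcal{R}}_j^*$ is dominated by the two maximal operators $Q_j^+$ and $P^+$ that have already been estimated.

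For the $L_{\kappa}^p$-bound with $1<p<\infty$, I would first observe that $Q_j^+\le Q^*_{j,a}$ pointwise, since the vertical ray $\{(x_0,x):x_0>0\}$ lies in the cone $\Gamma_a(x)$ for every $a>0$; hence the $L_{\kappa}^p$-boundedness of $Q^*_{j,a}$ in Theorem \ref{conjugacy-maximal-a} transfers to $Q_j^+$. Combining this with the $L_{\kappa}^p$-boundedness of $P^+$ from Proposition \ref{Poisson-Integral-Max-N} and the displayed pointwise bound gives
$$
\|{\mathcal{R}}_j^*f\|_{L_{\kappa}^p}\le\|Q_j^+f\|_{L_{\kappa}^p}+C_p'\,\|P^+(|f|)\|_{L_{\kappa}^p}\lesssim\|f\|_{L_{\kappa}^p},
$$
using $\||f|\|_{L_{\kappa}^p}=\|f\|_{L_{\kappa}^p}$.

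For the weak-type $(1,1)$ bound I would invoke the same pointwise estimate with $p=1$, together with the elementary inclusion
$$
\{x:\,({\mathcal{R}}_j^*f)(x)>s\}\subseteq\{x:\,(Q_j^+f)(x)>s/2\}\cup\{x:\,C_1'[P^+(|f|)](x)>s/2\}.
$$
Since $Q_j^+$ maps $L_{\kappa}^1$ to $L_{\kappa}^{1,\infty}$ by Theorem \ref{conjugacy-maximal-a} and $P^+$ maps $L_{\kappa}^1$ to $L_{\kappa}^{1,\infty}$ by Proposition \ref{Poisson-Integral-Max-N}, each set on the right has $\omega_{\kappa}$-measure at most a constant multiple of $\|f\|_{L_{\kappa}^1}/s$; adding the two estimates yields the claimed weak $(1,1)$ bound for ${\mathcal{R}}_j^*$.

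Because all the delicate analysis—the kernel comparison producing the pointwise bound by $P^+(|f|)$—is already contained in Theorem \ref{truncated-Riesz-a}, I anticipate no substantial obstacle here. The one point that deserves explicit mention is the pointwise domination $Q_j^+\le Q^*_{j,a}$, which is what lets me replace the perpendicular conjugate maximal function by the non-tangential one whose $L_{\kappa}^p$-boundedness is the form actually recorded in Theorem \ref{conjugacy-maximal-a}.
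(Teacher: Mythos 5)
Your proof is correct and follows exactly the route the paper intends for this corollary: the pointwise domination $({\mathcal{R}}_j^*f)(x)\le (Q_j^+f)(x)+C'\,[P^+(|f|)](x)$ from Theorem \ref{truncated-Riesz-a}(ii), combined with the boundedness of $Q^*_{j,a}$ (hence of $Q_j^+$, via the vertical-ray inclusion) and $Q_j^+$ from Theorem \ref{conjugacy-maximal-a} and of $P^+$ from Proposition \ref{Poisson-Integral-Max-N}. No gaps; the splitting of the level set for the weak $(1,1)$ estimate is the standard and intended argument.
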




Finally we formulate the $\kappa$-Riesz transforms of functions in $L_{\kappa}^1(\RR^d)$, in the spirit of \cite[p. 221]{St2}, as follows.

\begin{definition}\label{Riesz-2-e}
For $f\in L_{\kappa}^1(\RR^d)$, we say that it has the $j$th $\kappa$-Riesz transform in $L_{\kappa}^1(\RR^d)$, if there exists an $f_j\in L_{\kappa}^1(\RR^d)$ so that
\begin{align}\label{Riesz-2-2}
\left(\SF_{\kappa}f_j\right)(\xi)=-i(\xi_j/|\xi|)\left(\SF_{\kappa}f\right)(\xi),\qquad \xi\in\RR^d\setminus\{0\},
\end{align}
and certainly write ${\mathcal{R}}_jf=f_j$.
\end{definition}

\begin{proposition}\label{Riesz-2-f} If $f\in L_{\kappa}^1(\RR^d)$ and ${\mathcal{R}}_jf\in L_{\kappa}^1(\RR^d)$, then (\ref{conjugate-Poisson-4}) holds
and $(Q_jf)(x_0,\cdot)$ converges to ${\mathcal{R}}_jf$ as $x_0\rightarrow 0+$, both in the
$L^1_{\kappa}$-norm and almost everywhere non-tangentially on $\RR^d$.
\end{proposition}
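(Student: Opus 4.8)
The plan is to reduce the whole statement to the single identity (\ref{conjugate-Poisson-4}), after which both convergence assertions become mere transcriptions of the known boundary behaviour of the $\kappa$-Poisson integral of the $L_{\kappa}^1$ function $\mathcal{R}_jf$. First I note that $(Q_jf)(x_0,\cdot)$ is genuinely well defined for $f\in L_{\kappa}^1(\RR^d)$: by Proposition \ref{conjugate-kernel-a}(iii) with $q=\infty$ the conjugate kernel $t\mapsto(\tau_xQ^{(j)}_{x_0})(-t)$ is bounded, so its pairing against $f\in L_{\kappa}^1$ converges absolutely for every $(x_0,x)\in\RR^{1+d}_+$.

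To establish (\ref{conjugate-Poisson-4}), I would invoke Proposition \ref{conjugate-Poisson-a}(ii) at the endpoint $p=1$, which is included in its range $1\le p\le 2$. This is legitimate because for $f\in L_{\kappa}^1(\RR^d)$ the factor $(\SF_{\kappa}f)(\xi)$ is bounded (Proposition \ref{transform-a}(i)), while $e^{-x_0|\xi|}|E_{\kappa}(i\xi,x)|\le e^{-x_0|\xi|}$ is $\omega_\kappa$-integrable for each fixed $x_0>0$, so the representing integral converges absolutely. This gives
\[
(Q_jf)(x_0,x)=c_{\kappa}\int_{\RR^{d}}\left(-i\frac{\xi_j}{|\xi|}\right)e^{-x_0|\xi|}(\SF_{\kappa}f)(\xi)E_{\kappa}(i\xi,x)\,d\omega_{\kappa}(\xi).
\]
Since $\mathcal{R}_jf\in L_{\kappa}^1(\RR^d)$ by hypothesis, Definition \ref{Riesz-2-e} yields $(\SF_{\kappa}(\mathcal{R}_jf))(\xi)=-i(\xi_j/|\xi|)(\SF_{\kappa}f)(\xi)$; substituting this into the display above turns the integrand into $e^{-x_0|\xi|}(\SF_{\kappa}(\mathcal{R}_jf))(\xi)E_{\kappa}(i\xi,x)$, which by (\ref{Poisson-2-3}) applied to the $L_{\kappa}^1$ function $\mathcal{R}_jf$ is exactly $[P(\mathcal{R}_jf)](x_0,x)$. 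This proves (\ref{conjugate-Poisson-4}).

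With (\ref{conjugate-Poisson-4}) in hand, $(Q_jf)(x_0,\cdot)$ is literally the $\kappa$-Poisson integral of $\mathcal{R}_jf\in L_{\kappa}^1(\RR^d)$, so the remaining two claims follow at once. The $L_{\kappa}^1$-norm convergence $\lim_{x_0\rightarrow0+}\|(Q_jf)(x_0,\cdot)-\mathcal{R}_jf\|_{L_{\kappa}^1}=0$ is Proposition \ref{Poisson-c} with $X=L_{\kappa}^1(\RR^d)$ (the case $p=1$), and the almost everywhere non-tangential convergence of $(Q_jf)(x_0,\cdot)$ to $\mathcal{R}_jf$ is Corollary \ref{Poisson-d} with $p=1$, both applied to the boundary datum $\mathcal{R}_jf$.

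The only genuinely delicate point is the endpoint justification of the transform representation of $Q_jf$ at $p=1$; everything else is bookkeeping with absolutely convergent integrals. Should one prefer to avoid relying on Proposition \ref{conjugate-Poisson-a}(ii) at $p=1$, I would instead regularize: set $f^{(s)}=(Pf)(s,\cdot)=f\ast_{\kappa}P_s$, which lies in $A_{\kappa}(\RR^d)$ (hence in every $L_{\kappa}^p$) since $(\SF_{\kappa}f^{(s)})(\xi)=e^{-s|\xi|}(\SF_{\kappa}f)(\xi)$ decays exponentially. One then applies Proposition \ref{conjugate-Poisson-a}(ii) to $f^{(s)}$ at $p=2$, uses the semigroup identity $(Q_jf)(x_0+s,\cdot)=(Q_jf^{(s)})(x_0,\cdot)$, which follows from $P_s\ast_{\kappa}Q^{(j)}_{x_0}=Q^{(j)}_{x_0+s}$ (verified on the Dunkl-transform side via Propositions \ref{convolution-2-b} and \ref{conjugate-kernel-a}(ii) together with injectivity on $L_{\kappa}^2$), and finally lets $s\to0+$. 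Either route terminates in (\ref{conjugate-Poisson-4}).
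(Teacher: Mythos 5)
Your proof is correct. Note first that the paper itself states Proposition \ref{Riesz-2-f} without proof (the detailed arguments of this subsection are explicitly deferred to the companion paper \cite{JL3}), so there is no in-paper argument to compare against; your proposal can only be judged on whether it uses legitimately the results the paper does assert, and it does. Invoking Proposition \ref{conjugate-Poisson-a}(ii) at the endpoint $p=1$ is allowed, since its stated range is $1\le p\le2$; combining that representation with Definition \ref{Riesz-2-e} and with (\ref{Poisson-2-3}) applied to ${\mathcal{R}}_jf\in L^1_{\kappa}(\RR^d)$ gives (\ref{conjugate-Poisson-4}), after which Proposition \ref{Poisson-c} (case $p=1$) and Corollary \ref{Poisson-d} deliver the $L^1_{\kappa}$-norm and almost everywhere non-tangential convergence, respectively. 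This is exactly the classical reduction of \cite[p.~221]{St2} that the paper says it follows ``in the spirit of,'' so your primary route is surely the intended one; the regularization fallback via $f^{(s)}=(Pf)(s,\cdot)$ is workable but unnecessary, since the endpoint case is already covered by the paper's stated proposition.
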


\subsection{The Hardy spaces $\SH_{\kappa}^1(\RR^{1+d}_+)$ and $H_{\kappa}^1(\RR^d)$}

We consider the vector-valued function $F=(u_0,u_1,\cdots,u_d)$ (a function system of Stein-Weiss type) defined on the upper half-space $\RR^{1+d}_+$, that satisfies the generalized Cauchy-Riemann equations (\ref{C-R-1}).
If the components of such a vector valued function $F$ are all twice continuously differentiable in $\RR^{1+d}$, i.e. $F\in C^2(\RR^{1+d}_+)$ says, then they are all $\tilde{\kappa}$-harmonic because of commutativity of $D_j$, $j=0,1,\cdots,d$.
This further implies $F\in C^{\infty}(\RR^{1+d}_+)$ by Proposition \ref{harmonic-2-a-1}.

The ``analytic" Hardy space on $\RR^{1+d}_+$ in the Dunkl setting, denoted by $\SH_{\kappa}^1(\RR^{1+d}_+)$, consists of all vector-valued functions $F\in C^2(\RR^{1+d}_+)$ satisfying the equations in (\ref{C-R-1}) and the condition
\begin{align*}
\|F\|_{\SH_{\kappa}^1}:=\sup_{x_0>0}\||F(x_0,\cdot)|\|_{L^{1}_\kappa}<\infty,
\end{align*}
where $|F(x_0,x)|=\left(\sum_{j=0}^d|u_j(x_0,x)|^2\right)^{1/2}$.

The following theorem contains the fundamental results about the Hardy space $\SH_{\kappa}^1(\RR^{1+d}_+)$.

\begin{theorem} \label{Hardy-thm-1}
Assume that $F\in \SH_{\kappa}^1(\RR^{1+d}_+)$. Then

{\rm (i)} For almost every $x\in\RR^d$, $\lim F(x_0,t)=\tilde{F}(x)$ exists as $(x_0,t)$ approaches to the point
$(0,x)$ nontangentially;

{\rm (ii)} As $x_0\rightarrow0+$, $F(x_0,\cdot)$ converges to $\tilde{F}$ in the $L_{\kappa}^1$ norm; and $\|F\|_{\SH_{\kappa}^1}=\|\tilde{F}\|_{L_{\kappa}^1}$;

{\rm (iii)} $F(x_0,x)$ is the $\kappa$-Poisson integral of $\tilde{F}$ in vector form;

{\rm (iv)} For all $a>0$, $\|F^*_a\|_{L_{\kappa}^1}\lesssim\|F\|_{\SH_{\kappa}^1}$, where $F^*_a(x)=\sup_{(x_0,t)\in\Gamma_a(x)}|F(x_0,t)|$.
\end{theorem}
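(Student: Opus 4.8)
The plan is to adapt the Stein--Weiss theory of conjugate harmonic systems to the Dunkl setting, the linchpin being a subharmonicity estimate that lets one descend below $L^1$. Write $N=2|\kappa|+d$ and fix an exponent $q$ with $(N-1)/N\le q<1$, so that $p:=1/q>1$. The key lemma I would establish first is that, for a vector $F=(u_0,\dots,u_d)$ satisfying the generalized Cauchy--Riemann system (\ref{C-R-1}), the function $|F|^q$ is $\tilde\kappa$-subharmonic on $\RR^{1+d}_+$; equivalently, it obeys the sub-mean-value inequality relative to the positivity-preserving $\tilde\kappa$-spherical means $\sigma^{\tilde\kappa}_{x,r}$. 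Granting this, the defining $L^1_\kappa$-bound on the horizontal slices of $F$ turns into a uniform $L^p_\kappa$-bound on the slices of $|F|^q$, since $\||F(x_0,\cdot)|^q\|_{L^p_\kappa}=\|F(x_0,\cdot)\|_{L^1_\kappa}^{q}\le\|F\|_{\SH_\kappa^1}^{q}$.

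Next I would majorize $|F|^q$ by a $\kappa$-Poisson integral. For $\epsilon>0$ the function $(x_0,x)\mapsto|F(x_0+\epsilon,x)|^q$ is nonnegative, continuous up to the boundary, and $\tilde\kappa$-subharmonic, with boundary trace $|F(\epsilon,\cdot)|^q\in L^p_\kappa$; the maximum principle, whose applicability at infinity is secured by the uniform $L^p_\kappa$-control of the slices together with the sub-mean-value property, then gives $|F(x_0+\epsilon,x)|^q\le P[\,|F(\epsilon,\cdot)|^q\,](x_0,x)$. As $\||F(\epsilon,\cdot)|^q\|_{L^p_\kappa}\le\|F\|_{\SH_\kappa^1}^{q}$, the traces form a bounded family in the reflexive space $L^p_\kappa$; I would extract a weak limit $h\ge0$ along some $\epsilon_k\to0$ and pass to the limit — using that for fixed $(x_0,x)$ the $\kappa$-Poisson kernel $(\tau_xP_{x_0})(-\cdot)$ belongs to $L^{p'}_\kappa$, so that $P[\,\cdot\,](x_0,x)$ is a bounded linear functional — to reach the pointwise majorization $|F(x_0,x)|^q\le(Ph)(x_0,x)$ with $\|h\|_{L^p_\kappa}\le\|F\|_{\SH_\kappa^1}^{q}$.

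From this single majorization the four assertions follow from the results already in place. Raising to the power $1/q$ gives $F_a^*(x)\le[(P_a^*h)(x)]^{1/q}$, whence, by the $L^p_\kappa$-boundedness of $P_a^*$ (Proposition \ref{Poisson-Integral-Max-N}), $\|F_a^*\|_{L^1_\kappa}\le\|P_a^*h\|_{L^p_\kappa}^{p}\lesssim\|h\|_{L^p_\kappa}^{p}\le\|F\|_{\SH_\kappa^1}$, which is (iv); in particular $F_a^*<\infty$ almost everywhere. Each $u_j$ is $\tilde\kappa$-harmonic and, being dominated by $|F|\le(Ph)^{1/q}$, is nontangentially bounded a.e., so the local Fatou theorem (Theorem \ref{local-Fatou-1}) supplies a nontangential limit of each $u_j$, hence of $F$, at almost every point, proving (i) and defining $\tilde F$. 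Since $|F(x_0,\cdot)|\le F_1^*\in L^1_\kappa$ uniformly in $x_0$, dominated convergence upgrades the a.e. convergence to $F(x_0,\cdot)\to\tilde F$ in $L^1_\kappa$, giving the convergence in (ii). Because $\sup_{x_0}\|u_j(x_0,\cdot)\|_{L^1_\kappa}<\infty$, Proposition \ref{Poi-Int-Cha}(ii) represents each $u_j$ as the $\kappa$-Poisson integral of a Borel measure, and the $L^1_\kappa$-convergence identifies that measure as $\tilde F_j\,d\omega_\kappa$; thus $F=P[\tilde F]$ componentwise, which is (iii). Finally the contraction property of the $\kappa$-Poisson integral (Proposition \ref{Poisson-c}) yields $\|F\|_{\SH_\kappa^1}\le\|\tilde F\|_{L^1_\kappa}$, while Fatou gives the reverse inequality, so $\|F\|_{\SH_\kappa^1}=\|\tilde F\|_{L^1_\kappa}$, completing (ii).

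I expect the subharmonicity lemma of the first step to be the main obstacle, precisely because of the non-local reflection terms $-2\sum_{\alpha\in R_+}\kappa(\alpha)\langle\alpha,x\rangle^{-2}(1-\sigma_\alpha)$ in $\Delta_{\tilde\kappa}$. The ordinary Stein--Weiss computation handles the differential part by using the Cauchy--Riemann relations to bound the full $\tilde\kappa$-gradient of $F$ in terms of $\nabla|F|$, and this is what forces the threshold $q\ge(N-1)/N$; but the reflection contribution, proportional to $|F(x_0,x)|^q-|F(x_0,\sigma_\alpha x)|^q$, carries no pointwise sign for a general solution $F$. Controlling it — by reorganizing these non-local terms within the sub-mean-value (Jensen) formulation against the positivity-preserving means $\sigma^{\tilde\kappa}_{x,r}$, together with a regularization of $|F|^q$ near the zero set $\{F=0\}$, where it fails to be smooth — is the delicate heart of the argument.
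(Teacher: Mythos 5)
Your overall architecture is the right one: it is essentially the Stein--Weiss route that underlies \cite[Proposition 7.6]{ADH1}, which is exactly what the paper invokes for part (iv) (the paper itself does not reprove the majorization; it cites \cite{ADH1} for (iv) and for the semigroup identity $F(x_0+\epsilon,x)=[P(F(\epsilon,\cdot))](x_0,x)$, and then derives (i) from Theorem \ref{local-Fatou-1}, (ii) by dominated convergence against $2F_1^*$, and (iii) by letting $\epsilon\rightarrow0+$, much as you do in your final paragraph). Your deductions of (i)--(iv) from the pointwise majorization $|F|^q\le Ph$ are correct: the $L_{\kappa}^{1/q}$ bound on $h$, the boundedness of $P_a^*$ from Proposition \ref{Poisson-Integral-Max-N}, the local Fatou theorem, dominated convergence, and Proposition \ref{Poi-Int-Cha}(ii) are used exactly as they should be.

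The genuine gap is the step you yourself flag: the claim that $|F|^q$ is $\tilde{\kappa}$-subharmonic for a system $F$ satisfying (\ref{C-R-1}). In the Dunkl setting this is not available (and is not expected to hold), precisely because the non-local part of $\Delta_{\tilde{\kappa}}$ produces terms proportional to $|F(x_0,x)|^q-|F(x_0,\sigma_\alpha x)|^q$, which have no sign; neither the Jensen/sub-mean-value reformulation nor regularization near $\{F=0\}$ removes this obstruction, since both address smoothness rather than the sign of the reflection differences. The known resolution --- used in \cite{ADH1} and reproduced in this paper's Section 6 (see (\ref{UF-function-1}) and Lemma \ref{subbarmonicity-a}) --- is to replace $F$ by the $(d+1)|G|$-dimensional vector $U_F=\{F_\sigma\}_{\sigma\in G}$ built from the whole $G$-orbit of $F$. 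Then $|U_F|$ is $G$-invariant, so the reflection terms of $\Delta_{\tilde{\kappa}}$ annihilate functions of $|U_F|$, and the remaining cross terms organize into the nonnegative quantity $V=\sum_{\alpha\in R_+}\kappa(\alpha)\langle\alpha,x\rangle^{-2}|U_F-\sigma_{\alpha}U_F|^2$, which yields $\Delta_{\tilde{\kappa}}(|U_F|^2+\epsilon)^{q/2}\ge0$ for some $q\in(0,1)$ determined by \cite[(6.12)]{ADH1} (not by the naive threshold $(N-1)/N$). Since $|F|\le|U_F|$ and $\sup_{x_0>0}\|\,|U_F(x_0,\cdot)|\,\|_{L_{\kappa}^1}\le|G|\,\|F\|_{\SH_{\kappa}^1}$ by $G$-invariance of $\omega_{\kappa}$, running your harmonic-majorization and weak-compactness argument on $|U_F|^q$ instead of $|F|^q$ repairs the proof, and everything downstream of the majorization goes through unchanged.
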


The above theorem has partly been proved in \cite{ADH1}. Indeed, on account of Proposition \ref{Poisson-Integral-Max-N}, part (iv) is essentially the same as \cite[Proposition 7.6]{ADH1}, which certainly implies that each component of $F$ is non-tangentially bounded at $(0,x)$ for almost every $x\in\RR^d$. Consequently part (i) follows from Theorem \ref{local-Fatou-1}. Furthermore, since $|F(x_0,x)-\tilde{F}(x)|\le2F^*_1(x)\in L_{\kappa}^1(\RR^d)$, the Lebesgue dominated convergence theorem gives that $\|F(x_0,\cdot)-\tilde{F}\|_{L_{\kappa}^1}\rightarrow0$ as $x_0\rightarrow0+$, and also $\|\tilde{F}\|_{L_{\kappa}^1}\le\|F\|_{\SH_{\kappa}^1}$. On the other hand, from the proof of \cite[Proposition 7.6]{ADH1} one has, in vector form,
$$
F(x_0+\epsilon,x)=\left[P(F(\epsilon,\cdot))\right](x_0,x),
$$
and letting $\epsilon\rightarrow0+$ yields $F(x_0,x)=(P\tilde{F})(x_0,x)$. Finally since
\begin{align}\label{Hardy-Poisson-1}
|F(x_0,x)|\le[P(|\tilde{F}|)](x_0,x),
\end{align}
by Proposition \ref{Poisson-c} we get $\|F\|_{\SH_{\kappa}^1}\le\|\tilde{F}\|_{L_{\kappa}^1}$. Thus both parts (ii) and (iii) are proved.

\begin{proposition}
For $F=(u_0,u_1,\cdots,u_d)\in \SH_{\kappa}^1(\RR^{1+d}_+)$, the functions $u_j$, $j=1,\cdots,d$, are uniquely determined by the first component $u_0$.
\end{proposition}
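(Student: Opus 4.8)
The plan is to combine linearity of the system (\ref{C-R-1}) with its ``mixed'' equations $D_0u_k=D_ku_0$ and the Poisson representation of Theorem \ref{Hardy-thm-1}. Suppose $F=(u_0,u_1,\dots,u_d)$ and $F'=(u_0,u_1',\dots,u_d')$ both belong to $\SH_\kappa^1(\RR^{1+d}_+)$ and share the same first component $u_0$. Since the equations (\ref{C-R-1}) are linear and $|F-F'|\le|F|+|F'|$ pointwise, the difference $G:=F-F'=(0,v_1,\dots,v_d)$, with $v_k:=u_k-u_k'$, again lies in $\SH_\kappa^1(\RR^{1+d}_+)$. Thus it suffices to prove the following: if an element of $\SH_\kappa^1(\RR^{1+d}_+)$ has first component vanishing identically, then all of its components vanish.

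First I would invoke the equations $D_0u_k=D_ku_0$ ($1\le k\le d$) in (\ref{C-R-1}), which for $G$ become $\partial_0 v_k=D_k0=0$. Since $\RR^{1+d}_+$ is connected in the $x_0$-direction, each $v_k$ is independent of $x_0$, i.e. $v_k(x_0,x)=v_k(x)$. On the other hand, by Theorem \ref{Hardy-thm-1}(iii), $G$ is the $\kappa$-Poisson integral of its boundary value $\tilde G=(0,\tilde v_1,\dots,\tilde v_d)$ with $\tilde v_k\in L_\kappa^1(\RR^d)$, so $v_k=P\tilde v_k$ and the Fourier representation (\ref{Poisson-2-3}) applies.

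The key step is to let $x_0\to\infty$ in
\[
v_k(x_0,x)=c_\kappa\int_{\RR^d}e^{-x_0|\xi|}(\SF_\kappa\tilde v_k)(\xi)E_\kappa(i\xi,x)\,d\omega_\kappa(\xi).
\]
For $x_0\ge1$ the integrand is dominated, uniformly in $x_0$, by $e^{-|\xi|}\,\|\SF_\kappa\tilde v_k\|_{C_0}$: here $|E_\kappa(i\xi,x)|\le1$ follows from (\ref{Dunkl-kernel-2-4}) together with the symmetry of $E_\kappa$, and $\SF_\kappa\tilde v_k\in C_0(\RR^d)$ by Proposition \ref{transform-a}(i). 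This dominating function is $\omega_\kappa$-integrable because $W_\kappa$ has only polynomial growth, so dominated convergence gives $v_k(x_0,x)\to0$ as $x_0\to\infty$ for every $x$. Since $v_k$ does not depend on $x_0$, this forces $v_k\equiv0$, hence $u_k=u_k'$, which is exactly the asserted uniqueness. I expect the only delicate point to be this passage to the limit $x_0\to\infty$; everything else is immediate from linearity and Theorem \ref{Hardy-thm-1}.

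As a by-product, the same circle of ideas gives the explicit dependence of $u_k$ on $u_0$. Writing $u_0=P\tilde u_0$ and comparing $\partial_0 u_k=D_ku_0$ in the representation (\ref{Poisson-2-3}), while using $D_j^xE_\kappa(i\xi,x)=i\xi_jE_\kappa(i\xi,x)$ from (\ref{Dunkl-kernel-eigenfunction-1}), yields on the transform side the relation $(\SF_\kappa\tilde u_k)(\xi)=-i(\xi_k/|\xi|)(\SF_\kappa\tilde u_0)(\xi)$; that is, $\tilde u_k={\mathcal R}_k\tilde u_0$ in the sense of Definition \ref{Riesz-2-e}, so $u_k=P({\mathcal R}_k\tilde u_0)$ is recovered from $u_0$ through the $k$th $\kappa$-Riesz transform of its boundary value.
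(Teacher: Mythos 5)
Your proof is correct, and its skeleton is exactly the paper's: pass to the difference $G=F-F'=(0,v_1,\dots,v_d)$ (or, as the paper does, work with $u_j-v_j$ directly), use the equations $D_0u_k=D_ku_0$ in (\ref{C-R-1}) to see that each $v_k$ is independent of $x_0$, and then eliminate these $x_0$-independent functions by letting $x_0\rightarrow+\infty$. The only divergence is the mechanism for the decay step. The paper stays on the kernel side: from $0\le(\tau_xP_{x_0})(-t)\le x_0^{-2|\kappa|-d}$ and (\ref{Hardy-Poisson-1}) it gets the quantitative bound $|F(x_0,x)|\le x_0^{-2|\kappa|-d}\|F\|_{\SH_{\kappa}^1}$ for every element of $\SH_{\kappa}^1(\RR^{1+d}_+)$, applies it to $F$ and $G$ separately, and concludes at once. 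You instead pass to the transform side via Theorem \ref{Hardy-thm-1}(iii) and the representation (\ref{Poisson-2-3}), and use dominated convergence with the bounds $|E_{\kappa}(i\xi,x)|\le1$ (from (\ref{Dunkl-kernel-2-4}) and symmetry) and $\SF_{\kappa}\tilde v_k\in C_0(\RR^d)$; this yields only qualitative decay, but that suffices. Both routes rest on the same nontrivial input, namely that members of $\SH_{\kappa}^1(\RR^{1+d}_+)$ are $\kappa$-Poisson integrals of their boundary values; the paper's kernel bound is a bit more elementary (no Dunkl-transform formula needed, just nonnegativity and the trivial sup bound of the Poisson kernel) and gives an explicit rate, while your argument makes transparent that mere $L^1_{\kappa}$-integrability of the boundary value is what matters. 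Your closing by-product, $\tilde u_k={\mathcal R}_k\tilde u_0$, is precisely the converse half of Theorem \ref{Hardy-thm-2}, so it is consistent with, though not needed for, the uniqueness statement.
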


\begin{proof}
Since $0\le P_{x_0}(x)\le x_0^{-2|\kappa|-d}$ for $x\in\RR^d$ so that $0\le(\tau_{x}P_{x_0})(-t)\le x_0^{-2|\kappa|-d}$ for $x,t\in\RR^d$ and $x_0>0$,
from (\ref{Poisson-2-2}) and (\ref{Hardy-Poisson-1}) one has $|F(x_0,x)|\le x_0^{-2|\kappa|-d}\|F\|_{\SH_{\kappa}^1}$ for $(x_0,x)\in\RR^{1+d}_+$. If the vector-valued function $G=(u_0,v_1,\cdots,v_d)$ is also in $\SH_{\kappa}^1(\RR^{1+d}_+)$, the generalized Cauchy-Riemann equations in (\ref{C-R-1}) imply $\partial_0(u_j-v_j)\equiv0$ for $j=1,\cdots,d$, and so $u_j-v_j=:\varphi_j(x)$, independent of $x_0$. But by the inequality just proved, $|\varphi_j(x)|\le x_0^{-2|\kappa|-d}(\|F\|_{\SH_{\kappa}^1}+\|G\|_{\SH_{\kappa}^1})$, and then letting $x_0\rightarrow+\infty$ proves that $u_j-v_j=\varphi_j(x)\equiv0$ on $\RR^{1+d}_+$.
\end{proof}

We define the ``real" Hardy space in the Dunkl setting by
\begin{align*}
H_{\kappa}^1(\RR^d)=\left\{f\in L_{\kappa}^1(\RR^d):\quad {\mathcal{R}}_jf\in L_{\kappa}^1(\RR^d),\,\,\,j=1,\cdots,d\right\},
\end{align*}
endowed with the norm $\|f\|_{H_{\kappa}^1}=\|f\|_{L_{\kappa}^1}+\sum_{j=1}^d\|{\mathcal{R}}_jf\|_{L_{\kappa}^1}$.

\begin{proposition}\label{completeness-a}
The space $H_{\kappa}^1(\RR^d)$ is a Banach space.
\end{proposition}

\begin{proposition}\label{Hardy-integral-a}
If $f\in H_{\kappa}^1(\RR^d)$, then $\int_{\RR^{d}}f\,d\omega_{\kappa}=0$.
\end{proposition}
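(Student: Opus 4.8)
The plan is to transfer the statement to the Dunkl transform side and to exploit the directional discontinuity of the Riesz multiplier $-i\xi_j/|\xi|$ at the origin. First I would observe that, since $E_{\kappa}(0,x)=1$ for all $x$, the quantity to be controlled is precisely the value of the Dunkl transform at the origin, namely
$$
(\SF_{\kappa}f)(0)=c_{\kappa}\int_{\RR^d}f\,d\omega_{\kappa},
$$
so that the assertion $\int_{\RR^d}f\,d\omega_{\kappa}=0$ is equivalent to $(\SF_{\kappa}f)(0)=0$.

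Next I would record the two continuity facts that drive the argument. Because $f\in L_{\kappa}^1(\RR^d)$, Proposition \ref{transform-a}(i) gives $\SF_{\kappa}f\in C_0(\RR^d)$, hence $\SF_{\kappa}f$ is continuous at the origin. The hypothesis $f\in H_{\kappa}^1(\RR^d)$ means in addition that ${\mathcal{R}}_jf\in L_{\kappa}^1(\RR^d)$ for each $j$, whence $\SF_{\kappa}({\mathcal{R}}_jf)\in C_0(\RR^d)$ is continuous at the origin as well. The link between the two functions is the defining multiplier relation (\ref{Riesz-2-2}),
$$
(\SF_{\kappa}({\mathcal{R}}_jf))(\xi)=-i\frac{\xi_j}{|\xi|}(\SF_{\kappa}f)(\xi),\qquad \xi\in\RR^d\setminus\{0\},
$$
valid by Definition \ref{Riesz-2-e}.

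The crux is then a short contradiction argument. Suppose $(\SF_{\kappa}f)(0)=a\neq0$. Fix any index $j$ and let $\xi\to0$ along the two rays $\xi=\pm s\,e_j$ with $s\downarrow0$, where $e_j$ is the $j$th coordinate vector. Along these rays $\xi_j/|\xi|=\pm1$, while $(\SF_{\kappa}f)(\xi)\to a$ by continuity; hence the right-hand side of the multiplier relation tends to $\mp ia$. Since these two limits differ, $\SF_{\kappa}({\mathcal{R}}_jf)$ cannot be continuous at the origin, contradicting the previous paragraph. Therefore $a=0$, i.e. $(\SF_{\kappa}f)(0)=0$, which is the desired conclusion.

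I do not expect a genuine obstacle here: the entire content is the mismatch between the continuity of $\SF_{\kappa}({\mathcal{R}}_jf)$ at $0$, guaranteed by ${\mathcal{R}}_jf\in L_{\kappa}^1(\RR^d)$, and the directional discontinuity of the factor $\xi_j/|\xi|$. The only point requiring a moment's care is that the identity (\ref{Riesz-2-2}) is asserted only on $\RR^d\setminus\{0\}$, so one must take limits through punctured neighbourhoods of the origin rather than evaluating there directly; the two-sided approach along a single coordinate axis already suffices, and no information about the value $(\SF_{\kappa}({\mathcal{R}}_jf))(0)$ itself is needed.
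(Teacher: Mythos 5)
Your proof is correct and is essentially the paper's own argument: the paper likewise notes that $\SF_{\kappa}f$ and $\SF_{\kappa}({\mathcal{R}}_jf)$ are continuous at the origin (both functions being Dunkl transforms of $L_{\kappa}^1$ functions) and concludes directly from the multiplier identity (\ref{Riesz-2-2}). You merely make explicit the directional-limit contradiction that the paper leaves implicit in the phrase ``follows immediately,'' which is a fair and accurate unpacking of the same idea.
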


Since, for $f\in H_{\kappa}^1(\RR^d)$, $\SF_{\kappa}f$ and $\SF_{\kappa}({\mathcal{R}}_jf)$ are all continuous at the origin, the proposition follows immediately from (\ref{Riesz-2-2}) with $f_j={\mathcal{R}}_jf$.

\begin{theorem} \label{Hardy-thm-2}
The function $f$ defined on $\RR^d$ is in $H_{\kappa}^1(\RR^d)$ if and only if there exists a vector-valued function $F=(u_0,u_1,\cdots,u_d)\in \SH_{\kappa}^1(\RR^{1+d}_+)$ with $f=\lim u_0$, the boundary function of the first component of $F$ given in Theorem \ref{Hardy-thm-1}. Moreover, $\|f\|_{H_{\kappa}^1}\asymp\|F\|_{\SH_{\kappa}^1}$, and for $j=1,\cdots,d$, the boundary function $f_j$ of $u_j$ is the $j$th $\kappa$-Riesz transform ${\mathcal{R}}_jf$ of $f$.
\end{theorem}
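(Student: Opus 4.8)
The plan is to prove the two implications separately and then to read off both the norm equivalence and the identification of the boundary functions from the constructions used.

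\emph{Necessity.} Suppose $f\in H_{\kappa}^1(\RR^d)$, so that $f$ and each ${\mathcal{R}}_jf$ lie in $L_{\kappa}^1(\RR^d)$. I would set $u_0=Pf$ and $u_j=P({\mathcal{R}}_jf)$ for $1\le j\le d$, the $\kappa$-Poisson integrals of $f$ and of its $\kappa$-Riesz transforms. Each $u_j$ is $\tilde{\kappa}$-harmonic and hence $C^{\infty}$ on $\RR^{1+d}_+$, so $F:=(u_0,u_1,\cdots,u_d)\in C^2(\RR^{1+d}_+)$. Because ${\mathcal{R}}_jf\in L_{\kappa}^1(\RR^d)$, Proposition \ref{Riesz-2-f} and (\ref{conjugate-Poisson-4}) give $u_j=(Q_jf)(x_0,\cdot)$, whence Proposition \ref{conjugate-Poisson-a}(i) shows that $F$ satisfies the generalized Cauchy--Riemann system (\ref{C-R-1}). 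Using $|F(x_0,\cdot)|\le\sum_{j=0}^d|u_j(x_0,\cdot)|$ together with the contraction estimate of Proposition \ref{Poisson-c}, I obtain $\sup_{x_0>0}\||F(x_0,\cdot)|\|_{L_{\kappa}^1}\le\|f\|_{L_{\kappa}^1}+\sum_{j=1}^d\|{\mathcal{R}}_jf\|_{L_{\kappa}^1}=\|f\|_{H_{\kappa}^1}$, so $F\in\SH_{\kappa}^1(\RR^{1+d}_+)$ and $\|F\|_{\SH_{\kappa}^1}\le\|f\|_{H_{\kappa}^1}$. Corollary \ref{Poisson-d} then identifies the boundary function of $u_0=Pf$ as $f$.

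\emph{Sufficiency and identification of the boundary functions.} Conversely, let $F=(u_0,\cdots,u_d)\in\SH_{\kappa}^1(\RR^{1+d}_+)$ with $f=\lim u_0$. Theorem \ref{Hardy-thm-1} supplies a boundary function $\tilde{F}=(f,f_1,\cdots,f_d)$ with all components in $L_{\kappa}^1(\RR^d)$, with $u_j=Pf_j$, and with $\|F\|_{\SH_{\kappa}^1}=\|\tilde{F}\|_{L_{\kappa}^1}$. The heart of the matter is to prove $f_j={\mathcal{R}}_jf$; by Definition \ref{Riesz-2-e} it suffices to check that $(\SF_{\kappa}f_j)(\xi)=-i(\xi_j/|\xi|)(\SF_{\kappa}f)(\xi)$ for $\xi\neq0$. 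Fixing $x_0>0$, formula (\ref{Poisson-2-3}) gives $\SF_{\kappa}[u_0(x_0,\cdot)](\xi)=e^{-x_0|\xi|}(\SF_{\kappa}f)(\xi)$ and $\SF_{\kappa}[u_j(x_0,\cdot)](\xi)=e^{-x_0|\xi|}(\SF_{\kappa}f_j)(\xi)$. I then take the Dunkl transform in the $x$-variable of the Cauchy--Riemann relation $\partial_0u_j=D_ju_0$ contained in (\ref{C-R-1}): differentiating the first transform in $x_0$ produces the factor $-|\xi|$, while the transformation rule $\SF_{\kappa}(D_jg)=i\xi_j\SF_{\kappa}g$ of Proposition \ref{transform-a}(iii) produces the factor $i\xi_j$, so that $-|\xi|e^{-x_0|\xi|}(\SF_{\kappa}f_j)(\xi)=i\xi_je^{-x_0|\xi|}(\SF_{\kappa}f)(\xi)$. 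Cancelling the nonvanishing factor $e^{-x_0|\xi|}$ yields the required identity, hence $f_j={\mathcal{R}}_jf\in L_{\kappa}^1(\RR^d)$ and $f\in H_{\kappa}^1(\RR^d)$.

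\emph{Norm equivalence.} One direction is the bound $\|F\|_{\SH_{\kappa}^1}\le\|f\|_{H_{\kappa}^1}$ already obtained for the canonical $F$. For the reverse, the pointwise inequalities $|f|\le|\tilde{F}|$ and $|f_j|\le|\tilde{F}|$ combined with $\|\tilde{F}\|_{L_{\kappa}^1}=\|F\|_{\SH_{\kappa}^1}$ from Theorem \ref{Hardy-thm-1}(ii) give $\|f\|_{L_{\kappa}^1}\le\|F\|_{\SH_{\kappa}^1}$ and $\|{\mathcal{R}}_jf\|_{L_{\kappa}^1}=\|f_j\|_{L_{\kappa}^1}\le\|F\|_{\SH_{\kappa}^1}$, so that $\|f\|_{H_{\kappa}^1}\le(d+1)\|F\|_{\SH_{\kappa}^1}$; together these prove $\|f\|_{H_{\kappa}^1}\asymp\|F\|_{\SH_{\kappa}^1}$.

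\emph{Main obstacle.} I expect the one genuinely delicate point to be the legitimacy of the Dunkl-transform manipulations in the sufficiency part, since the slices $u_0(x_0,\cdot)$ and $u_j(x_0,\cdot)$ are Poisson integrals of $L_{\kappa}^1$-functions and need not be Schwartz, so Proposition \ref{transform-a}(iii) does not apply verbatim. I would circumvent this by writing $u_0(x_0,\cdot)=f\ast_{\kappa}P_{x_0}$ with $P_{x_0}$ radial and in $\SS(\RR^d)$; the bound $0\le(\tau_xP_{x_0})(-t)\le x_0^{-2|\kappa|-d}$ places $u_0(x_0,\cdot)$ in $L_{\kappa}^1(\RR^d)\cap L^{\infty}(\RR^d)\subset L_{\kappa}^2(\RR^d)$, and since $P_{x_0}$ is radial one has $D_jP_{x_0}=\partial_jP_{x_0}\in\SS(\RR^d)$, whence $D_ju_0(x_0,\cdot)=f\ast_{\kappa}(D_jP_{x_0})$ has Dunkl transform $i\xi_je^{-x_0|\xi|}(\SF_{\kappa}f)(\xi)$ by Proposition \ref{convolution-2-a}(v) and (\ref{Dunkl-kernel-eigenfunction-1}). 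The differentiation in $x_0$ is justified by dominated convergence using the exponential decay of $e^{-x_0|\xi|}$, which makes the equating of the two transforms rigorous.
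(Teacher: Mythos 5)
Your architecture is the same as the paper's: for necessity you take $u_0=Pf$, $u_j=P({\mathcal{R}}_jf)$, deduce membership in $\SH_{\kappa}^1(\RR^{1+d}_+)$ with $\|F\|_{\SH_{\kappa}^1}\le\|f\|_{H_{\kappa}^1}$ from Proposition \ref{Poisson-c} and Corollary \ref{Poisson-d}; for sufficiency you extract the multiplier identity (\ref{Riesz-2-2}) from the Cauchy--Riemann relation $\partial_0u_j=D_ju_0$ applied to $u_0=Pf$, $u_j=Pf_j$; and the norm equivalence follows from Theorem \ref{Hardy-thm-1}(ii). Your verification of (\ref{C-R-1}) in the necessity step via Proposition \ref{Riesz-2-f} and Proposition \ref{conjugate-Poisson-a}(i) is a harmless variant of the paper's direct check using (\ref{Poisson-2-3}) together with (\ref{Riesz-2-2}).

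The genuine problem is in your resolution of the step you yourself flag as delicate. The Poisson kernel $P_{x_0}(x)=c_{d,\kappa}\,x_0(x_0^2+|x|^2)^{-|\kappa|-(d+1)/2}$ is \emph{not} in $\SS(\RR^d)$: it decays only polynomially, and equivalently its Dunkl transform $e^{-x_0|\xi|}$ is not smooth at $\xi=0$, whereas $\SF_{\kappa}$ is a homeomorphism of $\SS(\RR^d)$ by Proposition \ref{transform-a}(iii). The same applies to $\partial_jP_{x_0}$. So your claims ``$P_{x_0}$ radial and in $\SS(\RR^d)$'' and ``$D_jP_{x_0}=\partial_jP_{x_0}\in\SS(\RR^d)$'' are false, and with them your justification that $\SF_{\kappa}[D_ju_0(x_0,\cdot)](\xi)=i\xi_je^{-x_0|\xi|}(\SF_{\kappa}f)(\xi)$ collapses; moreover, invoking Proposition \ref{convolution-2-a}(v) for $f\ast_{\kappa}(D_jP_{x_0})$ presupposes knowing $\SF_{\kappa}(D_jP_{x_0})$, which is exactly what is at stake, so the cited propositions do not close the circle. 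The gap is repairable. The paper avoids transforming the slices altogether: it writes $u_0,u_j$ through the representation (\ref{Poisson-2-3}), differentiates under the integral sign in $x$ and $x_0$ (legitimate because $\SF_{\kappa}f$ and $\SF_{\kappa}f_j$ are bounded, $e^{-x_0|\xi|}$ decays exponentially, and (\ref{Dunkl-kernel-eigenfunction-1}) with (\ref{Dunkl-kernel-2-4}) control $D_j^xE_{\kappa}(i\xi,x)=i\xi_jE_{\kappa}(i\xi,x)$), so that $0\equiv D_ju_0-D_0u_j$ exhibits the $L_{\kappa}^1$-function $\xi\mapsto e^{-x_0|\xi|}\bigl[i\xi_j(\SF_{\kappa}f)(\xi)+|\xi|(\SF_{\kappa}f_j)(\xi)\bigr]$ as having vanishing inverse Dunkl transform, and then the inversion theorem, Proposition \ref{transform-a}(ii), forces it to vanish a.e. Alternatively your route can be salvaged by replacing ``Schwartz'' with ``radial and in $A_{\kappa}(\RR^d)$'' for $P_{x_0}$ (which legitimizes $\SF_{\kappa}[u_0(x_0,\cdot)]=e^{-x_0|\xi|}\SF_{\kappa}f$ via Proposition \ref{convolution-2-a}(i),(v)) and by computing $\SF_{\kappa}(D_jP_{x_0})=i\xi_je^{-x_0|\xi|}$ by a separate direct argument, as in the proof of Proposition \ref{conjugate-kernel-a}(ii); one must also still justify interchanging $D_j$ and $\partial_0$ with the defining integrals, e.g.\ by the domination in \cite[Proposition 5.1]{ADH1}. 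As written, however, the sufficiency half of your proof does not go through.
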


This theorem was proved in \cite{ADH1} for the $\kappa$-Riesz transforms defined in the sense of distributions. Here we restate the proof based on Definition \ref{Riesz-2-e}.
For $f\in H_{\kappa}^1(\RR^d)$, set $u_0=Pf$ and $u_j=P({\mathcal{R}}_jf)$ for $1\le j\le d$. Using the expressions of $u_0$ and $u_j$ ($1\le j\le d$) in the form of (\ref{Poisson-2-3}) and noting (\ref{Riesz-2-2}) with $f_j={\mathcal{R}}_jf$, it is easy to check that the vector-valued function $F=(u_0,u_1,\cdots,u_d)$ satisfies the equations in (\ref{C-R-1}); and furthermore, by Proposition \ref{Poisson-c}, $\|F\|_{\SH_{\kappa}^1}\le\|f\|_{H_{\kappa}^1}$. Therefore $F\in \SH_{\kappa}^1(\RR^{1+d}_+)$ with $f=\lim u_0$ by Corollary \ref{Poisson-d}.

Conversely suppose $F=(u_0,u_1,\cdots,u_d)\in \SH_{\kappa}^1(\RR^{1+d}_+)$ with the boundary function $\tilde{F}=(f_0,f_1,\cdots,f_d)$ determined in Theorem \ref{Hardy-thm-1}. It suffices to show (\ref{Riesz-2-2}) for $j=1,\cdots,d$.
By Theorem \ref{Hardy-thm-1}(iii), $u_0=Pf_0$ and $u_j=Pf_j$, and the generalized Cauchy-Riemann equations in (\ref{C-R-1}) and the expressions of $u_0$ and $u_j$ in the form of (\ref{Poisson-2-3}) give, for $x_0>0$,
\begin{align*}
0&\equiv D_ju_0-D_0u_j\\
&=c_{\kappa}\int_{\RR^{d}}e^{-x_0|\xi|}\left[i\xi_j\left(\SF_{\kappa}f_0\right)(\xi)+|\xi|\left(\SF_{\kappa}f_j\right)(\xi)\right]E_{\kappa}(i\xi,x)\,d\omega_{\kappa}(\xi),\quad x\in\RR^d.
\end{align*}
It then follows from Proposition \ref{transform-a}(ii) that $i\xi_j\left(\SF_{\kappa}f_0\right)(\xi)+|\xi|\left(\SF_{\kappa}f_j\right)(\xi)=0$ for a.e. $x\in\RR^d$, which is identical with (\ref{Riesz-2-2}) as desired. Thus we have proved $f_0\in H_{\kappa}^1(\RR^d)$; and finally, by Theorem \ref{Hardy-thm-1}(ii), $\|f_0\|_{H_{\kappa}^1}\lesssim\|\tilde{F}\|_{L_{\kappa}^1}=\|F\|_{\SH_{\kappa}^1}$. The proof of Theorem \ref{Hardy-thm-2} is completed.

\subsection{A dense subclass of the Hardy space $H_{\kappa}^1(\RR^d)$}

We shall prove a density result on $H_{\kappa}^1(\RR^d)$ that is an analog of \cite[p. 225, Lemma]{St2}). Set
\begin{align*}
L_{\kappa,0}^1(\RR^d)
&=\left\{f\in L_{\kappa}^1(\RR^d): \,\,(\SF_{\kappa}f)(0)=0\right\},\\
L_{\kappa,00}^1(\RR^d)
&=\left\{f\in L_{\kappa}^1(\RR^d): \,\, \SF_{\kappa}f \,\, \hbox{has compact support in $\RR^d\setminus\{0\}$}\right\}.
\end{align*}

\begin{lemma} \label{density-a}
$H_{\kappa,0}^1(\RR^d):=\SS(\RR^d)\bigcap L_{\kappa,00}^1(\RR^d)$ is s dense subclass of $H_{\kappa}^1(\RR^d)$.
\end{lemma}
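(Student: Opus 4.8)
The plan is to follow the classical Stein argument, building the approximants out of \emph{radial} Dunkl convolutions so that the $\kappa$-Riesz transforms are automatically controlled, and then upgrading the resulting functions to genuine Schwartz functions. First I record the trivial inclusion $H_{\kappa,0}^1\subseteq H_\kappa^1$: if $\SF_\kappa f\in C^\infty_c(\RR^d\setminus\{0\})$ then $f\in\SS(\RR^d)$, while by Definition \ref{Riesz-2-e} the transform $\SF_\kappa(\mathcal{R}_jf)(\xi)=-i(\xi_j/|\xi|)\SF_\kappa f(\xi)$ is again in $C^\infty_c(\RR^d\setminus\{0\})$ since $\xi\mapsto\xi_j/|\xi|$ is smooth off the origin; hence $\mathcal{R}_jf\in\SS\subset L_\kappa^1$ and $f\in H_\kappa^1$.

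For density, fix $f\in H_\kappa^1$ and let $\Phi\in\SS(\RR^d)$ be radial with $\SF_\kappa\Phi\in C^\infty_c(\RR^d)$, $\SF_\kappa\Phi\equiv1$ on $\{|\xi|\le1\}$ and $\mathrm{supp}\,\SF_\kappa\Phi\subseteq\{|\xi|\le2\}$; for $t>0$ put $\Phi_t(x)=t^{-2|\kappa|-d}\Phi(x/t)$, so $\SF_\kappa\Phi_t(\xi)=\SF_\kappa\Phi(t\xi)$ and $c_\kappa\int_{\RR^d}\Phi_t\,d\omega_\kappa=1$. The structural point is that, $\Phi_t$ being radial, the $\kappa$-Riesz transforms commute with convolution, $\mathcal{R}_j(h\ast_\kappa\Phi_t)=(\mathcal{R}_jh)\ast_\kappa\Phi_t$, which one checks on the transform side using the product formula of Proposition \ref{convolution-2-b}, Definition \ref{Riesz-2-e}, and the injectivity of $\SF_\kappa$ from Proposition \ref{transform-a}. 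Applying Proposition \ref{convolution-2-b}(ii) to $f$ and to each $\mathcal{R}_jf\in L_\kappa^1$ gives $f\ast_\kappa\Phi_t\to f$ in $H_\kappa^1$ as $t\to0^+$, while $\SF_\kappa(f\ast_\kappa\Phi_t)=\SF_\kappa f\cdot\SF_\kappa\Phi(t\,\cdot)$ has compact support and $f\ast_\kappa\Phi_t\in C^\infty$ by Proposition \ref{convolution-2-c}(i). To push the spectrum off the origin I subtract the low-frequency part, setting
\[
g_{t,\epsilon}=f\ast_\kappa\Phi_t-(f\ast_\kappa\Phi_t)\ast_\kappa\Phi_{1/\epsilon},
\]
whose transform $\SF_\kappa f\,\SF_\kappa\Phi(t\,\cdot)\,[1-\SF_\kappa\Phi(\cdot/\epsilon)]$ vanishes on $\{|\xi|\le\epsilon\}$ and is supported in the annulus $\{\epsilon\le|\xi|\le2/t\}$. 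Since $\int_{\RR^d}(f\ast_\kappa\Phi_t)\,d\omega_\kappa=0$ by Proposition \ref{Hardy-integral-a}, the subtracted term tends to $0$ in $H_\kappa^1$ as $\epsilon\to0^+$: the uniform contraction $\|\,\cdot\,\ast_\kappa\Phi_{1/\epsilon}\|_{L_\kappa^1\to L_\kappa^1}\le\|\Phi\|_{L_\kappa^1}$ from Proposition \ref{convolution-2-b}(i), combined with the commutation identity, reduces it to the vanishing of the low-frequency average of a mean-zero $L_\kappa^1$ function, which I would extract from the spreading form $\Phi_{1/\epsilon}(x)=\epsilon^{2|\kappa|+d}\Phi(\epsilon x)$, the cancellation above, and Proposition \ref{convolution-2-a}. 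Thus $g_{t,\epsilon}\to f$ in $H_\kappa^1$ with $\SF_\kappa g_{t,\epsilon}$ continuous and compactly supported in $\RR^d\setminus\{0\}$; that is, $L_{\kappa,00}^1$ is already dense in $H_\kappa^1$.

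The remaining, genuinely delicate, step is to replace such a $g$ by an element of $\SS\cap L_{\kappa,00}^1$, i.e.\ to smooth its Dunkl transform. In the classical setting one multiplies $g$ by a slowly varying factor $w_n\to1$, which mollifies the transform precisely because the ordinary Fourier transform turns products into convolutions; this device is \emph{unavailable} here, since $\SF_\kappa$ has no convolution theorem for products and $E_\kappa$ obeys no addition formula. This is the main obstacle. My plan is to gain smoothness of the transform from spatial decay at the source: I would first reduce, within $H_\kappa^1$, to an $f$ whose transform is already $C^\infty$ — e.g.\ a function of compact support, for which $\SF_\kappa f$ is real-analytic by differentiation under the integral in the definition of $\SF_\kappa$ — after which the two truncations above give $\SF_\kappa g_{t,\epsilon}=\SF_\kappa f\cdot(\text{smooth annular multiplier})\in C^\infty_c(\RR^d\setminus\{0\})$, so that $g_{t,\epsilon}\in\SS\cap L_{\kappa,00}^1$ and the same $H_\kappa^1$-convergence finishes the proof. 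To convert the transform-side control into the required $L_\kappa^1$-convergence of the functions and of their Riesz transforms, I would use the weighted estimate $\|h\|_{L_\kappa^1}\lesssim\|(1+|\cdot|^2)^{s/2}h\|_{L_\kappa^2}$ for $2s>2|\kappa|+d$ (Cauchy--Schwarz against the integrable weight $(1+|\cdot|^2)^{-s}$), whose right-hand side is a Dunkl--Sobolev norm of $\SF_\kappa h$.

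I expect the heart of the difficulty to lie in exactly this last reduction: producing a mean-zero-preserving approximation that makes $\SF_\kappa f$ smooth (equivalently, forces rapid spatial decay) while keeping the $\kappa$-Riesz transforms convergent in $L_\kappa^1$, without the product formula that trivializes the classical case. By contrast, the reduction to $L_{\kappa,00}^1$ is comparatively routine once one observes that every operation used is a radial Dunkl convolution, so that $\mathcal{R}_j$ passes through it and all $H_\kappa^1$-bounds come from the $L_\kappa^1$-contraction estimates of Proposition \ref{convolution-2-b} together with the vanishing mean of Proposition \ref{Hardy-integral-a}.
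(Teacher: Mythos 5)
Your first stage (density of $L_{\kappa,00}^1(\RR^d)$ in $H_{\kappa}^1(\RR^d)$) is sound and is essentially the argument the paper itself uses: spectral cutoffs built from radial Dunkl convolutions, commutation of ${\mathcal{R}}_j$ with $\ast_{\kappa}\Phi_t$ via the transform, and the vanishing of the spread-out term $h\ast_{\kappa}\Phi_{1/\epsilon}$ for mean-zero $h$ — though to close that last point you need the explicit mechanism: write $(h\ast_{\kappa}\Phi_{1/\epsilon})(x)=c_{\kappa}\int h(-s)\left[(\tau_{s}\Phi_{1/\epsilon})(x)-\Phi_{1/\epsilon}(x)\right]d\omega_{\kappa}(s)$ using the cancellation, and then use $\|\tau_{s}\Phi_{1/\epsilon}-\Phi_{1/\epsilon}\|_{L_{\kappa}^1}=\|\tau_{\epsilon s}\Phi-\Phi\|_{L_{\kappa}^1}\rightarrow0$, which is exactly Corollary \ref{translation-2-f} (continuity of generalized translations on radial $L_{\kappa}^1$ functions) together with dominated convergence.

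The genuine gap is in your second stage. Your premise — that the classical device of multiplying by a slowly varying factor is ``unavailable'' because $\SF_{\kappa}$ has no product-to-convolution theorem — is false, and the substitute you build on it does not work. The Dunkl transform is essentially self-inverse (Proposition \ref{transform-a}(ii),(v)) and satisfies $\SF_{\kappa}(g\ast_{\kappa}h)=\SF_{\kappa}g\cdot\SF_{\kappa}h$ (Proposition \ref{convolution-2-a}(v)); hence if, for $f\in L_{\kappa,00}^1(\RR^d)$, one \emph{defines} $\widetilde{T_\epsilon}f$ on the transform side by $\SF_{\kappa}(\widetilde{T_\epsilon}f)=\eta(0)^{-1}(\SF_{\kappa}f)\ast_{\kappa}(\SF_{\kappa}\eta)_{\epsilon}$ — a Dunkl convolution of a compactly supported continuous function with a radial Schwartz function, so no translation-operator pathology arises — then inversion gives precisely $(\widetilde{T_\epsilon}f)(x)=f(x)\,\eta(\epsilon x)/\eta(0)$. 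This simultaneously mollifies the transform (so $\widetilde{T_\epsilon}f\in\SS(\RR^d)\cap L_{\kappa,00}^1(\RR^d)$) and converges to $f$ in $L_{\kappa}^1$ by dominated convergence; the Riesz transforms follow because on $L_{\kappa,00}^1$ with spectrum in a fixed compact set $K\not\ni0$ one has ${\mathcal{R}}_jg=g\ast_{\kappa}\psi_j$ with $\psi_j=\SF_{\kappa}^{-1}m_j$, $m_j\in\SS(\RR^d)$, $m_j(\xi)=-i\xi_j/|\xi|$ on $K$, and Proposition \ref{convolution-2-a}(iv). Your alternative — ``first reduce, within $H_{\kappa}^1$, to an $f$ of compact support'' — is an unproved claim that is at least as hard as the lemma itself: spatial truncation $f\mapsto f\chi_{B_R}$ destroys both the mean-zero condition and any spectral localization, and since ${\mathcal{R}}_j$ is not bounded on $L_{\kappa}^1(\RR^d)$, smallness of $\|f-f\chi_{B_R}\|_{L_{\kappa}^1}$ gives no control of $\|{\mathcal{R}}_j(f-f\chi_{B_R})\|_{L_{\kappa}^1}$. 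In the classical theory the density of compactly supported mean-zero functions in $H^1$ is obtained from the atomic decomposition — machinery this paper deliberately avoids and which, in the Dunkl setting, is not available at this point of the development. So the step you yourself flag as ``the heart of the difficulty'' is indeed a hole, and the way to fill it is the multiplication device you discarded.
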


\begin{proof}
Note that $H_{\kappa}^1(\RR^d)\subseteq L_{\kappa,0}^1(\RR^d)$ by Proposition \ref{Hardy-integral-a}. Choose a fixed radial $\eta\in \SS(\RR^d)$ with the real Dunkl transform $\SF_\kappa \eta$ satisfying $\chi_{\{|\xi|\leq1\}}\leq\SF_\kappa\eta\leq \chi_{\{|\xi|\leq2\}}$, and write $\eta_\epsilon(x)=\epsilon^{-2|\kappa|-d}\eta(\epsilon^{-1}x)$ for $\epsilon>0$.
We shall prove the following two claims:

{\bf Claim 1.} $\SS(\RR^d)\bigcap L_{\kappa,00}^1(\RR^d)$ is dense in $L_{\kappa,00}^1(\RR^d)$ as a subspace of $H_{\kappa}^1(\RR^d)$.

{\bf Claim 2.} $L_{\kappa,00}^1(\RR^d)$ is dense in $H_{\kappa}^1(\RR^d)$.

To show Claim 1, define the operators $T_\epsilon$ for $\epsilon>0$ by
$\SF_\kappa(\widetilde{T_\epsilon}f)=\eta(0)^{-1}(\SF_\kappa f)*_\kappa(\SF_\kappa \eta)_\epsilon$.
For $f\in L_{\kappa,00}^1(\RR^d)$, by Proposition \ref{convolution-2-c} we have that $\SF_\kappa(\widetilde{T_\epsilon}f)\in C^\infty(\RR^d)$ and the supports of $\SF_{\kappa}f$ and $\SF_\kappa(\widetilde{T_\epsilon}f)$ for those $\epsilon$ sufficiently small are contained in a common compact set $K$ which is at a positive distance from the origin. This asserts that
$\widetilde{T_\epsilon}f\in \SS(\RR^d)\bigcap L_{\kappa,00}^1(\RR^d)$.
In addition Propositions \ref{transform-a}(ii) and \ref{convolution-2-a}(v) imply $(\widetilde{T_\epsilon}f)(x)=f(x)\eta(\epsilon x)/\eta(0)$, so that
\begin{align*}
\|\widetilde{T_\epsilon}f-f\|_{L^{1}_\kappa}
=c_{\kappa}\int_{\RR^{d}}|f(x)|\left|\eta(\epsilon x)/\eta(0)-1\right|\,d\omega_{\kappa}\rightarrow0, \qquad \hbox{as}\quad \epsilon\rightarrow0+,
\end{align*}
by Lebesgue's dominated convergence theorem.

Now for $j=1,\cdots,d$, we select the functions $m_j(\xi)\in\SS(\RR^d)$ satisfying $m_j(\xi)=-i\xi_j/|\xi|$ when $\xi\in K$, and put $\psi_j=\SF_{\kappa}^{-1}m_j$. By Proposition \ref{convolution-2-a}(iv), $\|f\ast_{\kappa}\psi_j\|_{L_{\kappa}^1}\le C_{\psi_j}\|f\|_{L_{\kappa}^1}$ and $\|(\widetilde{T_\epsilon}f)\ast_{\kappa}\psi_j\|_{L_{\kappa}^1}\le C_{\psi_j}\|\widetilde{T_\epsilon}f\|_{L_{\kappa}^1}$, and then, according to Proposition \ref{convolution-2-a}(v) and Definition \ref{Riesz-2-e}, ${\mathcal{R}}_jf=f\ast_{\kappa}\psi_j$ and ${\mathcal{R}}_j(\widetilde{T_\epsilon}f)=\widetilde{T_\epsilon}f\ast_{\kappa}\psi_j$. Thus both $f$ and $\widetilde{T_\epsilon}f$ belong to $H_{\kappa}^1(\RR^d)$,
so that $L_{\kappa,00}^1(\RR^d)\subseteq H_{\kappa}^1(\RR^d)$, and again by Proposition \ref{convolution-2-a}(iv),
$$
\|{\mathcal{R}}_j(\widetilde{T_\epsilon}f-f)\|_{L_{\kappa}^1}=\|(\widetilde{T_\epsilon}f-f)\ast\psi_j\|_{L_{\kappa}^1}\leq C_{\psi_j}\|\widetilde{T_\epsilon}f-f\|_{L_{\kappa}^1}\rightarrow0,  \qquad \hbox{as}\quad \epsilon\rightarrow0+.
$$
Therefore $\lim_{\epsilon\rightarrow0+}\|\widetilde{T_\epsilon}f-f\|_{H_{\kappa}^1}=0$, and Claim 1 is concluded,

In order to prove Claim 2, define the operator $T_\epsilon$ by $(T_\epsilon f)(x)=(f\ast_\kappa\eta_\epsilon)(x)$, and for $f\in L_{\kappa}^1(\RR^d)$, consider $T_\epsilon f-T_{\epsilon^{-1}}f$ with $0<\epsilon<1$.
By Proposition \ref{convolution-2-a}(iii), $\|T_\epsilon f-T_{\epsilon^{-1}}f\|_{L_{\kappa}^1}\le2\|\eta\|_{L_{\kappa}^1}\|f\|_{L_{\kappa}^1}$, and by Proposition \ref{convolution-2-a}(v),
$$
[\SF_\kappa(T_\epsilon f-T_{\epsilon^{-1}}f)](\xi)=(\SF_\kappa f)(\xi)[(\SF_\kappa \eta)(\epsilon\xi)-(\SF_\kappa \eta)(\epsilon^{-1}\xi)]=0
$$
whenever $|\xi|<\epsilon$ or $|\xi|>2\epsilon^{-1}$. Consequently $T_\epsilon f-T_{\epsilon^{-1}}f\in L_{\kappa,00}^1(\RR^d)\subseteq H_{\kappa}^1(\RR^d)$.

We assert that for $f\in L_{\kappa,0}^1(\RR^d)$, $\lim_{\epsilon\rightarrow\infty}\|T_\epsilon f\|_{L^{1}_\kappa}=0$. Since
$$
(T_\epsilon f)(x)=c_{\kappa}\int_{\RR^d}f(-t)[(\tau_t\eta_\epsilon)(x)-\eta_\epsilon(x)]d\omega_\kappa(t),
$$
one has
$\|T_\epsilon f\|_{L^{1}_\kappa}\leq c_{\kappa}\int_{\RR^d}|f(-t)|\|\tau_{t}\eta_\epsilon-\eta_\epsilon\|_{L^{1}_\kappa}d\omega_\kappa(t)$.
But by Proposition \ref{translation-2-b-1}(ii), $(\tau_{t}\eta_\epsilon)(x)=\left(\tau_{\epsilon^{-1}t}\eta\right)_{\epsilon}(x)$, so that for $t\in\RR^d$, $\|\tau_{t}\eta_\epsilon-\eta_\epsilon\|_{L^{1}_\kappa}=\|\tau_{\epsilon^{-1}t}\eta-\eta\|_{L^{1}_\kappa}\rightarrow0$ as $\epsilon\rightarrow\infty$ by Corollary \ref{translation-2-f};
and then, on account of $\|\tau_{t}\eta_\epsilon-\eta_\epsilon\|_{L^{1}_\kappa}\le 2\|\eta\|_{L^{1}_\kappa}$ by Proposition \ref{translation-2-e}, applying Lebesgue's dominated convergence theorem yields the desired assertion.
Moreover $\lim_{\epsilon\rightarrow0+}\|T_\epsilon f-f\|_{L^{1}_\kappa}=0$ by Proposition \ref{convolution-2-b}(ii), and hence, for $f\in L_{\kappa,0}^1(\RR^d)$,
$$
\|(T_\epsilon f-T_{\epsilon^{-1}}f)-f\|_{L^{1}_\kappa}\leq \|T_\epsilon f-f\|_{L^{1}_\kappa}+\|T_{\epsilon^{-1}}f\|_{L^{1}_\kappa}\rightarrow0 \qquad \hbox{as}\quad \epsilon\rightarrow0+.
$$

Finally assume that $f\in H_{\kappa}^1(\RR^d)$. Since ${\mathcal{R}}_jf\in L_{\kappa,0}^1(\RR^d)$ for $1\le j\le d$ and $T_\epsilon f-T_{\epsilon^{-1}}f\in H_{\kappa}^1(\RR^d)$, by taking the Dunkl transform we get
${\mathcal{R}}_j(T_\epsilon f-T_{\epsilon^{-1}}f)=(T_\epsilon-T_{\epsilon^{-1}})({\mathcal{R}}_j f)$.
Thus
\begin{align*}
\|{\mathcal{R}}_j(T_\epsilon f-T_{\epsilon^{-1}}f)-{\mathcal{R}}_jf\|_{L_{\kappa}^1}
=\|(T_\epsilon-T_{\epsilon^{-1}})({\mathcal{R}}_jf)-{\mathcal{R}}_jf\|_{L_{\kappa}^1}\rightarrow0 \qquad \hbox{as}\quad \epsilon\rightarrow0+,
\end{align*}
and consequently,
$\|(T_\epsilon f-T_{\epsilon^{-1}}f)-f\|_{H_{\kappa}^1}\rightarrow0$ as $\epsilon\rightarrow0^+$. Claim 2 is proved and so is the lemma.
\end{proof}

From the proof of the above lemma, we have the following corollary.
\begin{corollary} \label{density-b}
If $f\in H_{\kappa,0}^1(\RR^d)$, then ${\mathcal{R}}_jf\in H_{\kappa,0}^1(\RR^d)$.
\end{corollary}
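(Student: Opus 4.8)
The plan is to read the conclusion directly off the construction already carried out in the proof of Lemma \ref{density-a}. Let $f\in H_{\kappa,0}^1(\RR^d)=\SS(\RR^d)\cap L_{\kappa,00}^1(\RR^d)$, so that $\SF_\kappa f$ is smooth and supported in a compact set $K\subset\RR^d\setminus\{0\}$. As in Claim 1 of that proof, I would fix $m_j\in\SS(\RR^d)$ with $m_j(\xi)=-i\xi_j/|\xi|$ for $\xi\in K$, set $\psi_j=\SF_\kappa^{-1}m_j$, and recall that there one already has ${\mathcal{R}}_jf=f\ast_\kappa\psi_j$; this identification is exactly what Proposition \ref{convolution-2-a}(v) together with Definition \ref{Riesz-2-e} supplies.

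Next I would compute the Dunkl transform of ${\mathcal{R}}_jf$. By Proposition \ref{convolution-2-a}(v),
\[
\SF_\kappa({\mathcal{R}}_jf)=(\SF_\kappa f)\,(\SF_\kappa\psi_j)=(\SF_\kappa f)\,m_j .
\]
Since $\SF_\kappa f$ vanishes off $K$, this product is supported in $K\subset\RR^d\setminus\{0\}$, and on $K$ it coincides with $-i(\xi_j/|\xi|)(\SF_\kappa f)(\xi)$; this confirms both that $f\ast_\kappa\psi_j$ is genuinely the $j$th $\kappa$-Riesz transform in the sense of Definition \ref{Riesz-2-e} and that $\SF_\kappa({\mathcal{R}}_jf)$ has compact support away from the origin. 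As ${\mathcal{R}}_jf$ will also be seen to lie in $L_{\kappa}^1(\RR^d)$ (indeed in $\SS(\RR^d)\subset L_{\kappa}^1(\RR^d)$), this gives ${\mathcal{R}}_jf\in L_{\kappa,00}^1(\RR^d)$.

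It remains to verify the Schwartz membership. Because $f\in\SS(\RR^d)$, Proposition \ref{transform-a}(iii) gives $\SF_\kappa f\in\SS(\RR^d)$, so $(\SF_\kappa f)\,m_j$ is a product of two Schwartz functions and hence lies in $\SS(\RR^d)$; in fact it is smooth with compact support in $K$. Applying the inversion formula (Proposition \ref{transform-a}(ii)) and the fact that $\SF_\kappa$ is a homeomorphism of $\SS(\RR^d)$ (Proposition \ref{transform-a}(iii)), I obtain ${\mathcal{R}}_jf=\SF_\kappa^{-1}[(\SF_\kappa f)\,m_j]\in\SS(\RR^d)$. Combining this with the previous paragraph yields ${\mathcal{R}}_jf\in\SS(\RR^d)\cap L_{\kappa,00}^1(\RR^d)=H_{\kappa,0}^1(\RR^d)$, which is the assertion.

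There is no serious obstacle here, since every ingredient has already been assembled in the proof of Lemma \ref{density-a}; the argument amounts to the observation that the multiplier $m_j$ is itself Schwartz and that multiplication by it preserves both the Schwartz class and the compact support of $\SF_\kappa f$ away from the origin. The only point demanding slight care is that the true symbol $-i\xi_j/|\xi|$ is singular at the origin, which is precisely why one works with the globally Schwartz surrogate $m_j$ agreeing with it on $K$; the condition $0\notin K$ guarantees that replacing the symbol by $m_j$ changes nothing on the support of $\SF_\kappa f$.
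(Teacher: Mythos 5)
Your proposal is correct and coincides with the paper's intended argument: the paper deduces the corollary precisely from the construction in the proof of Lemma \ref{density-a}, namely ${\mathcal{R}}_jf=f\ast_{\kappa}\psi_j$ with $\psi_j=\SF_{\kappa}^{-1}m_j$, so that $\SF_{\kappa}({\mathcal{R}}_jf)=(\SF_{\kappa}f)\,m_j$ is smooth with compact support in $\RR^d\setminus\{0\}$ and, by inversion and the fact that $\SF_{\kappa}$ is a homeomorphism of $\SS(\RR^d)$, ${\mathcal{R}}_jf\in\SS(\RR^d)\cap L_{\kappa,00}^1(\RR^d)$. Your added care about replacing the singular symbol $-i\xi_j/|\xi|$ by the Schwartz surrogate $m_j$ on $K$ is exactly the point the paper relies on, so nothing is missing.
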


Let $\SH_{\kappa,0}^1(\RR^{1+d}_+)$ be the collection of the vector-valued functions of $\SH_{\kappa}^1(\RR^{1+d}_+)$ whose first components have boundary values in $H_{\kappa,0}^1(\RR^d)=\SS(\RR^d)\bigcap L_{\kappa,00}^1(\RR^d)$.

\begin{corollary} \label{den-aug-2}
The subclass $\SH_{\kappa,0}^1(\RR^{1+d}_+)$ is dense in the Hardy space $\SH_{\kappa}^1(\RR^{1+d}_+)$ and has the following properties: $F$ in $\SH_{\kappa,0}^1(\RR^{1+d}_+)$ and its every order partial derivative $\partial^{\gamma}F$ ($\gamma$ is a $(d+1)$-tuple of non-negative integers) in the $x_0,x_1,\cdots,x_d$ are continuous on  $\overline{\RR^{1+d}_+}$; and they are all fast decreasing, that means, for each polynomial $p$ in the $x_0,x_1,\cdots,x_d$, $p$ times $F$ and each $\partial^{\gamma}F$ are bounded on $\overline{\RR^{1+d}_+}$.
\end{corollary}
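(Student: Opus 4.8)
The plan is to read off both assertions from the bijection between $H_{\kappa}^1(\RR^d)$ and $\SH_{\kappa}^1(\RR^{1+d}_+)$ furnished by Theorem \ref{Hardy-thm-2}. For the density, recall that $f\mapsto F=\big(Pf,P({\mathcal R}_1f),\dots,P({\mathcal R}_df)\big)$ is a bijection of $H_{\kappa}^1(\RR^d)$ onto $\SH_{\kappa}^1(\RR^{1+d}_+)$ with $\|f\|_{H_{\kappa}^1}\asymp\|F\|_{\SH_{\kappa}^1}$, and that such an $F$ lies in $\SH_{\kappa,0}^1(\RR^{1+d}_+)$ exactly when its first-component boundary value $f$ lies in $H_{\kappa,0}^1(\RR^d)$. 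Thus, given $F\in\SH_{\kappa}^1$ with boundary value $f$, I would invoke Lemma \ref{density-a} to pick $f^{(n)}\in H_{\kappa,0}^1$ with $\|f^{(n)}-f\|_{H_{\kappa}^1}\to0$; the associated $F^{(n)}\in\SH_{\kappa,0}^1$ then satisfy $\|F^{(n)}-F\|_{\SH_{\kappa}^1}\asymp\|f^{(n)}-f\|_{H_{\kappa}^1}\to0$, giving density.

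For the regularity and decay, fix $F=(u_0,\dots,u_d)\in\SH_{\kappa,0}^1$. By Theorem \ref{Hardy-thm-1}(iii) each $u_j$ is the $\kappa$-Poisson integral of its boundary value $f_j$, where $f_0=f\in H_{\kappa,0}^1$ and, by Corollary \ref{density-b}, $f_j={\mathcal R}_jf\in H_{\kappa,0}^1$ for $1\le j\le d$. Hence each $g_j:=\SF_{\kappa}f_j\in\SS(\RR^d)$ is supported in a fixed annulus $\{r_0\le|\xi|\le R_0\}$ with $r_0>0$, and by (\ref{Poisson-2-3}),
\[
u_j(x_0,x)=c_{\kappa}\int_{\RR^d}e^{-x_0|\xi|}\,g_j(\xi)\,E_{\kappa}(i\xi,x)\,d\omega_{\kappa}(\xi).
\]
The decisive feature is that $g_j$ is supported away from the origin, so $|\xi|$, and therefore $e^{-x_0|\xi|}$ together with all its $x_0$-derivatives $(-|\xi|)^{\gamma_0}e^{-x_0|\xi|}$, is smooth on the domain of integration.

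Continuity up to the boundary and differentiation under the integral sign then follow routinely. From (\ref{Dunkl-kernel-eigenfunction-1}) and the symmetry of $E_{\kappa}$ one has $(D^x)^{\gamma'}E_{\kappa}(i\xi,x)=(i\xi)^{\gamma'}E_{\kappa}(i\xi,x)$, and since $|E_{\kappa}(i\xi,x)|\le1$ for real arguments (the case $\nu=0$ of (\ref{Dunkl-kernel-2-4})), every $x$-derivative $\partial_x^{\gamma'}E_{\kappa}(i\xi,x)$ is bounded uniformly in $x\in\RR^d$ for $\xi$ in the annulus (ordinary partials being bounded combinations of Dunkl derivatives and reflection terms). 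Consequently $\partial_{x_0}^{\gamma_0}\partial_x^{\gamma'}u_j$ is represented by the integral with integrand $(-|\xi|)^{\gamma_0}e^{-x_0|\xi|}g_j(\xi)\,\partial_x^{\gamma'}E_{\kappa}(i\xi,x)$, which is dominated by a compactly supported integrable function uniformly on compacta of $\overline{\RR^{1+d}_+}$, so that $\partial^{\gamma}u_j\in C(\overline{\RR^{1+d}_+})$.

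The remaining point, the fast decrease, is the main obstacle. By linearity it suffices to bound $x_0^{m}x^{\beta}\partial^{\gamma}u_j$. The factor $x_0^{m}(-|\xi|)^{\gamma_0}e^{-x_0|\xi|}$ is dominated on the support by $C\,e^{-x_0r_0/2}$, which even yields exponential decay in $x_0$, so the real issue is the factor $x^{\beta}$. Here I would use the identity $x_kE_{\kappa}(i\xi,x)=-i\,D_k^{\xi}E_{\kappa}(i\xi,x)$, which follows from (\ref{Dunkl-kernel-eigenfunction-1}), the symmetry relations (\ref{Dunkl-kernel-2-3}), and the behaviour of $D_k^{\xi}$ under $\xi\mapsto i\xi$. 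Differentiating this relation in $x$ and applying Leibniz's rule expands $x^{\beta}\partial_x^{\gamma'}E_{\kappa}(i\xi,x)$ into a finite sum of terms $(D^{\xi})^{\beta_{\ell}}\partial_x^{\gamma_{\ell}}E_{\kappa}(i\xi,x)$ with $|\beta_{\ell}|\le|\beta|$ and $|\gamma_{\ell}|\le|\gamma'|$. Substituting and integrating by parts in $\xi$, which is legitimate since the Dunkl operators are anti-symmetric with respect to $d\omega_{\kappa}$ and $g_j$ is compactly supported away from the origin (so all boundary terms vanish), transfers each $(D^{\xi})^{\beta_{\ell}}$ onto the smooth compactly supported amplitude $(-|\xi|)^{\gamma_0}e^{-x_0|\xi|}g_j(\xi)$. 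Estimating the resulting integrals by the uniform bound for $\partial_x^{\gamma_{\ell}}E_{\kappa}(i\xi,x)$ times the $L^1_{\kappa}$-norm of the differentiated amplitude, which is uniformly bounded in $x_0\ge0$ with an $e^{-x_0r_0}$ factor, yields $\sup_{\overline{\RR^{1+d}_+}}|x_0^{m}x^{\beta}\partial^{\gamma}u_j|<\infty$. The delicate part throughout is the bookkeeping of the correction terms produced by commuting $x^{\beta}$ past $\partial_x^{\gamma'}$ and the careful justification of the $\xi$-integration by parts; the compact support of each $g_j$ in an annulus bounded away from $0$ is precisely what renders both harmless.
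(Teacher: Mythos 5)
Your proposal is correct and follows exactly the route the paper intends: the paper states this corollary without a written proof precisely because it is meant to follow from Lemma \ref{density-a}, Corollary \ref{density-b} and Theorem \ref{Hardy-thm-2} (density transfers through the isometry-up-to-constants $f\mapsto(Pf,P{\mathcal R}_1f,\dots,P{\mathcal R}_df)$ together with the $L^1_\kappa$-contractivity of the $\kappa$-Poisson integral), and from the representation (\ref{Poisson-2-3}) with Dunkl transforms that are Schwartz and supported in an annulus away from the origin, which is what yields smoothness up to the boundary and fast decrease (your integration by parts in $\xi$, using $x_kE_{\kappa}(i\xi,x)=-iD_k^{\xi}E_{\kappa}(i\xi,x)$ and the anti-symmetry of the Dunkl operators, is the standard way to handle the polynomial weights in $x$). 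Your fleshed-out details, including the uniform bound $|\partial_x^{\nu}E_{\kappa}(i\xi,x)|\le|\xi|^{|\nu|}$ obtainable from (\ref{Dunkl-kernel-2-3}) and (\ref{Dunkl-kernel-2-4}), are sound.
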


\section{The space $\B_{\kappa}(\RR^d)$ and the $\kappa$-Riesz transforms}

The $\B$ type space to be considered is the class of those $f\in L_{\kappa,{\rm loc}}(\RR^d)$ satisfying
\begin{eqnarray*}
\|f\|_{*,\kappa}:=\sup_{B}\frac{1}{|B|_\kappa}\int_{B}|f-f_{B}|\,d\omega_{\kappa}<\infty,
\end{eqnarray*}
where the supremum ranges over all finite balls $B$ in $\RR^d$, and $f_B=|B|_{\kappa}^{-1}\int_{B}f\,d\omega_\kappa$. Such a class is denoted by $\B_{\kappa}(\RR^d)$.
As usual, $f\in \B_\kappa(\RR^d)$ represents an element of the quotient space of $\B_\kappa(\RR^d)$ modulo constants.

\begin{proposition}\label{bmo-a}
There exists some $C>0$ such that for all $f\in\B_\kappa(\RR^d)$ and $x'\in\RR^d$,
\begin{eqnarray}\label{bmo-2}
\int_{\RR^d}\frac{|f(x)-f_{B_1(x')}|}{(1+|x-x'|)|B_{1+|x-x'|}(x')|_\kappa}
d\omega_\kappa(x)\leq C\|f\|_{*,\kappa},
\end{eqnarray}
where $B_{\delta}(x')=B(x',\delta)$, the ball with center at $x'$ and radius $\delta>0$. Moreover for $\delta>0$,
\begin{eqnarray}\label{bmo-3}
\int_{\RR^d}\frac{|f(x)-f_{B_{\delta}(x')}|}{(\delta+|x-x'|)|B_{\delta+|x-x'|}(x')|_\kappa}
\,d\omega_\kappa(x)
\leq\frac{C}{\delta}\|f\|_{*,\kappa}.
\end{eqnarray}
\end{proposition}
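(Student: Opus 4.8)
The plan is to establish the unscaled inequality (\ref{bmo-2}) by a dyadic annular decomposition about the center $x'$, and then to deduce the scaled inequality (\ref{bmo-3}) from it by exploiting the dilation invariance of the seminorm $\|\cdot\|_{*,\kappa}$. For (\ref{bmo-2}), write $B_k=B_{2^k}(x')$ for $k\in\NN\cup\{0\}$ and set $A_0=B_0$, $A_k=B_k\setminus B_{k-1}$ for $k\ge1$, so that $\RR^d=\bigcup_{k\ge0}A_k$. Two standard facts drive the argument. First, $\omega_\kappa$ is doubling (this is part of the space-of-homogeneous-type structure recalled in Section 1), so $|B_{2r}(x')|_\kappa\lesssim|B_r(x')|_\kappa$ uniformly in $x'$ and $r$. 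Second, for consecutive balls $B_k\subset B_{k+1}$ one has
\begin{align*}
|f_{B_k}-f_{B_{k+1}}|\le\frac{1}{|B_k|_\kappa}\int_{B_{k+1}}|f-f_{B_{k+1}}|\,d\omega_\kappa\le\frac{|B_{k+1}|_\kappa}{|B_k|_\kappa}\,\|f\|_{*,\kappa}\lesssim\|f\|_{*,\kappa},
\end{align*}
and summing this telescoping chain yields $|f_{B_k}-f_{B_0}|\lesssim k\,\|f\|_{*,\kappa}$.

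On the annulus $A_k$ both $1+|x-x'|$ and the radius $1+|x-x'|$ are comparable to $2^k$, so by doubling $|B_{1+|x-x'|}(x')|_\kappa\asymp|B_k|_\kappa$ uniformly on $A_k$. Hence the left side of (\ref{bmo-2}) is bounded, up to a constant, by
\begin{align*}
\sum_{k\ge0}\frac{1}{2^k|B_k|_\kappa}\int_{A_k}|f-f_{B_0}|\,d\omega_\kappa\le\sum_{k\ge0}\frac{1}{2^k|B_k|_\kappa}\left(\int_{B_k}|f-f_{B_k}|\,d\omega_\kappa+|B_k|_\kappa\,|f_{B_k}-f_{B_0}|\right).
\end{align*}
The first inner term is at most $|B_k|_\kappa\|f\|_{*,\kappa}$ and the second is $\lesssim k\,|B_k|_\kappa\|f\|_{*,\kappa}$ by the chaining bound, so the whole expression is $\lesssim\|f\|_{*,\kappa}\sum_{k\ge0}(k+1)2^{-k}$, and this series converges. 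This proves (\ref{bmo-2}).

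To pass to (\ref{bmo-3}), set $g(y)=f(\delta y)$. The substitution $x=\delta y$ maps $B_r(x')$ onto $B_{r/\delta}(x'/\delta)$, multiplies $d\omega_\kappa$ by $\delta^{-2|\kappa|-d}$, and leaves every ball average unchanged, so that $g_{B_{r/\delta}(x'/\delta)}=f_{B_r(x')}$; consequently the mean oscillation over corresponding balls is identical and $\|g\|_{*,\kappa}=\|f\|_{*,\kappa}$. Applying (\ref{bmo-2}) to $g$ at the center $x'/\delta$ and changing variables back via $x=\delta y$ — so that $g_{B_1(x'/\delta)}=f_{B_\delta(x')}$, $\,1+|y-x'/\delta|=(\delta+|x-x'|)/\delta$, and $|B_{1+|y-x'/\delta|}(x'/\delta)|_\kappa=\delta^{-2|\kappa|-d}|B_{\delta+|x-x'|}(x')|_\kappa$ — turns the inequality for $g$ into exactly $\delta$ times the integral appearing in (\ref{bmo-3}); dividing by $\delta$ gives the claim, and the factor $1/\delta$ emerges precisely from this scaling.

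The argument is essentially routine once doubling is in hand; the only points demanding care, which I would verify explicitly, are the uniform (in $x'$) comparability $|B_{1+|x-x'|}(x')|_\kappa\asymp|B_{2^k}(x')|_\kappa$ on each annulus and the exact bookkeeping of the scaling factors in the passage to (\ref{bmo-3}). Both rest on the known volume estimate $|B_r(x')|_\kappa\asymp r^d\prod_{\alpha\in R_+}(|\langle\alpha,x'\rangle|+r)^{2\kappa(\alpha)}$ for $\omega_\kappa$-balls, from which the doubling property, with constant governed by $N=2|\kappa|+d$, follows immediately.
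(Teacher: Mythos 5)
Your proof is correct and takes essentially the same route as the paper's: a dyadic annular decomposition about $x'$, the telescoping bound $|f_{B_{2^k}(x')}-f_{B_1(x')}|\lesssim k\,\|f\|_{*,\kappa}$ resting on the doubling property implied by the volume estimate for $\omega_\kappa$-balls, summation of the series $\sum_k (1+k)2^{-k}$, and then a dilation to deduce (\ref{bmo-3}) from (\ref{bmo-2}). The paper dispatches the dilation step with the single phrase ``by dilations,'' and your explicit bookkeeping of the scaling factors (in particular $\|f(\delta\cdot)\|_{*,\kappa}=\|f\|_{*,\kappa}$ and the emergence of the factor $1/\delta$) is accurate.
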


Observe that (\ref{bmo-3}) is a consequence of (\ref{bmo-2}) by dilations, while some estimates similar to (\ref{bmo-2}) are known (see \cite{DY1}, for example).
In our style, since
 \begin{align}\label{cube-estimate-1}
\left|B_{\delta}(x')\right|_{\kappa}\asymp \delta^d\prod_{\alpha\in R}(|\langle\alpha,x'\rangle|+\delta)^{\kappa(\alpha)},
 \end{align}
which certainly implies the doubling property of the measure $\omega_k$, it is easy to find that $|f_{B_{2^{\ell}}(x')}-f_{B_1(x')}|\lesssim \ell\|f\|_{*,\kappa}$ for $\ell=1,2,\cdots$, and hence
$$
\frac{1}{|B_{2^{\ell}}(x')|_\kappa}\int_{B_{2^{\ell}}(x')}|f-f_{B_1(x')}|\,d\omega_{\kappa}
\lesssim (1+\ell)\|f\|_{*,\kappa}.
$$
Thus, with $B_{\delta}=B_{\delta}(x')$, the left hand side of (\ref{bmo-2}) is dominated by a multiple of
\begin{align*}
\frac{1}{|B_1|_\kappa}\int_{B_1}|f-f_{B_1}|d\omega_\kappa
+\sum_{\ell=1}^\infty\frac{1}{2^{\ell}|B_{2^{\ell}}|_\kappa}\int_{B_{2^{\ell}}\setminus B_{2^{\ell-1}}}|f-f_{B_1}|d\omega_\kappa
 \lesssim\|f\|_{*,\kappa},
\end{align*}
and then (\ref{bmo-2}) is proved.

The John-Nirenberg inequality stated as follows, is a special case of that in spaces of homogeneous type, which could be proved along the proof process of the classical one.

\begin{proposition}\label{J-N-a}
There exist constants $c_0,c_1>0$ such that for all $f\in\B_\kappa(\RR^d)$ and all finite balls $B$ in $\RR^d$,
\begin{eqnarray}\label{J-N-1}
\left|\left\{x\in B:|f(x)-f_{B}|>\lambda\right\}\right|_\kappa \leq c_1 e^{-c_0\lambda/\|f\|_{*,\kappa}}|B|_\kappa,\qquad \lambda>0.
 \end{eqnarray}
\end{proposition}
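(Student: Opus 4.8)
The plan is to reproduce the classical John--Nirenberg argument of Fefferman and Stein in the space of homogeneous type $(\RR^d,|\cdot|,\omega_\kappa)$, whose doubling property is guaranteed by (\ref{cube-estimate-1}). After normalizing so that $\|f\|_{*,\kappa}=1$, it suffices to bound the uniform distribution function
$$
\Phi(\lambda)=\sup_{B}\frac{1}{|B|_\kappa}\left|\{x\in B:\,|f(x)-f_B|>\lambda\}\right|_\kappa,
$$
the supremum being over all finite balls $B$, and to show $\Phi(\lambda)\le c_1 e^{-c_0\lambda}$. The exponential decay will be extracted from a self-improving recursion of the form $\Phi(\lambda+c)\le\theta\,\Phi(\lambda)$ with some fixed $c>0$ and $\theta\in(0,1)$, iterated across the scale $\lambda\asymp kc$.

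The engine is a Calder\'on--Zygmund stopping-time decomposition. Since balls in $\RR^d$ do not subdivide conveniently, I would first install on $(\RR^d,|\cdot|,\omega_\kappa)$ a system of dyadic cubes in the sense of M.~Christ, whose existence requires only the doubling of $\omega_\kappa$; each such cube is comparable to a ball, and the family enjoys the nesting and bounded-overlap properties of ordinary dyadic cubes. Fixing a ball $B$ and applying the stopping-time selection to $g=f-f_B$ among the dyadic cubes filling $B$ at a large threshold $\gamma$, I obtain a disjoint family $\{Q_j\}$ of maximal cubes on which the average of $|g|$ exceeds $\gamma$. The doubling of $\omega_\kappa$ then yields the three standard facts: the averages are comparable, $\gamma<\frac{1}{|Q_j|_\kappa}\int_{Q_j}|g|\,d\omega_\kappa\le A\gamma$ with $A$ depending only on the doubling constant (via the parent cube, on which the average is at most $\gamma$); the packing estimate $\sum_j|Q_j|_\kappa\le\gamma^{-1}\int_{B}|g|\,d\omega_\kappa\le\gamma^{-1}|B|_\kappa$, so that choosing $\gamma$ large forces the total fraction $\theta:=\gamma^{-1}<1$; and $|g|\le\gamma$ holds $\omega_\kappa$-almost everywhere on $B\setminus\bigcup_jQ_j$ by the Lebesgue differentiation theorem. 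In particular $|f_{Q_j}-f_B|\le A\gamma$ for each $j$.

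The iteration then proceeds exactly as in the classical case. Because $f-f_{Q_j}$ again has $\|\cdot\|_{*,\kappa}$-norm at most $1$, the same decomposition may be performed inside each $Q_j$; after $k$ generations one is left with a disjoint family whose union $E_k$ satisfies $|E_k|_\kappa\le\theta^{\,k}|B|_\kappa$, while on $B\setminus E_k$ one has $|f-f_B|\le k(A+1)\gamma$ almost everywhere. Consequently $\Phi(\lambda)\le\theta^{\,k}$ whenever $\lambda\ge k(A+1)\gamma$; taking $k=\lfloor\lambda/((A+1)\gamma)\rfloor$ produces the asserted bound with $c_0=|\log\theta|/((A+1)\gamma)$ and a suitable $c_1$, uniformly in $B$, which is more than (\ref{J-N-1}).

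The main obstacle is the substitute for the dyadic grid: in the Euclidean cube formulation the comparability $\gamma<\mathrm{avg}\le 2^d\gamma$ of the stopping averages is immediate from the parent--child relation, whereas here one must guarantee, through Christ's construction and the doubling of $\omega_\kappa$, that each selected cube has an average comparable to its parent's and that the selected cubes genuinely pack with total fraction bounded away from $1$. Once these covering and comparability properties are secured, the remainder is the verbatim classical induction and uses no feature of the Dunkl weight beyond its doubling, already recorded in (\ref{cube-estimate-1}).
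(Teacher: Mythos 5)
Your proposal is correct and is precisely the argument the paper intends: the paper offers no detailed proof, stating only that the inequality is a special case of the John--Nirenberg inequality on spaces of homogeneous type and ``could be proved along the proof process of the classical one,'' which is exactly the Calder\'on--Zygmund stopping-time iteration you carry out, with Christ dyadic cubes supplying the grid and the doubling of $\omega_\kappa$ from (\ref{cube-estimate-1}) supplying the comparability and packing constants. Your handling of the one genuine technicality (replacing Euclidean dyadic subdivision of a ball by a Christ-cube covering with averages controlled via the parent cube) is the standard and correct fix.
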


A direct consequence of (\ref{J-N-1}) is that, for $f\in\B_\kappa(\RR^d)$,
\begin{eqnarray}\label{bmo-4}
\|f\|_{*,\kappa}\asymp\sup_{B}\left(\frac{1}{|B|_\kappa}\int_{B}|f-f_B|^2\,d\omega_{\kappa}\right)^{1/2}.
\end{eqnarray}

For $j=1,\cdots,d$ and $\epsilon>0$, let
$$
K_j^{\epsilon}(x,t)=K_j(x,t)\chi_{d(x,t)>\epsilon}(x,t),
$$
where $\chi_{d(x,t)>\epsilon}$ denotes the indicator of the set $\{(x,t):\,d(x,t)>\epsilon\}$. We consider the variant of ${\mathcal{R}}_j^{\epsilon}$ given by
\begin{align}\label{truncated-Riesz-4}
(\tilde{\mathcal{R}}_j^{\epsilon}f)(x)=c_{\kappa}\int_{\RR^d}f(t)\left(K_j^{\epsilon}(x,t)-K_j^1(0,t)\right)\,d\omega_{\kappa}(t),\qquad x\in\RR^d.
\end{align}

\begin{proposition}\label{Riesz-BMO-b}
For $f\in L^{\infty}(\RR^d)$, the limit
 \begin{align}\label{Riesz-2-3}
(\tilde{\mathcal{R}}_jf)(x):=\lim_{\epsilon\rightarrow0+}(\tilde{\mathcal{R}}_j^{\epsilon}f)(x)
\end{align}
exists for almost every $x\in\RR^d$, and also in the $L_{\kappa}^2$ norm on each finite ball.
\end{proposition}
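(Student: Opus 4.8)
The plan is to prove the existence of the limit by fixing an arbitrary finite ball, splitting $f$ into a local and a global piece, reducing the local piece to the already-established $L_{\kappa}^2$ theory, and treating the global piece by the Calderón--Zygmund estimates of the kernel $K_j$.

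First I would fix a ball $B_0=B(x_0,r_0)$ and choose $R>0$ so large that $B_0\subseteq B(0,R/2)$. Writing $f=f_1+f_2$ with $f_1=f\chi_{B(0,R)}$ and $f_2=f-f_1$, the integrand in (\ref{truncated-Riesz-4}) is additive, so $\tilde{\mathcal{R}}_j^{\epsilon}f=\tilde{\mathcal{R}}_j^{\epsilon}f_1+\tilde{\mathcal{R}}_j^{\epsilon}f_2$. Since $d(0,t)=|t|$, the function $K_j^1(0,t)=K_j(0,t)\chi_{d(0,t)>1}$ is bounded on $\RR^d$ (the only singularity of $K_j(0,\cdot)$ sits on the orbit of $0$, which is cut off by $\chi_{|t|>1}$, and $K_j(0,t)$ decays for large $|t|$), whence $\tilde{\mathcal{R}}_j^{\epsilon}f_1(x)={\mathcal{R}}_j^{\epsilon}f_1(x)-c_{\kappa}\int_{\RR^d}f_1(t)K_j^1(0,t)\,d\omega_{\kappa}(t)$, the subtracted term being a finite constant because $f_1\in L^{\infty}(\RR^d)$ has compact support. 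As $f_1\in L_{\kappa}^2(\RR^d)$, Corollary \ref{Riesz-2-c} gives that ${\mathcal{R}}_j^{\epsilon}f_1$ converges in $L_{\kappa}^2(\RR^d)$ and almost everywhere, so $\tilde{\mathcal{R}}_j^{\epsilon}f_1$ converges both in $L_{\kappa}^2(B_0)$ and a.e. on $B_0$.

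Next I would handle $f_2$. For $x\in B_0$ and $t\notin B(0,R)$ one has $d(x,t)\ge|t|-|x|>R/2$, so for every $\epsilon<R/2$ the cut-off is inactive and $\tilde{\mathcal{R}}_j^{\epsilon}f_2(x)=c_{\kappa}\int_{|t|\ge R}f_2(t)\bigl(K_j(x,t)-K_j(0,t)\bigr)\,d\omega_{\kappa}(t)=:g(x)$, independent of $\epsilon$; thus the limit of this piece exists trivially and equals $g$, once $g$ is seen to be well defined and bounded on $B_0$. To this end I would split $\{|t|\ge R\}$ according to whether $d(x,t)\ge 2|x|$ or not. On the part $d(x,t)\ge 2|x|$ the smoothness (Hörmander-type) estimate $|K_j(x,t)-K_j(0,t)|\lesssim \frac{|x|}{d(x,t)}\,|B(x,d(x,t))|_{\kappa}^{-1}$, which follows from the heat-kernel representation (\ref{truncated-Riesz-kernel-2}) together with the Gaussian bounds for $h_v$ (cf.\ \cite{AS1}), combined with a dyadic decomposition in $d(x,t)$ and the doubling property (\ref{cube-estimate-1}), yields a convergent integral with an $x$-uniform majorant. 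On the remaining part $R/2<d(x,t)<2|x|$ the relevant $t$ lie in a fixed bounded set of finite measure, and there both $K_j(x,\cdot)$ and $K_j(0,\cdot)$ are bounded by the size estimate $|K_j|\lesssim|B(x,d(x,t))|_{\kappa}^{-1}$; hence this contribution is finite and bounded uniformly for $x\in B_0$. Consequently $g\in L^{\infty}(B_0)\subseteq L_{\kappa}^2(B_0)$. Adding the two pieces shows that $\tilde{\mathcal{R}}_j^{\epsilon}f$ converges a.e.\ on $B_0$ and in $L_{\kappa}^2(B_0)$, and since $B_0$ is arbitrary the proposition follows.

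I expect the main obstacle to be the global piece, specifically having the size and smoothness estimates for $K_j$ in the form adapted to the distance $d(x,t)$ and the doubling measure $\omega_{\kappa}$; these are exactly the Calderón--Zygmund estimates underlying the $L^p$ theory of \cite{AS1}, and they can be extracted from (\ref{truncated-Riesz-kernel-2}) by inserting the Gaussian upper bounds for $h_v$ and its gradient and integrating in $v$. Once these kernel bounds are available, the convergence argument is routine: the local piece is governed entirely by the $L_{\kappa}^2$ convergence in Corollary \ref{Riesz-2-c}, while the global piece is $\epsilon$-independent for all small $\epsilon$.
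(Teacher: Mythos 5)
Your proposal is correct and takes essentially the same route as the paper's proof: split $f$ into a bounded compactly supported piece $f_1$, whose truncated transforms converge a.e.\ and in $L_{\kappa}^2$ by Corollary \ref{Riesz-2-c}, plus a far-away piece $f_2$ for which the $\epsilon$-truncation is inactive on the fixed ball once $\epsilon$ is small, so that its contribution is independent of $\epsilon$. The only difference is that you also verify, via the size and smoothness estimates for $K_j$, that the $\epsilon$-independent global term is finite and bounded on the ball --- a point the paper treats as implicit in the well-definedness of $\tilde{\mathcal{R}}_j^{\epsilon}f$ for $f\in L^{\infty}(\RR^d)$.
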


Indeed, write $f=f_1+f_2$ with $f_1=f\chi_{B(0,2M)}$ for $M>0$. Then for $x\in B(0,M)$ and $0<\epsilon<M$,  $(\tilde{\mathcal{R}}_j^{\epsilon}f)(x)=(\tilde{\mathcal{R}}_j^{\epsilon}f_1)(x)+(\tilde{\mathcal{R}}_j^{M}f_2)(x)$;
but since $f_1\in L_{\kappa}^2(\RR^d)$,
$$
\lim_{\epsilon\rightarrow0+}(\tilde{\mathcal{R}}_j^{\epsilon}f_1)(x)
=\lim_{\epsilon\rightarrow0+}({\mathcal{R}}_j^{\epsilon}f_1)(x)-c_{\kappa}\int_{\RR^d}f_1(t)K_j^1(0,t)\,d\omega_{\kappa}(t)
$$
exists for almost every $x\in B(0,M)$, and so does $\lim_{\epsilon\rightarrow0+}(\tilde{\mathcal{R}}_j^{\epsilon}f)(x)$. This certainly concludes (\ref{Riesz-2-3}) for almost every $x\in\RR^d$.
In addition, for $x\in B(0,M)$, $(\tilde{\mathcal{R}}_jf)(x)$ defined by (\ref{Riesz-2-3}) is
$$
({\mathcal{R}}_jf_1)(x)-c_{\kappa}\int_{\RR^d}f_1(t)K_j^1(0,t)\,d\omega_{\kappa}(t)
+(\tilde{\mathcal{R}}_j^{M}f_2)(x),
$$
so that, by Corollary \ref{Riesz-2-c},
$$
\int_{B(0,M)}|\tilde{\mathcal{R}}_j^{\epsilon}f-\tilde{\mathcal{R}}_jf|^2\,d\omega_{\kappa}
=\int_{B(0,M)}|{\mathcal{R}}_j^{\epsilon}f_1-{\mathcal{R}}_jf_1|^2\,d\omega_{\kappa}\rightarrow0\quad \hbox{as $\epsilon\rightarrow0+$.}
$$

The main conclusion in this section is stated in the following theorem.

\begin{theorem}\label{Riesz-BMO-a}
The $\kappa$-Riesz transform $\tilde{\mathcal{R}}_j$ defined by (\ref{truncated-Riesz-4}) and (\ref{Riesz-2-3}) is bounded from $L^\infty(\RR^d)$ to $\B_\kappa(\RR^d)$, where $j=1,\cdots,d$.
\end{theorem}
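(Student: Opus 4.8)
The plan is to verify the defining inequality of $\B_\kappa(\RR^d)$ directly: fix an arbitrary finite ball $B=B(x_B,r)$ and produce a constant $c_B$ with $|B|_\kappa^{-1}\int_B|\tilde{\mathcal{R}}_jf-c_B|^2\,d\omega_\kappa\lesssim\|f\|_{L^\infty}^2$; by the $L^2$ form of the norm in (\ref{bmo-4}) this yields $\|\tilde{\mathcal{R}}_jf\|_{*,\kappa}\lesssim\|f\|_{L^\infty}$. Since $d\omega_\kappa$ is $G$-invariant and doubling by (\ref{cube-estimate-1}), and since the geometry is governed by the orbit distance $d(x,t)=\min_{\sigma\in G}|t-\sigma(x)|$, the natural localization of $f$ is the $G$-invariant enlargement $\tilde{B}=\{t:\,d(x_B,t)<4r\}=\bigcup_{\sigma\in G}B(\sigma(x_B),4r)$. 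I would write $f=f_1+f_2$ with $f_1=f\chi_{\tilde{B}}$ and $f_2=f-f_1$, and choose $c_B=(\tilde{\mathcal{R}}_jf_2)(x_B)+C_1$, where $C_1=c_\kappa\int f_1(t)K_j^1(0,t)\,d\omega_\kappa(t)$ is the additive constant (independent of $x$) relating $\tilde{\mathcal{R}}_jf_1$ and ${\mathcal{R}}_jf_1$ coming from the definition (\ref{truncated-Riesz-4}), as in the proof of Proposition \ref{Riesz-BMO-b}. Then on $B$ one has $\tilde{\mathcal{R}}_jf-c_B={\mathcal{R}}_jf_1+\big((\tilde{\mathcal{R}}_jf_2)-(\tilde{\mathcal{R}}_jf_2)(x_B)\big)$.

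First I would treat the local part. Since $f_1\in L_\kappa^2(\RR^d)$ with $\|f_1\|_{L_\kappa^2}^2\le\|f\|_{L^\infty}^2\,|\tilde{B}|_\kappa$, the $L_\kappa^2$-boundedness of ${\mathcal{R}}_j$ (Corollary \ref{Riesz-2-a} with $p=2$) together with $|\tilde{B}|_\kappa\le|G|\,|B(x_B,4r)|_\kappa\lesssim|B|_\kappa$ (by $G$-invariance and doubling) gives $|B|_\kappa^{-1}\int_B|{\mathcal{R}}_jf_1|^2\,d\omega_\kappa\lesssim\|f\|_{L^\infty}^2$. Next, for the far part, I observe that for $x\in B$ and $d(x_B,t)\ge4r$ one has $d(x,t)\ge d(x_B,t)-|x-x_B|\ge3r$, so the truncations $K_j^\epsilon$ are inactive for small $\epsilon$ and the normalizing kernels $K_j^1(0,\cdot)$ cancel in the difference, leaving $(\tilde{\mathcal{R}}_jf_2)(x)-(\tilde{\mathcal{R}}_jf_2)(x_B)=c_\kappa\int_{d(x_B,t)\ge4r}f_2(t)\,[K_j(x,t)-K_j(x_B,t)]\,d\omega_\kappa(t)$. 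Hence it suffices to establish the H\"ormander-type bound $\sup_{x\in B}\int_{d(x_B,t)\ge4r}|K_j(x,t)-K_j(x_B,t)|\,d\omega_\kappa(t)\lesssim1$, which, multiplied by $\|f_2\|_{L^\infty}\le\|f\|_{L^\infty}$, bounds the far part pointwise on $B$ by $\|f\|_{L^\infty}$; its $L^2$-oscillation over $B$ is then immediate.

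The crux is therefore the regularity of the Dunkl--Riesz kernel $K_j$ relative to the orbit distance $d$ and the measure $\omega_\kappa$. The required estimates are the standard Calder\'on--Zygmund pair: a size bound $|K_j(x,t)|\lesssim|B(x,d(x,t))|_\kappa^{-1}$ and a H\"older bound $|K_j(x,t)-K_j(x_B,t)|\lesssim\big(|x-x_B|/d(x_B,t)\big)^{\gamma}|B(x_B,d(x_B,t))|_\kappa^{-1}$ valid when $d(x_B,t)\ge2|x-x_B|$, for some $\gamma\in(0,1]$. These are available from the fact that ${\mathcal{R}}_j$ is a Calder\'on--Zygmund operator in the Dunkl setting (\cite{AS1}), or may be derived ``analytically'' from the heat-kernel representation (\ref{truncated-Riesz-kernel-2}) by differentiating in $x$ and inserting Gaussian-type bounds on $h_v$ and on $\nabla_x h_v$ obtained from (\ref{heat-kernel-1}), (\ref{heat-kernel-2}) and the support property (\ref{intertwining-support-1}). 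Granting them, I would split $\{d(x_B,t)\ge4r\}$ into dyadic shells $\{2^kr\le d(x_B,t)<2^{k+1}r\}$, $k\ge2$; each shell lies in $\bigcup_{\sigma\in G}B(\sigma(x_B),2^{k+1}r)$, of measure $\lesssim|B(x_B,2^kr)|_\kappa$ by $G$-invariance and doubling, so the H\"older bound makes the $k$th shell contribute $\lesssim2^{-k\gamma}$, and summing the geometric series gives the constant. Combining the local and far parts through (\ref{bmo-4}) completes the proof.

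The main obstacle is exactly the H\"older regularity of $K_j$: the non-local reflection terms force one to work with the orbit distance rather than the Euclidean one, and the ball measures $|B(x,\delta)|_\kappa$ in (\ref{cube-estimate-1}) depend on the position of $x$ relative to the walls, so one cannot reduce to a convolution-kernel computation. In the analytic derivation the delicate point is to show that $\int_0^\infty|x-t|\,|\nabla_x h_v(x,t)|\,v^{-3/2}\,dv$ carries the extra gain $\big(|x-x_B|/d(x_B,t)\big)^{\gamma}$ uniformly, which rests on the sharp off-diagonal behavior of the Dunkl kernel $E_\kappa$ entering (\ref{heat-kernel-1}).
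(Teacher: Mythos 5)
Your proposal is correct in substance and, at its core, runs on the same Calder\'on--Zygmund engine as the paper's proof: a near/far splitting of $f$ relative to the $G$-orbit of the ball, $L^2_\kappa$-boundedness of ${\mathcal{R}}_j$ for the near part, and a H\"ormander-type integral condition (Hölder regularity of the kernel in the orbit distance $d(x,t)$, summed over dyadic shells) for the far part --- this is exactly the mechanism of the paper's Lemma \ref{Riesz-truncated-BMO-a} and Corollary \ref{Riesz-truncated-kernel-a}. Where you genuinely diverge is in the handling of limits and truncations. The paper never estimates $\tilde{\mathcal{R}}_jf$ directly: it first proves a BMO bound for the doubly truncated operators ${\mathcal{R}}_j^{\epsilon,N}$, uniform in $\epsilon$ and $N$ (Lemma \ref{Riesz-truncated-BMO-a}), then shows that the oscillation of $\tilde{\mathcal{R}}_j^{N}f$ over a fixed ball tends to $0$ uniformly as $N\to\infty$, and finally lets $\epsilon\to0+$ through Proposition \ref{Riesz-BMO-b}. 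You instead split $f$ itself and work with the compensated kernel $K_j^{\epsilon}(x,t)-K_j^1(0,t)$, which is shorter and closer to the classical Fefferman--Stein argument; the price is that you must verify that the far-part integrals converge absolutely and that $\tilde{\mathcal{R}}_jf=\tilde{\mathcal{R}}_jf_1+\tilde{\mathcal{R}}_jf_2$ with $\tilde{\mathcal{R}}_jf_1={\mathcal{R}}_jf_1-C_1$ holds pointwise a.e.\ (both facts do follow from Proposition \ref{Riesz-BMO-b}, Corollary \ref{Riesz-2-c}, and your H\"ormander bound, since for $x\in B$ and $\epsilon<3r$ the truncations are inactive on ${\rm supp}\,f_2$ and the terms $K_j^1(0,\cdot)$ cancel in the oscillation). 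The paper's more laborious route keeps every integral manifestly finite and produces the uniform truncated bounds as a reusable intermediate result.

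Two caveats. First, the kernel estimates you defer to \cite{AS1} or to a heat-kernel computation are the real Dunkl-specific content here: the paper proves them as Lemma \ref{Riesz-kernel-a} (stated as improvements of \cite{HLLW}), by realizing $K_j$ as the boundary limit of the conjugate $\kappa$-Poisson kernel via (\ref{conjugate-kernel-1}) and invoking the Poisson bounds (\ref{Poisson-ker-bound-1}) and (\ref{Poisson-ker-bound-2}); moreover the regularity obtained there is in the \emph{second} variable, and the bound your far-part estimate needs, in the \emph{first} variable, is reached through the antisymmetry $K_j(x,t)=-K_j(t,x)$ coming from (\ref{truncated-Riesz-kernel-2}) --- a step you should make explicit, since a direct differentiation in $x$ is not what the paper (or \cite{HLLW}) provides. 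Second, with your convention $\tilde{\mathcal{R}}_jf_1={\mathcal{R}}_jf_1-C_1$ the constant should be $c_B=(\tilde{\mathcal{R}}_jf_2)(x_B)-C_1$ rather than $+C_1$; this is immaterial, since any constant serves in the BMO oscillation, but worth fixing.
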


We need to make some preparations for the proof of the theorem. The estimates given in the following lemma are improvements of those in \cite{HLLW}.

\begin{lemma}\label{Riesz-kernel-a}
{\rm (i)} For $x,t\in\RR^d$ satisfying $d(x,t)>0$,
\begin{align}\label{Riesz-kernel-1}
\frac{c_1}{\left|B(x,|x-t|)\right|_{\kappa}}\frac{|x_j-t_j|}{|x-t|}
\leq |K_j(x,t)|| \leq
\frac{c_2|x_j-t_j|}{\left|B(x,d(x,t))\right|_{\kappa}}\frac{d(x,t)}{|x-t|^2},
\end{align}
where the constants $c_1,c_2>0$ are independent of $x,t\in\RR^d$.

{\rm (ii)} For $x,t,t'\in\RR^d$ satisfying $d(x,t)\ge2|t-t'|$,
\begin{align}\label{Riesz-kernel-2}
|K_j(x,t)-K_j(x,t')|\lesssim \frac{|t-t'|}{|x-t|}\frac{1}{\left|B(x,d(x,t))\right|_{\kappa}}.
\end{align}
\end{lemma}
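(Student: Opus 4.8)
The plan is to collapse both parts to a single scalar integral against the intertwining measure $\mu^{\kappa}_{x}$ and then to analyse that integral through the geometry of $\mathrm{supp}\,\mu^{\kappa}_{x}$. Starting from (\ref{truncated-Riesz-kernel-2}), I would first rewrite the heat kernel pointwise: inserting (\ref{heat-kernel-1}), using $E_{\kappa}(x/\sqrt{2v},t/\sqrt{2v})=E_{\kappa}(x,t/(2v))$ from (\ref{Dunkl-kernel-2-3}) together with $E_{\kappa}(x,z)=\int_{\RR^d}e^{\langle\xi,z\rangle}\,d\mu^{\kappa}_{x}(\xi)$ gives
\[
h_v(x,t)=\frac{1}{(2v)^{|\kappa|+d/2}}\int_{\RR^d}e^{-A(x,t;\xi)^2/(4v)}\,d\mu^{\kappa}_{x}(\xi).
\]
Feeding this into (\ref{truncated-Riesz-kernel-2}) and evaluating the elementary integral $\int_0^\infty v^{-(|\kappa|+d/2+3/2)}e^{-A^2/(4v)}\,dv\asymp A^{-(2|\kappa|+d+1)}$ produces the closed form
\[
K_j(x,t)=c\,(x_j-t_j)\,I(x,t),\qquad I(x,t):=\int_{\RR^d}A(x,t;\xi)^{-(2|\kappa|+d+1)}\,d\mu^{\kappa}_{x}(\xi),
\]
with $c>0$, in agreement with (\ref{conjugate-kernel-2}). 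Both inequalities in (i) then reduce to two-sided control of $I(x,t)$, the governing geometry being that $A(x,t;\xi)$ ranges over $[\,d(x,t),\,\max_{\sigma\in G}|t-\sigma(x)|\,]$ on $\mathrm{supp}\,\mu^{\kappa}_{x}$ by (\ref{intertwining-support-1}), the two extreme scales being realised at the minimising vertex $\xi=\sigma_0(x)$ (where $A=d(x,t)$) and at the principal vertex $\xi=x$ (where $A=|x-t|$).

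For the lower bound I would keep only the mass of $\mu^{\kappa}_{x}$ near the principal vertex, the locus of the scale $|x-t|$ and the weight $|\BB(x,|x-t|)|_{\kappa}$. On $\{\xi:\langle x,\xi\rangle>|x|^2-\delta^2\}$ one has $|x-\xi|^2\le 2(|x|^2-\langle x,\xi\rangle)<2\delta^2$, so from $A(x,t;\xi)^2=|x-t|^2+2\langle t,x-\xi\rangle$ the bound $A\lesssim|x-t|$ holds there once $\delta\lesssim|x-t|^2/(1+|t|)$; the distribution estimate for $\mu^{\kappa}_{x}$ quoted in Section 2 (from \cite{JL1}) bounds the mass of this set below by $\delta^{2|\kappa|+d}/|\BB(x,\delta)|_{\kappa}$. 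Inserting $A\lesssim|x-t|$ and comparing the weights at the scales $\delta$ and $|x-t|$ through (\ref{cube-estimate-1}) should give $I(x,t)\gtrsim\big(|x-t|\,|\BB(x,|x-t|)|_{\kappa}\big)^{-1}$. Care is needed precisely when $d(x,t)\ll|x-t|$ (that is, when $x$ and $t$ lie in different chambers): a purely radial slice of $\mathrm{supp}\,\mu^{\kappa}_{x}$ about $\xi=x$ need not meet $\{A\lesssim|x-t|\}$ in enough mass, and one must exploit the transversal spread of $\mu^{\kappa}_{x}$ near the vertex, not merely the radial estimate.

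The upper bound is the delicate direction, since its scale $d(x,t)$ and weight $|\BB(x,d(x,t))|_{\kappa}$ belong to the minimising vertex, where $\mu^{\kappa}_{x}$ is most strongly damped. I would estimate $I$ by a layer-cake decomposition of the superlevel sets $\{\xi:A(x,t;\xi)\le s\}$ for $d(x,t)\le s\le\max_{\sigma}|t-\sigma(x)|$: since $A^2=d(x,t)^2-2\langle t,\xi-\sigma_0(x)\rangle$, such a set is a half-space slice of $\mathrm{supp}\,\mu^{\kappa}_{x}$ at $\sigma_0(x)$, whose mass must be bounded above by a distribution estimate at that vertex. The stated principal-vertex estimate transports by the $G$-equivariance $\mu^{\kappa}_{\sigma(x)}=\sigma_{*}\mu^{\kappa}_{x}$ (a consequence of the $G$-covariance of $V_{\kappa}$), but only relates a point to its own principal vertex, so the behaviour of $\mu^{\kappa}_{x}$ near a non-principal minimising vertex is genuinely new information and is the main obstacle of the proof; it is exactly this near-vertex mass, combined with the linear growth of $A^2$ in the displacement from $\sigma_0(x)$ against the degeneration of $d\omega_{\kappa}$ there, that should yield the gain from the crude $d(x,t)^{-(2|\kappa|+d+1)}$ to the claimed $d(x,t)\,|\BB(x,d(x,t))|_{\kappa}^{-1}|x-t|^{-2}$. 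An equivalent and perhaps cleaner route is to insert sharp two-sided Gaussian bounds for $h_v(x,t)$, in which $d(x,t)$ enters the exponent and $|x-t|$ a polynomial prefactor, and to integrate in $v$; reconciling the two scales after the $v$-integration is the same difficulty in analytic guise.

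Finally, for the Hölder-type estimate (ii) I would differentiate the representation along the segment $t_s=(1-s)t'+st$, $s\in[0,1]$, writing $K_j(x,t)-K_j(x,t')=\int_0^1\langle t-t',\nabla_t K_j(x,t_s)\rangle\,ds$, the Dunkl and ordinary $t$-derivatives of $h_v$ differing only by the harmless reflection terms, cf. (\ref{heat-kernel-2}). The hypothesis $d(x,t)\ge 2|t-t'|$ guarantees $d(x,t_s)\asymp d(x,t)$ and $|x-t_s|\asymp|x-t|$ for every $s$, so the whole segment stays away from the singularity and $|\nabla_t K_j(x,t_s)|$ is controlled by the same measure estimates as in (i), each $t$-derivative contributing one extra negative power of $|x-t|$. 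Integrating in $s$ and using $|t-t'|\le\tfrac12 d(x,t)$ then yields $|K_j(x,t)-K_j(x,t')|\lesssim\frac{|t-t'|}{|x-t|}\,|\BB(x,d(x,t))|_{\kappa}^{-1}$, as asserted.
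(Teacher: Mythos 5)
Your reduction $K_j(x,t)=c\,(x_j-t_j)\,I(x,t)$ with $I(x,t)=\int_{\RR^d}A(x,t;\xi)^{-(2|\kappa|+d+1)}\,d\mu^{\kappa}_{x}(\xi)$ is correct and consistent with (\ref{conjugate-kernel-2}) and (\ref{truncated-Riesz-kernel-2}), but the proposal then breaks down exactly where the content of part (i) lies. The upper bound in (\ref{Riesz-kernel-1}) --- the one carrying $d(x,t)$ and $|B(x,d(x,t))|_{\kappa}$ --- is never proved: you reduce it to a distribution estimate for $\mu^{\kappa}_{x}$ near a non-principal vertex $\sigma_0(x)$ of its support and then yourself declare that estimate to be ``genuinely new information'' and ``the main obstacle of the proof''. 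Naming the obstacle is not overcoming it, and nothing quoted in the paper supplies that vertex estimate. Your fallback route (insert the bound $h_v(x,t)\lesssim(1+|x-t|/\sqrt v)^{-2}e^{-cd(x,t)^2/v}\,|B(x,\sqrt v)|_{\kappa}^{-1}$ into (\ref{truncated-Riesz-kernel-2}) and integrate in $v$) is also left unexecuted, and if one carries it out the intermediate regime $d(x,t)^2\le v\le|x-t|^2$ fails to close in rank one: for $G=\ZZ_2$, $t\approx-x$, one has $|B(x,\sqrt v)|_{\kappa}\asymp\sqrt v\,|x|^{2\kappa}$ there, and the $v$-integral produces an extra factor $\log\bigl(|x-t|/d(x,t)\bigr)$ beyond the claimed bound. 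The lower bound argument contains a genuine error as well: having forced $\delta\lesssim|x-t|^2/(1+|t|)$, you get $I(x,t)\gtrsim|x-t|^{-(2|\kappa|+d+1)}\,\delta^{2|\kappa|+d}/|B(x,\delta)|_{\kappa}$, but by (\ref{cube-estimate-1}) the quantity $\delta^{2|\kappa|+d}/|B(x,\delta)|_{\kappa}\asymp\prod_{\alpha\in R}\bigl(\delta/(|\langle\alpha,x\rangle|+\delta)\bigr)^{\kappa(\alpha)}$ is \emph{increasing} in $\delta$, so ``comparing the weights at the scales $\delta$ and $|x-t|$'' runs in the wrong direction whenever $\delta\ll|x-t|$; e.g.\ for $d=1$, $\kappa>0$, $x=1$, $t=1+\epsilon$, your bound undershoots the required one by the factor $\epsilon^{2\kappa}$. (This defect is repairable: on the half-space, $\langle t,x-\xi\rangle=\langle t-x,x-\xi\rangle+\langle x,x-\xi\rangle<\sqrt2\,\delta|x-t|+\delta^2$, hence $A\lesssim|x-t|+\delta$, so one may simply take $\delta=|x-t|$ and no restriction involving $|t|$, nor any ``transversal spread'', is needed.)

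Part (ii) inherits the same gap: the segment geometry under $d(x,t)\ge2|t-t'|$ is fine, but your estimate rests on a bound $|\nabla_tK_j(x,t_s)|\lesssim\bigl(|x-t|\,|B(x,d(x,t))|_{\kappa}\bigr)^{-1}$ that is never established and would again require the missing vertex information; note also that the correct cost of a $t$-derivative here is a factor $1/d(x,t)$ (the distance to the singular orbit), not $1/|x-t|$, so your heuristic ``one extra negative power of $|x-t|$ per derivative'' is not the right one --- compare the form of (\ref{Poisson-ker-bound-2}). By contrast, the paper's proof avoids $\mu^{\kappa}_{x}$ entirely: it combines the identity (\ref{conjugate-kernel-1}), $(\tau_xQ^{(j)}_{x_0})(-t)=\frac{x_j-t_j}{x_0}(\tau_xP_{x_0})(-t)$, with the known two-sided Poisson-kernel bounds (\ref{Poisson-ker-bound-1}) and the gradient bound (\ref{Poisson-ker-bound-2}), derives both estimates for the conjugate kernels uniformly in $x_0>0$, and then lets $x_0\to0+$, since $K_j(x,t)=\lim_{x_0\to0+}(\tau_xQ^{(j)}_{x_0})(-t)$ when $d(x,t)>0$. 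All the measure-theoretic difficulty you would have to build from scratch is thus absorbed into results already available for the $\kappa$-Poisson kernel; to keep your route you would have to actually prove the near-vertex mass estimate for $\mu^{\kappa}_{x}$ (equivalently, re-derive sharp heat/Poisson kernel upper bounds), which is the content of the cited literature and not a few-line matter.
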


\begin{proof}
In \cite{ADH1, DH1}, useful lower and upper bounds of the $\kappa$-Poisson kernel $(\tau_x P_{x_0})(-t)$ are obtained and they are simplified a little in \cite{JL2}, that is,
\begin{align}\label{Poisson-ker-bound-1}
\frac{c_1}{\left|B(x,x_0+|x-t|)\right|_{\kappa}}\frac{x_0}{x_0+|x-t|}
\leq(\tau_x P_{x_0})(-t)\leq
\frac{c_2x_0}{\left|B(x,x_0+d(x,t))\right|_{\kappa}}\frac{x_0+d(x,t)}{x_0^2+|x-t|^2},
\end{align}
where the constants $c_1,c_2>0$ are independent of $x_0>0$ and $x,t\in\RR^d$. From (\ref{conjugate-kernel-1}) and (\ref{Poisson-ker-bound-1}) it readily follows that
\begin{align*}
\frac{c_1}{\left|B(x,x_0+|x-t|)\right|_{\kappa}}\frac{|x_j-t_j|}{x_0+|x-t|}
\leq |(\tau_{x}Q^{(j)}_{x_0})(-t)| \leq
\frac{c_2|x_j-t_j|}{\left|B(x,x_0+d(x,t))\right|_{\kappa}}\frac{x_0+d(x,t)}{x_0^2+|x-t|^2}.
\end{align*}
Now for $x,t\in\RR^d$ satisfying $d(x,t)>0$, (\ref{Riesz-kernel-1}) is immediate by letting $x_0\rightarrow0+$.

In proving (\ref{Riesz-kernel-2}), we need the following estimate of the gradient of the $\kappa$-Poisson kernel (see \cite[(5.6)]{ADH1})
\begin{align}\label{Poisson-ker-bound-2}
\left|\nabla^{t}\left[(\tau_x P_{x_0})(-t)\right]\right|
\lesssim \frac{(\tau_x P_{x_0})(-t)}{x_0+d(x,t)}.
\end{align}
For $x,t,t'\in\RR^d$ and $x_0>0$, it follows from (\ref{conjugate-kernel-1}) that $|(\tau_{x}Q^{(j)}_{x_0})(-t)-(\tau_{x}Q^{(j)}_{x_0})(-t')|$ is dominated by
\begin{align*}
\frac{|t_j-t_j'|}{x_0}(\tau_x P_{x_0})(-t)+\frac{|x_j-t_j'|}{x_0}\left|\left\langle\nabla\left[(\tau_x P_{x_0})(-\cdot)\right](\tilde{t}),t-t'\right\rangle\right|,
\end{align*}
with some $\tilde{t}$ on the line between $t$ and $t'$. Thus applying (\ref{Poisson-ker-bound-1}) and (\ref{Poisson-ker-bound-2}), and noting that
\begin{align*}
d(x,\tilde{t})\ge d(x,t)-|t-t'|\ge d(x,t)/2 \quad \hbox{and}\quad |x-t'|/2\le|x-t|\le2|x-\tilde{t}|
\end{align*}
whenever $d(x,t)\ge2|t-t'|$, we get
\begin{align*}
\left|(\tau_{x}Q^{(j)}_{x_0})(-t)-(\tau_{x}Q^{(j)}_{x_0})(-t')\right|
\lesssim & \frac{|t_j-t_j'|}{\left|B(x,x_0+d(x,t))\right|_{\kappa}}
\left[\frac{x_0+d(x,t)}{x_0^2+|x-t|^2}
+\frac{|x_j-t_j'|}{x_0^2+|x-\tilde{t}|^2}\right]\\
\lesssim & \frac{|t-t'|}{x_0+|x-t|}\frac{1}{\left|B(x,x_0+d(x,t))\right|_{\kappa}}.
\end{align*}
Again by letting $x_0\rightarrow0+$, the estimate in (\ref{Riesz-kernel-2}) is proved for $d(x,t)\ge2|t-t'|$.
\end{proof}

\begin{corollary}\label{Riesz-truncated-kernel-a} There exists a constant $C>0$ independent of $\epsilon>0$ such that
\begin{align}\label{Riesz-Hormander-1}
\int_{d(x,t)>2|t-t'|}|K_j^{\epsilon}(x,t)-K_j^{\epsilon}(x,t')|\,d\omega_{\kappa}(x)\le C,\qquad t,t'\in\RR^d.
\end{align}
\end{corollary}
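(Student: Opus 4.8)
The plan is to write the kernel difference as a sum of a ``smoothness'' term and a ``truncation'' term and to estimate the two contributions separately, each time uniformly in $\epsilon$. Namely, I would set
$$
K_j^{\epsilon}(x,t)-K_j^{\epsilon}(x,t')
=\big[K_j(x,t)-K_j(x,t')\big]\chi_{d(x,t)>\epsilon}
+K_j(x,t')\big[\chi_{d(x,t)>\epsilon}-\chi_{d(x,t')>\epsilon}\big],
$$
and denote the two summands by $I$ and $II$. The point of this splitting is that $I$ is controlled by the Lipschitz estimate (\ref{Riesz-kernel-2}), after discarding the cutoff $\chi_{d(x,t)>\epsilon}\le1$ (which removes all $\epsilon$-dependence), while $II$ is supported on a thin $\epsilon$-shell and is controlled by the size estimate (\ref{Riesz-kernel-1}).

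For the term $I$, since $\chi_{d(x,t)>\epsilon}\le1$ and the domain of integration is exactly the region where (\ref{Riesz-kernel-2}) applies, it suffices to show
$$
\int_{d(x,t)>2|t-t'|}\frac{|t-t'|}{|x-t|}\,\frac{1}{\left|B(x,d(x,t))\right|_{\kappa}}\,d\omega_{\kappa}(x)\lesssim1
$$
uniformly in $t,t'$. I would do this by a dyadic decomposition of $\{x:d(x,t)>2|t-t'|\}$ into the annuli $A_k=\{x:2^k r<d(x,t)\le2^{k+1}r\}$, $k\ge1$, where $r=|t-t'|$. On $A_k$ one has $|x-t|\ge d(x,t)\gtrsim 2^k r$ and $\left|B(x,d(x,t))\right|_{\kappa}\asymp\left|B(t,2^k r)\right|_{\kappa}$; the latter comparison rests on the $G$-invariance of $\omega_{\kappa}$, the doubling property recorded via (\ref{cube-estimate-1}), and the fact that $d(x,t)$ realizes the distance from $t$ to the $G$-orbit of $x$. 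Moreover $A_k\subseteq\{x:d(x,t)\le2^{k+1}r\}=\bigcup_{\sigma\in G}B(\sigma(t),2^{k+1}r)$, so $|A_k|_{\kappa}\lesssim|G|\,\left|B(t,2^k r)\right|_{\kappa}$. Hence each annulus contributes $\lesssim 2^{-k}$, and summing the geometric series gives the bound.

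The main obstacle is the truncation term $II$, where I must exploit that the difference of indicators is supported on a thin shell. Using that the orbit distance is $1$-Lipschitz in its second argument, $|d(x,t)-d(x,t')|\le|t-t'|$, the factor $\chi_{d(x,t)>\epsilon}-\chi_{d(x,t')>\epsilon}$ vanishes unless $\epsilon$ lies between $d(x,t)$ and $d(x,t')$; combined with the constraint $d(x,t)\ge2|t-t'|$ this forces $d(x,t')\asymp\epsilon$ on the support of $II$. I would then invoke the upper bound in (\ref{Riesz-kernel-1}) in the crude form $|K_j(x,t')|\lesssim d(x,t')\big/\big(|x-t'|\,\left|B(x,d(x,t'))\right|_{\kappa}\big)$ together with $d(x,t')\le|x-t'|$, so that the integrand of $II$ is $\lesssim \left|B(x,d(x,t'))\right|_{\kappa}^{-1}\asymp\left|B(t',\epsilon)\right|_{\kappa}^{-1}$ on its support. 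Since that support lies in $\{x:d(x,t')\lesssim\epsilon\}=\bigcup_{\sigma\in G}B(\sigma(t'),c\epsilon)$, whose $\omega_{\kappa}$-measure is $\lesssim\left|B(t',\epsilon)\right|_{\kappa}$, the integral of $II$ is $\lesssim1$ uniformly in $\epsilon$, closing the argument. The delicate points throughout are purely geometric: the passages between balls centered at $x$, at $t$, and at their reflected copies all rest on the $G$-invariance and doubling of $\omega_{\kappa}$ rather than on any further property of the kernel.
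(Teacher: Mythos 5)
Your proof is correct and follows essentially the same route as the paper: both arguments reduce matters to the H\"ormander-type estimate for the untruncated kernel $K_j$ (via the Lipschitz bound (\ref{Riesz-kernel-2}), dyadic annuli in $d(x,t)$, the comparison (\ref{ball-estimate-2}), $G$-invariance and doubling), and both control the truncation mismatch by the size bound (\ref{Riesz-kernel-1}) on a thin shell where the orbit distance is comparable to $\epsilon$. The only cosmetic difference is bookkeeping: your add-and-subtract identity produces a single shell term involving $K_j(x,t')$, whereas the paper's four-case analysis of $d(x,t)\lessgtr\epsilon$, $d(x,t')\lessgtr\epsilon$ produces two shell terms, one for $K_j(x,t)$ and one for $K_j(x,t')$.
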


\begin{proof}
Let us first consider the $\kappa$-Riesz kernels $K_j(x,t)$ ($1\le j\le d$) themselves. Applying (\ref{Riesz-kernel-2}) and in view of that (see \cite[(32)]{JL2})
\begin{align}\label{ball-estimate-2}
\left|B(x,d(x,t))\right|_{\kappa}\asymp\left|B(t,d(x,t))\right|_{\kappa},
\end{align}
we have, for $t\neq t'$,
\begin{align}\label{Riesz-Hormander-1-1}
&\int_{d(x,t)>2|t-t'|}|K_j(x,t)-K_j(x,t')|\,d\omega_{\kappa}(x)\nonumber\\
\lesssim &\sum_{\ell=1}^{\infty}
\int_{2^{\ell}|t-t'|<d(x,t)\le2^{\ell+1}|t-t'|}\frac{|t-t'|}{|x-t|}\frac{1}{\left|B(t,d(x,t))\right|_{\kappa}}\,d\omega_{\kappa}(x)\nonumber\\
\lesssim &\sum_{\ell=1}^{\infty}\frac{1}{2^{\ell}} \frac{1}{\left|B(t,2^{\ell}|t-t'|)\right|_{\kappa}}
\int_{{\mathcal{O}}(B(t,2^{\ell+1}|t-t'|))}\,d\omega_{\kappa}(x),
\end{align}
where
$$
{\mathcal{O}}(B(x',r))=\{x:\, d(x,x')<r\}
$$
denotes the $G$-orbit of the ball $B(x',r)$. It follows immediately that there exists a constant $C>0$ such that
\begin{align}\label{Riesz-Hormander-2}
\int_{d(x,t)>2|t-t'|}|K_j(x,t)-K_j(x,t')|\,d\omega_{\kappa}(x)
\le C,\qquad t,t'\in\RR^d.
\end{align}

We now turn to the associated estimate for $K_j^{\epsilon}$. Assume that $d(x,t)>2|t-t'|$. It is obvious that $|d(x,t)-d(x,t')|\le|t-t'|$, and so $d(x,t)/2\le d(x,t')\le2d(x,t)$.

If $d(x,t)\le\epsilon$ and $d(x,t')\le\epsilon$, then $K_j^{\epsilon}(x,t)=K_j^{\epsilon}(x,t')=0$; if $d(x,t)>\epsilon$ and $d(x,t')>\epsilon$, then $K_j^{\epsilon}(x,t)-K_j^{\epsilon}(x,t')=K_j(x,t)-K_j(x,t')$; if $d(x,t)>\epsilon$ and $d(x,t')\le\epsilon$, then $K_j^{\epsilon}(x,t)-K_j^{\epsilon}(x,t')=K_j(x,t)$ and $\epsilon<d(x,t)\le 2d(x,t')\le2\epsilon$; and similarly, if $d(x,t)\le\epsilon$ and $d(x,t')>\epsilon$, then $K_j^{\epsilon}(x,t)-K_j^{\epsilon}(x,t')=-K_j(x,t')$ and $\epsilon<d(x,t')\le 2d(x,t)\le2\epsilon$.
Thus $\int_{d(x,t)>2|t-t'|}|K_j^{\epsilon}(x,t)-K_j^{\epsilon}(x,t')|\,d\omega_{\kappa}(x)$ is dominated by
\begin{align*}
&\int_{d(x,t)>2|t-t'|}|K_j(x,t)-K_j(x,t')|\,d\omega_{\kappa}(x)\\
& \qquad\qquad\qquad  +\int_{\epsilon<d(x,t)\le2\epsilon}|K_j(x,t)|\,d\omega_{\kappa}(x)
+\int_{\epsilon<d(x,t')\le2\epsilon}|K_j(x,t')|\,d\omega_{\kappa}(x)
\end{align*}
From (\ref{Riesz-Hormander-2}) the first term above is bounded by a constant; while, by use of (\ref{Riesz-kernel-1}) and (\ref{ball-estimate-2}), the second term is controlled by a multiple of
\begin{align*}
\int_{\epsilon<d(x,t)\le2\epsilon}\frac{|x_j-t_j|}{\left|B(t,d(x,t))\right|_{\kappa}}\frac{d(x,t)}{|x-t|^2}\,d\omega_{\kappa}(x)
\lesssim \frac{1}{\left|B(t,\epsilon)\right|_{\kappa}}\int_{{\mathcal{O}}(B(t,2\epsilon))}\,d\omega_{\kappa}(x)
\lesssim 1;
\end{align*}
and the same estimate holds for the third term. Summing these concludes (\ref{Riesz-Hormander-1}).
\end{proof}

We are now in a position to consider the operators ${\mathcal{R}}_j^{\epsilon,N}$, for $0<\epsilon<N<\infty$, defined by
 \begin{align}\label{truncated-Riesz-5}
({\mathcal{R}}_j^{\epsilon,N}f)(x)=c_{\kappa}\int_{\RR^d}f(t)K_j^{\epsilon,N}(x,t)\,d\omega_{\kappa}(t),\qquad x\in\RR^d,
\end{align}
where
$$
K_j^{\epsilon,N}(x,t)=K_j^{\epsilon}(x,t)-K_j^{N}(x,t).
$$

\begin{lemma}\label{Riesz-truncated-BMO-a}
There exists a constant $C>0$ independent of $\epsilon,N$ and $f\in L^\infty(\RR^d)$ such that
\begin{eqnarray}\label{Riesz-truncated-BMO-1}
\|{\mathcal{R}}_j^{\epsilon,N}f\|_{*,\kappa}\le C\|f\|_{L^{\infty}}.
\end{eqnarray}
\end{lemma}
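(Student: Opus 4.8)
The plan is to run the classical argument that a Calder\'on--Zygmund operator maps $L^{\infty}$ into BMO, carried out on the space of homogeneous type $(\RR^d,|\cdot|,\omega_{\kappa})$ and organized around the reflection distance $d(x,t)=\min_{\sigma\in G}|t-\sigma(x)|$ instead of the Euclidean one. Fix a finite ball $B=B(z,r)$; it suffices to exhibit a constant $c_B$ with $|B|_{\kappa}^{-1}\int_B|{\mathcal{R}}_j^{\epsilon,N}f-c_B|\,d\omega_{\kappa}\lesssim\|f\|_{L^{\infty}}$, uniformly in $\epsilon,N$ and $B$. First I would split $f=f_1+f_2$ with $f_1=f\,\chi_{{\mathcal{O}}(B(z,4r))}$, the restriction to the $G$-orbit of the enlarged ball, and $f_2=f-f_1$; since $K_j^{\epsilon,N}(x,t)$ is supported where $\epsilon<d(x,t)\le N$, for each fixed $x$ the $t$-integration runs over a bounded set, so ${\mathcal{R}}_j^{\epsilon,N}f$ is well defined for $f\in L^{\infty}(\RR^d)$.

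Two uniform ingredients drive the estimate. The first is an $L_{\kappa}^{2}$ bound: by Corollary \ref{Riesz-2-d} the maximal transform ${\mathcal{R}}_j^*$ is bounded on $L_{\kappa}^{2}(\RR^d)$, hence $\|{\mathcal{R}}_j^{\epsilon,N}g\|_{L_{\kappa}^{2}}\le\|{\mathcal{R}}_j^{\epsilon}g\|_{L_{\kappa}^{2}}+\|{\mathcal{R}}_j^{N}g\|_{L_{\kappa}^{2}}\lesssim\|g\|_{L_{\kappa}^{2}}$ uniformly in $\epsilon,N$. Applying this to $f_1$, together with Cauchy--Schwarz, the $G$-invariance of $\omega_{\kappa}$ and the doubling consequence of (\ref{cube-estimate-1}) giving $|{\mathcal{O}}(B(z,4r))|_{\kappa}\lesssim|B(z,4r)|_{\kappa}\lesssim|B|_{\kappa}$, yields
\begin{align*}
\frac{1}{|B|_{\kappa}}\int_B|{\mathcal{R}}_j^{\epsilon,N}f_1|\,d\omega_{\kappa}
\le\frac{\|{\mathcal{R}}_j^{\epsilon,N}f_1\|_{L_{\kappa}^{2}}}{|B|_{\kappa}^{1/2}}
\lesssim\frac{\|f_1\|_{L_{\kappa}^{2}}}{|B|_{\kappa}^{1/2}}\lesssim\|f\|_{L^{\infty}}.
\end{align*}

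For the far part I would take $c_B=({\mathcal{R}}_j^{\epsilon,N}f_2)(z)$, so that for $x\in B$,
\begin{align*}
\left|({\mathcal{R}}_j^{\epsilon,N}f_2)(x)-c_B\right|
\le c_{\kappa}\|f\|_{L^{\infty}}\int_{{\mathcal{O}}(B(z,4r))^c}\left|K_j^{\epsilon,N}(x,t)-K_j^{\epsilon,N}(z,t)\right|\,d\omega_{\kappa}(t).
\end{align*}
The step I expect to be the crux is that Corollary \ref{Riesz-truncated-kernel-a} supplies the H\"ormander condition in the \emph{second} variable, whereas the integral above is a difference in the \emph{first}. I would bridge this with the antisymmetry $K_j(t,x)=-K_j(x,t)$, immediate from the explicit formula (\ref{truncated-Riesz-kernel-2}) and the symmetry $h_v(x,t)=h_v(t,x)$ of the heat kernel; as $d(\cdot,\cdot)$ is symmetric, the same antisymmetry passes to $K_j^{\epsilon}$ and to $K_j^{\epsilon,N}$, whence $|K_j^{\epsilon,N}(x,t)-K_j^{\epsilon,N}(z,t)|=|K_j^{\epsilon,N}(t,x)-K_j^{\epsilon,N}(t,z)|$. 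Moreover, for $t\notin{\mathcal{O}}(B(z,4r))$ and $x\in B$ the reflection-distance inequality $d(x,t)\ge d(z,t)-|x-z|\ge 4r-|x-z|>2|x-z|$ holds, so ${\mathcal{O}}(B(z,4r))^c\subseteq\{t:\,d(x,t)>2|x-z|\}$ and the last integral is dominated by the second-variable expression of Corollary \ref{Riesz-truncated-kernel-a} applied to the pair $(x,z)$, which is bounded by $2C$ uniformly in $\epsilon,N$. Thus $|({\mathcal{R}}_j^{\epsilon,N}f_2)(x)-c_B|\lesssim\|f\|_{L^{\infty}}$ for every $x\in B$.

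Combining the two parts with $c_B=({\mathcal{R}}_j^{\epsilon,N}f_2)(z)$ gives $|B|_{\kappa}^{-1}\int_B|{\mathcal{R}}_j^{\epsilon,N}f-c_B|\,d\omega_{\kappa}\lesssim\|f\|_{L^{\infty}}$, and taking the supremum over all finite balls $B$ yields (\ref{Riesz-truncated-BMO-1}). The remaining verifications are routine: the well-definedness just noted, and the measure comparisons between ${\mathcal{O}}(B(z,4r))$ and $B$, which follow from the $G$-invariance of $\omega_{\kappa}$, the doubling consequence of (\ref{cube-estimate-1}), and (\ref{ball-estimate-2}).
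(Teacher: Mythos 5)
Your proof is correct and follows essentially the same route as the paper's: the same splitting $f=f_1+f_2$ along the $G$-orbit of a dilated ball, the uniform $L_{\kappa}^2$ bound from Corollary \ref{Riesz-2-d} plus Cauchy--Schwarz for the local part, and the antisymmetry $K_j(x,t)=-K_j(t,x)$ from (\ref{truncated-Riesz-kernel-2}) combined with the H\"ormander condition of Corollary \ref{Riesz-truncated-kernel-a} for the far part, with the center of $B$ as reference point. The only differences (dilation factor $4$ instead of $3$, and your more explicit verification of the distance condition and of how antisymmetry passes to the truncated kernels) are immaterial.
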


\begin{proof}
We fix a ball $B=B_{\delta}(x')$ and decompose $f\in L^\infty(\RR^d)$ by $f=f_1+f_2$, where $f_1=f\chi_{{\mathcal{O}}(3B)}$ and  $f_2=f\chi_{[{\mathcal{O}}(3B)]^c}$.

By Corollary \ref{Riesz-2-d}, ${\mathcal{R}}_j^{\epsilon,N}$ is bounded from $L_{\kappa}^2(\RR^d)$ to itself with a bound of operator norms independent of $\epsilon,N$. Thus
\begin{align}\label{truncated-Riesz-6}
&\int_B|{\mathcal{R}}_j^{\epsilon,N}f_1|d\omega_{\kappa}
\le |B|_{\kappa}^{1/2}\left(\int_B|{\mathcal{R}}_j^{\epsilon,N}f_1|^2d\omega_{\kappa}\right)^{1/2}\nonumber\\
\lesssim & |B|_{\kappa}^{1/2}\left(\int_{{\mathcal{O}}(3B)}|f_1|^2d\omega_{\kappa}\right)^{1/2}
\lesssim |B|_{\kappa}\|f\|_{L^{\infty}}.
\end{align}
Passing to ${\mathcal{R}}_j^{\epsilon,N}f_2$, for $x\in B$ we have
\begin{align*}
\left|({\mathcal{R}}_j^{\epsilon,N}f_2)(x)-({\mathcal{R}}_j^{\epsilon,N}f_2)(x')\right|
&=c_{\kappa}\int_{\RR^d}|f_2(t)||K_j^{\epsilon,N}(x,t)-K_j^{\epsilon,N}(x',t)|\,d\omega_{\kappa}(t)\\
&\lesssim \|f\|_{L^{\infty}}\int_{[{\mathcal{O}}(3B)]^c}|K_j^{\epsilon,N}(x,t)-K_j^{\epsilon,N}(x',t)|\,d\omega_{\kappa}(t).
\end{align*}
It is noted that $K_j(x,t)=-K_j(t,x)$ by (\ref{truncated-Riesz-kernel-2}), and then by Corollary \ref{Riesz-truncated-kernel-a},
\begin{align}\label{truncated-Riesz-7}
\left|({\mathcal{R}}_j^{\epsilon,N}f_2)(x)-({\mathcal{R}}_j^{\epsilon,N}f_2)(x')\right|
\lesssim \|f\|_{L^{\infty}},\qquad x\in B.
\end{align}
Combining (\ref{truncated-Riesz-6}) and (\ref{truncated-Riesz-7}) yields
\begin{align*}
\int_B|({\mathcal{R}}_j^{\epsilon,N}f)(x)-({\mathcal{R}}_j^{\epsilon,N}f_2)(x')|d\omega_{\kappa}
\lesssim |B|_{\kappa}\|f\|_{L^{\infty}}.
\end{align*}
Therefore ${\mathcal{R}}_j^{\epsilon,N}f\in \B_{\kappa}$, and (\ref{Riesz-truncated-BMO-1}) holds.
\end{proof}

We now turn to the proof of Theorem \ref{Riesz-BMO-a}.

Assume $f\in L^\infty(\RR^d)$ and fix a ball $B=B_{\delta}(x')$. Note that $|d(x,t)-d(x',t)|\le|x-x'|\le\delta$ for $x\in B$.

It follows from (\ref{truncated-Riesz-4}) that, for $x\in B$,
\begin{align}\label{truncated-Riesz-4-1}
\left|(\tilde{\mathcal{R}}_j^{N}f)(x)-(\tilde{\mathcal{R}}_j^{N}f)(x')\right|
\lesssim \|f\|_{L^{\infty}}\int_{\RR^d}\left|K_j^{N}(x,t)-K_j^{N}(x',t)\right|\,d\omega_{\kappa}(t).
\end{align}
Adopting closely the argument in the proof of Corollary \ref{Riesz-truncated-kernel-a}, the integral above is controlled by
\begin{align}\label{Riesz-Hormander-3}
&\int_{d(x,t)>N}|K_j(x,t)-K_j(x',t)|\,d\omega_{\kappa}(t)\nonumber\\
& \qquad\qquad  +\int_{N<d(x,t)\le N+\delta}|K_j(x,t)|\,d\omega_{\kappa}(t)
+\int_{N<d(x',t)\le N+\delta}|K_j(x',t)|\,d\omega_{\kappa}(t).
\end{align}
We consider the case for $N\ge2(|x'|+\delta)$. The first term in (\ref{Riesz-Hormander-3}), after a similar treatment in (\ref{Riesz-Hormander-1-1}), is bounded by a multiple of
\begin{align}\label{Riesz-Hormander-4}
\sum_{\ell=1}^{\infty}
\int_{2^{\ell-1}N<d(x,t)\le2^{\ell}N}\frac{|x-x'|}{|x-t|}\frac{1}{\left|B(x,d(x,t))\right|_{\kappa}}\,d\omega_{\kappa}(t)
\lesssim \frac{\delta}{N}.
\end{align}
As for the second term in (\ref{Riesz-Hormander-3}), using (\ref{Riesz-kernel-1}) gives
\begin{align*}
\int_{N<d(x,t)\le N+\delta}|K_j(x,t)|\,d\omega_{\kappa}(t)
\lesssim & \frac{1}{\left|B(x,N)\right|_{\kappa}}\int_{N<d(x,t)\le N+\delta}\,d\omega_{\kappa}(t)\\
= & \frac{1}{\left|B(x/N,1)\right|_{\kappa}}\int_{1<d(x/N,t)\le 1+\delta/N}\,d\omega_{\kappa}(t);
\end{align*}
and since $B(x/N,1)\supseteq B(x'/N,1/2)$ and $|d(x'/N,t)-d(x/N,t)|\le \delta/N$, it follows that
\begin{align*}
\int_{N<d(x,t)\le N+\delta}|K_j(x,t)|\,d\omega_{\kappa}(t)
\lesssim \frac{1}{\left|B(x'/N,1/2)\right|_{\kappa}}\int_{1-\delta/N<d(x'/N,t)\le 1+2\delta/N}\,d\omega_{\kappa}(t).
\end{align*}
The same estimate holds for the third term in (\ref{Riesz-Hormander-3}). Inserting them and (\ref{Riesz-Hormander-4}) into (\ref{Riesz-Hormander-3}), and then (\ref{truncated-Riesz-4-1}), asserts that $(\tilde{\mathcal{R}}_j^{N}f)(x)-(\tilde{\mathcal{R}}_j^{N}f)(x')\rightarrow 0$ uniformly for $x\in B$ as $N\rightarrow\infty$.

Since, from (\ref{truncated-Riesz-4}) and (\ref{truncated-Riesz-5}),
$$
\tilde{\mathcal{R}}_j^{\epsilon}f-\tilde{\mathcal{R}}_j^{N}f={\mathcal{R}}_j^{\epsilon,N}f,
$$
by Lemma \ref{Riesz-truncated-BMO-a} and (\ref{bmo-4}) we have
\begin{align}\label{bmo-5}
\frac{1}{|B|_\kappa}\int_{B}\left|\tilde{\mathcal{R}}_j^{\epsilon}f-\tilde{\mathcal{R}}_j^{N}f-(\tilde{\mathcal{R}}_j^{\epsilon}f)_B+(\tilde{\mathcal{R}}_j^{N}f)_{B}\right|^2\,d\omega_{\kappa}
\le C\|f\|_{L^{\infty}}^2.
\end{align}
But, with $a=(\tilde{\mathcal{R}}_j^{N}f)(x')$,
$$
(\tilde{\mathcal{R}}_j^{N}f)(x)-(\tilde{\mathcal{R}}_j^{N}f)_{B}=(\tilde{\mathcal{R}}_j^{N}f)(x)-a-[\tilde{\mathcal{R}}_j^{N}f-a]_{B}
$$
tends to zero uniformly for $x\in B$ as $N\rightarrow\infty$, as just proved. Thus from (\ref{bmo-5}) we get
\begin{align*}
\frac{1}{|B|_\kappa}\int_{B}\left|\tilde{\mathcal{R}}_j^{\epsilon}f-(\tilde{\mathcal{R}}_j^{\epsilon}f)_B\right|^2\,d\omega_{\kappa}
\le C\|f\|_{L^{\infty}}^2.
\end{align*}
Finally letting $\epsilon\rightarrow0+$, Proposition \ref{Riesz-BMO-b} asserts $|B|_{\kappa}^{-1}\int_{B}\left|\tilde{\mathcal{R}}_jf-(\tilde{\mathcal{R}}_jf)_B\right|^2\,d\omega_{\kappa}
\le C\|f\|_{L^{\infty}}^2$. Thus, in view of (\ref{bmo-4}), $\|\tilde{\mathcal{R}}_jf\|_{*,\kappa}\lesssim\|f\|_{L^{\infty}}$, and the proof of Theorem \ref{Riesz-BMO-a} is completed.

\section{The associated Carleson measures}

As usual, for a ball $B=B_{r}(x')$ in $\RR^d$ the associated tent $T(B)$ is the closed set in $\RR^{1+d}_+$ given by
$$
T(B)=\{(x_0,x):\, |x-x'|\le r-x_0\}.
$$

\begin{definition}\label{car-def}
A Borel measure $d\nu$ on $\RR^{1+d}_+$ is called a $\kappa$-Carleson measure if there exists some $c>0$ such that for all balls $B$ in $\RR^d$,
$$
\iint_{T(B)}|d\nu|\leq c|B|_\kappa;
$$
and $\|d\nu\|_{\kappa}:=\sup_B|B|_{\kappa}^{-1}\iint_{T(B)}|d\nu|$ is named the $\kappa$-Carleson norm of $d\nu$.
\end{definition}

As in the classical case, the property underlying the definition of $\kappa$-Carleson measures is not limited to balls. Indeed we have the following lemma.

\begin{lemma}\label{car-pro}
If $d\nu$ is a $\kappa$-Carleson measure on $\RR^{1+d}_+$ and $E\subset\RR^d$ is open, then
$$
\iint_{\widehat{E}}|d\nu|\lesssim\|d\nu\|_{\kappa}|E|_\kappa,
$$
where
$\widehat{E}=\{(r,x)\in\RR^{1+d}_+:\,\, B_r(x)\subset E\}$.
\end{lemma}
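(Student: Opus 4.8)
The plan is to reduce the estimate over the abstract region $\widehat{E}$ to a sum of the defining Carleson estimates over tents of balls, by means of a Whitney covering of the open set $E$. First I would dispose of the trivial case: if $E=\RR^d$ then $|E|_\kappa=\infty$ and there is nothing to prove, so I may assume $E^c\neq\emptyset$. The geometric heart of the matter is the elementary observation that $(r,x)\in\widehat{E}$ exactly when $0<r\le d(x,E^c)$, where $d(x,E^c)=\inf_{y\notin E}|x-y|$; in particular every such $x$ then lies in $E$, and the height $r$ is controlled by the distance of $x$ to the complement.

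Next I would invoke a Whitney covering of $E$ in the space of homogeneous type $(\RR^d,|\cdot|,\omega_\kappa)$, whose doubling property is recorded after (\ref{cube-estimate-1}): there is a countable family of balls $B_i=B(x_i,r_i)$ with $E=\bigcup_i B_i$, with pairwise disjoint cores $B(x_i,r_i/5)\subset E$, and with $r_i\asymp d(x_i,E^c)$, all constants depending only on $d$. Using these balls I would establish the key covering
\[
\widehat{E}\subset\bigcup_i T(\lambda B_i),\qquad \lambda B_i:=B(x_i,\lambda r_i),
\]
for a fixed dilation factor $\lambda$ depending only on the Whitney constants. Indeed, given $(r,x)\in\widehat{E}$, choose $i$ with $x\in B_i$; then the triangle inequality gives $r\le d(x,E^c)\le|x-x_i|+d(x_i,E^c)\lesssim r_i$, while $|x-x_i|<r_i$, so $r+|x-x_i|\le\lambda r_i$ for $\lambda$ large enough, which is precisely the condition $(r,x)\in T(\lambda B_i)$.

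Finally I would sum. By countable subadditivity of $|d\nu|$ and Definition \ref{car-def},
\[
\iint_{\widehat{E}}|d\nu|\le\sum_i\iint_{T(\lambda B_i)}|d\nu|\le\|d\nu\|_\kappa\sum_i|\lambda B_i|_\kappa.
\]
The doubling property yields $|\lambda B_i|_\kappa\lesssim|B_i|_\kappa\lesssim|B(x_i,r_i/5)|_\kappa$, and since the cores $B(x_i,r_i/5)$ are pairwise disjoint and contained in $E$, one has $\sum_i|B(x_i,r_i/5)|_\kappa\le|E|_\kappa$. Combining these three displays gives $\iint_{\widehat{E}}|d\nu|\lesssim\|d\nu\|_\kappa|E|_\kappa$, as claimed.

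I expect the only real obstacle to be the covering step itself, namely producing a Whitney family with pairwise disjoint cores and with radii comparable to the distance to $E^c$, and in particular handling unbounded $E$, where the radii $r_i$ are unbounded so that the basic $5r$-covering lemma must be applied scale by scale. Once the inclusion $\widehat{E}\subset\bigcup_i T(\lambda B_i)$ is secured, the remaining summation is routine given the doubling of $\omega_\kappa$.
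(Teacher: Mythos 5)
Your proof is correct and follows essentially the same route as the paper, which simply cites the Whitney-decomposition argument of Duoandikoetxea's Lemma 9.4: cover $\widehat{E}$ by tents over dilates of Whitney pieces of $E$, apply the defining Carleson bound on each tent, and sum using the doubling of $\omega_\kappa$ together with the disjointness of the Whitney cores inside $E$. Your use of a $5r$-covering family of balls (handled scale by scale for unbounded $E$) in place of dyadic Whitney cubes is only a cosmetic difference.
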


The proof of the lemma is a straightforward adaptation of that of \cite[Lemma 9.4]{Duo}.
A $\kappa$-Carleson measure on $\RR^{1+d}_+$ can also be characterized as a measure $d\nu$ for which the $\kappa$-Poisson integral defines a bounded operator from $L_{\kappa}^p(\RR^d)$ to $L^p(\RR^{1+d}_+, |d\nu|)$ for $1<p<\infty$.

\begin{theorem}\label{car-pro-1}
Assume that $d\nu$ is a Borel measure on $\RR^{1+d}_+$.

{\rm(i)} If $d\nu$ is a $\kappa$-Carleson measure, then for every $p\in(1,\infty)$,
\begin{eqnarray*}
\iint_{\RR^{1+d}_+}|(Pf)(x_0,x)|^p\,|d\nu|\lesssim\|d\nu\|_\kappa\|f\|_{L_{\kappa}^p}^p,\qquad f\in L^p_{\kappa}(\RR^d).
 \end{eqnarray*}

{\rm(ii)} If for some $p\in[1,\infty)$, there holds $\iint_{\RR^{1+d}_+}|(Pf)(x_0,x)|^p\,|d\nu|\le c\|f\|_{L_{\kappa}^p}^p$ for $f\in L^p_{\kappa}(\RR^d)$, where $c>0$ is independent of $f$, then $d\nu$ is a $\kappa$-Carleson measure.
\end{theorem}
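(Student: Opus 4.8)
The plan is to establish the two implications by the classical Carleson-measure machinery of Stein and Duoandikoetxea, transplanted to the present setting through the non-tangential maximal function, the covering Lemma \ref{car-pro}, and the pointwise Poisson-kernel bounds already recorded. Throughout, the only Dunkl-specific inputs are the kernel estimates (\ref{Poisson-ker-bound-1}), the maximal bounds of Proposition \ref{Poisson-Integral-Max-N}, and the doubling property implicit in (\ref{cube-estimate-1}).

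For part (i) I would argue through the distribution function of $Pf$ with respect to $|d\nu|$. Fix $\lambda>0$ and set $\Omega_\lambda=\{x\in\RR^d:\,(P^*_1f)(x)>\lambda\}$, which is open because $P^*_1f$ is lower semicontinuous. The geometric heart of the argument is the containment
\[
\{(x_0,x)\in\RR^{1+d}_+:\,|(Pf)(x_0,x)|>\lambda\}\subseteq\widehat{\Omega_\lambda}:
\]
indeed, if $|(Pf)(x_0,x)|>\lambda$ and $|y-x|<x_0$, then $(x_0,x)\in\Gamma_1(y)$, so $(P^*_1f)(y)>\lambda$, i.e. $B_{x_0}(x)\subseteq\Omega_\lambda$. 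Lemma \ref{car-pro} then yields $|d\nu|(\{|(Pf)|>\lambda\})\lesssim\|d\nu\|_\kappa|\Omega_\lambda|_\kappa$, and integrating the distribution function gives
\[
\iint_{\RR^{1+d}_+}|(Pf)|^p\,|d\nu|=p\int_0^\infty\lambda^{p-1}|d\nu|(\{|(Pf)|>\lambda\})\,d\lambda\lesssim\|d\nu\|_\kappa\,\|P^*_1f\|_{L^p_\kappa}^p.
\]
Since $1<p<\infty$, Proposition \ref{Poisson-Integral-Max-N} bounds $\|P^*_1f\|_{L^p_\kappa}\lesssim\|f\|_{L^p_\kappa}$, completing part (i).

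For part (ii) I would produce, for each ball $B=B_r(x')$, a single test function forcing $Pf$ to stay bounded below on the tent. Take $f=\chi_{B(x',2r)}$, so that $\|f\|_{L^p_\kappa}^p=|B(x',2r)|_\kappa\lesssim|B|_\kappa$ by doubling. For $(x_0,x)\in T(B)$ one has $x_0\le r$ and $|x-x'|\le r$; restricting the defining integral (\ref{Poisson-2-2}) of $Pf$ to $\{t:\,|x-t|\le x_0\}$, a set contained in $B(x',2r)$, and invoking the lower Poisson-kernel bound in (\ref{Poisson-ker-bound-1}) together with doubling yields
\[
(Pf)(x_0,x)\gtrsim\frac{|B(x,x_0)|_\kappa}{|B(x,2x_0)|_\kappa}\gtrsim1,\qquad (x_0,x)\in T(B).
\]
Consequently $\iint_{T(B)}|d\nu|\lesssim\iint_{T(B)}|(Pf)|^p\,|d\nu|\le c\,\|f\|_{L^p_\kappa}^p\lesssim|B|_\kappa$, and taking the supremum over $B$ shows $d\nu$ is a $\kappa$-Carleson measure; this argument works for every $p\in[1,\infty)$.

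Setting the routine calculations aside, I expect the main obstacle to lie in part (i): one must verify that the Dunkl non-tangential maximal operator behaves exactly as in the Euclidean theory, namely that $P^*_1f$ is genuinely lower semicontinuous so that $\Omega_\lambda$ is open (a prerequisite for applying Lemma \ref{car-pro}), and that the aperture-$1$ cones match the ball condition $B_r(x)\subset E$ defining $\widehat E$. The orbit geometry intrinsic to the Dunkl Poisson kernel does not actually interfere, since the containment in (i) is purely Euclidean and the lower bound in (ii) uses the Euclidean distance $|x-t|$ rather than the orbit distance $d(x,t)$; thus the delicacy is confined to checking that the maximal-function and covering ingredients interlock as in the classical case.
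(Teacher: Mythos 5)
Your proof is correct and follows essentially the same route as the paper: part (i) via the containment $\{|Pf|>\lambda\}\subseteq\widehat{\Omega_\lambda}$, Lemma \ref{car-pro}, the distribution-function integral, and Proposition \ref{Poisson-Integral-Max-N}; part (ii) via a characteristic-function test and the lower bound in (\ref{Poisson-ker-bound-1}). The only cosmetic difference is that the paper tests with $\chi_B$ itself (using $B_{x_0}(x)\subset B$ for $(x_0,x)\in T(B)$) rather than your $\chi_{B(x',2r)}$, which costs one extra appeal to doubling but changes nothing of substance.
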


\begin{proof}
First suppose that $d\nu$ is a $\kappa$-Carleson measure.
Let $E_\lambda=\{x\in\RR^d:(P^*_1f)(x)>\lambda\}$. It then follows that $\{(x_0,x)\in\RR^{1+d}_+:\,|(Pf)(x_0,x)|>\lambda\}\subset\widehat{E_\lambda}$,
and by Lemma \ref{car-pro},
\begin{align*}
\iint_{\RR^{1+d}_+}|(Pf)(x_0,x)|^p\,|d\nu|
 &=p\int_{0}^\infty\lambda^{p-1}\iint_{\{(x_0,x):\,|(Pf)(x_0,x)|>\lambda\}}|d\nu|\,d\lambda\\
 &\lesssim
 p\|d\nu\|_\kappa\int_{0}^\infty\lambda^{p-1}|E_\lambda|_\kappa d\lambda,
 \end{align*}
that is identical with $\|d\nu\|_\kappa\int_{\RR^d}(P^*_1f)(x)^pd\omega_\kappa$. Part (i) is immediate by Proposition \ref{Poisson-Integral-Max-N}.

Conversely, suppose that $d\nu$ satisfies the condition in part (ii). For a ball $B=B_{r}(x')$ in $\RR^d$ and for $(x_0,x)\in T(B)$,  using the fact $B_{x_0}(x)\subset B$ and the first inequality in (\ref{Poisson-ker-bound-1}) we have
\begin{align*}
(P\chi_{B})(x_0,x)\ge c_{\kappa}\int_{B_{x_0}(x)}(\tau_{x}P_{x_0})(-t)\,d\omega_{\kappa}(t)
\gtrsim\int_{B_{x_0}(x)}\frac{x_0}{x_0+|x-t|}\frac{d\omega_{\kappa}(t)}{\left|B(x,x_0+|x-t|)\right|_{\kappa}},
\end{align*}
which implies $(P\chi_{B})(x_0,x)\gtrsim1$ for $(x_0,x)\in T(B)$. Consequently
\begin{align*}
\iint_{T(B)}|d\nu|\lesssim \iint_{T(B)}|(P\chi_{B})(x_0,x)|^p|d\nu| \lesssim\|\chi_{B}\|_{L_{\kappa}^p}^p=|B|_{\kappa}.
\end{align*}
Hence $d\nu$ is a $\kappa$-Carleson measure.
\end{proof}

We now introduce a class of functions on $\RR^d$ in terms of $\kappa$-Carleson measures.

\begin{definition}\label{bmc-a}
A function $\varphi\in L_{\kappa,{\rm loc}}(\RR^d)$ is said to be in $\BC_{\kappa}(\RR^d)$ if it satisfies the following two conditions:

{\rm(i)}
\begin{eqnarray}\label{H-1-BMO-1-1}
\int_{\RR^d}\frac{|\varphi(x)|}{(1+|x|)^{2|\kappa|+d+1}}
\,d\omega_\kappa(x)
<\infty;
\end{eqnarray}

{\rm(ii)} $d\nu_{\kappa,\varphi}(x_0,x)=x_0|\nabla_{\tilde{\kappa}} u_\varphi(x_0,x)|^2\,dx_0d\omega_\kappa(x)$
is a $\kappa$-Carleson measure on $\RR^{1+d}_+$, where $\nabla_{\tilde{\kappa}}=(\partial_0,D_1,\dots,D_d)$ is the $\tilde{\kappa}$-gradient,
and $u_\varphi(x_0,x)$ is the $\kappa$-Poisson integral of $\varphi$.

Moreover the norm of $\varphi$ in $\BC_{\kappa}(\RR^d)$ is defined by $\|\varphi\|_{*,C_{\kappa}}=\|d\nu_{\kappa,\varphi}\|_{\kappa}^{1/2}$.
\end{definition}

\begin{proposition}\label{bmc-b}
A bounded function $\varphi$ on $\RR^d$ belongs to $\BC_{\kappa}(\RR^d)$ and $\|\varphi\|_{*,C_{\kappa}}\lesssim\|\varphi\|_{L^{\infty}}$.
\end{proposition}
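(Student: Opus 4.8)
The plan is to verify the two conditions of Definition \ref{bmc-a} separately, the first being elementary and the second a Dunkl analogue of the Fefferman--Stein argument that $L^\infty$ embeds into the Carleson class. Condition (i) follows at once from boundedness: since $|\varphi(x)|\le\|\varphi\|_{L^{\infty}}$, it suffices to see that $\int_{\RR^d}(1+|x|)^{-(2|\kappa|+d+1)}\,d\omega_{\kappa}(x)<\infty$. I would split into $\{|x|\le1\}$ and a dyadic decomposition of $\{|x|\ge1\}$; using $|B(0,2^k)|_{\kappa}\asymp 2^{k(2|\kappa|+d)}$ from (\ref{cube-estimate-1}), the tail is dominated by $\sum_{k\ge0}2^{-k}<\infty$. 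This yields (i) with a factor $\|\varphi\|_{L^{\infty}}$ compatible with the claimed norm bound.

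For condition (ii) I would fix a ball $B=B_r(x')$ and aim at $\iint_{T(B)}x_0|\nabla_{\tilde{\kappa}}u_\varphi|^2\,dx_0\,d\omega_{\kappa}(x)\lesssim\|\varphi\|_{L^{\infty}}^2|B|_{\kappa}$; taking the supremum over $B$ and a square root then gives $\|\varphi\|_{*,C_{\kappa}}\lesssim\|\varphi\|_{L^{\infty}}$. Following Fefferman--Stein, write $\varphi=\varphi_1+\varphi_2$ with $\varphi_1=\varphi\chi_{\CO(2B)}$ and $\varphi_2=\varphi\chi_{[\CO(2B)]^c}$, so that $u_\varphi=u_{\varphi_1}+u_{\varphi_2}$ by linearity. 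For the local part I would establish the Littlewood--Paley identity
\[
\iint_{\RR^{1+d}_+}x_0\,|\nabla_{\tilde{\kappa}}u_g(x_0,x)|^2\,dx_0\,d\omega_{\kappa}(x)=\tfrac12\|g\|_{L^2_{\kappa}}^2,\qquad g\in L^2_{\kappa}(\RR^d),
\]
where $u_g$ is the $\kappa$-Poisson integral of $g$. Passing to the Dunkl transform, Proposition \ref{transform-a}(iii) gives that the transform of $D_ju_g(x_0,\cdot)$ is $i\xi_je^{-x_0|\xi|}(\SF_{\kappa}g)(\xi)$ and that of $\partial_0u_g(x_0,\cdot)$ is $-|\xi|e^{-x_0|\xi|}(\SF_{\kappa}g)(\xi)$; Plancherel (Proposition \ref{transform-a}(v)) together with $\int_0^\infty x_0|\xi|^2e^{-2x_0|\xi|}\,dx_0=\tfrac14$ then yields the identity. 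Since $\varphi_1$ is bounded and supported in the bounded set $\CO(2B)$, it lies in $L^2_{\kappa}$ with $\|\varphi_1\|_{L^2_{\kappa}}^2\le\|\varphi\|_{L^{\infty}}^2|\CO(2B)|_{\kappa}\lesssim\|\varphi\|_{L^{\infty}}^2|B|_{\kappa}$, using the $G$-invariance of $\omega_{\kappa}$ and the doubling property following from (\ref{cube-estimate-1}); enlarging $T(B)$ to $\RR^{1+d}_+$ controls the local contribution.

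For the far part I would use a pointwise $\tilde{\kappa}$-gradient bound on the Poisson kernel of the form $|\nabla_{\tilde{\kappa}}[(\tau_xP_{x_0})(-t)]|\lesssim (\tau_xP_{x_0})(-t)/(x_0+d(x,t))$, whose Euclidean $t$-variable version is (\ref{Poisson-ker-bound-2}), combined with the upper bound in (\ref{Poisson-ker-bound-1}). For $(x_0,x)\in T(B)$ and $t\notin\CO(2B)$ one has $d(x,t)\ge d(x',t)-|x-x'|\ge 2r-r=r$, so decomposing $\{t:\,d(x,t)\ge r\}$ into dyadic shells $2^kr\le d(x,t)<2^{k+1}r$ and bounding the $\omega_{\kappa}$-measure of each shell by $|B(x,2^kr)|_{\kappa}$ (via (\ref{ball-estimate-2}) and doubling) gives $|\nabla_{\tilde{\kappa}}u_{\varphi_2}(x_0,x)|\lesssim\|\varphi\|_{L^{\infty}}/r$ on $T(B)$. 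Since $\iint_{T(B)}x_0\,dx_0\,d\omega_{\kappa}(x)\le|B|_{\kappa}\int_0^r x_0\,dx_0=\tfrac12 r^2|B|_{\kappa}$, the far contribution is $\lesssim(\|\varphi\|_{L^{\infty}}/r)^2\,r^2|B|_{\kappa}=\|\varphi\|_{L^{\infty}}^2|B|_{\kappa}$, and adding the two parts finishes the estimate.

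The main obstacle I anticipate is the pointwise $\tilde{\kappa}$-gradient estimate on the Poisson kernel used in the far part. The paper records only the Euclidean $t$-gradient bound (\ref{Poisson-ker-bound-2}), whereas $\nabla_{\tilde{\kappa}}$ involves the vertical derivative $\partial_0$ and the Dunkl derivatives $D_j^x$, the latter carrying nonlocal reflection terms $\kappa(\alpha)\alpha_j(1-\sigma_\alpha)/\langle\alpha,x\rangle$, so one must check that these contributions obey the same decay uniformly, including near the reflecting hyperplanes. I would derive the required bound from the subordination formula (\ref{Poisson-kernel-2}) and the heat-kernel derivative identity (\ref{heat-kernel-2}), in the same spirit as the existing kernel estimates, supplemented by the relation $D_j^x[(\tau_xP_{x_0})(-t)]=-D_j^t[(\tau_xP_{x_0})(-t)]$ and the $\partial_0$-estimates implicit in the conjugate-kernel bounds of Lemma \ref{Riesz-kernel-a}. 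A secondary, routine point is justifying differentiation under the integral sign when forming $\nabla_{\tilde{\kappa}}u_g$ and when invoking Plancherel at each level $x_0$, which is legitimate by the remark following (\ref{Poisson-2-2}).
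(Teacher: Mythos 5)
Your proposal is correct and follows essentially the same route as the paper's own proof: the same decomposition $\varphi=\varphi\chi_{\CO(2B)}+\varphi\chi_{[\CO(2B)]^c}$, the same Plancherel/Littlewood--Paley identity (via (\ref{Dunkl-transform-1})--(\ref{Dunkl-transform-2})) giving the factor $\tfrac12\|\varphi\chi_{\CO(2B)}\|_{L^2_{\kappa}}^2$ for the local part, and the same pointwise gradient bound plus $\iint_{T(B)}x_0\,dx_0\,d\omega_{\kappa}(x)\lesssim\delta^2|B|_{\kappa}$ for the far part. The one ``obstacle'' you anticipate --- the full $\nabla_{\tilde{\kappa}}^{(x_0,x)}$ bound on the $\kappa$-Poisson kernel, reflection terms and $\partial_0$ included --- is not an obstacle in the paper: it is exactly the quoted estimate (\ref{Poisson-ker-2}) from \cite[Proposition 5.1]{ADH1} (also \cite[(32)]{JL2}), so no fresh derivation via subordination is required.
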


\begin{proof}
For a fixed ball $B=B_{\delta}(x')$ in $\RR^d$,
\begin{align}\label{Carleson-1}
\iint_{T(B)}d\nu_{\kappa,\varphi\chi_{{\mathcal{O}}(2B)}}(x_0,x)\le \iint_{\RR^{1+d}_+}x_0|\nabla_{\tilde{\kappa}} u_{\varphi\chi_{{\mathcal{O}}(2B)}}(x_0,x)|^2\,dx_0d\omega_\kappa(x);
\end{align}
and since for $f\in L_{\kappa}^1(\RR^d)\cup L_{\kappa}^2(\RR^d)$, by (\ref{Dunkl-kernel-eigenfunction-1}) and (\ref{Poisson-2-3}),
\begin{align}
\left[\SF_{\kappa}(\partial_{x_0}u_f(x_0,\cdot))\right](\xi)&=-|\xi|e^{-x_0|\xi|}\left(\SF_{\kappa}f\right)(\xi),\label{Dunkl-transform-1}\\
\left[\SF_{\kappa}(D_ju_f(x_0,\cdot))\right](\xi)&=i\xi_je^{-x_0|\xi|}\left(\SF_{\kappa}f\right)(\xi), \quad j=1,\cdots,d, \label{Dunkl-transform-2}
\end{align}
the Plancherel formula asserts that the integral on the right hand side of (\ref{Carleson-1}) is equal to $2^{-1}\int_{\RR^d}|\varphi\chi_{{\mathcal{O}}(2B)}|^2d\omega_\kappa\lesssim \|\varphi\|_{L^{\infty}}^2|B|_{\kappa}$.

Nevertheless, from (\ref{Poisson-2-2}),
\begin{align}\label{Carleson-2}
|\nabla_{\tilde{\kappa}} u_{\varphi\chi_{[{\mathcal{O}}(2B)]^c}}(x_0,x)|
\le c_{\kappa}\|\varphi\|_{L^{\infty}}\int_{[{\mathcal{O}}(2B)]^c}\left|\nabla_{\tilde{\kappa}}^{(x_0,x)}[(\tau_{x}P_{x_0})(-t)]\right|\,d\omega_{\kappa}(t),
\end{align}
and by \cite[Proposition 5.1]{ADH1} (also \cite[(32)]{JL2}),
\begin{align}\label{Poisson-ker-2}
\left|\nabla_{\tilde{\kappa}}^{(x_0,x)}[(\tau_{x}P_{x_0})(-t)]\right|
\lesssim \frac{(x_0+d(t,x))^{-1}}{\left|B(x,x_0+d(t,x))\right|_{\kappa}}.
\end{align}
Since for $(x_0,x)\in T(B)$ and $t\in[{\mathcal{O}}(2B)]^c$, $d(t,x)\ge d(t,x')/2\ge\delta$, it follows from (\ref{Carleson-2}) and (\ref{Poisson-ker-2}) that
\begin{align}\label{Carleson-3}
|\nabla_{\tilde{\kappa}} u_{\varphi\chi_{[{\mathcal{O}}(2B)]^c}}(x_0,x)|
\lesssim\|\varphi\|_{L^{\infty}}\int_{[{\mathcal{O}}(2B)]^c}\frac{d\omega_{\kappa}(t)}{d(t,x)\left|B(x,d(t,x))\right|_{\kappa}}
\lesssim \frac{\|\varphi\|_{L^{\infty}}}{\delta}.
\end{align}
Thus
$$
\iint_{T(B)}d\nu_{\kappa,\varphi\chi_{[{\mathcal{O}}(2B)]^c}}(x_0,x)
\lesssim \left(\frac{\|\varphi\|_{L^{\infty}}}{\delta}\right)^2\iint_{T(B)}x_0\,dx_0d\omega_\kappa(x)
\lesssim\|\varphi\|_{L^{\infty}}^2|B|_{\kappa},
$$
so that $\iint_{T(B)}d\nu_{\kappa,\varphi}(x_0,x)
\lesssim\|\varphi\|_{L^{\infty}}^2|B|_{\kappa}$, as desired.
\end{proof}

Note that, for the symbol ``$\BC_{\kappa}$", ``BM" means ``bounded mean" since, at least, a function $\varphi$ with the property $\sup_{A>0}A^{-d-2|\kappa|}\int_{|x|\le A}|\varphi|d\omega_\kappa<\infty$ satisfies condition (i), and ``C" certainly indicates that it is connected with Carleson measures as stated in condition (ii).

At this stage we shall prove an analog of Theorem \ref{Riesz-BMO-a} for the space $\BC_{\kappa}(\RR^d)$, that is necessary in characterizing the dual of the Hardy space $H_{\kappa}(\RR^d)$ in the next section.

\begin{theorem}\label{Riesz-BMC-a}
The $\kappa$-Riesz transform $\tilde{\mathcal{R}}_j$ defined by (\ref{truncated-Riesz-4}) and (\ref{Riesz-2-3}) is bounded from $L^\infty(\RR^d)$ to $\BC_{\kappa}(\RR^d)$, where $j=1,\cdots,d$.
\end{theorem}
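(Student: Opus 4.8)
The plan is to follow the pattern of Proposition \ref{bmc-b}, splitting $f$ into a local and a far part relative to a fixed ball, but now invoking the conjugate $\kappa$-Poisson integral to cope with the non-local Riesz kernel. Condition (i) of Definition \ref{bmc-a} comes for free from Theorem \ref{Riesz-BMO-a}: since $\tilde{\mathcal{R}}_j f\in\B_\kappa$ with $\|\tilde{\mathcal{R}}_j f\|_{*,\kappa}\lesssim\|f\|_{L^\infty}$, I apply the BMO estimate (\ref{bmo-2}) with $x'=0$ and use $(1+|x|)|B_{1+|x|}(0)|_\kappa\asymp(1+|x|)^{2|\kappa|+d+1}$ to control $\int_{\RR^d}|\tilde{\mathcal{R}}_j f-c_0|(1+|x|)^{-2|\kappa|-d-1}\,d\omega_\kappa$, where $c_0=(\tilde{\mathcal{R}}_j f)_{B_1(0)}$; the constant $c_0$ then contributes a finite integral because $\int_{\RR^d}(1+|x|)^{-2|\kappa|-d-1}\,d\omega_\kappa<\infty$. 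So the entire content is to show that $d\nu_{\kappa,\tilde{\mathcal{R}}_j f}$ is a $\kappa$-Carleson measure with norm $\lesssim\|f\|_{L^\infty}^2$.

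Fix $B=B_\delta(x')$ and write $f=f_1+f_2$ with $f_1=f\chi_{{\mathcal O}(2B)}$, so that $f_1\in L^2_\kappa(\RR^d)$ with $\|f_1\|_{L^2_\kappa}^2\lesssim\|f\|_{L^\infty}^2|B|_\kappa$. From (\ref{truncated-Riesz-4})--(\ref{Riesz-2-3}) and Corollary \ref{Riesz-2-c} one has $\tilde{\mathcal{R}}_j f_1={\mathcal{R}}_j f_1-c_{f_1}$ with the constant $c_{f_1}=c_\kappa\int_{\RR^d}f_1(t)K_j^1(0,t)\,d\omega_\kappa(t)$. Since the $\kappa$-Poisson integral of a constant is that constant, $\nabla_{\tilde{\kappa}}u_{\tilde{\mathcal{R}}_j f_1}=\nabla_{\tilde{\kappa}}u_{{\mathcal{R}}_j f_1}$, and I bound the local Carleson contribution by extending the integral to all of $\RR^{1+d}_+$ and invoking the Plancherel identity behind Proposition \ref{bmc-b} (via (\ref{Dunkl-transform-1}) and (\ref{Dunkl-transform-2})), which gives $\iint_{\RR^{1+d}_+}x_0|\nabla_{\tilde{\kappa}}u_{{\mathcal{R}}_j f_1}|^2\,dx_0d\omega_\kappa=\tfrac12\|{\mathcal{R}}_j f_1\|_{L^2_\kappa}^2\lesssim\|f_1\|_{L^2_\kappa}^2\lesssim\|f\|_{L^\infty}^2|B|_\kappa$, using the $L^2_\kappa$-boundedness of ${\mathcal{R}}_j$.

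For the far part I would express the gradient of the Poisson integral of $\tilde{\mathcal{R}}_j f_2$ through the conjugate $\kappa$-Poisson kernel. After a Fubini interchange and the kernel-level conjugacy identity $c_\kappa\int_{\RR^d}(\tau_x P_{x_0})(-y)K_j(y,t)\,d\omega_\kappa(y)=(\tau_{x}Q^{(j)}_{x_0})(-t)$ (the kernel form of (\ref{conjugate-Poisson-4}), obtainable from the heat-kernel representation (\ref{conjugate-kernel-4}) and the subordination formula (\ref{Poisson-kernel-2})), the subtracted term $K_j^1(0,t)$ and all $(x_0,x)$-independent contributions are annihilated by $\nabla_{\tilde{\kappa}}$, leaving $\nabla_{\tilde{\kappa}}u_{\tilde{\mathcal{R}}_j f_2}(x_0,x)=c_\kappa\int_{[{\mathcal O}(2B)]^c}f(t)\nabla_{\tilde{\kappa}}^{(x_0,x)}[(\tau_{x}Q^{(j)}_{x_0})(-t)]\,d\omega_\kappa(t)$. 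Granting the conjugate analog of (\ref{Poisson-ker-2}), namely $|\nabla_{\tilde{\kappa}}^{(x_0,x)}[(\tau_{x}Q^{(j)}_{x_0})(-t)]|\lesssim[(x_0+d(x,t))|B(x,x_0+d(x,t))|_\kappa]^{-1}$, then for $(x_0,x)\in T(B)$ and $t\in[{\mathcal O}(2B)]^c$ one has $d(x,t)\gtrsim\delta\ge x_0$, and a dyadic decomposition in $d(x,t)$ exactly as in (\ref{Carleson-3}) yields $|\nabla_{\tilde{\kappa}}u_{\tilde{\mathcal{R}}_j f_2}(x_0,x)|\lesssim\|f\|_{L^\infty}/\delta$. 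Consequently $\iint_{T(B)}d\nu_{\kappa,\tilde{\mathcal{R}}_j f_2}\lesssim(\|f\|_{L^\infty}/\delta)^2\iint_{T(B)}x_0\,dx_0d\omega_\kappa\lesssim\|f\|_{L^\infty}^2|B|_\kappa$, and combining the two parts gives $\|\tilde{\mathcal{R}}_j f\|_{*,C_\kappa}\lesssim\|f\|_{L^\infty}$.

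The main obstacle is the far part, specifically two technical points. First, one must rigorously justify the representation of $\nabla_{\tilde{\kappa}}u_{\tilde{\mathcal{R}}_j f_2}$ as the stated integral against $\nabla_{\tilde{\kappa}}(\tau_{x}Q^{(j)}_{x_0})(-t)$: this entails proving the kernel-level conjugacy identity, carefully tracking that the defining correction $K_j^1(0,t)$ and the convergence-forcing constants enter only as $(x_0,x)$-independent terms, and justifying Fubini together with differentiation under the integral sign for the merely bounded, slowly decaying datum $f_2$ (here the growth condition (i) is what makes $u_{\tilde{\mathcal{R}}_j f_2}$ meaningful). Second, one must establish the conjugate-kernel gradient bound above, which I would derive from (\ref{conjugate-kernel-4}) and (\ref{heat-kernel-2}) in the same spirit as the $\kappa$-Poisson kernel estimates (\ref{Poisson-ker-bound-1})--(\ref{Poisson-ker-2}) quoted from \cite{ADH1}. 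Once these are in place, the remaining estimates are the routine dyadic summations already used in Corollary \ref{Riesz-truncated-kernel-a} and Proposition \ref{bmc-b}.
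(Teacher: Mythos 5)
Your proposal is correct and follows essentially the same route as the paper: the same local/far splitting of $f$ relative to ${\mathcal{O}}(2B)$, the same Plancherel argument (via (\ref{Dunkl-transform-1})--(\ref{Dunkl-transform-2}) and (\ref{Riesz-2-1})) for the local part, and the same passage from $\tilde{\mathcal{R}}_jf_2$ to the gradient of the conjugate $\kappa$-Poisson kernel for the far part. The two technical points you flag as the main obstacle are exactly what the paper supplies beforehand: the transposition is carried out through Lemma \ref{sym-riez} combined with the identity (\ref{conjugate-kernel-7}) (verified by taking Dunkl transforms), and the conjugate-kernel gradient bound you ask to grant is precisely Lemma \ref{conjugate-Poisson-b}, estimate (\ref{conjugate-kernel-5}), proved from the heat-kernel representation as you sketch.
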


If the multiplicity function $\kappa$ is identical to zero, i.e., $\kappa(\alpha)=0$ for all $\alpha\in R$, it is well known that $\BC_{0}(\RR^d)=\B_{0}(\RR^d)$ (see \cite[Theorem 3]{FS1}); but for general nonnegative $\kappa$, a partial relation of the two classes $\BC_{\kappa}(\RR^d)$ and $\B_{\kappa}(\RR^d)$ is a consequence of the duality of $H_{\kappa}(\RR^d)$ and $\BC_{\kappa}(\RR^d)$ to be proved in the next section. It is uncertain whether there was a direct proof of the converse of this inclusion.

In proving Theorem \ref{Riesz-BMC-a}, we shall need an estimate of the $\tilde{\kappa}$-gradient of the conjugate $\kappa$-Poisson kernels $(\tau_{x}Q^{(j)}_{x_0})(-t)$ ($j=1,2,\cdots,d$) given below.

\begin{lemma}\label{conjugate-Poisson-b}
The conjugate $\kappa$-Poisson kernels $(\tau_{x}Q^{(j)}_{x_0})(-t)$ ($j=1,2,\cdots,d$) satisfy
\begin{align}\label{conjugate-kernel-5}
\left|\nabla_{\tilde{\kappa}}^{(x_0,x)}[(\tau_{x}Q^{(j)}_{x_0})(-t)]\right|
\lesssim \frac{(x_0+d(t,x))^{-1}}{\left|B(x,x_0+d(t,x))\right|_{\kappa}}.
\end{align}
\end{lemma}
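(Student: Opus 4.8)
The plan is to express each component of $\nabla_{\tilde{\kappa}}^{(x_0,x)}$ applied to $(\tau_{x}Q^{(j)}_{x_0})(-t)$ in terms of the $\kappa$-Poisson kernel $(\tau_{x}P_{x_0})(-t)$ and its derivatives, and then invoke the pointwise bound (\ref{Poisson-ker-bound-1}) together with the gradient bounds (\ref{Poisson-ker-bound-2}) and (\ref{Poisson-ker-2}). Write $P(x,t)=(\tau_{x}P_{x_0})(-t)$ and $Q_j(x,t)=(\tau_{x}Q^{(j)}_{x_0})(-t)$, abbreviate $d=d(x,t)$ and $|B|_\kappa=|B(x,x_0+d)|_\kappa$; the target is $|\nabla_{\tilde{\kappa}}^{(x_0,x)}Q_j|\lesssim (x_0+d)^{-1}|B|_\kappa^{-1}$.

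First I would treat the $\partial_0$-component. Since the system $(P,Q_1,\dots,Q_d)$ satisfies the generalized Cauchy--Riemann equations (\ref{C-R-1}) by Propositions \ref{translation-2-b}(i),(iv) and \ref{conjugate-kernel-a}(i), the relation $D_0u_j=D_ju_0$ gives $\partial_0 Q_j=D_j^x P$, which is immediately dominated by $(x_0+d)^{-1}|B|_\kappa^{-1}$ through (\ref{Poisson-ker-2}). For the components $D_k^x Q_j$ I would use the factorization (\ref{conjugate-kernel-1}), $Q_j=\frac{x_j-t_j}{x_0}P$, and the Dunkl product rule; since $(1-\sigma_\alpha)$ applied to the linear factor $x_j-t_j$ produces $\langle\alpha,x\rangle\alpha_j$, a direct computation yields
\begin{align*}
D_k^x Q_j=\frac{\delta_{jk}}{x_0}P(x,t)+\frac{x_j-t_j}{x_0}D_k^x P(x,t)
+\frac{1}{x_0}\sum_{\alpha\in R_+}\kappa(\alpha)\alpha_k\alpha_j\,P(\sigma_\alpha x,t).
\end{align*}
The first term is controlled by $(x_0+d)^{-1}|B|_\kappa^{-1}$ using the upper bound in (\ref{Poisson-ker-bound-1}) and $x_0^2+|x-t|^2\gtrsim(x_0+d)^2$. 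Each summand of the reflection term is handled the same way after noting the $G$-invariances $d(\sigma_\alpha x,t)=d(x,t)$, $|B(\sigma_\alpha x,\cdot)|_\kappa=|B(x,\cdot)|_\kappa$ and $|\sigma_\alpha x-t|\ge d(x,t)$.

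The crux is the middle term $\frac{x_j-t_j}{x_0}D_k^x P$: since $\frac{|x_j-t_j|}{x_0}\le\frac{|x-t|}{x_0}$ may be large, the uniform bound (\ref{Poisson-ker-2}) is not enough, and I would need the sharp (decaying) gradient estimate
\begin{align*}
|D_k^x P(x,t)|\lesssim \frac{P(x,t)}{x_0+d(x,t)}\asymp\frac{x_0}{|B(x,x_0+d)|_\kappa(x_0^2+|x-t|^2)}.
\end{align*}
Granting this, the middle term is $\lesssim\frac{|x-t|}{|B|_\kappa(x_0^2+|x-t|^2)}\lesssim(x_0+d)^{-1}|B|_\kappa^{-1}$ by the elementary inequality $\frac{s}{x_0^2+s^2}\lesssim\frac{1}{x_0+d}$ valid for $s=|x-t|\ge d$, and the estimate is complete.

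The main obstacle is therefore establishing this sharp bound on $D_k^x P$, which is strictly stronger than the quoted (\ref{Poisson-ker-2}). I would obtain it from the symmetry $D_k^x P=-D_k^t P$, a consequence of the heat-kernel identity (\ref{heat-kernel-2}) and the symmetry $h_v(x,t)=h_v(t,x)$ (so $D_k^x h_v=-D_k^t h_v$), fed through the subordination (\ref{Poisson-kernel-2}). The Euclidean part $\partial_{t_k}P$ is then dominated by $P/(x_0+d)$ directly from (\ref{Poisson-ker-bound-2}). The genuinely delicate part is the reflection sum $\sum_\alpha\kappa(\alpha)\alpha_k\frac{P(x,t)-P(x,\sigma_\alpha t)}{\langle\alpha,t\rangle}$: I would write $P(x,t)-P(x,\sigma_\alpha t)=\int_0^{\langle\alpha,t\rangle}\langle\alpha,\nabla^t P(x,t-\theta\alpha)\rangle\,d\theta$, bound the integrand by (\ref{Poisson-ker-bound-2}), and then control $\sup_\theta P(x,t-\theta\alpha)/(x_0+d(x,t-\theta\alpha))$ by its value at $t$ through a case analysis according to whether the reflection segment stays far from the wall $\langle\alpha,\cdot\rangle=0$ (where $d(x,\cdot)$ and $P(x,\cdot)$ vary only by bounded factors) or passes near it; the equalities $d(x,\sigma_\alpha t)=d(x,t)$ and $P(x,\sigma_\alpha t)=P(\sigma_\alpha x,t)$ (from Proposition \ref{translation-2-b-1}(i)) keep the endpoints comparable. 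Controlling $P(x,\cdot)$ and $d(x,\cdot)$ uniformly along these segments is the technical heart of the argument. Alternatively, the sharp bound can be read off directly from (\ref{conjugate-kernel-4}) and (\ref{heat-kernel-1}) using Gaussian upper bounds for $h_v$.
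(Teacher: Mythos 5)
Your decomposition is sound and is in fact essentially the paper's own: the paper also factors out $x_j-t_j$ (it works with $U_{x_0}(x,t)=\sqrt{\pi}\,(\tau_xP_{x_0})(-t)/x_0$, so that $(\tau_xQ^{(j)}_{x_0})(-t)=(x_j-t_j)U_{x_0}(x,t)/\sqrt{\pi}$), applies the Dunkl product rule, and disposes of the $\delta_{jk}$-term and of the reflection terms $\kappa(\alpha)\alpha_k\alpha_jP(\sigma_\alpha x,t)/x_0$ exactly as you do, via (\ref{Poisson-ker-bound-1}) and the $G$-invariances; your handling of the $\partial_0$-component through $\partial_0Q_j=D_j^xP$ and (\ref{Poisson-ker-2}) is also correct. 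The genuine gap is the estimate your ``crux'' rests on: the bound $|D_k^xP(x,t)|\lesssim P(x,t)/(x_0+d(x,t))$ is \emph{false}. The Euclidean estimate (\ref{Poisson-ker-bound-2}) has no Dunkl counterpart of this form, because $D_k^x$ is non-local: its difference part sees $P(\sigma_\alpha x,t)$, which can be enormous compared with $P(x,t)$. Concretely, in rank one ($G=\ZZ_2$, $W_\kappa(x)=|x|^{2\kappa}$, $Df(x)=f'(x)+\kappa\,(f(x)-f(-x))/x$) take $x>0$ large, $x_0,\epsilon\ll x$, and $t=-x+\epsilon$, so that $d(x,t)=\epsilon$ and $|x-t|\approx 2x$. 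Using your own identity $D^xP=-D^tP$ and the two-sided bounds (\ref{Poisson-ker-bound-1}): the local part $\partial_tP(x,t)$ is $O\bigl(x_0x^{-2\kappa-2}(x_0+\epsilon)^{-1}\bigr)$ by (\ref{Poisson-ker-bound-2}), whereas $P(x,-t)\asymp x_0x^{-2\kappa}(x_0+\epsilon)^{-2}$ (since $|x-(-t)|=\epsilon$) and $|t|\asymp x$ make the difference part $\kappa\,(P(x,t)-P(x,-t))/t$ of size $x_0x^{-2\kappa-1}(x_0+\epsilon)^{-2}$; no cancellation is possible since the two pieces have different orders. This exceeds your claimed bound $P(x,t)/(x_0+d)\asymp x_0x^{-2\kappa-2}(x_0+\epsilon)^{-1}$ by the unbounded factor $x/(x_0+\epsilon)$. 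For the same reason, the segment-integration scheme (dominating the reflection sum by $\sup_\theta P(x,t-\theta\alpha)/(x_0+d(x,t-\theta\alpha))$ and comparing with the value at $t$) cannot be repaired by any case analysis: when the segment from $t$ to $\sigma_\alpha t$ passes near the orbit of $x$, the inequality it aims at is simply false.

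The correct substitute — and this is precisely the paper's key step, obtained by inserting the Gaussian heat kernel bound of Dziuba\'{n}ski--Hejna into the subordination formula (\ref{conjugate-kernel-4}), i.e.\ your closing one-line ``alternative'' — is
\begin{align*}
\left|D_k^x\left[(\tau_xP_{x_0})(-t)\right]\right|
\lesssim \frac{(x_0+d(x,t))^{-1}}{\left|B(x,x_0+d(x,t))\right|_{\kappa}}\,
\frac{x_0\,|x_k-t_k|}{x_0^2+|x-t|^2},
\end{align*}
which keeps the factor $|x_k-t_k|$ and is attained in the example above. It is weaker than your claim exactly in the regime of the counterexample, yet it still closes your argument, since
\begin{align*}
\frac{|x_j-t_j|}{x_0}\cdot\frac{x_0|x_k-t_k|}{x_0^2+|x-t|^2}\le
\frac{|x-t|^2}{x_0^2+|x-t|^2}\le1 .
\end{align*}
So the architecture of your proof survives, but its key lemma must be replaced by this two-factor estimate (the paper's (\ref{U-function-2})), proved by the heat-kernel computation rather than by any pointwise comparison with $P(x,t)$ itself.
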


\begin{proof}
Define the function $U_{x_0}(x,t)$ by
\begin{align}\label{U-function-1}
U_{x_0}(x,t)=\frac{1}{2}\int_0^{\infty}e^{-\frac{x_0^2}{4v}}\,h_v(x,t)\,\frac{dv}{v^{3/2}}
=\frac{1}{x_0}\int_0^{\infty}e^{-u}h_{x_0^2/4u}(x,t)\,\frac{du}{\sqrt{u}},
\end{align}
where $h_v(x,t)$ is the heat kernel associated to the Dunkl operators given by (\ref{heat-kernel-1}).

We are going to show, for $i=1,2,\cdots,d$,
\begin{align}\label{U-function-2}
\left|D_i^x\left[U_{x_0}(x,t)\right]\right|
\lesssim
\frac{(x_0+d(x,t))^{-1}}{\left|B(x,x_0+d(x,t))\right|_{\kappa}}\frac{|x_i-t_i|}{x_0^2+|x-t|^2}.
\end{align}

From (\ref{heat-kernel-2}) and (\ref{U-function-1}),
\begin{align}\label{U-function-3}
D_i^x\left[U_{x_0}(x,t)\right]
=\frac{2(t_i-x_i)}{x_0^3}\int_0^{\infty}e^{-u}h_{x_0^2/4u}(x,t)\sqrt{u}\,du,
\end{align}
and making use of \cite[Theoem 3.1]{DH1}
\begin{align*}
h_v(x,t)\lesssim\left(1+\frac{|x-t|}{\sqrt{v}}\right)^{-2} \frac{e^{-cd(x,t)^2/v}}{|B(x,\sqrt{v})|_{\kappa}}
\end{align*}
yields
\begin{align*}
\left|D_i^x\left[U_{x_0}(x,t)\right]\right|
\lesssim \frac{|x_i-t_i|}{x_0^3}\int_0^{\infty}\left(1+\frac{u|x-t|^2}{x_0^2}\right)^{-1}
\frac{e^{-u(1+4cd(x,t)^2/x_0^2)}}{|B(x,x_0/\sqrt{u})|_{\kappa}}\sqrt{u}\,du.
\end{align*}

Since $|B(x,r_2)|_{\kappa}\lesssim(r_2/r_1)^{d+2|\kappa|}|B(x,r_1)|_{\kappa}$ for $r_2\ge r_1>0$, it follows that, for $u\ge u_0$ with $u_0=\left(x_0/(x_0+d(x,t))\right)^2$,
$$
\frac{e^{-u(1+4cd(x,t)^2/x_0^2)}}{|B(x,x_0/\sqrt{u})|_{\kappa}}
\lesssim \frac{e^{-c'u/u_0}}{|B(x,x_0+d(x,t))|_{\kappa}}\left(\frac{u}{u_0}\right)^{|\kappa|+d/2}
\lesssim \frac{(u_0/u)^{2}}{|B(x,x_0+d(x,t))|_{\kappa}}.
$$
Consequently,
\begin{align*}
\left|D_i^x\left[U_{x_0}(x,t)\right]\right|
\lesssim \frac{x_0^{-3}|x_i-t_i|}{|B(x,x_0+d(x,t))|_{\kappa}}
\left[\int_0^{u_0}\frac{x_0^2\sqrt{u}\,du}{x_0^2+u|x-t|^2}
+u_0^2\int_{u_0}^{\infty}\frac{x_0^2u^{-3/2}du}{x_0^2+u|x-t|^2}\right].
\end{align*}
Thus if $|x-t|\le x_0+d(x,t)$,
\begin{align*}
\left|D_i^x\left[U_{x_0}(x,t)\right]\right|
\lesssim \frac{x_0^{-3}|x_i-t_i|u_0^{3/2}}{|B(x,x_0+d(x,t))|_{\kappa}}
\asymp\frac{(x_0+d(x,t))^{-1}}{\left|B(x,x_0+d(x,t))\right|_{\kappa}}\frac{|x_i-t_i|}{x_0^2+|x-t|^2},
\end{align*}
and if $|x-t|>x_0+d(x,t)$,
\begin{align*}
\left|D_i^x\left[U_{x_0}(x,t)\right]\right|
\lesssim \frac{x_0^{-3}|x_i-t_i|}{|B(x,x_0+d(x,t))|_{\kappa}}
\frac{x_0^2 u_0^{1/2}}{|x-t|^2}
\asymp\frac{(x_0+d(x,t))^{-1}}{\left|B(x,x_0+d(x,t))\right|_{\kappa}}\frac{|x_i-t_i|}{x_0^2+|x-t|^2},
\end{align*}
so that (\ref{U-function-2}) is concluded.

If $i\neq j$, it then follows from (\ref{conjugate-kernel-4}), (\ref{U-function-1}) and (\ref{U-function-2}) that
\begin{align}\label{U-function-4}
\left|D_i^x[(\tau_{x}Q^{(j)}_{x_0})(-t)]\right|=\frac{|x_j-t_j|}{\sqrt{\pi}}\left|D_i^x\left[U_{x_0}(x,t)\right]\right|
\lesssim\frac{(x_0+d(x,t))^{-1}}{\left|B(x,x_0+d(x,t))\right|_{\kappa}}.
\end{align}
If $i=j$, from (\ref{Dunkl-operator-1}) and (\ref{conjugate-kernel-4}) direct calculations show
\begin{align}\label{conjugate-kernel-6}
&\sqrt{\pi}D_j^x\left[(\tau_{x}Q^{(j)}_{x_0})(-t)\right] \nonumber\\
=&(x_j-t_j)D_j^x\left[U_{x_0}(x,t)\right]+U_{x_0}(x,t)
+\sum_{\alpha\in R_+}\kappa(\alpha)\alpha_j^2U_{x_0}(\sigma_{\alpha}(x),t).
\end{align}
But from (\ref{Poisson-kernel-2}), (\ref{Poisson-ker-bound-1}) and (\ref{U-function-1}), for each $\alpha\in R_+$ one has
\begin{align*}
U_{x_0}(\sigma_{\alpha}(x),t)=\frac{\sqrt{\pi}}{x_0}(\tau_{\sigma_{\alpha}(x)}P_{x_0})(-t)
\lesssim\frac{(x_0+d(x,t))^{-1}}{\left|B(x,x_0+d(x,t))\right|_{\kappa}},
\end{align*}
and applying this and (\ref{U-function-2}) to (\ref{conjugate-kernel-6}) gives
\begin{align}\label{U-function-5}
\left|D_j^x[(\tau_{x}Q^{(j)}_{x_0})(-t)]\right|
\lesssim\frac{(x_0+d(x,t))^{-1}}{\left|B(x,x_0+d(x,t))\right|_{\kappa}}.
\end{align}

Finally, from (\ref{U-function-1}) we have
\begin{align*}
\partial_{x_0}[U_{x_0}(x,t)]=-\frac{x_0}{4}\int_0^{\infty}e^{-\frac{x_0^2}{4v}}\,h_v(x,t)\,\frac{dv}{v^{5/2}}
=-\frac{2}{x_0^2}\int_0^{\infty}e^{-u}h_{x_0^2/4u}(x,t)\sqrt{u}\,du,
\end{align*}
so that from (\ref{U-function-2}) and (\ref{U-function-3}),
\begin{align*}
\left|\partial_{x_0}[U_{x_0}(x,t)]\right|
\lesssim
\frac{(x_0+d(x,t))^{-1}}{\left|B(x,x_0+d(x,t))\right|_{\kappa}}\frac{x_0}{x_0^2+|x-t|^2}.
\end{align*}
Thus from (\ref{conjugate-kernel-4}) and (\ref{U-function-2}),
\begin{align}\label{U-function-6}
\left|\partial_{x_0}[(\tau_{x}Q^{(j)}_{x_0})(-t)]\right|=\frac{|x_j-t_j|}{\sqrt{\pi}}\left|\partial_{x_0}\left[U_{x_0}(x,t)\right]\right|
\lesssim\frac{(x_0+d(x,t))^{-1}}{\left|B(x,x_0+d(x,t))\right|_{\kappa}}.
\end{align}

Combining (\ref{U-function-4}), (\ref{U-function-5}) and (\ref{U-function-6}) completes the proof of the lemma.
\end{proof}

The following lemma is necessary too, and will also be used in the next section.

\begin{lemma}\label{sym-riez}
If $\psi\in L^\infty(\RR^d)$ and $f\in H_{\kappa}^1(\RR^d)$ satisfying $|f(x)|\lesssim (1+|x|)^{-2|\kappa|-d-1}$, then
\begin{eqnarray}\label{sym-riez-1}
\int_{\RR^d}({\mathcal{R}}_jf)(x)\,\psi(x)\,d\omega_\kappa(x)=-\int_{\RR^d}f(x)\,(\tilde{\mathcal{R}}_j\psi)(x)\,d\omega_\kappa(x),
\ \ \ j=1,2,\cdots,d.
\end{eqnarray}
\end{lemma}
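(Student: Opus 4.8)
The plan is to establish the adjoint relation first at the level of the truncated operators and then pass to the limit $\epsilon\to0+$. Fix $0<\epsilon<1$ and start from the right-hand side. Expanding $\tilde{\mathcal{R}}_j^{\epsilon}\psi$ by (\ref{truncated-Riesz-4}) and writing the resulting double integral, I would first verify absolute convergence so that Fubini's theorem applies: since $\psi\in L^\infty(\RR^d)$, this reduces to bounding $\int_{\RR^d}|f(t)|\big(\int_{\RR^d}|K_j^{\epsilon}(t,x)-K_j^1(0,x)|\,d\omega_\kappa(x)\big)\,d\omega_\kappa(t)$. Splitting the inner integral according to whether $d(t,x)$ is larger or smaller than a multiple of $|t|$, the far part is controlled uniformly in $t$ by the Hörmander estimate (\ref{Riesz-Hormander-2}) (with second argument $0$, using $K_j(x,t)=-K_j(t,x)$), while the near part grows only logarithmically in $|t|$; against the decay $|f(t)|\lesssim(1+|t|)^{-2|\kappa|-d-1}$ the whole expression is finite for each fixed $\epsilon$. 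Having justified the interchange, I would invoke $\int_{\RR^d}f\,d\omega_\kappa=0$ (Proposition \ref{Hardy-integral-a}) to discard the term coming from the constant $K_j^1(0,x)$, and then use the antisymmetry $K_j(t,x)=-K_j(x,t)$ (from (\ref{truncated-Riesz-kernel-2})) together with $d(t,x)=d(x,t)$ to recognize the remaining inner $t$-integral as $-c_\kappa^{-1}(\mathcal{R}_j^{\epsilon}f)(x)$. This produces the finite-$\epsilon$ identity
\[
\int_{\RR^d}(\mathcal{R}_j^{\epsilon}f)\,\psi\,d\omega_\kappa=-\int_{\RR^d}f\,(\tilde{\mathcal{R}}_j^{\epsilon}\psi)\,d\omega_\kappa.
\]

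Letting $\epsilon\to0+$ on the left is routine: by Theorem \ref{truncated-Riesz-a}(i) with $p=1$ and Proposition \ref{Riesz-2-f}, $\mathcal{R}_j^{\epsilon}f\to\mathcal{R}_jf$ in $L^1_\kappa$ for $f\in H_{\kappa}^1(\RR^d)$, so pairing against the bounded $\psi$ yields $\int(\mathcal{R}_j^{\epsilon}f)\psi\,d\omega_\kappa\to\int(\mathcal{R}_jf)\psi\,d\omega_\kappa$. The right-hand side is the delicate point, and this is where I expect the main obstacle. We know from Proposition \ref{Riesz-BMO-b} that $\tilde{\mathcal{R}}_j^{\epsilon}\psi\to\tilde{\mathcal{R}}_j\psi$ almost everywhere and in $L^2_\kappa$ on every ball, but $\tilde{\mathcal{R}}_j^{\epsilon}\psi$ is not uniformly bounded in $\epsilon$ pointwise (its local part blows up logarithmically near the diagonal as $\epsilon\to0+$), so a plain dominated-convergence argument is unavailable; instead one must prove uniform integrability in the tail.

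To this end I would split $\int f(\tilde{\mathcal{R}}_j^{\epsilon}\psi-\tilde{\mathcal{R}}_j\psi)\,d\omega_\kappa$ over a large ball $B(0,M)$ and its complement. On $B(0,M)$, Cauchy-Schwarz combined with the local $L^2_\kappa$-convergence of Proposition \ref{Riesz-BMO-b} and the boundedness of $f$ (so $f\in L^2_\kappa(B(0,M))$) forces this part to $0$ as $\epsilon\to0+$ for each fixed $M$. For the tail I would use the decomposition $\tilde{\mathcal{R}}_j^{\epsilon}\psi=\tilde{\mathcal{R}}_j^{1}\psi+\mathcal{R}_j^{\epsilon,1}\psi$ (in the notation of (\ref{truncated-Riesz-5})), in which the $\epsilon$-dependent piece $\mathcal{R}_j^{\epsilon,1}\psi$ involves $\psi$ only on $\{t:\,d(x,t)\le1\}$. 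Hence for $x$ in a dyadic annulus $A_k=\{2^kM<|x|\le2^{k+1}M\}$, $\mathcal{R}_j^{\epsilon,1}\psi$ coincides with $\mathcal{R}_j^{\epsilon,1}$ applied to $\psi$ restricted to a comparable annulus, and the uniform $L^2_\kappa$-boundedness of $\mathcal{R}_j^{\epsilon,1}$ (Corollary \ref{Riesz-2-d}) gives $\|\mathcal{R}_j^{\epsilon,1}\psi\|_{L^2_\kappa(A_k)}\lesssim\|\psi\|_{L^\infty}|A_k|_\kappa^{1/2}$. Pairing this by Cauchy-Schwarz with $\|f\|_{L^2_\kappa(A_k)}\lesssim(2^kM)^{-2|\kappa|-d-1}|A_k|_\kappa^{1/2}$ and using $|A_k|_\kappa\asymp(2^kM)^{2|\kappa|+d}$, each term is $\lesssim\|\psi\|_{L^\infty}(2^kM)^{-1}$; summing the geometric series bounds the tail by $C\|\psi\|_{L^\infty}/M$, uniformly in $\epsilon\in(0,1]$, and the identical estimate holds for $\tilde{\mathcal{R}}_j\psi-\tilde{\mathcal{R}}_j^{1}\psi$. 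Given $\eta>0$, choosing $M$ large makes the tail smaller than $\eta$ uniformly in $\epsilon$, after which the ball term tends to $0$; hence $\int f\,\tilde{\mathcal{R}}_j^{\epsilon}\psi\,d\omega_\kappa\to\int f\,\tilde{\mathcal{R}}_j\psi\,d\omega_\kappa$. Here the target integral converges absolutely because $\tilde{\mathcal{R}}_j\psi\in\BC_{\kappa}(\RR^d)$ satisfies (\ref{H-1-BMO-1-1}) by Theorem \ref{Riesz-BMC-a}, which matches the decay of $f$. Combining the two limits with the finite-$\epsilon$ identity then proves (\ref{sym-riez-1}).
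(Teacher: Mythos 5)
Your proposal is essentially correct, but it takes a genuinely different route from the paper's. The paper truncates $\psi$ in \emph{space}: it sets $\psi_N=\psi\chi_{|x|\le N}$, notes $\tilde{\mathcal{R}}_j\psi_N={\mathcal{R}}_j\psi_N-a_N$, applies the $L^2_\kappa$ adjoint relation (Corollary \ref{Riesz-2-b}) together with $\int f\,d\omega_\kappa=0$ to get the identity for $\psi_N$, and then shows $\int f\,\tilde{\mathcal{R}}_j(\psi-\psi_N)\,d\omega_\kappa\to0$ by inserting the average $b_N$ over $B_{\sqrt N}(0)$, treating $|x|\le\sqrt N$ with the kernel regularity (\ref{Riesz-kernel-2}) and $|x|>\sqrt N$ with the decay of $f$ and the $\B_\kappa$ bound of Theorem \ref{Riesz-BMO-a} via (\ref{bmo-3}). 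You instead truncate the \emph{kernel}: the finite-$\epsilon$ identity via Fubini, mean-zero and antisymmetry is sound (after Fubini the subtraction of $K_j^1(0,\cdot)$ must be performed inside the inner integral at fixed outer variable, where both pieces converge --- the $K_j^1(0,\cdot)$ term alone is not absolutely convergent as a double integral --- but this is how your argument in fact proceeds), the left-hand limit via Theorem \ref{truncated-Riesz-a}(i) plus Proposition \ref{Riesz-2-f} is correct, and your ball/tail argument for the right-hand limit works: the finite range of the kernel of ${\mathcal{R}}_j^{\epsilon,1}$, the uniform $L^2_\kappa$ bound from Corollary \ref{Riesz-2-d}, and Cauchy--Schwarz on dyadic annuli give the tail bound $C\|\psi\|_{L^\infty}/M$ uniformly in $\epsilon$, and Fatou transfers it to $\tilde{\mathcal{R}}_j\psi-\tilde{\mathcal{R}}_j^1\psi$. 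A notable feature of your route is that, once repaired as below, it does not use Theorem \ref{Riesz-BMO-a} at all, only the truncated-transform machinery; the paper's proof is shorter precisely because it reuses that $L^\infty\to\B_\kappa$ bound.

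There are, however, two citation flaws, one of which you must fix. The serious one: you justify absolute convergence of $\int f\,\tilde{\mathcal{R}}_j\psi\,d\omega_\kappa$ by citing Theorem \ref{Riesz-BMC-a}, but in the paper that theorem is proved \emph{after}, and by means of, Lemma \ref{sym-riez} (its proof invokes (\ref{sym-riez-1}) to pass $\tilde{\mathcal{R}}_j$ onto the conjugate Poisson kernel), so your appeal to it is circular. The fact you need is available non-circularly: either quote Theorem \ref{Riesz-BMO-a} together with Proposition \ref{bmo-a} and (\ref{cube-estimate-1}), as the paper does, or simply observe that your own estimates already deliver it --- your Fubini step gives $|\tilde{\mathcal{R}}_j^1\psi(x)|\lesssim\|\psi\|_{L^\infty}(1+\log(1+|x|))$, which is integrable against $|f|$, and the Fatou limit of the annulus bounds controls $\int|f|\,|\tilde{\mathcal{R}}_j\psi-\tilde{\mathcal{R}}_j^1\psi|\,d\omega_\kappa$. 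The minor one: in the Fubini justification the kernels carry truncations, so the far part should be handled by first splitting off $K_j^\epsilon(0,\cdot)-K_j^1(0,\cdot)$ (supported in $\{\epsilon<|x|\le1\}$, hence integrable) and then applying the uniform truncated-kernel H\"ormander bound (\ref{Riesz-Hormander-1}) of Corollary \ref{Riesz-truncated-kernel-a}, rather than (\ref{Riesz-Hormander-2}) for the untruncated kernels as written.
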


\begin{proof}
Obviously the integral on the left hand side of (\ref{sym-riez-1}) converges absolutely; and since $\tilde{\mathcal{R}}_j\psi\in\B_\kappa(\RR^d)$ by Theorem \ref{Riesz-BMO-a},
applying (\ref{bmo-2}) to $\tilde{\mathcal{R}}_j\psi$ instead of $f$ with $x'=0$ and
in view of (\ref{cube-estimate-1}) it is asserted that the integral on the right hand side of (\ref{sym-riez-1}) also converges absolutely.

We set $\psi_N=\psi\chi_{|x|\leq N}$. Then $\psi_N\in L^{2}_\kappa(\RR^d)\cap L^\infty(\RR^d)$, and from (\ref{truncated-Riesz-4}) and (\ref{Riesz-2-3}), it follows that
$(\tilde{\mathcal{R}}_j\psi_N)(x)=({\mathcal{R}}_j\psi_N)(x)-a_N$, where $a_N=c_{\kappa}\int_{\RR^d}\psi_N(t)K_j^1(0,t)\,d\omega_{\kappa}(t)$. Consequently, by Corollary \ref{Riesz-2-b} and Proposition \ref{Hardy-integral-a}, we have
\begin{eqnarray*}
\int_{\RR^d}({\mathcal{R}}_jf)(x)\,\psi_N(x)\,d\omega_\kappa(x)=-\int_{\RR^d}f(x)\,(\tilde{\mathcal{R}}_j\psi_N)(x)\,d\omega_\kappa(x).
\end{eqnarray*}
Thus (\ref{sym-riez-1}) is concluded once it is shown that
\begin{eqnarray}\label{sym-riez-2}
\lim_{N\rightarrow\infty}\int_{\RR^d}f(x)\,[\tilde{\mathcal{R}}_j(\psi-\psi_N)](x)\,d\omega_\kappa(x)
=0.
\end{eqnarray}

For the purpose let $b_N=[\tilde{\mathcal{R}}_j(\psi-\psi_N)]_{B_{\sqrt{N}}(0)}$
and insert the number $b_N$ under the integral sign in (\ref{sym-riez-2}), so that (\ref{sym-riez-2}) is equivalent to
\begin{eqnarray}\label{sym-riez-3}
\lim_{N\rightarrow\infty}\int_{\RR^d}f(x)\,\left([\tilde{\mathcal{R}}_j(\psi-\psi_N)](x)-b_N\right)\,d\omega_\kappa(x)
=0.
\end{eqnarray}

It is easy to see that, for appropriately large $N$,
\begin{equation*}
b_N=\int_{|t|\geq N}\frac{c_{\kappa}}{|B_{\sqrt{N}}(0)|_\kappa}\int_{B_{\sqrt{N}}(0)}
 [K_j(x',t)-K_j(0,t)]\,d\omega_\kappa(x')\,\psi(t)d\omega_\kappa(t),
\end{equation*}
so that, for $|x|\leq \sqrt{N}$,
\begin{align*}
&[\tilde{\mathcal{R}}_j(\psi-\psi_N)](x)-b_N\\
=&\int_{|t|\geq N}\frac{c_{\kappa}}{|B_{\sqrt{N}}(0)|_\kappa}\int_{B_{\sqrt{N}}(0)}(K_j(x,t)-K_j(x',t))d\omega_\kappa(x')\,\psi(t)d\omega_\kappa(t).
\end{align*}
Now applying (\ref{Riesz-kernel-2}) we get, for $|x|\leq \sqrt{N}$,
\begin{align*}
\left|[\tilde{\mathcal{R}}_j(\psi-\psi_N)](x)-b_N\right|
\lesssim\frac{\|\psi\|_{L^{\infty}}}{N^{|\kappa|+d/2}}\int_{|t|\geq N}\int_{B_{\sqrt{N}}(0)}\frac{|x-x'|}{|x-t|}\frac{d\omega_\kappa(x')}{\left|B(t,d(x,t))\right|_{\kappa}}\,d\omega_\kappa(t).
\end{align*}
Note that $|x-t|\ge d(x,t)\ge|t|-|x|\ge|t|/2\ge N/2\ge 4\sqrt{N}\ge2|x-x'|$ for large $N$, and so
\begin{align*}
\left|[\tilde{\mathcal{R}}_j(\psi-\psi_N)](x)-b_N\right|
\lesssim
\int_{|t|\geq N}\frac{\sqrt{N}\|\psi\|_{L^{\infty}}}{|t|\left|B(t,|t|)\right|_{\kappa}}d\omega_\kappa(t)
\lesssim\frac{\|\psi\|_{L^{\infty}}}{\sqrt{N}}
\end{align*}
because of $\left|B(t,|t|)\right|_{\kappa}\asymp|t|^{2\kappa+d}$ according to (\ref{cube-estimate-1}).
Subsequently,
\begin{eqnarray}\label{sym-riez-4}
\int_{|x|\le\sqrt{N}}|f(x)|\,|[\tilde{\mathcal{R}}_j(\psi-\psi_N)](x)-b_N|\,d\omega_\kappa(x)
\lesssim \frac{\|f\|_{H_{\kappa}}\|\psi\|_{L^{\infty}}}{\sqrt{N}}.
\end{eqnarray}

For $|x|>\sqrt{N}$, since
$$
|f(x)|\lesssim (\sqrt{N}+|x|)^{-2|\kappa|-d-1}
\asymp(\sqrt{N}+|x|)^{-1}|B_{\sqrt{N}+|x|}(0)|_{\kappa}^{-1},
$$
we have
\begin{eqnarray*}
\int_{|x|>\sqrt{N}}|f(x)|\,|[\tilde{\mathcal{R}}_j(\psi-\psi_N)](x)-b_N|\,d\omega_\kappa(x)
\lesssim \int_{\RR^d}\frac{|[\tilde{\mathcal{R}}_j(\psi-\psi_N)](x)-b_N|} {(\sqrt{N}+|x|)|B_{\sqrt{N}+|x|}(0)|_{\kappa}}\,d\omega_\kappa(x),
\end{eqnarray*}
which, appealing to (\ref{bmo-3}), is further controlled by a multiple of $\|\tilde{\mathcal{R}}_j(\psi-\psi_N)\|_{*,\kappa}/\sqrt{N}\lesssim\|\psi\|_{L^{\infty}}/\sqrt{N}$.
Combining this with (\ref{sym-riez-4}) proves (\ref{sym-riez-3}), and the proof of the lemma is completed.
\end{proof}

We now turn to the proof of Theorem \ref{Riesz-BMC-a}.

Assume that $\psi\in L^\infty(\RR^d)$. Obviously $\tilde{\mathcal{R}}_j\psi$ satisfies condition (i) in Definition \ref{bmc-a}, since $\tilde{\mathcal{R}}_j\psi\in\B_\kappa(\RR^d)$ by Theorem \ref{Riesz-BMO-a}, and Proposition \ref{bmo-a} is applicable to $\tilde{\mathcal{R}}_j\psi$, with $x'=0$. It remains to show $\|\tilde{\mathcal{R}}_j\psi\|_{*,C_{\kappa}}\lesssim \|\psi\|_{L^{\infty}}$.

For a fixed ball $B=B_{\delta}(x')$ in $\RR^d$, write $\psi=\psi_1+\psi_2$, where $\psi_1=\psi\chi_{{\mathcal{O}}(2B)}$ and $\psi_2=\psi\chi_{[{\mathcal{O}}(2B)]^c}$. Similarly to the first part of the proof of Proposition \ref{bmc-b},
\begin{align*}
\iint_{T(B)}d\nu_{\kappa,\tilde{\mathcal{R}}_j\psi_1}(x_0,x)\le \frac{1}{2}\int_{\RR^d}|{\mathcal{R}}_j\psi_1|^2d\omega_\kappa
\end{align*}
in view of $\tilde{\mathcal{R}}_j\psi_1={\mathcal{R}}_j\psi_1-a$ with some constant $a$, and further by (\ref{Riesz-2-1}),
\begin{align}\label{Carleson-4}
\iint_{T(B)}d\nu_{\kappa,\tilde{\mathcal{R}}_j\psi_1}(x_0,x)\le \frac{1}{2}\int_{\RR^d}|\psi_1|^2d\omega_\kappa\lesssim \|\psi\|_{L^{\infty}}^2|B|_{\kappa}.
\end{align}

In order to consider the contribution of $\psi_2$, we need the formula
\begin{align}\label{conjugate-kernel-7}
{\mathcal{R}}_j\left(\nabla_{\tilde{\kappa}}^{(x_0,x)}[(\tau_{x}P_{x_0})(-\cdot)]\right)(t)
=-\nabla_{\tilde{\kappa}}^{(x_0,x)}[(\tau_{x}Q^{(j)}_{x_0})(-t)],
\end{align}
which can be checked by taking the Dunkl transforms of the both sides. Indeed, from (\ref{translation-2-0}), Propositions \ref{translation-2-b-1}(ii) and \ref{conjugate-kernel-a}(ii) one has
\begin{align*}
\SF_{\kappa}[(\tau_{x}Q^{(j)}_{x_0})(-\cdot)](\xi)=-\SF_{\kappa}[(\tau_{-x}Q^{(j)}_{x_0})](\xi)
=\frac{i\xi_j}{|\xi|}e^{-x_0|\xi|}E_{\kappa}(-ix,\xi),
\end{align*}
so that
\begin{align}\label{conjugate-kernel-8}
\nabla_{\tilde{\kappa}}^{(x_0,x)}[(\tau_{x}Q^{(j)}_{x_0})(-t)]
=c_{\kappa}\int_{\RR^{d}}\frac{i\xi_j}{|\xi|}\nabla_{\tilde{\kappa}}^{(x_0,x)}[e^{-x_0|\xi|}E_{\kappa}(-ix,\xi)]\,
E_{\kappa}(it,\xi)\,d\omega_{\kappa}(\xi).
\end{align}
On the other hand, it follows from (\ref{Poisson-kernel-1}) that $\nabla_{\tilde{\kappa}}^{(x_0,x)}[e^{-x_0|\xi|}E_{\kappa}(-ix,\xi)]$ is the Dunkl transforms of $\nabla_{\tilde{\kappa}}^{(x_0,x)}[(\tau_{x}P_{x_0})(-\cdot)]$, and so, by (\ref{Riesz-2-1}) the right hand side of (\ref{conjugate-kernel-8}) is also the inverse Dunkl transform of $-{\mathcal{R}}_j\left(\nabla_{\tilde{\kappa}}^{(x_0,x)}[(\tau_{x}P_{x_0})(-\cdot)]\right)(t)$. Thus (\ref{conjugate-kernel-7}) is concluded.

In view of (\ref{conjugate-kernel-7}), the estimates (\ref{Poisson-ker-2}) and (\ref{conjugate-kernel-5}) imply that for fixed $(x_0,x)\in\RR^{1+d}_+$, each component of the vector-valued function  $t\mapsto\nabla_{\tilde{\kappa}}^{(x_0,x)}[(\tau_{x}P_{x_0})(-t)$ is in the Hardy space $H_{\kappa}^1(\RR^d)$ and satisfies the required decreasing condition in Lemma \ref{sym-riez}. Thus from (\ref{Poisson-2-2}), (\ref{sym-riez-1}) and (\ref{conjugate-kernel-7}),
\begin{align*}
\nabla_{\tilde{\kappa}} u_{\tilde{\mathcal{R}}_j\psi_2}(x_0,x)
&=c_{\kappa}\int_{\RR^{d}}(\tilde{\mathcal{R}}_j\psi_2)(t) \, \nabla_{\tilde{\kappa}}^{(x_0,x)}[(\tau_{x}P_{x_0})(-t)]\,d\omega_{\kappa}(t)\\
&=c_{\kappa}\int_{\RR^{d}}\psi_2(t) \, \nabla_{\tilde{\kappa}}^{(x_0,x)}[(\tau_{x}Q^{(j)}_{x_0})(-t)]\,d\omega_{\kappa}(t),
\end{align*}
and by use of Lemma \ref{conjugate-Poisson-b},
\begin{align*}
\left|\nabla_{\tilde{\kappa}} u_{\tilde{\mathcal{R}}_j\psi_2}(x_0,x)\right|
\lesssim \|\psi\|_{L^{\infty}}
\int_{[{\mathcal{O}}(2B)]^c}\frac{(x_0+d(t,x))^{-1}}{\left|\BB(x,x_0+d(t,x))\right|_{\kappa}}\,d\omega_{\kappa}(t).
\end{align*}
Since for $(x_0,x)\in T(B)$ and $t\in[{\mathcal{O}}(2B)]^c$, $d(t,x)\ge d(t,x')-|x'-x|\ge d(t,x')/2\ge\delta$, we immediately get, for $(x_0,x)\in T(B)$,
\begin{align*}
\left|\nabla_{\tilde{\kappa}} u_{\tilde{\mathcal{R}}_j\psi_2}(x_0,x)\right|
\lesssim \|\psi\|_{L^{\infty}}/\delta,
\end{align*}
which takes the role of (\ref{Carleson-3}), and consequently,
\begin{align*}
\iint_{T(B)}d\nu_{\kappa,\tilde{\mathcal{R}}_j\psi_2}(x_0,x)\lesssim \|\psi\|_{L^{\infty}}^2|B|_{\kappa}.
\end{align*}
Combining this and (\ref{Carleson-4}) asserts that $\tilde{\mathcal{R}}_j\psi\in\BC_{\kappa}(\RR^d)$ and $\|\tilde{\mathcal{R}}_j\psi\|_{*,C_{\kappa}}\lesssim \|\psi\|_{L^{\infty}}$, and the proof of Theorem \ref{Riesz-BMC-a} is finished.

\section{Duality of $H_{\kappa}^1(\RR^d)$ and $\BC_{\kappa}(\RR^d)$}

For a vector valued function $F=(u_0,u_1,\cdots,u_d)$ defined on $\RR^{1+d}_+$, we construct a $(d+1)|G|$-dimensional vector valued function $U_F(x_0,x)$ following the work
\cite{ADH1}. If $\{\sigma_{ij}\}_{i,j=1}^d$ denotes the matrix of the group element $\sigma\in G$ in the ordinary sense, we set $F_{\sigma}=(u_{\sigma,0},u_{\sigma,1},\cdots,u_{\sigma,d})$, where $u_{\sigma,0}(x_0,x)=u_0(x_0,\sigma(x))$ and
$$
u_{\sigma,j}(x_0,x)=\sum_{i=1}^d\sigma_{ij}u_i(x_0,\sigma(x)),\qquad j=1,\cdots,d.
$$
The vector valued function $U_F(x_0,x)$ is given by
\begin{align}\label{UF-function-1}
U_F(x_0,x)=\left\{F_{\sigma}(x_0,x)\right\}_{\sigma\in G}.
\end{align}

\begin{proposition}{\rm (\cite[Lemma 6.1]{ADH1})}
If $F=(u_0,u_1,\cdots,u_d)$ satisfies the generalized Cauchy-Riemann equations in (\ref{C-R-1}) on $\RR^{1+d}_+$, then for each $\sigma\in G$,  $F_{\sigma}$ also satisfies the equations in (\ref{C-R-1}), and $|F_{\sigma}(x_0,x)|=|F(x_0,\sigma(x))|$. Moreover $|U_F(x_0,\sigma(x))|=|U_F(x_0,x)|$ for $\sigma\in G$.
\end{proposition}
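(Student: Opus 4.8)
The heart of the matter is the covariance of the Dunkl operators under the $G$-action, namely the identity
\begin{align*}
D_j(f\circ\sigma)=\sum_{i=1}^d\sigma_{ij}\,(D_if)\circ\sigma,\qquad j=1,\dots,d,
\end{align*}
valid for $\sigma\in G$ acting on the $x$-variables, together with $D_0(f\circ\sigma)=(D_0f)\circ\sigma$, which is immediate since $\partial_0$ commutes with the reflection action on $x$. The plan is to first establish this formula and then reduce everything else to linear algebra with the orthogonal matrix $(\sigma_{ij})$ and the group structure of $G$. To prove the covariance I would differentiate $f\circ\sigma$ directly from (\ref{Dunkl-operator-1}): the smooth part $\partial_j(f\circ\sigma)=\sum_i\sigma_{ij}(\partial_if)\circ\sigma$ is the chain rule, while for the reflection part I would invoke the conjugation rule $\sigma\sigma_\alpha\sigma^{-1}=\sigma_{\sigma\alpha}$, the orthogonality of $\sigma$ to rewrite $\langle\alpha,x\rangle=\langle\sigma\alpha,\sigma(x)\rangle$, and the $G$-invariance of $\kappa$, and then reindex by $\beta=\sigma\alpha$.

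The subtle point, and what I expect to be the only real obstacle, is that this reindexing sends $R_+$ to $\sigma R_+$, which need not be a positive subsystem. I would resolve it by checking that each summand $\kappa(\beta)(\sigma^{-1}\beta)_j(f-f\circ\sigma_\beta)/\langle\beta,\cdot\rangle$ is invariant under $\beta\mapsto-\beta$: the factor $(\sigma^{-1}\beta)_j$ and the denominator each change sign, while $\sigma_{-\beta}=\sigma_\beta$ and $\kappa(-\beta)=\kappa(\beta)$ leave the rest unchanged. Hence the sum over $\sigma R_+$ coincides with the sum over $R_+$, which yields the claimed formula.

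Granting the covariance, I would verify that $F_\sigma=(u_{\sigma,0},\dots,u_{\sigma,d})$ solves (\ref{C-R-1}). Writing $\tilde u_i=u_i\circ\sigma$, the definitions read $u_{\sigma,0}=\tilde u_0$ and $u_{\sigma,j}=\sum_i\sigma_{ij}\tilde u_i$. For the divergence equation I would compute $\sum_{j=0}^dD_ju_{\sigma,j}=D_0\tilde u_0+\sum_{i,m}\big(\sum_j\sigma_{ij}\sigma_{mj}\big)(D_mu_i)\circ\sigma$ and collapse the inner sum via the row-orthonormality $\sum_j\sigma_{ij}\sigma_{mj}=\delta_{im}$, obtaining $\big(\sum_{i=0}^dD_iu_i\big)\circ\sigma=0$. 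For the curl equations $D_ju_{\sigma,k}=D_ku_{\sigma,j}$ I would treat the case $j=0$ using $D_0u_i=D_iu_0$, and the case $1\le j<k\le d$ by expanding both sides and using the symmetry $D_mu_i=D_iu_m$ after swapping the summation indices. The pointwise norm identity $|F_\sigma(x_0,x)|=|F(x_0,\sigma(x))|$ follows in the same vein: expanding $\sum_{j=1}^d|u_{\sigma,j}|^2=\sum_{i,i'}\tilde u_i\tilde u_{i'}\sum_j\sigma_{ij}\sigma_{i'j}$ and again using $\sum_j\sigma_{ij}\sigma_{i'j}=\delta_{ii'}$ gives $\sum_{i=1}^d\tilde u_i^2$, whence $|F_\sigma|^2=\sum_{i=0}^d|u_i\circ\sigma|^2=|F(\cdot,\sigma(\cdot))|^2$.

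Finally, for the statement about $U_F$, I would use (\ref{UF-function-1}) and the identity just proved to write $|U_F(x_0,x)|^2=\sum_{\tau\in G}|F_\tau(x_0,x)|^2=\sum_{\tau\in G}|F(x_0,\tau(x))|^2$. Replacing $x$ by $\sigma(x)$ turns the right-hand side into $\sum_{\tau\in G}|F(x_0,(\tau\sigma)(x))|^2$; since right multiplication by $\sigma$ permutes $G$, this sum is unchanged, giving $|U_F(x_0,\sigma(x))|=|U_F(x_0,x)|$. The whole argument is routine once the covariance formula is in hand, requiring no estimates or analytic input beyond it.
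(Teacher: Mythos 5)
Your proof is correct. The paper itself offers no argument for this proposition — it simply cites \cite[Lemma 6.1]{ADH1} — and your verification is the standard one that the cited source relies on: the equivariance $D_j(f\circ\sigma)=\sum_{i}\sigma_{ij}(D_if)\circ\sigma$ (proved exactly as you do, via the conjugation rule $\sigma\sigma_\alpha\sigma^{-1}=\sigma_{\sigma\alpha}$, the $G$-invariance of $\kappa$, and the sign-invariance of each summand so that the sum over $\sigma R_+$ equals the sum over $R_+$), followed by orthogonality of the matrix $(\sigma_{ij})$ and the bijectivity of right translation on $G$. In particular, you correctly identified and resolved the one genuinely delicate point, namely that $\sigma R_+$ need not be a positive subsystem.
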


\begin{lemma}\label{subbarmonicity-a}
If $F=(u_0,u_1,\cdots,u_d)$ satisfies the generalized Cauchy-Riemann equations in (\ref{C-R-1}) on $\RR^{1+d}_+$, then there exists a constant $C>0$ independent of $F$ and $\epsilon>0$, such that
\begin{align}\label{subbarmonicity-1}
(|U_F|^2+\epsilon)^{-1/2}|\nabla_{\tilde{\kappa}}U_F|^2
\le C\Delta_{\tilde{\kappa}}(|U_F|^2+\epsilon)^{1/2}.
\end{align}
\end{lemma}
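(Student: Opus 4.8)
The plan is to establish the pointwise inequality by first multiplying through by $\phi:=(|U_F|^2+\epsilon)^{1/2}>0$, so that the assertion becomes $|\nabla_{\tilde\kappa}U_F|^2\lesssim \phi\,\Delta_{\tilde\kappa}\phi$, and then to extract a ``carr\'e du champ'' identity from the explicit form of $\Delta_{\tilde\kappa}$ in Section 3.1. Writing $\nabla_{\mathrm e}=(\partial_0,\partial_1,\dots,\partial_d)$ for the ordinary gradient on $\RR^{1+d}$, a direct computation using only the Leibniz rule for $\partial_j^2$, the factorization $f^2-(\sigma_\alpha f)^2=(f-\sigma_\alpha f)(f+\sigma_\alpha f)$, and $\langle\alpha,\alpha\rangle=2$ gives, for smooth real $f$,
\[
\tfrac12\Delta_{\tilde\kappa}(f^2)=f\,\Delta_{\tilde\kappa}f+|\nabla_{\mathrm e}f|^2+\sum_{\alpha\in R_+}\kappa(\alpha)\frac{\big(f(x_0,x)-f(x_0,\sigma_\alpha x)\big)^2}{\langle\alpha,x\rangle^2}.
\]
I would record this as a preliminary lemma, since it is the engine of the whole argument.

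Applying this componentwise to $U_F=(V_1,\dots,V_M)$, $M=(d+1)|G|$, and using that each $V_k$ is $\tilde\kappa$-harmonic (the components of $F$ are harmonic by commutativity of the $D_j$, and the $G$-action defining $U_F$ preserves harmonicity because $\tilde\kappa$ is $G$-invariant), the term $\langle U_F,\Delta_{\tilde\kappa}U_F\rangle$ drops out and
\[
\tfrac12\Delta_{\tilde\kappa}(|U_F|^2)=|\nabla_{\mathrm e}U_F|^2+\mathrm{Refl},\qquad \mathrm{Refl}:=\sum_{\alpha\in R_+}\kappa(\alpha)\frac{|U_F(x_0,x)-U_F(x_0,\sigma_\alpha x)|^2}{\langle\alpha,x\rangle^2}\ge 0.
\]
Applying the same identity to the scalar $\phi$ is where the enlargement $U_F$ pays off: by the Proposition preceding the lemma, $|U_F(x_0,\cdot)|$ is $G$-invariant, hence so is $\phi$, so $\phi(x_0,x)=\phi(x_0,\sigma_\alpha x)$ kills the entire reflection sum and leaves $\tfrac12\Delta_{\tilde\kappa}(\phi^2)=\phi\,\Delta_{\tilde\kappa}\phi+|\nabla_{\mathrm e}\phi|^2$. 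Since $\Delta_{\tilde\kappa}(\phi^2)=\Delta_{\tilde\kappa}(|U_F|^2)$, subtracting yields the master identity
\[
\phi\,\Delta_{\tilde\kappa}\phi=|\nabla_{\mathrm e}U_F|^2-|\nabla_{\mathrm e}\phi|^2+\mathrm{Refl}.
\]

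To dominate the left side of the asserted estimate I would next compare the $\tilde\kappa$-gradient with the ordinary one. Expanding each Dunkl operator through \eqref{Dunkl-operator-1} and using $\sum_j\alpha_j^2=2$ together with the finiteness of $R_+$ gives the companion bound $|\nabla_{\tilde\kappa}U_F|^2\lesssim |\nabla_{\mathrm e}U_F|^2+\mathrm{Refl}$. In view of the master identity, the target $|\nabla_{\tilde\kappa}U_F|^2\lesssim\phi\,\Delta_{\tilde\kappa}\phi$ then reduces to the single pointwise inequality $|\nabla_{\mathrm e}\phi|^2\le\gamma\,|\nabla_{\mathrm e}U_F|^2$ for some $\gamma<1$ independent of $F,\epsilon$: for then the two copies of $\mathrm{Refl}$ match and $\phi\,\Delta_{\tilde\kappa}\phi\ge\min(1-\gamma,1)\big(|\nabla_{\mathrm e}U_F|^2+\mathrm{Refl}\big)$, which bounds $|\nabla_{\tilde\kappa}U_F|^2$ from above.

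The main obstacle is precisely this last, Stein--Weiss type inequality. Because $\nabla_{\mathrm e}\phi=\phi^{-1}\sum_k V_k\nabla_{\mathrm e}V_k$, Cauchy--Schwarz alone only yields $\gamma\le 1$, and the strict gain $\gamma<1$ must be squeezed out of the generalized Cauchy--Riemann equations \eqref{C-R-1}. My plan is to read \eqref{C-R-1}, written for each block $F_\sigma$ as $D_ju_k=D_ku_j$ and $\sum_jD_ju_j=0$, as the statement that the \emph{ordinary} Jacobian $J_\sigma$ of $F_\sigma$ is symmetric and trace-free up to the reflection differences $\kappa(\alpha)\langle\alpha,x\rangle^{-1}(u_i-\sigma_\alpha u_i)$. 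For a genuinely symmetric trace-free $(d+1)\times(d+1)$ matrix, eigenvalues summing to zero obey the sharp bound $\max_i\lambda_i^2\le\frac{d}{d+1}\sum_i\lambda_i^2$; stacking the blocks and using $|\nabla_{\mathrm e}\phi|^2=|U_F|^{-2}\big|\sum_\sigma J_\sigma F_\sigma\big|^2$ with another Cauchy--Schwarz in $\sigma$ propagates the constant $\gamma=\tfrac{d}{d+1}$ to the full vector $U_F$, whose sole role is to keep $|U_F|$ $G$-invariant while leaving this blockwise structure intact. The delicate point — and where I would spend the most care — is that the antisymmetric and trace corrections coming from the reflection terms are bounded pointwise by $\mathrm{Refl}^{1/2}$, and must be absorbed with an admissible constant; since $\mathrm{Refl}$ already appears favorably in the master identity, this absorption is exactly what closes the estimate, and the remaining steps are the bookkeeping described above.
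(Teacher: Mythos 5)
Your architecture is essentially the paper's: the carr\'e du champ identity, componentwise $\tilde{\kappa}$-harmonicity, and the $G$-invariance of $|U_F|$ combine into your ``master identity'' $\phi\,\Delta_{\tilde\kappa}\phi=|\nabla_{\mathrm e}U_F|^2-|\nabla_{\mathrm e}\phi|^2+\mathrm{Refl}$, which is exactly the paper's (\ref{subbarmonicity-2}) together with (\ref{subbarmonicity-3}) (note $\partial_j\phi=\langle\partial_jU_F,U_F\rangle/\phi$, so $|\nabla_{\mathrm e}\phi|^2=\phi^{-2}\sum_j\langle\partial_jU_F,U_F\rangle^2$, and your $\mathrm{Refl}$ is the paper's $V$); likewise your companion bound $|\nabla_{\tilde\kappa}U_F|^2\lesssim|\nabla_{\mathrm e}U_F|^2+\mathrm{Refl}$ is the paper's final step. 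The one thing the paper does \emph{not} prove but imports from \cite{ADH1} as (\ref{subbarmonicity-4}) is the Stein--Weiss type inequality, which in your notation reads $|\nabla_{\mathrm e}\phi|^2\le\gamma\left(|\nabla_{\mathrm e}U_F|^2+\mathrm{Refl}\right)$ with $\gamma=1/(2-q)<1$. This is where your proposal has a genuine gap.

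Two problems occur in your final step. First, you reduce the lemma to the \emph{pure} inequality $|\nabla_{\mathrm e}\phi|^2\le\gamma|\nabla_{\mathrm e}U_F|^2$, $\gamma<1$, with no $\mathrm{Refl}$ on the right; but this is too strong: in the Dunkl setting the ordinary Jacobians $J_\sigma$ are symmetric and trace-free only up to the reflection corrections $E_\sigma$, and at points where those corrections dominate there is no mechanism preventing near-equality in Cauchy--Schwarz, so the pure inequality cannot be established (the correct target must carry $\mathrm{Refl}$ on the right-hand side, as in (\ref{subbarmonicity-4})). Second, your absorption scheme does not close: bounding $|E_\sigma|\lesssim\mathrm{Refl}^{1/2}$ and squaring yields at best $|\nabla_{\mathrm e}\phi|^2\le\gamma'|\nabla_{\mathrm e}U_F|^2+C'\,\mathrm{Refl}$ where $C'$ depends on $d$, $|G|$, $|R_+|$ and $\max\kappa$ and can well exceed $1$; since $\mathrm{Refl}$ enters the master identity with coefficient exactly $+1$, and a positive multiple of $\mathrm{Refl}$ must \emph{survive} on the lower-bound side to dominate the reflection part of $|\nabla_{\tilde\kappa}U_F|^2$, one needs the coefficient of $\mathrm{Refl}$ to be strictly less than $1$ as well --- ``$\mathrm{Refl}$ appears favorably'' does not buy this. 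The gap is repairable by a dichotomy you do not mention: fix $\delta>0$ small; if $\mathrm{Refl}\ge\delta|\nabla_{\mathrm e}U_F|^2$, plain Cauchy--Schwarz $|\nabla_{\mathrm e}\phi|\le|\nabla_{\mathrm e}U_F|$ already gives the target with $\gamma=\tfrac{1}{1+\delta}$; if $\mathrm{Refl}<\delta|\nabla_{\mathrm e}U_F|^2$, then $\bigl(\sum_\sigma|E_\sigma|^2\bigr)^{1/2}\le C_1\sqrt{\delta}\,|\nabla_{\mathrm e}U_F|$ and your eigenvalue lemma gives
\begin{align*}
|\nabla_{\mathrm e}\phi|\le\Bigl[\sqrt{\tfrac{d}{d+1}}\,(1+C_1\sqrt{\delta})+C_1\sqrt{\delta}\Bigr]|\nabla_{\mathrm e}U_F|,
\end{align*}
whose bracket is $<1$ once $\delta$ is small depending only on $d$, $G$, $\kappa$. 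With that case analysis your argument becomes a correct, self-contained proof of the inequality the paper cites from \cite{ADH1}; as written, the last step fails.
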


\begin{proof}
Direct calculations show that
\begin{align}\label{subbarmonicity-2}
\Delta_{\tilde{\kappa}}(|U_F|^2+\epsilon)^{1/2}=\frac{1}{(|U_F|^2+\epsilon)^{1/2}}\left(V+\sum_{j=0}^d|\partial_jU_F|^2
-\frac{1}{|U_F|^2+\epsilon}\sum_{j=0}^d\langle\partial_jU_F,U_F\rangle^2\right),
\end{align}
where
$$
V=\left\langle\sum_{j=0}^d\partial_j^2U_F+2\sum_{\alpha\in R_+}\frac{\kappa(\alpha)}{\langle\alpha,x\rangle}\sum_{j=1}^d\alpha_j\partial_jU_F,U_F\right\rangle.
$$
But by \cite[(6.11) and (6.12)]{ADH1}, $V$ can be rewritten as
\begin{align}\label{subbarmonicity-3}
V=\sum_{\alpha\in R_+}\frac{\kappa(\alpha)}{\langle\alpha,x\rangle^2}|U_F-\sigma_{\alpha}U_F|^2,
\end{align}
and there exists some $q\in(0,1)$ independent of $F$ such that
\begin{align}\label{subbarmonicity-4}
\sum_{j=0}^d\langle\partial_jU_F,U_F\rangle^2
\le\frac{|U_F|^2}{2-q}\left(V+\sum_{j=0}^d|\partial_jU_F|^2\right).
\end{align}
Combining (\ref{subbarmonicity-2}) and (\ref{subbarmonicity-4}) yields
\begin{align*}
\Delta_{\tilde{\kappa}}(|U_F|^2+\epsilon)^{1/2}
\ge\frac{1-q}{2-q}\frac{1}{(|U_F|^2+\epsilon)^{1/2}}\left(V+\sum_{j=0}^d|\partial_jU_F|^2\right),
\end{align*}
but from (\ref{subbarmonicity-3}) it is obvious that $V+\sum_{j=0}^d|\partial_jU_F|^2\ge C_1|\nabla_{\tilde{\kappa}}U_F|^2$ with $C_1>0$ independent of $F$. Thus (\ref{subbarmonicity-1}) is proved for $C=(2-q)/C_1(1-q)$.
\end{proof}

\begin{lemma}\label{gradient-product-a}
If $f\in H_{\kappa,0}^1(\RR^d)$ and $\varphi\in \BC_\kappa(\RR^d)$, then
\begin{eqnarray}\label{gradient-product-1}
\iint_{\RR^{1+d}_+}x_0\left|\big\langle\nabla_{\tilde{\kappa}} u_f(x_0,x),\nabla_{\tilde{\kappa}} u_{\varphi}(x_0,x)\big\rangle\right| d\omega_\kappa(x)dx_0
\lesssim\|\varphi\|_{*,C_{\kappa}}\|f\|_{H_{\kappa}^1},
\end{eqnarray}
where $u_f(x_0,x)$ and $u_\varphi(x_0,x)$ are the $\kappa$-Poisson integrals of $f$ and $\varphi$ respectively.
\end{lemma}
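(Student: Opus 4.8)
The plan is to run the original Fefferman--Stein bilinear argument, reducing everything to the interplay between the area integral of $u_f$ and the Carleson measure $d\nu_{\kappa,\varphi}$. First I would apply the pointwise Cauchy--Schwarz inequality for the inner product in $\RR^{1+d}$ to replace $|\langle\nabla_{\tilde{\kappa}}u_f,\nabla_{\tilde{\kappa}}u_\varphi\rangle|$ by $|\nabla_{\tilde{\kappa}}u_f|\,|\nabla_{\tilde{\kappa}}u_\varphi|$, so the task becomes bounding $I=\iint_{\RR^{1+d}_+}x_0|\nabla_{\tilde{\kappa}}u_f||\nabla_{\tilde{\kappa}}u_\varphi|\,dx_0 d\omega_\kappa$. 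The device that makes the weighted geometry transparent is the cone-averaging identity: since $\int_{\RR^d}\chi_{\{|t-x|<ax_0\}}\,d\omega_\kappa(x)=|B(t,ax_0)|_\kappa$, for any nonnegative $\Phi$ one has $\iint_{\RR^{1+d}_+}\Phi\,dx_0 d\omega_\kappa=\int_{\RR^d}\big(\iint_{\Gamma_a(x)}\Phi\,|B(t,ax_0)|_\kappa^{-1}dx_0 d\omega_\kappa(t)\big)d\omega_\kappa(x)$. Writing $S_a(u)(x)^2=\iint_{\Gamma_a(x)}x_0|\nabla_{\tilde{\kappa}}u|^2\,|B(t,ax_0)|_\kappa^{-1}dx_0 d\omega_\kappa(t)$, this in particular gives $\int_{\RR^d}S_a(u)^2 d\omega_\kappa=\iint x_0|\nabla_{\tilde{\kappa}}u|^2 dx_0 d\omega_\kappa$, and it is the bridge between the half-space integral and the cone (area) quantities.

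The heart of the proof is a level-set decomposition in the spirit of Fefferman--Stein. Setting $A(x)=S_a(u_f)(x)$, let $O_k=\{x:A(x)>2^k\}$ and let $O_k^{\ast}=\{x:M\chi_{O_k}(x)>\tfrac12\}$ be its Hardy--Littlewood enlargement, which is open with $|O_k^{\ast}|_\kappa\lesssim|O_k|_\kappa$ by the maximal theorem on the space of homogeneous type $(\RR^d,|\cdot|,\omega_\kappa)$. Using the tents $\widehat{O_k^{\ast}}$ of Lemma \ref{car-pro}, I would split $\RR^{1+d}_+$ into the annular regions $R_k=\widehat{O_k^{\ast}}\setminus\widehat{O_{k+1}^{\ast}}$ and estimate $\iint_{R_k}x_0|\nabla_{\tilde{\kappa}}u_f||\nabla_{\tilde{\kappa}}u_\varphi|$ by Cauchy--Schwarz. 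For the $u_\varphi$ factor I would invoke Lemma \ref{car-pro}: since $d\nu_{\kappa,\varphi}$ is a $\kappa$-Carleson measure with norm $\|\varphi\|_{*,C_\kappa}^2$ (Definition \ref{bmc-a}), $\iint_{R_k}x_0|\nabla_{\tilde{\kappa}}u_\varphi|^2\le\iint_{\widehat{O_k^{\ast}}}d\nu_{\kappa,\varphi}\lesssim\|\varphi\|_{*,C_\kappa}^2|O_k^{\ast}|_\kappa\lesssim\|\varphi\|_{*,C_\kappa}^2|O_k|_\kappa$. For the $u_f$ factor I would use that points of $R_k$ lie outside $\widehat{O_{k+1}^{\ast}}$, so that after the cone-averaging identity their contribution is carried by cone vertices $x\in(O_{k+1}^{\ast})^c\subset O_{k+1}^c$, where $A(x)\le2^{k+1}$; this yields $\iint_{R_k}x_0|\nabla_{\tilde{\kappa}}u_f|^2\lesssim 2^{2k}|O_k|_\kappa$, with a harmless enlargement of the aperture. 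Multiplying the two bounds and summing gives $I\lesssim\|\varphi\|_{*,C_\kappa}\sum_k 2^k|O_k|_\kappa\asymp\|\varphi\|_{*,C_\kappa}\|S_a(u_f)\|_{L^1_\kappa}$.

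It then remains to bound $\|S_a(u_f)\|_{L^1_\kappa}\lesssim\|f\|_{H^1_\kappa}$. Here I would pass to the Stein--Weiss system $F=(u_f,Q_1f,\dots,Q_df)\in\SH_\kappa^1(\RR^{1+d}_+)$; the Cauchy--Riemann relations \eqref{C-R-1} give $D_ju_f=\partial_0u_j$, whence $|\nabla_{\tilde{\kappa}}u_f|=|\partial_0F|\le|\nabla_{\tilde{\kappa}}U_F|$ and so $S_a(u_f)\le S_a(U_F)$ pointwise. The subharmonicity inequality \eqref{subbarmonicity-1} of Lemma \ref{subbarmonicity-a}, combined with Green's formula (Proposition \ref{Green-formula-2-a}) and the nontangential $\SH_\kappa^1$-theory of Theorems \ref{Hardy-thm-1} and \ref{Hardy-thm-2}, then controls $\|S_a(U_F)\|_{L^1_\kappa}$ by $\|F\|_{\SH_\kappa^1}\asymp\|f\|_{H^1_\kappa}$, the fast decay of $f\in H_{\kappa,0}^1$ guaranteed by Corollary \ref{den-aug-2} making all integrations by parts legitimate.

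I expect the main obstacle to be the $u_f$-factor estimate in the level-set step, namely converting the abstract membership $(x_0,t)\in R_k$ into the quantitative statement $\iint_{R_k}x_0|\nabla_{\tilde{\kappa}}u_f|^2\lesssim 2^{2k}|O_k|_\kappa$. This is precisely where the weighted geometry of $\omega_\kappa$ must be handled with care: one needs $|B(x,r)|_\kappa\asymp|B(t,r)|_\kappa$ for $|x-t|\lesssim r$ (a consequence of the doubling estimate \eqref{cube-estimate-1}) together with the comparability of cones of different apertures, so that a definite $\omega_\kappa$-proportion of the ball $B(t,ax_0)$ is occupied by ``good'' vertices in $(O_{k+1}^{\ast})^c$ on which $A\le 2^{k+1}$. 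The second, more routine, delicate point is the area-integral bound $\|S_a(u_f)\|_{L^1_\kappa}\lesssim\|f\|_{H^1_\kappa}$ through Lemma \ref{subbarmonicity-a}; since $f\in H_{\kappa,0}^1$ is Schwartz with Dunkl transform supported away from the origin, the decay in Corollary \ref{den-aug-2} removes all convergence issues there.
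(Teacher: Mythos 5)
Your proposal is correct in outline, but it follows a genuinely different route from the paper. The paper runs the original Fefferman--Stein duality argument: a single global Cauchy--Schwarz with the weights $(|U_F|^2+\epsilon)^{\pm 1/2}$ on half-balls $B_M^+$, so that one factor is $\iint x_0(|U_F|^2+\epsilon)^{-1/2}|\nabla_{\tilde{\kappa}}U_F|^2$, controlled by the subharmonicity inequality (\ref{subbarmonicity-1}) plus Green's formula (Proposition \ref{Green-formula-2-a}), while the other factor reduces to $J_3=\iint x_0|U_F||\nabla_{\tilde{\kappa}}u_\varphi|^2$, which is handled by constructing a $\kappa$-harmonic majorant of $|U_F|^q$ ($q\in(0,1)$) via the maximum principle and weak-$*$ compactness in $L^{1/q}_{\kappa}$, and then invoking the Carleson embedding Theorem \ref{car-pro-1}(i) with $p=1/q$. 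You instead use the Coifman--Meyer--Stein tent-space machinery: pointwise Cauchy--Schwarz, stopping-time decomposition over level sets of the area function, the Carleson property through Lemma \ref{car-pro}, and a final reduction to $\|S_a(u_f)\|_{L^1_\kappa}\lesssim\|f\|_{H^1_\kappa}$. Each route has its price. The paper's approach never needs the $L^1$ square-function estimate (which, as the introduction notes, was obtained in the cited literature only with considerable effort), nor the Hardy--Littlewood maximal theorem or any stopping-time geometry for $\omega_\kappa$; your approach needs both, but in exchange produces the intermediate bound by $\|\varphi\|_{*,C_\kappa}\|S_a(u_f)\|_{L^1_\kappa}$, a tent-space duality statement of independent interest.

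Two points in your sketch deserve emphasis. First, in the level-set step the threshold $\tfrac12$ in $O_k^{\ast}=\{M\chi_{O_k}>\tfrac12\}$ must in general be replaced by a small $\gamma$ depending on the doubling constant of $\omega_\kappa$, so that a point $y\in B(t,x_0)\setminus O_{k+1}^{\ast}$ forces $|B(t,x_0)\setminus O_{k+1}|_\kappa\ge\tfrac12|B(t,x_0)|_\kappa$ after one application of doubling; you flag exactly this issue, and it is a routine fix. Second, and more seriously, your step $\|S_a(u_f)\|_{L^1_\kappa}\lesssim\|f\|_{H^1_\kappa}$ is where the real work sits, and your one-sentence justification compresses a genuine argument: one needs the pointwise splitting $S(U_F)^2\lesssim (U_F^{\ast}+\sqrt{\epsilon})\,g_\epsilon$ coming from (\ref{subbarmonicity-1}), where $g_\epsilon$ is the cone average of $x_0\Delta_{\tilde{\kappa}}(|U_F|^2+\epsilon)^{1/2}$; then the cone-averaging identity and Green's formula (exactly the paper's $J_1$ computation) give $\|g_\epsilon\|_{L^1_\kappa}\lesssim\|F\|_{\SH_{\kappa}^1}$ up to $\epsilon,M$-errors, the nontangential maximal bound of Theorem \ref{Hardy-thm-1}(iv) (together with $G$-invariance of $\omega_\kappa$) gives $\|U_F^{\ast}\|_{L^1_\kappa}\lesssim\|F\|_{\SH_{\kappa}^1}$, and a Cauchy--Schwarz in $L^1$ plus careful limits in $\epsilon$, then $M$, closes the estimate, the decay from Corollary \ref{den-aug-2} legitimizing all of this on the dense class $H_{\kappa,0}^1(\RR^d)$. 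So the ingredients you name do suffice, but the assembly is not automatic, and it amounts to proving (on the dense class) the hard half of the square-function characterization that the paper's own proof is designed to avoid.
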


\begin{proof}
By Corollary \ref{den-aug-2}, $u_f(x_0,x)$ is the first component of some $F\in\SH_{\kappa,0}^1(\RR^{1+d}_+)$, and by Theorem \ref{Hardy-thm-2},
$\|f\|_{H_{\kappa}^1}\asymp\|F\|_{\SH_{\kappa}^1}$.

For $M>0$, let $B_M^+:=B^+((0,0),M)$ denote the upper half ball with center $(0,0)$ and radius $M>0$ on $\RR^{1+d}$. For $\epsilon>0$ we have
\begin{align}\label{H-BMO-5-1}
\iint_{B_M^+}x_0\left|\big\langle\nabla_{\tilde{\kappa}} u_f,\nabla_{\tilde{\kappa}} u_{\varphi}\big\rangle\right| d\omega_\kappa(x)dx_0
\le & \iint_{B_M^+}x_0 |\nabla_{\tilde{\kappa}}U_F||\nabla_{\tilde{\kappa}} u_{\varphi}|\, d\omega_\kappa(x)dx_0 \nonumber\\
\le & J_{1}^{1/2}\times J_{2}^{1/2},
\end{align}
where $U_F(x_0,x)$ is given by (\ref{UF-function-1}), and
\begin{align*}
J_{1}=& \iint_{B_M^+}x_0 (|U_F|^2+\epsilon)^{-1/2}|\nabla_{\tilde{\kappa}}U_F|^2\, d\omega_\kappa(x)dx_0,\\
J_{2}=& \iint_{B_M^+}x_0 (|U_F|^2+\epsilon)^{1/2}|\nabla_{\tilde{\kappa}} u_{\varphi}|^2 \, d\omega_\kappa(x)dx_0.
\end{align*}

By Lemma \ref{subbarmonicity-a},
\begin{align*}
J_{1}\lesssim \iint_{B_M^+}x_0 \Delta_{\tilde{\kappa}}(|U_F|^2+\epsilon)^{1/2}\, d\omega_\kappa(x)dx_0,
\end{align*}
and applying the Green formula (\ref{Green-formula-2-1}) we have
\begin{align*}
J_{1}\lesssim & \int_{\partial B_M^+}\left(x_0\partial_{\mathbf{n}}(|U_F|^2+\epsilon)^{1/2}
-(|U_F|^2+\epsilon)^{1/2}\partial_{\mathbf{n}}x_0\right)W_{\kappa}(x)d\sigma(x_0,x),
\end{align*}
where $\partial_{\mathbf{n}}$ denotes the directional derivative of the outward normal, and $d\sigma(x_0,x)$ the area element on $\partial\Omega$.
Since $|\partial_j(|U_F|^2+\epsilon)^{1/2}|=|\langle\partial_jU_F,U_F\rangle|/(|U_F|^2+\epsilon)^{1/2}|
\le|\partial_jU_F|$, it follows that
\begin{align*}
J_{1}\lesssim & \int_{|x|\le M}(|U_F(0,x)|^2+\epsilon)^{1/2}d\omega_\kappa(x)\\
&\qquad +\int_{|(x_0,x)|=M,\,x_0>0}\left(x_0|\nabla U_F|
+(|U_F|^2+\epsilon)^{1/2}\right)W_{\kappa}(x)d\sigma(x_0,x).
\end{align*}
Furthermore, for $F\in\SH_{\kappa,0}^1(\RR^{1+d}_+)$, one has $|U_F|\lesssim M^{-2|\kappa|-d-1}$ and $|\nabla U_F|\lesssim M^{-2|\kappa|-d-2}$ when $|(x_0,x)|=M$ and $x_0>0$, so that
\begin{align*}
J_{1}\lesssim \int_{|x|\le M}(|U_F(0,x)|^2+\epsilon)^{1/2}d\omega_\kappa(x)
 +\frac{1}{M}+\epsilon M^{2|\kappa|+d}.
\end{align*}
Inserting this into (\ref{H-BMO-5-1}), letting $\epsilon\rightarrow0+$ and then $M\rightarrow\infty$, we obtain
\begin{align}\label{H-BMO-5-2}
\iint_{\RR_+^{1+d}}x_0\left|\big\langle\nabla_{\tilde{\kappa}} u_f,\nabla_{\tilde{\kappa}} u_{\varphi}\big\rangle\right| d\omega_\kappa(x)dx_0
\lesssim \|f\|_{H_{\kappa}^1}^{1/2}\, J_3^{1/2},
\end{align}
where
\begin{align}\label{J-3}
J_3=\iint_{\RR_+^{1+d}}x_0 |U_F||\nabla_{\tilde{\kappa}} u_{\varphi}|^2 \, d\omega_\kappa(x)dx_0.
\end{align}

We now construct a $\kappa$-harmonic majorization of $|U_F|^q$. To apply the maximum principle \cite[Theorem 4.2]{Ro2} in our case, we give a variant of the $\kappa$-subharmonic inequality in \cite[Theorem 6.3]{ADH1}.
Similarly to (\ref{subbarmonicity-2}), for $q>0$ and $\epsilon>0$ we have
\begin{align*}
\Delta_{\tilde{\kappa}}(|U_F|^2+\epsilon)^{q/2}=\frac{q}{(|U_F|^2+\epsilon)^{1-q/2}}\left(V+\sum_{j=0}^d|\partial_jU_F|^2
+\frac{q-2}{|U_F|^2+\epsilon}\sum_{j=0}^d\langle\partial_jU_F,U_F\rangle^2\right),
\end{align*}
and choose $q\in(0,1)$ satisfying (\ref{subbarmonicity-4}) (see \cite[(6.12)]{ADH1}), independent of $F$ and $\epsilon$, to conclude
$$
\Delta_{\tilde{\kappa}}(|U_F|^2+\epsilon)^{q/2}\ge0.
$$

For $\delta,\epsilon>0$ let $f_{\delta,\epsilon}(x)=(|U_F(\delta,x)|^2+\epsilon)^{q/2}$ and set $u_{f_{\delta,\epsilon}}(x_0,x)=(Pf_{\delta,\epsilon})(x_0,x)$, the $\kappa$-Poisson integral of $f_{\delta,\epsilon}$.
For $F\in\SH_{\kappa,0}^1(\RR^{1+d}_+)$, we have $|U_F(x_0,x)|\lesssim (1+x_0+|x|)^{-2|\kappa|-d-2}$, and $u_{f_{\delta,\epsilon}}(x_0,x)$ is a continuous extension to $\overline{\RR_+^{1+d}}$ of $f_{\delta,\epsilon}(x)$.
Consider the function
$$
V_{\delta,\epsilon}(x_0,x)=(|U_F(\delta+x_0,x)|^2+\epsilon)^{q/2}-u_{f_{\delta,\epsilon}}(x_0,x).
$$
Obviously $V_{\delta,\epsilon}(x_0,x)$ is continuous on $\overline{\RR_+^{1+d}}$ and $\Delta_{\tilde{\kappa}}V_{\delta,\epsilon}(x_0,x)\ge0$ in $\RR_+^{1+d}$; moreover $V_{\delta,\epsilon}(0,x)=0$ for $x\in\RR^d$.

For given $\eta>0$, choose $M>0$ such that
\begin{align}\label{H-BMO-5-3}
|U_F(x_0,x)|<\eta\qquad \hbox{whenever}\quad |(x_0,x)|\ge M.
\end{align}
It then follows that $|f_{\delta,\epsilon}(x)|\lesssim\eta+\epsilon$ for $|x|\ge M$.

Now for $|(x_0,x)|\ge2M$, from (\ref{Poisson-2-2}) and (\ref{Poisson-ker-bound-1}) we have
\begin{align*}
|u_{f_{\delta,\epsilon}}(x_0,x)|\lesssim
\eta+\epsilon+\int_{|t|\le M}\frac{d\omega_{\kappa}(t)}{\left|B(x,x_0+d(x,t))\right|_{\kappa}};
\end{align*}
but $x_0+d(x,t)\ge x_0+|x|-|t|\ge|(x_0,x)|-|t|\ge|(x_0,x)|/2$ for $|t|\le M$, so that
\begin{align*}
|u_{f_{\delta,\epsilon}}(x_0,x)|\lesssim
\eta+\epsilon+\left(\frac{M}{|(x_0,x)|}\right)^{2|\kappa|+d}.
\end{align*}
Choose $M'>2M$ so that $(M/M')^{2|\kappa|+d}<\eta$, and then for $|(x_0,x)|\ge M'$, $|u_{f_{\delta,\epsilon}}(x_0,x)|\lesssim\eta+\epsilon$.
From this and (\ref{H-BMO-5-3}) we get
\begin{align*}
|V_{\delta,\epsilon}(x_0,x)|\lesssim\eta+\epsilon\qquad \hbox{whenever}\quad |(x_0,x)|\ge M'.
\end{align*}
We apply the maximum principle \cite[Theorem 4.2]{Ro2} to $V_{\delta,\epsilon}(x_0,x)$ on the upper half ball $=B^+((0,0),M')$, and then obtain $|V_{\delta,\epsilon}(x_0,x)|\lesssim\eta+\epsilon$ for all $(x_0,x)\in\RR_+^{1+d}$. Letting $\eta\rightarrow0+$ gives
\begin{align}\label{H-BMO-5-4}
(|U_F(\delta+x_0,x)|^2+\epsilon)^{q/2}-u_{f_{\delta,\epsilon}}(x_0,x)\lesssim\epsilon\qquad \hbox{for all}\quad (x_0,x)\in\RR_+^{1+d}.
\end{align}

It is easy to see that $f_{\delta,\epsilon}(x)=(|U_F(\delta,x)|^2+\epsilon)^{q/2}$ converges to $|U_F(\delta,x)|^q$ uniformly for $x\in\RR^d$ as $\epsilon\rightarrow0+$, and so
$\lim_{\epsilon\rightarrow0+}u_{f_{\delta,\epsilon}}(x_0,x)=u_{|U_F(\delta,\cdot)|^q}(x_0,x)$ for $(x_0,x)\in\RR_+^{1+d}$. This and (\ref{H-BMO-5-4}) lead to
\begin{align}\label{H-BMO-5-5}
|U_F(\delta+x_0,x)|^q\le u_{|U_F(\delta,\cdot)|^q}(x_0,x)\qquad \hbox{for all}\quad (x_0,x)\in\RR_+^{1+d}.
\end{align}
Since $|U_F(\delta,\cdot)|^q\in L^{1/q}_{\kappa}(\RR^d)$ with $1/q>1$ and $\sup_{\delta>0}\||U_F(\delta,\cdot)|^q\|_{L^{1/q}_{\kappa}}\lesssim\|f\|_{H_{\kappa}^1}^q$.
The *-weak compactness of $L^{1/q}_{\kappa}(\RR^d)$ implies
that, there exists a sequence $\delta_n\rightarrow0+$ such that
$U_F(\delta_n,\cdot)$ converges *-weakly to a nonnegative function $g\in L^{1/q}_{\kappa}(\RR^d)$. Thus from (\ref{H-BMO-5-5}) we get
\begin{align*}
|U_F(x_0,x)|^q\le u_{g}(x_0,x)\qquad \hbox{for all}\quad (x_0,x)\in\RR_+^{1+d}.
\end{align*}
Now since $\|u_{|U_F(\delta_n,\cdot)|^q}(x_0,\cdot)\|_{L^{1/q}_{\kappa}}\le\||U_F(\delta_n,\cdot)|^q\|_{L^{1/q}_{\kappa}} \lesssim\|f\|_{H_{\kappa}^1}^q$, Fatou's lemma asserts that $\|u_{g}(x_0,\cdot)\|_{L^{1/q}_{\kappa}}\lesssim\|f\|_{H_{\kappa}^1}^q$, and letting $x_0\rightarrow0+$ gives $\|g\|_{L^{1/q}_{\kappa}}\lesssim\|f\|_{H_{\kappa}^1}^q$.

Finally from (\ref{J-3}) we have
\begin{align*}
J_3\le\iint_{\RR_+^{1+d}}x_0 u_{g}(x_0,x)^{1/q}|\nabla_{\tilde{\kappa}} u_{\varphi}(x_0,x)|^2 \, d\omega_\kappa(x)dx_0,
\end{align*}
and so, by Theorems \ref{car-pro-1}, $J_3\lesssim\|d\nu_{\kappa,\varphi}\|_{\kappa}\|g\|_{L^{1/q}_{\kappa}}^{1/q} \lesssim\|\varphi\|_{*,C_{\kappa}}^2\|f\|_{H_{\kappa}^1}$. Substituting this into (\ref{H-BMO-5-2}) proves (\ref{gradient-product-1}), and then, the proof of the lemma is completed.
\end{proof}

\begin{lemma}\label{inner-product-repres-a}
If $f\in H_{\kappa,0}^1(\RR^d)$ and $\varphi\in \BC_\kappa(\RR^d)$, then
\begin{eqnarray}\label{inner-product-repres-1}
\int_{\RR^d}f(x)\varphi(x)d\omega_\kappa(x)=2\iint_{\RR^{1+d}_+}x_0\big\langle\nabla_{\tilde{\kappa}} u_f(x_0,x),\nabla_{\tilde{\kappa}} u_{\varphi}(x_0,x)\big\rangle d\omega_\kappa(x)dx_0,
\end{eqnarray}
where $u_f(x_0,x)$ and $u_\varphi(x_0,x)$ are the $\kappa$-Poisson integrals of $f$ and $\varphi$ respectively.
\end{lemma}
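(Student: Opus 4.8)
The plan is to reduce the identity to a double integration by parts in the height variable $x_0$ applied to the scalar function
\begin{align*}
I(x_0)=\int_{\RR^d}u_f(x_0,x)\,u_\varphi(x_0,x)\,d\omega_\kappa(x),\qquad x_0>0.
\end{align*}
Two preliminary facts about $I$ are needed. First, since $f\in H_{\kappa,0}^1(\RR^d)$ has $\SF_\kappa f$ smooth and supported in an annulus $\{0<c_1\le|\xi|\le c_2\}$, the representation $u_f(x_0,x)=c_\kappa\int_{\RR^d}e^{-x_0|\xi|}(\SF_\kappa f)(\xi)E_\kappa(i\xi,x)\,d\omega_\kappa(\xi)$ shows that $u_f$, together with all its $(x_0,x)$-derivatives, is rapidly decreasing on $\overline{\RR^{1+d}_+}$ (cf. Corollary \ref{den-aug-2}) and in fact decays like $e^{-c_1x_0}$ as $x_0\to\infty$. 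Second, using the symmetry $(\tau_xP_{x_0})(-t)=(\tau_tP_{x_0})(-x)$, which follows from the symmetric formula (\ref{translation-2-2}), together with Fubini's theorem and the Poisson semigroup law $e^{-x_0|\xi|}e^{-x_0|\xi|}=e^{-2x_0|\xi|}$, I would rewrite
\begin{align*}
I(x_0)=\int_{\RR^d}\varphi(t)\,u_f(2x_0,t)\,d\omega_\kappa(t).
\end{align*}
The growth control of $\varphi$ in condition (i) of Definition \ref{bmc-a} pairs with the rapid decay of $u_f$ to render this integral absolutely convergent and to show that $I\in C^\infty([0,\infty))$ with $I$ and all its derivatives rapidly decreasing as $x_0\to\infty$; moreover $I(0)=\int_{\RR^d}f\,\varphi\,d\omega_\kappa$, since $u_f(0,\cdot)=f$.

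Next I would compute $I''$ by differentiating the first representation under the integral sign. Because $u_f$ and $u_\varphi$ are $\tilde\kappa$-harmonic, $\partial_0^2u_f=-\Delta_\kappa u_f$ and $\partial_0^2u_\varphi=-\Delta_\kappa u_\varphi$, whence
\begin{align*}
I''(x_0)=\int_{\RR^d}\Big[(-\Delta_\kappa u_f)\,u_\varphi+2\,\partial_0u_f\,\partial_0u_\varphi+u_f\,(-\Delta_\kappa u_\varphi)\Big]\,d\omega_\kappa.
\end{align*}
Applying the antisymmetry of the Dunkl operators with respect to $d\omega_\kappa$, namely $\int_{\RR^d}(D_jg)h\,d\omega_\kappa=-\int_{\RR^d}g\,(D_jh)\,d\omega_\kappa$, to each summand $\int(D_j^2u_f)u_\varphi$ and $\int u_f(D_j^2u_\varphi)$ converts both Laplacian terms into $\int\langle\nabla_\kappa u_f,\nabla_\kappa u_\varphi\rangle\,d\omega_\kappa$, where $\nabla_\kappa=(D_1,\dots,D_d)$; no boundary terms arise because $u_f$ is rapidly decreasing while $u_\varphi$ and $\nabla_{\tilde\kappa}u_\varphi$ grow at most polynomially. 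Collecting the three contributions gives
\begin{align*}
I''(x_0)=2\int_{\RR^d}\big\langle\nabla_{\tilde\kappa}u_f(x_0,x),\nabla_{\tilde\kappa}u_\varphi(x_0,x)\big\rangle\,d\omega_\kappa(x)=:2J(x_0).
\end{align*}

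Finally I would integrate $x_0\,I''(x_0)$ over $(0,\infty)$. Integrating by parts twice over a truncation $[\epsilon,M]$ yields $\int_\epsilon^M x_0I''\,dx_0=\big[x_0I'(x_0)-I(x_0)\big]_\epsilon^M$; the rapid decay of $I$ and $I'$ established above annihilates the contribution at $x_0=M$ as $M\to\infty$, while $x_0I'(x_0)\to0$ and $I(x_0)\to I(0)$ as $\epsilon\to0$, so that $\int_0^\infty x_0I''\,dx_0=I(0)$. Since Lemma \ref{gradient-product-a} guarantees $\iint_{\RR^{1+d}_+}x_0|\langle\nabla_{\tilde\kappa}u_f,\nabla_{\tilde\kappa}u_\varphi\rangle|\,d\omega_\kappa\,dx_0<\infty$, the identity $I''=2J$ may be integrated against $x_0$ to give $\int_0^\infty x_0I''\,dx_0=2\iint_{\RR^{1+d}_+}x_0\langle\nabla_{\tilde\kappa}u_f,\nabla_{\tilde\kappa}u_\varphi\rangle\,d\omega_\kappa\,dx_0$. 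Combining this with $I(0)=\int_{\RR^d}f\varphi\,d\omega_\kappa$ proves (\ref{inner-product-repres-1}).

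The step I expect to be the main obstacle is the rigorous justification of the term-by-term computation of $I''$: differentiating twice under the integral sign and carrying out the two Dunkl integrations by parts must be underwritten by quantitative bounds showing that the rapid decay of $u_f$ dominates the polynomial growth of $u_\varphi$, $\nabla_{\tilde\kappa}u_\varphi$ and $\Delta_\kappa u_\varphi$, and that the singular reflection part $\langle\alpha,x\rangle^{-1}(1-\sigma_\alpha)$ of each $D_j$ produces no boundary contribution (the weight $W_\kappa$ vanishing to order $2\kappa(\alpha)$ on each hyperplane $\langle\alpha,x\rangle=0$ compensates for that singularity). The Poisson-kernel estimates (\ref{Poisson-ker-bound-1}) and the absolute convergence furnished by Lemma \ref{gradient-product-a} supply exactly what is required, so no kernel analysis beyond that of Section 5 should be needed.
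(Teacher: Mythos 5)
Your proposal is correct in substance, but it reaches the key intermediate identities by a genuinely different mechanism than the paper. Both arguments ultimately establish the same two facts: that $\int_{\RR^d}\big\langle\nabla_{\tilde{\kappa}} u_f(x_0,\cdot),\nabla_{\tilde{\kappa}} u_{\varphi}(x_0,\cdot)\big\rangle\, d\omega_\kappa$ equals one half of the second $x_0$-derivative of $I(x_0)=\int_{\RR^d}\varphi(t)\,u_f(2x_0,t)\,d\omega_\kappa(t)$, and that integrating this against $x_0\,dx_0$ returns $I(0)=\int_{\RR^d}f\varphi\,d\omega_\kappa$. The paper gets the first fact by Fubini followed by the Plancherel theorem: using (\ref{Dunkl-transform-1}), (\ref{Dunkl-transform-2}) and the transform of the kernel, the spatial integral is evaluated in closed form as $\tfrac12 c_\kappa^{-1}\partial_{x_0}^2[u_f(2x_0,t)]$ (see (\ref{inner-product-repres-3})), so that \emph{all} derivatives land on the rapidly decreasing function $u_f$; the only bounds needed on $u_\varphi$ are the first-order kernel estimates (\ref{Poisson-ker-bound-1})--(\ref{Poisson-ker-bound-2}). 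You instead differentiate $I$ twice under the integral sign, use $\tilde{\kappa}$-harmonicity to write $\partial_0^2=-\Delta_\kappa$ on each factor, and convert via the anti-symmetry of the Dunkl operators; this is closer in spirit to the classical Fefferman--Stein Green's-identity argument and avoids any explicit Fourier computation, which is a genuine merit. The price is the step you yourself flag: your expansion of $I''$ contains $\int u_f\,\partial_0^2u_\varphi\,d\omega_\kappa=-\int u_f\,\Delta_\kappa u_\varphi\,d\omega_\kappa$, so both the differentiation under the integral sign and the subsequent integration by parts require polynomial-growth bounds on \emph{second-order} derivatives of $u_\varphi$ at fixed $x_0>0$. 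Neither (\ref{Poisson-ker-bound-1}) nor Lemma \ref{gradient-product-a} supplies these: one must additionally establish second-derivative estimates for the $\kappa$-Poisson kernel (obtainable, e.g., by differentiating the subordination formula (\ref{Poisson-kernel-2}) twice and invoking Gaussian bounds on the Dunkl heat kernel and its time derivatives), or else integrate by parts asymmetrically so that at most one Dunkl derivative ever falls on $u_\varphi$. With that supplement your argument closes; the paper's Plancherel route is precisely designed to sidestep this issue, while yours trades it for a more elementary, real-variable structure. The remaining ingredients — the semigroup identity behind $I(x_0)$, the decay of $I$ and $I'$ from Corollary \ref{den-aug-2}, the double integration by parts in $x_0$, and the use of Lemma \ref{gradient-product-a} to justify Fubini — coincide with the paper's.
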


\begin{proof}
From (\ref{H-1-BMO-1-1}) it is obvious that the integral on the left hand side of (\ref{inner-product-repres-1}) converges absolutely, and so does the one on the right hand side by Lemma \ref{gradient-product-a}.

For $x_0>0$ we have
\begin{align*}
\nabla_{\tilde{\kappa}} u_{\varphi}(x_0,x)=c_{\kappa}\int_{\RR^d}\varphi(t) \nabla_{\tilde{\kappa}}^{(x_0,x)} [(\tau_{x}P_{x_0})(-t)]\,d\omega_{\kappa}(t)
\end{align*}
which is legitimate on account of (\ref{Poisson-ker-bound-1}) and (\ref{Poisson-ker-bound-2}). By use of (\ref{Poisson-ker-2}),
\begin{align*}
\left|\nabla_{\tilde{\kappa}}^{(x_0,x)}[(\tau_{x}P_{x_0})(-t)]\right|
\lesssim \left(1+\frac{1+|x|}{x_0}\right)^{2|\kappa|+d+1}\frac{1}{(1+|t|)^{2|\kappa|+d+1}},
\end{align*}
so that, in view of (\ref{H-1-BMO-1-1}),
\begin{align*}
\int_{\RR^d}|\varphi(t)| \left|\nabla_{\tilde{\kappa}}^{(x_0,x)} [(\tau_{x}P_{x_0})(-t)]\right|\,d\omega_{\kappa}(t)
\lesssim \left(1+\frac{1+|x|}{x_0}\right)^{2|\kappa|+d+1}.
\end{align*}
Moreover for $f\in H_{\kappa,0}^1(\RR^d)$, $\int_{\RR^{d}}(1+x_0+|x|)^{2|\kappa|+d+1}\left|\nabla_{\tilde{\kappa}} u_f(x_0,x)\right| d\omega_\kappa(x)$ is finite.
Now Fubini's theorem yields
\begin{align}\label{inner-product-repres-2}
&\int_{\RR^{d}}\big\langle\nabla_{\tilde{\kappa}} u_f(x_0,x),\nabla_{\tilde{\kappa}} u_{\varphi}(x_0,x)\big\rangle d\omega_\kappa(x) \nonumber\\
=& c_{\kappa}\int_{\RR^d}\varphi(t) \int_{\RR^d}\big\langle\nabla_{\tilde{\kappa}} u_f(x_0,x),\nabla_{\tilde{\kappa}}^{(x_0,x)} [(\tau_{x}P_{x_0})(-t)]\big\rangle d\omega_\kappa(x)d\omega_{\kappa}(t).
\end{align}

By Proposition \ref{transform-a}(iii) and (\ref{Poisson-kernel-1}), the Dunkl transform of $x\mapsto\nabla_{\tilde{\kappa}}^{(x_0,x)} [(\tau_{x}P_{x_0})(-t)]$ is
$$
e^{-x_0|\xi|}E_{\kappa}(-it,\xi)(-|\xi|,i\xi_1,\cdots,i\xi_d),
$$
and by (\ref{Dunkl-transform-1}) and (\ref{Dunkl-transform-2}), $\left[\SF_{\kappa}(\nabla_{\tilde{\kappa}} u_f(x_0,\cdot))\right](\xi)=e^{-x_0|\xi|}\left(\SF_{\kappa}f\right)(\xi)(-|\xi|,i\xi_1,\cdots,i\xi_d)$.
Thus the Plancherel formula (Propositions \ref{transform-a}(v)) implies that the inner integral on the right hand side of (\ref{inner-product-repres-2}) becomes
\begin{align}\label{inner-product-repres-3}
2\int_{\RR^{d}}|\xi|^2 e^{-2x_0|\xi|}\left(\SF_{\kappa}f\right)(\xi)E_{\kappa}(it,\xi) d\omega_\kappa(\xi)
=\frac{1}{2}c_{\kappa}^{-1}\partial_{x_0}^2[u_{f}(2x_0,t)].
\end{align}
In addition, by Lemma \ref{gradient-product-a} the integral on the right hand side of (\ref{inner-product-repres-1}) converges absolutely, and then, Fubini's theorem again gives
\begin{align*}
\iint_{\RR_+^{1+d}}x_0\big\langle\nabla_{\tilde{\kappa}} u_f,\nabla_{\tilde{\kappa}} u_{\varphi}\big\rangle\, dx_0d\omega_\kappa(x)
=\frac{1}{2}\int_0^{\infty}x_0\int_{\RR^d}\varphi(t)\partial_{x_0}^2[u_{f}(2x_0,t)]\,d\omega_{\kappa}(t)dx_0.
\end{align*}
Noting that $|\Delta_{\kappa}^t [u_{f}(2x_0,t)]|\lesssim (1+x_0+|t|)^{-2|\kappa|-d-3}$ and in view of (\ref{H-1-BMO-1-1}), further changing order of the integration above is permitted, and hence
\begin{align}\label{inner-product-repres-4}
\iint_{\RR_+^{1+d}}x_0\big\langle\nabla_{\tilde{\kappa}} u_f,\nabla_{\tilde{\kappa}} u_{\varphi}\big\rangle d\omega_\kappa(x)
=\frac{1}{2}\int_{\RR^d}\varphi(t)\int_0^{\infty}x_0\partial_{x_0}^2[u_{f}(2x_0,t)]\,dx_0 d\omega_{\kappa}(t).
\end{align}
Finally, since $f\in H_{\kappa,0}^1(\RR^d)$ it follows from (\ref{inner-product-repres-3}) that
$$
\int_0^{\infty}x_0\partial_{x_0}^2[u_{f}(2x_0,t)]\,dx_0
=f(t),
$$
and inserting this into (\ref{inner-product-repres-4}) finishes the proof of (\ref{inner-product-repres-1}).
\end{proof}

\begin{theorem}\label{H-1-BMO}
If $\varphi\in \BC_\kappa(\RR^d)$, then the linear functional
\begin{align}\label{bmo-linear-function-1}
{\mathcal{L}}f=\int_{\RR^d}f(x)\varphi(x)d\omega_\kappa(x),
\end{align}
initially defined for $f\in H_{\kappa,0}^1(\RR^d)$, has a unique bounded extension to $H^{1}_\kappa(\RR^d)$, and $\|{\mathcal{L}}\|\lesssim \|\varphi\|_{*,C_{\kappa}}$, where the constant $C>0$ is independent of $\varphi$.
\end{theorem}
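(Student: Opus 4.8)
The plan is to read the theorem off directly from the two lemmas immediately preceding it, combined with the density of $H_{\kappa,0}^1(\RR^d)$ in $H_\kappa^1(\RR^d)$. Indeed, the entire analytic substance — the representation of the pairing $\int f\varphi\,d\omega_\kappa$ as a balanced integral over $\RR^{1+d}_+$, and the Carleson-type control of that integral — has already been isolated in Lemmas \ref{inner-product-repres-a} and \ref{gradient-product-a}. What remains for the present theorem is essentially a boundedness-plus-extension argument.

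First I would fix $f\in H_{\kappa,0}^1(\RR^d)$ and invoke Lemma \ref{inner-product-repres-a} to write
\[
{\mathcal{L}}f=\int_{\RR^d}f\varphi\,d\omega_\kappa
=2\iint_{\RR^{1+d}_+}x_0\big\langle\nabla_{\tilde{\kappa}} u_f,\nabla_{\tilde{\kappa}} u_{\varphi}\big\rangle\,d\omega_\kappa(x)\,dx_0,
\]
where $u_f$ and $u_\varphi$ are the $\kappa$-Poisson integrals of $f$ and $\varphi$. Condition (i) of Definition \ref{bmc-a} already guarantees that $\mathcal{L}f$ is well defined (the defining integral converges absolutely) for such $f$. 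Pulling the absolute value inside and applying the estimate of Lemma \ref{gradient-product-a} then yields at once
\[
|{\mathcal{L}}f|\le 2\iint_{\RR^{1+d}_+}x_0\big|\big\langle\nabla_{\tilde{\kappa}} u_f,\nabla_{\tilde{\kappa}} u_{\varphi}\big\rangle\big|\,d\omega_\kappa(x)\,dx_0
\lesssim\|\varphi\|_{*,C_{\kappa}}\,\|f\|_{H_{\kappa}^1}.
\]
So ${\mathcal{L}}$ is a bounded linear functional on the subclass $H_{\kappa,0}^1(\RR^d)$, with operator norm dominated by a fixed multiple of $\|\varphi\|_{*,C_{\kappa}}$ independent of $\varphi$.

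Finally I would appeal to Lemma \ref{density-a}, which asserts that $H_{\kappa,0}^1(\RR^d)$ is dense in $H_\kappa^1(\RR^d)$. Since ${\mathcal{L}}$ is linear and bounded on this dense subspace, the standard extension principle for bounded linear functionals provides a unique continuous extension to all of $H_\kappa^1(\RR^d)$, and that extension inherits the norm bound $\|{\mathcal{L}}\|\lesssim\|\varphi\|_{*,C_{\kappa}}$; uniqueness is forced by density and continuity. The genuinely hard part of this circle of ideas is not located in the present proof at all: it is the Carleson-measure bound of Lemma \ref{gradient-product-a}, which in turn rests on the subharmonicity inequality of Lemma \ref{subbarmonicity-a}, the Green formula, the $\kappa$-harmonic majorization of $|U_F|^q$, and the Carleson embedding Theorem \ref{car-pro-1}. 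Granted those inputs, Theorem \ref{H-1-BMO} is a short corollary, and I would expect the only delicate bookkeeping here to be the verification of absolute convergence (already furnished by the two lemmas) so that the identity and the estimate may legitimately be chained together.
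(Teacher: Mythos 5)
Your proposal is correct and is essentially the paper's own proof: the paper likewise reduces the theorem to the bound $|\int f\varphi\,d\omega_\kappa|\lesssim\|\varphi\|_{*,C_\kappa}\|f\|_{H_\kappa^1}$ for $f\in H_{\kappa,0}^1(\RR^d)$, obtained by chaining Lemma \ref{inner-product-repres-a} with Lemma \ref{gradient-product-a}, with the extension to $H_\kappa^1(\RR^d)$ following from the density result of Lemma \ref{density-a}. Your write-up is in fact slightly more explicit than the paper's, which leaves the density/extension step implicit.
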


\begin{proof}
It suffices to show that, for $f\in H_{\kappa,0}^1(\RR^d)$,
$$
\Big|\int_{\RR^d}f(x)\varphi(x)d\omega_{\kappa}(x)\Big| \lesssim\|\varphi\|_{*,C_{\kappa}}\|f\|_{H_{\kappa}^1},
$$
but that is direct consequence of Lemmas \ref{gradient-product-a} and \ref{inner-product-repres-a}. The proof is completed.
\end{proof}

\begin{theorem}\label{H-1-BMO}
If $\mathcal{L}$ is a continuous linear functional on $H^{1}_\kappa(\RR^d)$, then there exists a unique element $\varphi\in\BC_\kappa(\RR^d)$ so that $\mathcal{L}$ can be realized by (\ref{bmo-linear-function-1}),
at least for $f\in H_{\kappa,0}^1(\RR^d)$, and  $\|\varphi\|_{*,C_{\kappa}}\le C'\|{\mathcal{L}}\|$, where the constant $C'>0$ is independent of $\mathcal{L}$.
Moreover, such a function $\varphi\in\BC_\kappa(\RR^d)$ can be expressed as
$$
\varphi=\varphi_0+\sum_{j=1}^d \tilde{\mathcal{R}}_j\varphi_j,
$$
where  $\varphi_0,\varphi_1,\cdot\cdot\cdot,\varphi_d\in L^\infty(\RR^d)$.
\end{theorem}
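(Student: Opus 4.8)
The plan is to run the classical Fefferman--Stein duality scheme \cite{FS1} in the present ``analytic'' setting: embed $H_{\kappa}^1(\RR^d)$ isometrically into a finite direct sum of copies of $L_{\kappa}^1(\RR^d)$ by means of the $\kappa$-Riesz transforms, dualize through Hahn--Banach, and then transfer the transforms onto the symbols using Lemma \ref{sym-riez}. First I would introduce the map
\[
\Phi f=(f,{\mathcal{R}}_1 f,\dots,{\mathcal{R}}_d f),\qquad f\in H_{\kappa}^1(\RR^d),
\]
into $\bigoplus_{j=0}^d L_{\kappa}^1(\RR^d)$ equipped with the $\ell^1$-sum norm. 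By the very definition of $\|\cdot\|_{H_{\kappa}^1}$, the map $\Phi$ is an isometry onto a subspace $\mathcal{V}$, which is closed since $H_{\kappa}^1(\RR^d)$ is complete (Proposition \ref{completeness-a}). A bounded functional $\mathcal{L}$ on $H_{\kappa}^1(\RR^d)$ therefore defines a bounded functional of equal norm on $\mathcal{V}$, and the Hahn--Banach theorem extends it to the whole sum without increasing the norm. Since $L_{\kappa}^1(\RR^d)^*=L^\infty(\RR^d)$ (recall $L_\kappa^\infty=L^\infty$), and the dual of an $\ell^1$-sum is the $\ell^\infty$-maximum of the dual components, I obtain $\varphi_0,\dots,\varphi_d\in L^\infty(\RR^d)$ with $\max_{0\le j\le d}\|\varphi_j\|_{L^\infty}\le\|\mathcal{L}\|$ such that
\[
\mathcal{L}f=\int_{\RR^d} f\varphi_0\,d\omega_\kappa+\sum_{j=1}^d\int_{\RR^d}({\mathcal{R}}_j f)\varphi_j\,d\omega_\kappa,\qquad f\in H_{\kappa}^1(\RR^d).
\]

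Next I would specialize to $f\in H_{\kappa,0}^1(\RR^d)=\SS(\RR^d)\cap L_{\kappa,00}^1(\RR^d)$, where the decay $|f(x)|\lesssim(1+|x|)^{-2|\kappa|-d-1}$ is automatic. Then Lemma \ref{sym-riez} with $\psi=\varphi_j$ gives $\int({\mathcal{R}}_j f)\varphi_j\,d\omega_\kappa=-\int f(\tilde{\mathcal{R}}_j\varphi_j)\,d\omega_\kappa$, so that, after replacing $\varphi_j$ by $-\varphi_j$ for $1\le j\le d$ (which leaves $\|\varphi_j\|_{L^\infty}$ unchanged),
\[
\mathcal{L}f=\int_{\RR^d} f\Big(\varphi_0+\sum_{j=1}^d\tilde{\mathcal{R}}_j\varphi_j\Big)\,d\omega_\kappa,\qquad f\in H_{\kappa,0}^1(\RR^d).
\]
This realizes $\mathcal{L}$ by (\ref{bmo-linear-function-1}) with $\varphi:=\varphi_0+\sum_{j=1}^d\tilde{\mathcal{R}}_j\varphi_j$, and is simultaneously the asserted Fefferman--Stein decomposition.

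To see $\varphi\in\BC_\kappa(\RR^d)$ with the norm estimate, I would invoke Proposition \ref{bmc-b} for $\varphi_0$ and Theorem \ref{Riesz-BMC-a} for each $\tilde{\mathcal{R}}_j\varphi_j$. Since the $\kappa$-Poisson integral is linear, $u_\varphi$ is the sum of the corresponding integrals, and the elementary bound $|\sum_j a_j|^2\le(d+1)\sum_j|a_j|^2$ applied to $\nabla_{\tilde{\kappa}}u_\varphi$ yields $\iint_{T(B)}d\nu_{\kappa,\varphi}\lesssim\sum_{j}\|\varphi_j\|_{L^\infty}^2\,|B|_\kappa$ for every ball $B$, hence $\|\varphi\|_{*,C_{\kappa}}\lesssim\sum_j\|\varphi_j\|_{L^\infty}\lesssim\|\mathcal{L}\|$. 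Condition (i) of Definition \ref{bmc-a} is inherited because $\varphi_0\in L^\infty(\RR^d)$ satisfies it trivially and $\tilde{\mathcal{R}}_j\varphi_j\in\B_\kappa(\RR^d)$ by Theorem \ref{Riesz-BMO-a}, to which Proposition \ref{bmo-a} (with $x'=0$) applies.

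Finally, for uniqueness modulo constants I would show that any $g\in\BC_\kappa(\RR^d)$ with $\int f g\,d\omega_\kappa=0$ for all $f\in H_{\kappa,0}^1(\RR^d)$ must be constant. Choosing $f=h\ast_\kappa\phi$ with $h\in\SS(\RR^d)$ arbitrary and a radial $\phi\in\SS(\RR^d)$ whose Dunkl transform is supported in an annulus away from the origin (so that $f\in H_{\kappa,0}^1(\RR^d)$ by Proposition \ref{convolution-2-c} together with the closure properties in Lemma \ref{density-a} and Corollary \ref{density-b}), the orthogonality forces $\SF_\kappa g$ to be concentrated at the origin; combined with $g\in\B_\kappa(\RR^d)$ this gives $g=\const$, so $\varphi$ is unique in $\BC_\kappa(\RR^d)$. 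The hard part will be precisely this uniqueness step: because $g$ lies only in $\BC_\kappa(\RR^d)\subset\B_\kappa(\RR^d)$ and need not be square-integrable, the passage ``$\SF_\kappa g$ supported at the origin $\Rightarrow$ $g$ polynomial $\Rightarrow$ $g$ constant'' must be carried out through the pairing against the dense family $H_{\kappa,0}^1(\RR^d)$ rather than by a direct Plancherel computation. By contrast, the representation, the decomposition, and the norm bounds follow almost mechanically once Lemma \ref{sym-riez} and Theorem \ref{Riesz-BMC-a} are in hand.
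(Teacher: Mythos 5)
Your proposal is correct and follows essentially the same route as the paper's own proof: the isometric embedding $f\mapsto(f,{\mathcal{R}}_1f,\dots,{\mathcal{R}}_df)$ into the $\ell^1$-direct sum of $d+1$ copies of $L^1_\kappa(\RR^d)$, Hahn--Banach extension, identification of the dual with $L^\infty$-tuples, transfer of the Riesz transforms onto the symbols via Lemma \ref{sym-riez}, and membership plus the norm bound from Theorem \ref{Riesz-BMC-a}. The only difference is that you spell out the uniqueness step (Dunkl transform of the difference supported at the origin, hence a polynomial, hence constant by the growth/BMO condition), which the paper dismisses as obvious; your sketch of it is sound.
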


\begin{proof}
As in \cite[p. 145]{FS1}, we regard $H^{1}_\kappa(\RR^d)$ as a closed subspace of the direct sum $X:=L^{1}_\kappa\oplus L^{1}_\kappa\oplus\cdots
\oplus L^{1}_\kappa$ of $d+1$ copies of $L^{1}_\kappa(\RR^d)$. If we set
$X=\{(f_0,f_1,\cdots,f_d):\,\, f_j\in L^{1}_\kappa(\RR^d),\,\,j=0,1,\cdots d\}$ with the norm $\|(f_0,f_1,\cdots,f_d)\|=\sum_{j=1}^d\|f_j\|_{L^{1}_\kappa}$, it is known that the dual of $X$ is equivalent to $L^{\infty}\oplus L^{\infty}\oplus\cdots
\oplus L^{\infty}$. We consider the subspace $X_0$ of $X$ given by
$$
X_0=\{(f,{\mathcal{R}}_1f,\cdots,{\mathcal{R}}_df):\,\, f\in L^{1}_\kappa(\RR^d)\,\,\hbox{and}\,\,{\mathcal{R}}_jf\in L^{1}_\kappa(\RR^d),\,\,j=1,\cdots d\}.
$$
It is easy to see that $X_0$ is a closed subspace of $X$, and there is a natural isometric mapping of $H^{1}_\kappa(\RR^d)$ to $X_0$, that is, $f\mapsto(f,{\mathcal{R}}_1f,\cdots,{\mathcal{R}}_df)$. Thus if $\mathcal{L}$ is a continuous linear functional on $H^{1}_\kappa(\RR^d)$, the one on $X_0$ identified by it, by Hahn-Banach theorem, extends to a continuous linear functional on $X$, denoted by $\mathcal{L}$ too. Therefore, there exist $\varphi_0,\varphi_1,\cdot\cdot\cdot,\varphi_d\in L^\infty(\RR^d)$ such that
$$
{\mathcal{L}}(f_0,f_1,\cdots,f_d)=\sum_{j=0}^d\int_{\RR^d}f_j\varphi_jd\omega_\kappa,\qquad (f_0,f_1,\cdots,f_d)\in X,
$$
and $\|{\mathcal{L}}\|\asymp\sum_{j=0}^{d}\|\varphi_j\|_{L^{\infty}}$.
In particular, such a representation holds if $(f,{\mathcal{R}}_1f,\cdots,{\mathcal{R}}_df) \in X_0$, or equivalently, for $f\in H^{1}_\kappa(\RR^d)$,
${\mathcal{L}}f=\int_{\RR^d}\varphi_0f\,d\omega_\kappa+\sum_{j=1}^d\int_{\RR^d}\varphi_j{\mathcal{R}}_jf\,d\omega_\kappa$.
If, furthermore, we apply this to $f\in H_{\kappa,0}^1(\RR^d)$, and then appeal to Lemma \ref{sym-riez}, to get
$$
{\mathcal{L}}f=\int_{\RR^d} f\left(\varphi_0-\sum_{j=1}^d\tilde{\mathcal{R}}_j\varphi_j\right)\,d\omega_\kappa.
$$
According to Theorem \ref{Riesz-BMC-a}, the function $\varphi=\varphi_0-\sum_{j=1}^d\tilde{\mathcal{R}}_j\varphi_j$ is in $\BC_\kappa(\RR^d)$ and
$\|\varphi\|_{*,C_{\kappa}}\lesssim\sum_{j=0}^{d}\|\varphi_j\|_{L^{\infty}}\asymp\|{\mathcal{L}}\|$. The uniqueness is obvious, and the proof of the theorem is completed.
\end{proof}

Finally we summarize the conclusions in the above theorems as follows.

\begin{theorem}\label{H-1-BMO-b}
The dual of $H_{\kappa}^1(\RR^d)$ is $\BC_\kappa(\RR^d)$, and the norm of a continuous linear functional $\mathcal{L}$ on $H^{1}_\kappa(\RR^d)$, realized by (\ref{bmo-linear-function-1}), is equivalent to the norm of the representing function
$\varphi\in\BC_\kappa(\RR^d)$.
\end{theorem}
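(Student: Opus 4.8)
The plan is to obtain the asserted duality as the synthesis of the two preceding theorems, which between them already establish both directions of the correspondence together with the matching norm bounds; the remaining work is to package this into a genuine isomorphism and to fix the correct equivalence on the $\BC_\kappa$ side. First I would record the convention that $\BC_{\kappa}(\RR^d)$, like $\B_\kappa(\RR^d)$, is taken modulo constants. Since the $\kappa$-Poisson integral of a constant is that same constant, its $\tilde{\kappa}$-gradient vanishes, so the Carleson quantity $\varphi\mapsto\|\varphi\|_{*,C_{\kappa}}$ is a seminorm annihilating exactly the constants; moreover, by Proposition \ref{Hardy-integral-a} every $f\in H^{1}_\kappa(\RR^d)$ has $\int_{\RR^d}f\,d\omega_\kappa=0$, so the value of the pairing (\ref{bmo-linear-function-1}) is unchanged when $\varphi$ is altered by a constant. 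Hence (\ref{bmo-linear-function-1}) descends to a well-defined bilinear pairing on $\bigl(\BC_{\kappa}(\RR^d)/\operatorname{const}\bigr)\times H^{1}_\kappa(\RR^d)$.

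Next I would assemble the isomorphism from the two Theorems \ref{H-1-BMO}. The first supplies the linear map $\varphi\mapsto\mathcal{L}$ from $\BC_{\kappa}(\RR^d)$ into $\bigl(H^{1}_\kappa(\RR^d)\bigr)^{*}$ together with $\|\mathcal{L}\|\lesssim\|\varphi\|_{*,C_{\kappa}}$, so this map is bounded and, by the first paragraph, well defined on the quotient. The second supplies, for each continuous functional $\mathcal{L}$, a representing $\varphi\in\BC_{\kappa}(\RR^d)$ with $\|\varphi\|_{*,C_{\kappa}}\le C'\|\mathcal{L}\|$ agreeing with $\mathcal{L}$ on the subspace $H^{1}_{\kappa,0}(\RR^d)$; since $H^{1}_{\kappa,0}(\RR^d)$ is dense in $H^{1}_\kappa(\RR^d)$ by Lemma \ref{density-a} and both $\mathcal{L}$ and $f\mapsto\int_{\RR^d}f\varphi\,d\omega_\kappa$ are continuous on $H^{1}_\kappa(\RR^d)$, the representation (\ref{bmo-linear-function-1}) then holds on all of $H^{1}_\kappa(\RR^d)$. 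This gives surjectivity, and combining the two inequalities yields $\|\mathcal{L}\|\asymp\|\varphi\|_{*,C_{\kappa}}$, which is exactly the claimed norm equivalence.

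It remains to verify injectivity of $\varphi\mapsto\mathcal{L}$ on the quotient, i.e. that a function pairing to zero against every element of $H^{1}_{\kappa,0}(\RR^d)$ must be constant; this is the one point not reducible to a citation. For it I would argue on the Dunkl-transform side: since $\SF_\kappa$ is a homeomorphism of $\SS(\RR^d)$ (Proposition \ref{transform-a}(iii)), as $f$ ranges over $H^{1}_{\kappa,0}(\RR^d)=\SS(\RR^d)\cap L^{1}_{\kappa,00}(\RR^d)$ the transform $\SF_\kappa f$ ranges over all of $\SS(\RR^d)$ with compact support in $\RR^d\setminus\{0\}$. If $\int_{\RR^d}f\varphi\,d\omega_\kappa=0$ for every such $f$, then the (tempered) Dunkl transform of $\varphi$, interpreted via the product formula (Proposition \ref{transform-a}(iv)) using the growth bound (\ref{H-1-BMO-1-1}) of Definition \ref{bmc-a}, annihilates all test functions supported away from the origin, hence is supported at $\{0\}$; this forces $\varphi$ to be a polynomial, and the local-integrability/growth constraint (\ref{H-1-BMO-1-1}) together with membership in $\BC_{\kappa}(\RR^d)$ confines the polynomial to a constant, so $\varphi\equiv0$ in the quotient. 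I expect this separation step to be the main obstacle, as it requires Lemma \ref{density-a} at the level of Dunkl transforms and a careful treatment of the distributional pairing under the homogeneity and integrability conditions of Definition \ref{bmc-a}; surjectivity and the norm bounds are immediate once the quotient convention is in place.
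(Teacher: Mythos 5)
Your proposal is correct and takes essentially the same route as the paper: Theorem \ref{H-1-BMO-b} is introduced there explicitly as a summary of the two preceding theorems (the boundedness $\|{\mathcal{L}}\|\lesssim\|\varphi\|_{*,C_{\kappa}}$ of the pairing for $\varphi\in\BC_\kappa(\RR^d)$, and the Hahn-Banach/Fefferman-Stein representation $\varphi=\varphi_0-\sum_j\tilde{\mathcal{R}}_j\varphi_j$ with $\|\varphi\|_{*,C_{\kappa}}\lesssim\|{\mathcal{L}}\|$), glued together exactly as you do via the density of $H_{\kappa,0}^1(\RR^d)$ from Lemma \ref{density-a}. Your Dunkl-transform argument that a function pairing to zero against all of $H_{\kappa,0}^1(\RR^d)$ must be constant (support of the distributional transform at the origin, hence a polynomial, cut down to a constant by condition (\ref{H-1-BMO-1-1})) is a sound filling-in of the one point the paper waves off with ``the uniqueness is obvious.''
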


\begin{theorem}\label{H-1-BMO-1-b}
A function $\varphi\in L_{\kappa,{\rm loc}}(\RR^d)$ is in $\BC_{\kappa}(\RR^d)$ if and only if it has the representation
$$
\varphi=\varphi_0+\sum_{j=1}^d \tilde{\mathcal{R}}_j\varphi_j, \qquad \hbox{where}\quad \varphi_0,\varphi_1,\cdots,\varphi_d\in L^\infty(\RR^d);
$$
and moreover, $\|\varphi\|_{*,C_{\kappa}}\asymp\sum_{j=0}^{d}\|\varphi_j\|_{L^{\infty}}$.
\end{theorem}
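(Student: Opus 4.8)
The plan is to read Theorem \ref{H-1-BMO-1-b} as a repackaging of the duality already established, so that the proof reduces to assembling the mapping properties of $\tilde{\mathcal{R}}_j$ and the representation of bounded functionals on $H^1_\kappa(\RR^d)$. I would prove the two implications separately and then extract the norm equivalence; the genuine analytic difficulty lives entirely in the upstream results (above all Theorem \ref{Riesz-BMC-a} and the duality theorem), not in this assembly.

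For the sufficiency direction, suppose $\varphi=\varphi_0+\sum_{j=1}^d\tilde{\mathcal{R}}_j\varphi_j$ with $\varphi_0,\dots,\varphi_d\in L^{\infty}(\RR^d)$. Since the $\kappa$-Poisson integral is linear in its boundary datum, $\nabla_{\tilde{\kappa}}u_\varphi=\nabla_{\tilde{\kappa}}u_{\varphi_0}+\sum_{j}\nabla_{\tilde{\kappa}}u_{\tilde{\mathcal{R}}_j\varphi_j}$, and the triangle inequality for the $L^2\big(T(B),\,x_0\,dx_0\,d\omega_\kappa\big)$ norm of the vector field $\nabla_{\tilde{\kappa}}u$, followed by taking the supremum over balls $B$, shows that $\|\cdot\|_{*,C_\kappa}$ is subadditive. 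It therefore suffices to bound each summand: $\|\varphi_0\|_{*,C_\kappa}\lesssim\|\varphi_0\|_{L^\infty}$ by Proposition \ref{bmc-b}, and $\|\tilde{\mathcal{R}}_j\varphi_j\|_{*,C_\kappa}\lesssim\|\varphi_j\|_{L^\infty}$ by Theorem \ref{Riesz-BMC-a}. Summing yields $\varphi\in\BC_\kappa(\RR^d)$ together with $\|\varphi\|_{*,C_\kappa}\lesssim\sum_{j=0}^d\|\varphi_j\|_{L^\infty}$, and membership in $L_{\kappa,{\rm loc}}(\RR^d)$ is automatic since each summand already lies there (the Riesz pieces lie in $\B_\kappa(\RR^d)\subset L_{\kappa,{\rm loc}}(\RR^d)$).

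For the necessity direction, let $\varphi\in\BC_\kappa(\RR^d)$. By the duality (Theorem \ref{H-1-BMO-b}) the functional ${\mathcal L}f=\int_{\RR^d}f\varphi\,d\omega_\kappa$, initially defined on $H^1_{\kappa,0}(\RR^d)$, extends boundedly to $H^1_\kappa(\RR^d)$ with $\|{\mathcal L}\|\lesssim\|\varphi\|_{*,C_\kappa}$. The Hahn--Banach/representation argument established just before Theorem \ref{H-1-BMO-b} then produces $\varphi_0,\dots,\varphi_d\in L^\infty(\RR^d)$ with $\sum_{j=0}^d\|\varphi_j\|_{L^\infty}\asymp\|{\mathcal L}\|$ such that ${\mathcal L}f=\int_{\RR^d}f\big(\varphi_0-\sum_{j=1}^d\tilde{\mathcal{R}}_j\varphi_j\big)\,d\omega_\kappa$ for all $f\in H^1_{\kappa,0}(\RR^d)$, the passage from ${\mathcal R}_j$ to $\tilde{\mathcal R}_j$ resting on the adjoint identity of Lemma \ref{sym-riez}. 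Since $\varphi-\big(\varphi_0-\sum_j\tilde{\mathcal{R}}_j\varphi_j\big)$ induces the zero functional on the dense subspace $H^1_{\kappa,0}(\RR^d)$, the uniqueness of the representing element in $\BC_\kappa(\RR^d)$ forces $\varphi=\varphi_0-\sum_{j=1}^d\tilde{\mathcal{R}}_j\varphi_j$; replacing each $\varphi_j$ by $-\varphi_j$ puts this in the stated form with a $+$ sign.

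The norm equivalence is then immediate: the sufficiency bound gives $\|\varphi\|_{*,C_\kappa}\lesssim\sum_{j=0}^d\|\varphi_j\|_{L^\infty}$ for \emph{any} admissible decomposition, whereas the decomposition just produced satisfies $\sum_{j=0}^d\|\varphi_j\|_{L^\infty}\asymp\|{\mathcal L}\|\lesssim\|\varphi\|_{*,C_\kappa}$, so that $\|\varphi\|_{*,C_\kappa}$ is comparable to the infimum of $\sum_j\|\varphi_j\|_{L^\infty}$ over all decompositions. I expect the only delicate point in this assembly to be the identification step: one must check that ${\mathcal L}$ is genuinely determined by its values on $H^1_{\kappa,0}(\RR^d)$ and that both $\varphi$ and the constructed $\varphi_0-\sum_j\tilde{\mathcal{R}}_j\varphi_j$ are bona fide elements of $\BC_\kappa(\RR^d)$, so that the uniqueness clause of the representation theorem legitimately applies and the argument does not become circular. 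Everything else is a direct citation of the preceding results.
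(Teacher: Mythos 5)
Your proposal is correct and follows essentially the same route as the paper, which presents this theorem as a direct assembly of Proposition \ref{bmc-b} and Theorem \ref{Riesz-BMC-a} (for the sufficiency and one half of the norm equivalence) with the two duality theorems and Lemma \ref{sym-riez} (for the necessity via Hahn--Banach and the identification on the dense subclass $H_{\kappa,0}^1(\RR^d)$). The only point you gloss over slightly is that the uniqueness of the representing element holds modulo additive constants, but since any such constant can be absorbed into $\varphi_0$, this does not affect the argument.
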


\section{Remarks on the spaces $\BC_\kappa(\RR^d)$, $\B_\kappa(\RR^d)$ and $H^{1}_\kappa(\RR^d)$}

\subsection{$\BC_\kappa(\RR^d)$ and $\B_\kappa(\RR^d)$}

By Theorems \ref{Riesz-BMO-a}, \ref{H-1-BMO-b} and \ref{H-1-BMO-1-b} we have the following corollary.

\begin{corollary}\label{bmo-bmc-1}
The space $\BC_{\kappa}(\RR^d)$ is complete, and is a subclass of $\B_{\kappa}(\RR^d)$; and moreover, for $\varphi\in \BC_{\kappa}(\RR^d)$, $\|\varphi\|_{*,\kappa}\lesssim\|\varphi\|_{*,C_{\kappa}}$.
\end{corollary}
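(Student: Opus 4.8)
The plan is to derive all three assertions by assembling the three cited theorems, with essentially no new analytic work.

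First I would establish completeness. By Theorem \ref{H-1-BMO-b} the assignment $\varphi\mapsto{\mathcal L}$, with ${\mathcal L}$ given by (\ref{bmo-linear-function-1}), identifies $\BC_\kappa(\RR^d)$ with the dual space of $H_\kappa^1(\RR^d)$, and the norm $\|\varphi\|_{*,C_\kappa}$ is equivalent to the operator norm of ${\mathcal L}$. Here $\BC_\kappa(\RR^d)$ is to be viewed modulo constants: elements of $H_\kappa^1(\RR^d)$ integrate to zero by Proposition \ref{Hardy-integral-a}, so adding a constant to $\varphi$ does not affect ${\mathcal L}$, and moreover adding a constant leaves $\nabla_{\tilde\kappa}u_\varphi$ unchanged, hence leaves $\|\varphi\|_{*,C_\kappa}$ unchanged. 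Since the dual of any normed space is a Banach space, $(H_\kappa^1(\RR^d))^*$ is complete; completeness is preserved under a linear isomorphism with equivalent norms, so $\BC_\kappa(\RR^d)$ is complete.

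Next I would prove the inclusion and the norm bound together. Fix $\varphi\in\BC_\kappa(\RR^d)$. By Theorem \ref{H-1-BMO-1-b} one may choose a representation $\varphi=\varphi_0+\sum_{j=1}^d\tilde{\mathcal R}_j\varphi_j$ with $\varphi_0,\dots,\varphi_d\in L^\infty(\RR^d)$ and $\sum_{j=0}^d\|\varphi_j\|_{L^\infty}\lesssim\|\varphi\|_{*,C_\kappa}$, reading the relation $\asymp$ there as an equivalence with the infimum over all such decompositions, so that a near-optimal decomposition is available. The term $\varphi_0$ satisfies $\varphi_0\in L^\infty(\RR^d)\subseteq\B_\kappa(\RR^d)$ with $\|\varphi_0\|_{*,\kappa}\le 2\|\varphi_0\|_{L^\infty}$, since $|\varphi_0-(\varphi_0)_B|\le 2\|\varphi_0\|_{L^\infty}$ pointwise on every ball $B$; and for each $j=1,\dots,d$, Theorem \ref{Riesz-BMO-a} gives $\tilde{\mathcal R}_j\varphi_j\in\B_\kappa(\RR^d)$ with $\|\tilde{\mathcal R}_j\varphi_j\|_{*,\kappa}\lesssim\|\varphi_j\|_{L^\infty}$. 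Since $\|\cdot\|_{*,\kappa}$ is a seminorm on $\B_\kappa(\RR^d)$ (well defined modulo constants) obeying the triangle inequality, summing these estimates yields
\begin{align*}
\|\varphi\|_{*,\kappa}\le\|\varphi_0\|_{*,\kappa}+\sum_{j=1}^d\|\tilde{\mathcal R}_j\varphi_j\|_{*,\kappa}\lesssim\sum_{j=0}^d\|\varphi_j\|_{L^\infty}\lesssim\|\varphi\|_{*,C_\kappa},
\end{align*}
which gives both $\BC_\kappa(\RR^d)\subseteq\B_\kappa(\RR^d)$ and the desired norm inequality.

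I do not anticipate a genuine obstacle here, since the corollary is a bookkeeping consequence of the three theorems. The only points requiring care are the correct reading of the equivalence in Theorem \ref{H-1-BMO-1-b} as an infimum over representations, so that a decomposition with controlled $L^\infty$ bounds can be selected, and the verification that both $\|\cdot\|_{*,\kappa}$ and $\|\cdot\|_{*,C_\kappa}$ descend to the quotient by constants, so that all the identifications and the triangle inequality are legitimate at the level of equivalence classes.
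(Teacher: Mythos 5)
Your proposal is correct and follows essentially the same route as the paper: the paper obtains this corollary precisely by combining Theorem \ref{H-1-BMO-b} (so that $\BC_\kappa(\RR^d)$, taken modulo constants, is identified with the dual of the Banach space $H^{1}_\kappa(\RR^d)$ and is therefore complete) with Theorems \ref{H-1-BMO-1-b} and \ref{Riesz-BMO-a} (the Fefferman--Stein decomposition $\varphi=\varphi_0+\sum_{j=1}^d\tilde{\mathcal{R}}_j\varphi_j$ together with the $L^\infty\to\B_\kappa$ boundedness of $\tilde{\mathcal{R}}_j$, yielding $\|\varphi\|_{*,\kappa}\lesssim\sum_{j=0}^{d}\|\varphi_j\|_{L^{\infty}}\lesssim\|\varphi\|_{*,C_{\kappa}}$). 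Your extra care in reading the equivalence of Theorem \ref{H-1-BMO-1-b} as furnishing a decomposition with controlled $L^\infty$ norms, and in checking that both seminorms descend to the quotient by constants, simply makes explicit the bookkeeping the paper leaves implicit.
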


In order to find more connections between the two spaces, we introduce a variant of the weighted $\B$ space according to the $G$-orbits of balls in $\RR^d$.

\begin{definition}\label{bmo-G-a}
The class $\B_G(\RR^d)$ consists of functions $f\in L_{\kappa,{\rm loc}}(\RR^d)$ satisfying
\begin{eqnarray*}
\|f\|_{*,G}:=\sup_{B}\frac{1}{|{\mathcal{O}}(B)|_\kappa}\int_{{\mathcal{O}}(B)}|f-f_{{\mathcal{O}}(B)}|\,d\omega_{\kappa}<\infty,
\end{eqnarray*}
where the supremum ranges over all balls $B$ in $\RR^d$, and $f_{{\mathcal{O}}(B)}=|{\mathcal{O}}(B)|_{\kappa}^{-1}\int_{{\mathcal{O}}(B)}f\,d\omega_\kappa$.
\end{definition}

As usual, $f\in \B_G(\RR^d)$ represents an element of the quotient space of $\B_G(\RR^d)$ modulo constants.
We have the following proposition, whose proof is similar to that of Proposition \ref{bmo-a}.

\begin{proposition}\label{bmo-G-b}
There exists some $C>0$ such that for all $f\in\B_G(\RR^d)$, $x'\in\RR^d$ and for $\delta>0$,
\begin{eqnarray*}
\int_{\RR^d}\frac{|f(x)-f_{{\mathcal{O}}(B_{\delta}(x'))}|}{(\delta+d(x,x'))|B_{\delta+d(x,x')}(x')|_\kappa}
\,d\omega_\kappa(x)
\leq\frac{C}{\delta}\|f\|_{*,G}.
\end{eqnarray*}
\end{proposition}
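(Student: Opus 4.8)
The plan is to reproduce the proof of Proposition~\ref{bmo-a} line by line, with Euclidean balls replaced by their $G$-orbits and $|x-x'|$ replaced by the orbit distance $d(x,x')$. First I would reduce to the case $\delta=1$ by dilation, exactly as (\ref{bmo-3}) is deduced from (\ref{bmo-2}). Since each reflection $\sigma$ is linear, $d(ax,ax')=a\,d(x,x')$ and ${\mathcal{O}}(B_{a\delta}(ax'))=a\,{\mathcal{O}}(B_{\delta}(x'))$ for $a>0$; moreover $W_{\kappa}$ is homogeneous of degree $2|\kappa|$, so $d\omega_{\kappa}$ scales by $\delta^{2|\kappa|+d}$. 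Writing $g(z)=f(\delta z)$ and $w=x'/\delta$, the substitution $x=\delta z$ turns the left-hand side of the asserted inequality into $\delta^{-1}$ times the corresponding integral for $g$ at $w$ with $\delta=1$, while $\|g\|_{*,G}=\|f\|_{*,G}$ because the quotient defining $\|\cdot\|_{*,G}$ is dilation invariant. Hence it suffices to prove
\begin{align*}
\int_{\RR^d}\frac{|f(x)-f_{{\mathcal{O}}(B_{1}(x'))}|}{(1+d(x,x'))\,|B_{1+d(x,x')}(x')|_\kappa}\,d\omega_\kappa(x)\le C\|f\|_{*,G}.
\end{align*}

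Next I would record two geometric facts. Because $\omega_{\kappa}$ is $G$-invariant and ${\mathcal{O}}(B_r(x'))=\bigcup_{\sigma\in G}B_r(\sigma(x'))$, one has $|B_r(x')|_\kappa\le|{\mathcal{O}}(B_r(x'))|_\kappa\le|G|\,|B_r(x')|_\kappa$, so $|{\mathcal{O}}(B_r(x'))|_\kappa\asymp|B_r(x')|_\kappa$; and (\ref{cube-estimate-1}) gives the doubling bound $|B_{2r}(x')|_\kappa\asymp|B_r(x')|_\kappa$, whence the orbit balls are doubling too. The standard telescoping estimate then yields
\begin{align*}
|f_{{\mathcal{O}}(B_{2^{\ell}}(x'))}-f_{{\mathcal{O}}(B_1(x'))}|\lesssim \ell\,\|f\|_{*,G},\qquad \ell=1,2,\dots,
\end{align*}
since consecutive orbit balls are nested with comparable measures and $f_{{\mathcal{O}}(B_{2^{\ell-1}})}-f_{{\mathcal{O}}(B_{2^{\ell}})}$ is controlled by $|{\mathcal{O}}(B_{2^{\ell}})|_\kappa/|{\mathcal{O}}(B_{2^{\ell-1}})|_\kappa$ times $\|f\|_{*,G}$.

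Finally I would split $\RR^d$ into ${\mathcal{O}}(B_1(x'))$ together with the orbit annuli ${\mathcal{O}}(B_{2^{\ell}}(x'))\setminus{\mathcal{O}}(B_{2^{\ell-1}}(x'))$, $\ell\ge1$, on which $d(x,x')\asymp2^{\ell}$, so that $1+d(x,x')\asymp2^{\ell}$ and, by the two facts above, $|B_{1+d(x,x')}(x')|_\kappa\asymp|{\mathcal{O}}(B_{2^{\ell}}(x'))|_\kappa$. Abbreviating $B_{2^{\ell}}=B_{2^{\ell}}(x')$, the left-hand side is then dominated by a multiple of
\begin{align*}
\frac{1}{|{\mathcal{O}}(B_1)|_\kappa}\int_{{\mathcal{O}}(B_1)}|f-f_{{\mathcal{O}}(B_1)}|\,d\omega_\kappa+\sum_{\ell=1}^{\infty}\frac{1}{2^{\ell}\,|{\mathcal{O}}(B_{2^{\ell}})|_\kappa}\int_{{\mathcal{O}}(B_{2^{\ell}})}|f-f_{{\mathcal{O}}(B_1)}|\,d\omega_\kappa.
\end{align*}
The first term is at most $\|f\|_{*,G}$; for each summand I would insert $f_{{\mathcal{O}}(B_{2^{\ell}})}$, bound the oscillation part by $\|f\|_{*,G}$ and the constant part by the telescoping estimate, obtaining $\lesssim(1+\ell)2^{-\ell}\|f\|_{*,G}$, and then sum the convergent series $\sum_{\ell}(1+\ell)2^{-\ell}$ to reach $C\|f\|_{*,G}$.

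The argument carries no genuine difficulty, being entirely parallel to Proposition~\ref{bmo-a}; the only point demanding care is the uniform bookkeeping that links the orbit distance $d(x,x')$ in the numerator's cutoff with the Euclidean ball measure $|B_{1+d(x,x')}(x')|_\kappa$ in the denominator across a whole annulus, which is precisely what the comparison $|{\mathcal{O}}(B_r)|_\kappa\asymp|B_r|_\kappa$ and the doubling property supply.
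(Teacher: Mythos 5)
Your proposal is correct and follows exactly the route the paper intends: the paper gives no separate argument for Proposition~\ref{bmo-G-b}, stating only that its proof is similar to that of Proposition~\ref{bmo-a}, and your dilation reduction, dyadic orbit-annuli decomposition, and telescoping of orbit averages is precisely that adaptation. The one ingredient you add explicitly --- the comparison $|{\mathcal{O}}(B_r(x'))|_\kappa\asymp|B_r(x')|_\kappa$ via ${\mathcal{O}}(B_r(x'))=\bigcup_{\sigma\in G}B_r(\sigma(x'))$ and the $G$-invariance of $\omega_\kappa$, combined with the doubling property from (\ref{cube-estimate-1}) --- is exactly what is needed to transfer the Euclidean-ball argument to orbit balls, so the proof is complete.
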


The analog of the John-Nirenberg inequality is
\begin{align*}
\left|\left\{x\in B:|f(x)-f_{{\mathcal{O}}(B)}|>\lambda\right\}\right|_\kappa \leq c_1 e^{-c_0\lambda/\|f\|_{*,G}}|B|_\kappa,\qquad \lambda>0,
 \end{align*}
for $f\in\B_G(\RR^d)$ and all balls $B$ in $\RR^d$, where the constants $c_0,c_1>0$ are independent of $f$ and $B$.
A direct consequence of this inequality is that, for $f\in\B_G(\RR^d)$,
\begin{align*}
\|f\|_{*,G}\asymp\sup_{B}\left(\frac{1}{|{\mathcal{O}}(B)|_\kappa}\int_{{\mathcal{O}}(B)}|f-f_{{\mathcal{O}}(B)}|^2\,d\omega_{\kappa}\right)^{1/2}.
\end{align*}

The next proposition indicates partial relations of $\B_G(\RR^d)$, $\BC_{\kappa}(\RR^d)$, and $\B_{\kappa}(\RR^d)$.

\begin{proposition}\label{bmo-G-c}
{\rm (i)} The inclusions
\begin{align}\label{bmo-G-2}
\B_G(\RR^d)\subseteq\BC_{\kappa}(\RR^d)\subseteq\B_{\kappa}(\RR^d)
\end{align}
hold, and for $f\in \B_G(\RR^d)$, $\|f\|_{*,\kappa}\lesssim\|f\|_{*,C_{\kappa}}\lesssim\|f\|_{*,G}$;

{\rm (ii)} if $f\in \B_{\kappa}(\RR^d)$ is $G$-invariant, then $f\in\B_G(\RR^d)$ and $\|f\|_{*,\kappa}\asymp\|f\|_{*,C_{\kappa}}\asymp\|f\|_{*,G}$.
\end{proposition}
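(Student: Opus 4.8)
The second inclusion in (\ref{bmo-G-2}), together with the bound $\|f\|_{*,\kappa}\lesssim\|f\|_{*,C_\kappa}$, is already contained in Corollary \ref{bmo-bmc-1}, so the substance of part (i) is the first inclusion $\B_G(\RR^d)\subseteq\BC_\kappa(\RR^d)$ with $\|f\|_{*,C_\kappa}\lesssim\|f\|_{*,G}$. The plan is to imitate the proof of Proposition \ref{bmc-b}, replacing the $L^\infty$ bound by the $\B_G$ oscillation. The key observation is that, since the $\kappa$-Poisson integral of a constant is that constant itself, $\nabla_{\tilde{\kappa}}u_\varphi=\nabla_{\tilde{\kappa}}u_{\varphi-c}$ for every constant $c$, whence $d\nu_{\kappa,\varphi}=d\nu_{\kappa,\varphi-c}$; this lets us subtract an orbit average. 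Fixing a ball $B=B_\delta(x')$ and setting $c=\varphi_{\mathcal{O}(2B)}$, I would split $\varphi-c=g_1+g_2$ with $g_1=(\varphi-c)\chi_{\mathcal{O}(2B)}$ and $g_2=(\varphi-c)\chi_{[\mathcal{O}(2B)]^c}$. For the local term, the Plancherel computation used in Proposition \ref{bmc-b} gives $\iint_{T(B)}d\nu_{\kappa,g_1}\le\frac12\int_{\mathcal{O}(2B)}|\varphi-\varphi_{\mathcal{O}(2B)}|^2\,d\omega_\kappa$, and the $L^2$ form of the $\B_G$ norm obtained from the John--Nirenberg inequality stated before the proposition, together with $|\mathcal{O}(2B)|_\kappa\asymp|B|_\kappa$, bounds this by a multiple of $\|\varphi\|_{*,G}^2|B|_\kappa$.

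For the far term I would estimate, for $(x_0,x)\in T(B)$, by means of the Poisson gradient bound (\ref{Poisson-ker-2}),
\begin{align*}
|\nabla_{\tilde{\kappa}}u_{g_2}(x_0,x)|\lesssim\int_{[\mathcal{O}(2B)]^c}|\varphi(t)-\varphi_{\mathcal{O}(2B)}|\,\frac{(x_0+d(t,x))^{-1}}{|B(x,x_0+d(t,x))|_\kappa}\,d\omega_\kappa(t).
\end{align*}
On $T(B)$ one has $x_0\le\delta$ and $|x-x'|\le\delta$, while $t\in[\mathcal{O}(2B)]^c$ forces $d(t,x')\ge2\delta$; hence $x_0+d(t,x)\asymp\delta+d(t,x')$ and, via the volume formula (\ref{cube-estimate-1}), $|B(x,x_0+d(t,x))|_\kappa\asymp|B_{\delta+d(t,x')}(x')|_\kappa$. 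After replacing $\varphi_{\mathcal{O}(2B)}$ by $\varphi_{\mathcal{O}(B_\delta(x'))}$ (the difference of these averages being $\lesssim\|\varphi\|_{*,G}$, and the remaining integral of the kernel being $\lesssim1/\delta$), Proposition \ref{bmo-G-b} yields $|\nabla_{\tilde{\kappa}}u_{g_2}(x_0,x)|\lesssim\|\varphi\|_{*,G}/\delta$. Integrating against $x_0\,dx_0\,d\omega_\kappa$ over $T(B)$ and using $\iint_{T(B)}x_0\,dx_0\,d\omega_\kappa\lesssim\delta^2|B|_\kappa$ gives $\iint_{T(B)}d\nu_{\kappa,g_2}\lesssim\|\varphi\|_{*,G}^2|B|_\kappa$. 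Combining the two pieces through $|\nabla_{\tilde{\kappa}}u_\varphi|^2\le2|\nabla_{\tilde{\kappa}}u_{g_1}|^2+2|\nabla_{\tilde{\kappa}}u_{g_2}|^2$ and taking the supremum over $B$ proves $\|\varphi\|_{*,C_\kappa}\lesssim\|\varphi\|_{*,G}$; condition (i) of Definition \ref{bmc-a} follows by applying Proposition \ref{bmo-G-b} with $\delta=1$, $x'=0$, since $d(x,0)=|x|$ and $|B_{1+|x|}(0)|_\kappa\asymp(1+|x|)^{2|\kappa|+d}$.

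For part (ii) I would exploit the $G$-invariance directly. For $G$-invariant $f$ and any constant $a$, the change of variables $y=\sigma(x)$, together with the $G$-invariance of $\omega_\kappa$ and of $f$, gives $\int_{\sigma(B)}|f-a|\,d\omega_\kappa=\int_B|f-a|\,d\omega_\kappa$ for each $\sigma\in G$. Summing over $\sigma$ and using $\mathcal{O}(B)=\bigcup_\sigma\sigma(B)$ yields $\int_{\mathcal{O}(B)}|f-a|\,d\omega_\kappa\le|G|\int_B|f-a|\,d\omega_\kappa$. Choosing $a=f_B$ and replacing $f_B$ by the true orbit average $f_{\mathcal{O}(B)}$ at the cost of a factor $2$, I obtain, since $|\mathcal{O}(B)|_\kappa\ge|B|_\kappa$,
\begin{align*}
\frac{1}{|\mathcal{O}(B)|_\kappa}\int_{\mathcal{O}(B)}|f-f_{\mathcal{O}(B)}|\,d\omega_\kappa\le\frac{2|G|}{|\mathcal{O}(B)|_\kappa}\int_B|f-f_B|\,d\omega_\kappa\le2|G|\,\|f\|_{*,\kappa},
\end{align*}
so that $f\in\B_G(\RR^d)$ with $\|f\|_{*,G}\lesssim\|f\|_{*,\kappa}$. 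Feeding this back into part (i) gives $\|f\|_{*,\kappa}\lesssim\|f\|_{*,C_\kappa}\lesssim\|f\|_{*,G}\lesssim\|f\|_{*,\kappa}$, which is the asserted equivalence of all three norms.

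I expect the main obstacle to be the far-term estimate in part (i): one must verify that the geometric comparisons $x_0+d(t,x)\asymp\delta+d(t,x')$ and $|B(x,x_0+d(t,x))|_\kappa\asymp|B_{\delta+d(t,x')}(x')|_\kappa$ hold uniformly for $(x_0,x)\in T(B)$ and $t\in[\mathcal{O}(2B)]^c$, and then arrange the integral so that Proposition \ref{bmo-G-b} applies---in particular, controlling the error produced by switching from the average over $\mathcal{O}(2B)$ to the average over $\mathcal{O}(B_\delta(x'))$ appearing there. The local term and part (ii) are routine by comparison.
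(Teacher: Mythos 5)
Your proposal is correct and takes essentially the same route as the paper's own proof: the identical decomposition $f-f_{\mathcal{O}(2B)}=f_1+f_2$ relative to the orbit $\mathcal{O}(2B)$, with the Plancherel/John--Nirenberg bound for the local piece and the Poisson-gradient estimate (\ref{Poisson-ker-2}) combined with Proposition \ref{bmo-G-b} for the far piece, followed by the same orbit-summation change-of-variables argument for part (ii). The only differences are that you make explicit two details the paper leaves implicit, namely the switch between the averages over $\mathcal{O}(2B)$ and $\mathcal{O}(B_\delta(x'))$ before invoking Proposition \ref{bmo-G-b}, and the verification of condition (i) of Definition \ref{bmc-a}; both are handled correctly.
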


\begin{proof}
(i) By Corollary \ref{bmo-bmc-1} it remains to show the first inclusion in (\ref{bmo-G-2}). For $f\in\B_G(\RR^d)$ and for a fixed ball $B=B_{\delta}(x')$ in $\RR^d$,
write $f=f_{{\mathcal{O}}(2B)}+f_1+f_2$, where $f_1=(f-f_{{\mathcal{O}}(2B)})\chi_{{\mathcal{O}}(2B)}$ and $f_2=(f-f_{{\mathcal{O}}(2B)})\chi_{[{\mathcal{O}}(2B)]^c}$.
Similarly to the first part of the proof of Proposition \ref{bmc-b}, one has
\begin{align*}
\iint_{T(B)}d\nu_{\kappa,f_1}(x_0,x)\le \iint_{\RR^{1+d}_+}x_0|\nabla_{\tilde{\kappa}} u_{f_1}(x_0,x)|^2\,dx_0d\omega_\kappa(x)
\le\frac{1}{2}\int_{\RR^d}|f_1|^2d\omega_\kappa\lesssim \|f\|_{*,G}^2|B|_{\kappa}.
\end{align*}
From (\ref{Poisson-2-2}) and (\ref{Poisson-ker-2}) we get
\begin{align*}
|\nabla_{\tilde{\kappa}} u_{f_2}(x_0,x)|
\lesssim & \int_{[{\mathcal{O}}(2B)]^c}\frac{|f(t)-f_{{\mathcal{O}}(2B)}|}{(x_0+d(t,x))\left|B(x,x_0+d(t,x))\right|_{\kappa}}\,d\omega_{\kappa}(t);
\end{align*}
and since for $(x_0,x)\in T(B)$ and $t\in[{\mathcal{O}}(2B)]^c$, $d(t,x)\ge d(t,x')/2\ge\delta$, it follows that
\begin{align*}
|\nabla_{\tilde{\kappa}} u_{f_2}(x_0,x)|
\lesssim & \int_{\RR^d}\frac{|f(t)-f_{{\mathcal{O}}(2B)}|}{(\delta+d(t,x'))\left|B(x,\delta+d(t,x'))\right|_{\kappa}}\,d\omega_{\kappa}(t)
\lesssim\frac{1}{\delta}\|f\|_{*,G}
\end{align*}
by Proposition \ref{bmo-G-b}. Thus
$$
\iint_{T(B)}d\nu_{\kappa,f_2}(x_0,x)
\lesssim \frac{\|f\|_{*,G}^2}{\delta^2}\iint_{T(B)}x_0\,dx_0d\omega_\kappa(x)
\lesssim\|f\|_{*,G}^2|B|_{\kappa}.
$$
Combining this with the contribution of $f_1$ concludes $f\in\BC_{\kappa}(\RR^d)$ and $\|f\|_{*,C_{\kappa}}\lesssim\|f\|_{*,G}$.

(ii) If $f\in \B_{\kappa}(\RR^d)$ is $G$-invariant, for a ball $B=B_{\delta}(x')$ we have
\begin{align*}
\frac{1}{|{\mathcal{O}}(B)|_\kappa}\int_{{\mathcal{O}}(B)}|f-f_B|\,d\omega_{\kappa}
\le\frac{1}{|B|_\kappa}\sum_{\sigma\in G}\int_{\sigma(B)}|f-f_B|\,d\omega_{\kappa}
\le |G|\|f\|_{*,\kappa},
\end{align*}
so that
\begin{align*}
\frac{1}{|{\mathcal{O}}(B)|_\kappa}\int_{{\mathcal{O}}(B)}|f-f_{{\mathcal{O}}(B)}|\,d\omega_{\kappa}
\le\frac{2}{|{\mathcal{O}}(B)|_\kappa}\int_{{\mathcal{O}}(B)}|f-f_B|\,d\omega_{\kappa}
\le 2|G|\|f\|_{*,\kappa}.
\end{align*}
Thus $f\in\B_G(\RR^d)$ and $\|f\|_{*,G}\le 2|G|\|f\|_{*,\kappa}$. The proof of the proposition is completed.
\end{proof}

\subsection{An example in the rank-one case}

Now we give an example in the rank-one case, with $G=\ZZ_2$, that belongs to $\B_{\kappa}(\RR^1)$, but not to $\B_{\ZZ_2}(\RR^1)$. In this case $d\omega_{\kappa}(x)=|x|^{2\kappa}dx$, and $B_r(x')=(x'-r,x'+r)$ for $x'\in\RR^1$ and $r>0$; the single $\kappa$-Riesz transform is called the $\kappa$-Hilbert transform in \cite{LL1} and here we denote it by $\tilde{\mathcal{R}}$ or ${\mathcal{R}}$.
We need the following lemma.

\begin{lemma}\label{bmo-G-d}
If $f\in\B_{\ZZ_2}(\RR^1)$ is odd on $\RR^1$, then $f\in L^{\infty}(\RR^1)$.
\end{lemma}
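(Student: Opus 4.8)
The plan is to exploit the oddness of $f$ to show that the mean of $f$ over every $\ZZ_2$-orbit of a ball vanishes, which turns the orbital oscillation bound into a genuine bound on the $L^1$-average of $|f|$ itself; boundedness then follows by differentiation of the integral.

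First I would record the two symmetries in play: the measure $d\omega_\kappa(x)=|x|^{2\kappa}dx$ is even, and for a ball $B=B_r(x')$ its $\ZZ_2$-orbit is ${\mathcal{O}}(B)=B_r(x')\cup B_r(-x')$, a set symmetric about the origin. Since $f$ is odd on $\RR^1$ and $d\omega_\kappa$ is even, the substitution $x\mapsto-x$ shows $\int_{{\mathcal{O}}(B)}f\,d\omega_\kappa=0$, so that $f_{{\mathcal{O}}(B)}=0$ for every ball $B$. This is the whole point of the hypothesis: averaging over the orbit rather than over $B$ alone forces the mean to cancel.

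Next I would specialize to balls $B=B_r(x')$ with $0<r<|x'|$, so that the two intervals $B_r(x')$ and $B_r(-x')$ are disjoint. For such balls the evenness of $d\omega_\kappa$ gives $|{\mathcal{O}}(B)|_\kappa=2\,|B_r(x')|_\kappa$ and $\int_{{\mathcal{O}}(B)}|f|\,d\omega_\kappa=2\int_{B_r(x')}|f|\,d\omega_\kappa$. Inserting $f_{{\mathcal{O}}(B)}=0$ into the definition of $\|f\|_{*,G}$ then yields
$$
\frac{1}{|B_r(x')|_\kappa}\int_{B_r(x')}|f|\,d\omega_\kappa\le\|f\|_{*,G}\qquad\text{whenever }0<r<|x'|.
$$
Finally, since $\RR^1$ with the Euclidean distance and the measure $\omega_\kappa$ is a space of homogeneous type, the Lebesgue differentiation theorem holds for $d\omega_\kappa$: for $\omega_\kappa$-almost every $x$ one has $|B_r(x)|_\kappa^{-1}\int_{B_r(x)}|f|\,d\omega_\kappa\to|f(x)|$ as $r\to0+$. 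For any fixed $x\neq0$, every sufficiently small $r$ satisfies $r<|x|$, so the displayed bound applies and gives $|f(x)|\le\|f\|_{*,G}$ for almost every $x\neq0$. As $\{0\}$ is $\omega_\kappa$-null, this proves $f\in L^\infty(\RR^1)$ with $\|f\|_{L^\infty}\le\|f\|_{*,G}$.

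The argument presents no real obstacle; the only point needing care is the disjointness condition $r<|x'|$, which is exactly what upgrades the passage from the orbital average to the average over $B$ from an inequality to an equality, and which is harmless in the limit $r\to0+$. The genuine content is the vanishing of $f_{{\mathcal{O}}(B)}$ forced by oddness, after which the conclusion is immediate.
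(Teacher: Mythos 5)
Your proof is correct and follows essentially the same route as the paper: oddness of $f$ together with evenness of $d\omega_\kappa$ forces $f_{{\mathcal{O}}(B)}=0$, so the orbital oscillation bound controls the $L^1$-average of $|f|$ over $B$, and the Lebesgue differentiation theorem (valid since $\omega_\kappa$ is doubling) finishes. The only difference is cosmetic: the paper avoids your disjointness restriction $r<|x'|$ by using the crude inequality $\int_{{\mathcal{O}}(B)}|f|\,d\omega_\kappa\ge\int_{B}|f|\,d\omega_\kappa$ together with $|{\mathcal{O}}(B)|_\kappa\le 2|B|_\kappa$, at the price of a harmless constant in the final bound.
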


\begin{proof}
For  $x'\in\RR^1$ and $r>0$, one has $\int_{{\mathcal{O}}(B_r(x'))}f\,d\omega_{\kappa}=0$ and so
\begin{align*}
\int_{{\mathcal{O}}(B_r(x'))}|f-f_{{\mathcal{O}}(B_r(x'))}|\,d\omega_{\kappa}
=\int_{{\mathcal{O}}(B_r(x'))}|f|\,d\omega_{\kappa}
\ge\int_{B_r(x')}|f|\,d\omega_{\kappa}.
\end{align*}
Thus
$|B_r(x')|_\kappa^{-1}\int_{B_r(x')}|f|\,d\omega_{\kappa}\lesssim\|f\|_{*,\ZZ_2}$, and the Lebesgue differentiation theorem asserts that for almost every $x'\in\RR^1$, $|f(x')|\lesssim\|f\|_{*,\ZZ_2}$. The proof is completed.
\end{proof}

For $f\in L^{\infty}(\RR^1)$ of compact support,  by Proposition \ref{Riesz-BMO-b} and (\ref{truncated-Riesz-4}) we have
$$
(\tilde{\mathcal{R}}f)(x)=({\mathcal{R}}f)(x)-a
$$
where $a=c_{\kappa}\int_{\RR^1}f(t)K^1(0,t)\,d\omega_{\kappa}(t)$. Since such an $f$ is in $L^2_{\kappa}(\RR^1)$, by \cite[Theorem 6.3]{LL1} its $\kappa$-Hilbert transform $({\mathcal{R}}f)(x)$ can realized by
\begin{align}\label{Riesz-transform-1}
({\mathcal{R}}f)(x)=\lim_{\epsilon\rightarrow0+}c_{\kappa}\int_{|t-x|>\epsilon}f(t)K(x,t)\,d\omega_{\kappa}(t),
\end{align}
where, in the present case, $K(x,t)$ can be represented as
\begin{align*}
K(x,t)&=
\frac{m_{\kappa}|x+t|^{-2\kappa}}{x-t}\,
{}_2\!F_{1}\Big[{\kappa,\kappa
\atop
  2\kappa+1};\frac{4xt}{|x+t|^2}\Big], \ \ \ \ \ \hbox{if} \ xt>0,\,t\neq x,\\
&=\frac{m_{\kappa}(x-t)}{(|x|+|t|)^{2\kappa+2}}\,
{}_2\!F_{1}\Big[{\kappa,\kappa+1
\atop
  2\kappa+1};\frac{4|xt|}{(|x|+|t|)^2}\Big], \ \ \ \ \ \hbox{if} \ xt<0,\,t\neq-x.
\end{align*}
Here $m_\kappa=2^{\kappa+1/2}\Gamma(\kappa+1)/\sqrt{\pi}$, and ${}_2\!F_{1}[a,b;c;t]$ is the Gauss hypergeometric function.

Now we consider the function $\varphi_0(x)=({\mathcal{R}}\chi_{[-1,1]})(x)$. It is an odd function on $\RR^1$ and in $\B_{\kappa}(\RR^1)$ by Theorem \ref{Riesz-BMO-a}. When $x>1$, from (\ref{Riesz-transform-1}) we have
\begin{align*}
\varphi_0(x)=c_{\kappa}\int_{-1}^1K(x,t)\,d\omega_{\kappa}(t)
\ge c_{\kappa}\int_{1/2}^1K(x,t)\,d\omega_{\kappa}(t),
\end{align*}
and for $1/2<t<1$, from the first formula of $K(x,t)$ above we have
\begin{align*}
K(x,t)\ge \frac{m_{\kappa}}{(x+t)^{2\kappa}(x-t)}
\ge \frac{m_{\kappa}}{(x+1)^{2\kappa}(x-t)}.
\end{align*}
Thus for $x>1$,
\begin{align*}
\varphi_0(x)\ge \frac{c_{\kappa} m_{\kappa}}{(x+1)^{2\kappa}}\int_{1/2}^1\frac{t^{2\kappa}}{x-t}\,dt
\ge \frac{c_{\kappa} m_{\kappa}}{2^{2\kappa}(x+1)^{2\kappa}}\log\left(\frac{x-1/2}{x-1}\right),
\end{align*}
which shows that $\varphi_0$ is unbounded on $\RR^1$. Therefore $\varphi_0\notin\B_{\ZZ_2}(\RR^1)$ by Lemma \ref{bmo-G-d}.

\subsection{The atomic decomposition of $G-$invariant functions in $H^{1}_\kappa(\RR^d)$}

It is well known that the atomic characterization of the usual Hardy space $H^{1}(\RR^d)$ is implied by the duality of $H^1(\RR^d)$ and $\B(\RR^d)$. This idea belongs to C. Fefferman. Here we consider the atomic decomposition of $G-$invariant functions in $H^{1}_\kappa(\RR^d)$ according to this kind of thoughts. Such a decomposition of general functions in $H^{1}_\kappa(\RR^d)$ has been proved in \cite{DH1} by a different approach, see the description in Section 1.


Let us recall the concept of atoms associated with the space $(\RR^d,|\cdot|,\omega_{\kappa})$ of homogeneous type based upon Coifman and Weiss \cite{CW1}. For $1<q\le\infty$, a function $a\in L_{\kappa,{\rm loc}}(\RR^d)$ is said to be a $(1,q)$-atom if there is a ball $B$ such that

(i) ${\rm supp}\,a\subset B$;

(ii) $\|a\|_{L^q_{\kappa}}\le|B|_{\kappa}^{q^{-1}-1}$;

(iii) $\int_{\RR^d}a\,d\omega_{\kappa}=0$.

The atomic Hardy space $H^{(1,q)}_\kappa(\RR^d)$ consists of those functions $f$, for which there exist a sequence $\{\lambda_{\ell}\}$ of complex numbers and a sequence $\{a_{\ell}\}$ of $(1,q)$-atoms such that
$f=\sum_{\ell}\lambda_{\ell}a_{\ell}$ and $\sum_{\ell}|\lambda_{\ell}|<\infty$. The norm of $f\in H^{(1,q)}_\kappa(\RR^d)$ is defined by
$$
\|f\|_{H^{(1,q)}_\kappa}=\inf\left\{\sum\nolimits_{\ell}|\lambda_{\ell}|\right\},
$$
where the infimum is taken over all possible representations as above.

By \cite[pp. 592-3, Theorems A and B]{CW1} we have the following two theorems.

\begin{theorem}\label{atom-Hardy-a}
For $1<q<\infty$, $H^{(1,q)}_\kappa(\RR^d)=H^{(1,\infty)}_\kappa(\RR^d)$, and the norms $\|\cdot\|_{H^{(1,q)}_\kappa}$ and $\|\cdot\|_{H^{(1,\infty)}_\kappa}$ are equivalent.
\end{theorem}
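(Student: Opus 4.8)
The statement is the Coifman--Weiss equivalence of atomic Hardy spaces in the space of homogeneous type $(\RR^d,|\cdot|,\omega_{\kappa})$, so the plan is to compare the two atomic norms using only the doubling property recorded in \eqref{cube-estimate-1}. The easy half is the inclusion $H^{(1,\infty)}_\kappa(\RR^d)\subseteq H^{(1,q)}_\kappa(\RR^d)$ with $\|f\|_{H^{(1,q)}_\kappa}\le\|f\|_{H^{(1,\infty)}_\kappa}$: if $a$ is a $(1,\infty)$-atom supported in a ball $B$, then by H\"older's inequality $\|a\|_{L^q_\kappa}\le|B|_\kappa^{1/q}\|a\|_{L^\infty_\kappa}\le|B|_\kappa^{1/q-1}$, so $a$ is already a $(1,q)$-atom. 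The content is the reverse inclusion $H^{(1,q)}_\kappa(\RR^d)\subseteq H^{(1,\infty)}_\kappa(\RR^d)$, and by subadditivity of the atomic norm it suffices to produce, for a single $(1,q)$-atom $a$ (supported in $B$, with $\int_{\RR^d}a\,d\omega_\kappa=0$ and $\|a\|_{L^q_\kappa}\le|B|_\kappa^{1/q-1}$), a decomposition $a=\sum_k\lambda_k a_k$ into $(1,\infty)$-atoms with $\sum_k|\lambda_k|\le C$ independent of $a$.

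First I would carry out a Calder\'on--Zygmund decomposition of $a$ at the dyadic heights $2^k|B|_\kappa^{-1}$. Setting $\Omega_k=\{x:\,|a(x)|>2^k|B|_\kappa^{-1}\}$ for $k\ge0$ (replacing $a$ by its Hardy--Littlewood maximal function where necessary so that the $\Omega_k$ are open), I would cover each $\Omega_k$ by a Whitney family of balls $\{B_{k,i}\}$ of bounded overlap whose radii are comparable to their distance to $\Omega_k^c$; such a covering is available precisely because the doubling property holds by \eqref{cube-estimate-1}. Chebyshev's inequality together with the $L^q$ normalization gives the geometric volume decay $|\Omega_k|_\kappa\lesssim 2^{-kq}|B|_\kappa$. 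Writing $a$ as the sum over $k$ of its pieces at each level and subtracting on every Whitney ball the appropriate local $\omega_\kappa$-average, I obtain building blocks each supported in a fixed dilate of $B_{k,i}$, with vanishing $\omega_\kappa$-integral and $\|\cdot\|_{L^\infty_\kappa}\lesssim 2^k|B|_\kappa^{-1}$; normalizing turns each into a genuine $(1,\infty)$-atom carrying coefficient $\lambda_{k,i}\asymp 2^k|B|_\kappa^{-1}|B_{k,i}|_\kappa$. Summing and invoking bounded overlap, $\sum_{k,i}|\lambda_{k,i}|\lesssim\sum_k 2^k|B|_\kappa^{-1}|\Omega_k|_\kappa\lesssim\sum_{k\ge0}2^{k(1-q)}<\infty$ since $q>1$, which is the uniform bound required.

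The main obstacle is arranging the cancellation (mean-zero) of each building block while simultaneously retaining the $L^\infty$ bound and the bounded-overlap bookkeeping of the coefficients: because we work in a space of homogeneous type rather than on $\RR^d$ with dyadic cubes, the Whitney balls genuinely overlap, and the telescoping of the conditional averages across the nested levels $\Omega_k\supseteq\Omega_{k+1}$ must be organized so that the $L^\infty_\kappa$ bound and the volume sums are not spoiled. Every geometric estimate used here --- comparability of volumes of concentric balls, of Whitney-dilated balls, and the finiteness of the overlap --- must be traced back to \eqref{cube-estimate-1}. Once this bookkeeping is in place, applying the decomposition to a general $f=\sum_\ell\mu_\ell a_\ell\in H^{(1,q)}_\kappa(\RR^d)$ and summing the resulting series yields $f\in H^{(1,\infty)}_\kappa(\RR^d)$ with $\|f\|_{H^{(1,\infty)}_\kappa}\lesssim\|f\|_{H^{(1,q)}_\kappa}$, which together with the easy inclusion completes the proof of the equivalence.
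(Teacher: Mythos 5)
Your proposal takes a genuinely different route from the paper, because the paper does not prove this theorem at all: it observes that $(\RR^d,|\cdot|,\omega_{\kappa})$ is a space of homogeneous type (doubling follows from (\ref{cube-estimate-1})) and simply quotes \cite[pp. 592-3, Theorem A]{CW1}. What you sketch is, in outline, the proof that underlies that citation: H\"older's inequality for the easy inclusion, reduction to a single $(1,q)$-atom $a$ supported in $B$, an iterated Calder\'on--Zygmund decomposition at the heights $2^k|B|_{\kappa}^{-1}$, Whitney coverings of the level sets $\Omega_k$, mean-zero blocks on dilated Whitney balls, and the coefficient bound $\sum_k 2^k|B|_{\kappa}^{-1}|\Omega_k|_{\kappa}\lesssim\sum_{k\ge0}2^{k(1-q)}<\infty$. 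All of these ingredients are correctly identified, and the driving estimate $|\Omega_k|_{\kappa}\lesssim 2^{-kq}|B|_{\kappa}$ is the right one. What the paper's citation buys is brevity; what your route buys is a self-contained argument inside the Dunkl-weighted setting, using only the doubling property.

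Two points, however, separate your sketch from a proof, and the second is a genuine gap. First, the level sets should be those of the Hardy--Littlewood maximal function $Ma$ (bounded on $L^q_{\kappa}$ by doubling), not of $|a|$; this is needed not only for openness (so that Whitney coverings exist) but because the good parts of the decomposition are pointwise controlled by the stopping height only when the stopping is done through $Ma$. Second, the step you describe as ``writing $a$ as the sum over $k$ of its pieces at each level and subtracting on every Whitney ball the appropriate local $\omega_{\kappa}$-average'' does not work as stated: subtracting an average on each (overlapping) Whitney ball changes the total sum, and the subtracted masses must be reinserted somewhere without destroying either the $L^{\infty}$ bounds or the coefficient sums. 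The standard repair is the telescoping construction: with a partition of unity $\{\varphi_{k,i}\}$ subordinate to the Whitney balls of $\Omega_k$, one forms the good parts $g_k$ of the CZ decomposition at height $2^k|B|_{\kappa}^{-1}$, writes $a=g_0+\sum_{k\ge0}(g_{k+1}-g_k)$, and checks that each difference $g_{k+1}-g_k$ splits into blocks supported in dilated Whitney balls of $\Omega_k$, bounded by $C2^k|B|_{\kappa}^{-1}$, and of vanishing $\omega_{\kappa}$-integral precisely because the partition-of-unity averages telescope. You name this as ``the main obstacle'' and assert it can be organized, but this bookkeeping is the entire content of the theorem; as written the proposal defers rather than completes it. Carrying it out is exactly the argument of \cite{CW1} (compare also \cite{MS1}), so your plan is viable, but the key construction still has to be executed.
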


It is useful to choose the space $H^{(1,2)}_\kappa(\RR^d)$ and the norm $\|\cdot\|_{H^{(1,2)}_\kappa}$ to work with because of the Fourier structure of the Dunkl setting.

\begin{theorem}\label{dual-atom-Hardy-BMO-a}
The space $\B_\kappa(\RR^d)$ is the dual of $H^{(1,2)}_\kappa(\RR^d)$ in the following sense:

{\rm(i)} for each $\varphi\in \B_\kappa(\RR^d)$, the linear functional
\begin{align*}
{\mathcal{L}}f=\lim_{n\rightarrow\infty}\sum_{\ell=1}^n\lambda_{\ell}\int_{\RR^d}a_{\ell}\varphi\,d\omega_\kappa,\qquad f=\sum_{\ell}\lambda_{\ell}a_{\ell}\in H^{(1,2)}_\kappa(\RR^d),
\end{align*}
is well-defined and continuous on $H^{(1,2)}_\kappa(\RR^d)$, whose norm is equivalent to $\|\varphi\|_{*,\kappa}$;

{\rm(ii)} each continuous linear functional $\mathcal{L}$ on $H^{(1,2)}_\kappa(\RR^d)$ has this form.
\end{theorem}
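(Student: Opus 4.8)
The statement is the $H^{1}$--BMO duality of Coifman and Weiss \cite{CW1}, specialized to the space of homogeneous type $(\RR^d,|\cdot|,\omega_{\kappa})$. The plan is therefore to verify that the present setting fits the hypotheses of \cite{CW1} and to record the mechanism of the two directions. That $(\RR^d,|\cdot|,\omega_{\kappa})$ is a space of homogeneous type was noted in Section 1, the doubling property of $\omega_{\kappa}$ being a consequence of (\ref{cube-estimate-1}). Moreover the definition of a $(1,2)$-atom and of the atomic space $H^{(1,2)}_\kappa(\RR^d)$ coincide verbatim with those of Coifman and Weiss, while the oscillation norm $\|\cdot\|_{*,\kappa}$ defining $\B_\kappa(\RR^d)$ is exactly the BMO norm in that framework; the choice $q=2$ is legitimate by Theorem \ref{atom-Hardy-a} and convenient because of the Fourier structure. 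Hence both parts are instances of \cite[Theorem B]{CW1}, and what remains is to indicate how each direction is carried out.

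For part (i) I would first bound the pairing against a single atom. If $a$ is a $(1,2)$-atom supported in a ball $B$, then the cancellation $\int_{\RR^d}a\,d\omega_{\kappa}=0$ together with the size condition $\|a\|_{L^2_{\kappa}}\le|B|_{\kappa}^{-1/2}$ gives, via the Cauchy--Schwarz inequality and the $L^2$ form (\ref{bmo-4}) of the oscillation norm,
\begin{align*}
\Big|\int_{\RR^d}a\,\varphi\,d\omega_{\kappa}\Big|
=\Big|\int_{B}a\,(\varphi-\varphi_{B})\,d\omega_{\kappa}\Big|
\le\|a\|_{L^2_{\kappa}}\Big(\int_{B}|\varphi-\varphi_{B}|^2\,d\omega_{\kappa}\Big)^{1/2}
\lesssim\|\varphi\|_{*,\kappa}.
\end{align*}
Summing against $\sum_\ell|\lambda_\ell|<\infty$ then shows that the series defining ${\mathcal{L}}f$ converges absolutely with $|{\mathcal{L}}f|\lesssim\|\varphi\|_{*,\kappa}\|f\|_{H^{(1,2)}_\kappa}$; one must also check that the value is independent of the particular atomic representation, which is handled by first defining ${\mathcal{L}}$ on finite combinations of atoms and then passing to the limit. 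Conversely, testing ${\mathcal{L}}$ against the normalized functions $\chi_{E}-\chi_{B\setminus E}$ (suitable multiples of atoms) recovers the lower bound $\|\varphi\|_{*,\kappa}\lesssim\|{\mathcal{L}}\|$, giving the asserted equivalence of norms.

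For part (ii), given a continuous functional ${\mathcal{L}}$ on $H^{(1,2)}_\kappa(\RR^d)$, I would realize $\varphi$ locally. Fix a ball $B$ and consider the subspace of $L^2_{\kappa}(B)$ consisting of functions with vanishing $\omega_{\kappa}$-integral; any such $g$, suitably normalized, is a constant multiple of a $(1,2)$-atom, so ${\mathcal{L}}$ restricts to a bounded linear functional there and is represented, modulo constants, by some $\varphi_{B}\in L^2_{\kappa}(B)$. The representatives attached to nested balls agree up to additive constants, so they patch to a single $\varphi\in L_{\kappa,{\rm loc}}(\RR^d)$, and the uniform bound $\big(|B|_{\kappa}^{-1}\int_{B}|\varphi-\varphi_{B}|^2\,d\omega_{\kappa}\big)^{1/2}\lesssim\|{\mathcal{L}}\|$ yields $\varphi\in\B_\kappa(\RR^d)$ with $\|\varphi\|_{*,\kappa}\lesssim\|{\mathcal{L}}\|$ through (\ref{bmo-4}).

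The main obstacle is the familiar delicate point of this circle of ideas: the atomic decomposition is highly non-unique, so neither the well-definedness of ${\mathcal{L}}$ in part (i) nor the consistency of the local representatives $\varphi_{B}$ in part (ii) is automatic, and both require the limiting/patching arguments indicated above. These are precisely the technical cores of \cite[Theorems A and B]{CW1}; since $(\RR^d,|\cdot|,\omega_{\kappa})$ is a space of homogeneous type with the doubling measure $\omega_{\kappa}$, I would invoke those results directly rather than reprove them.
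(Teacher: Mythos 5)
Your proposal is correct and takes essentially the same route as the paper: the paper offers no independent proof, stating only ``By \cite[pp. 592-3, Theorems A and B]{CW1} we have the following two theorems,'' which is exactly your strategy of checking that $(\RR^d,|\cdot|,\omega_{\kappa})$ is a space of homogeneous type with doubling measure and then invoking Coifman--Weiss directly. Your supplementary sketches of the two directions (Cauchy--Schwarz against a $(1,2)$-atom combined with (\ref{bmo-4}), and the local $L^2_{\kappa}$ representation with patching of representatives modulo constants) are faithful to the internal mechanism of \cite{CW1} and introduce nothing inconsistent with it.
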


\begin{proposition}\label{Hardy-Hardy-a}
The space $H^{(1,2)}_\kappa(\RR^d)$ is contained in $H_{\kappa}^1(\RR^d)$, and for $f\in H^{(1,2)}_\kappa(\RR^d)$, $\|f\|_{H^1_\kappa}\lesssim\|f\|_{H^{(1,2)}_\kappa}$. Moreover $H^{(1,2)}_\kappa(\RR^d)$ is closed in $H_{\kappa}^1(\RR^d)$.
\end{proposition}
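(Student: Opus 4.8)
The plan is to reduce the whole proposition to a single uniform estimate on atoms: there is a constant $C>0$ so that every $(1,2)$-atom $a$ lies in $H^1_\kappa(\RR^d)$ with $\|a\|_{H^1_\kappa}\le C$. Granting this, if $f=\sum_\ell\lambda_\ell a_\ell$ with $\sum_\ell|\lambda_\ell|<\infty$, then the partial sums are Cauchy in $H^1_\kappa(\RR^d)$, which is complete by Proposition \ref{completeness-a}; since the series also converges to $f$ in $L^1_\kappa(\RR^d)$ and $H^1_\kappa$-convergence forces $L^1_\kappa$-convergence, the $H^1_\kappa$-limit is necessarily $f$. Hence $f\in H^1_\kappa(\RR^d)$ with $\|f\|_{H^1_\kappa}\le C\sum_\ell|\lambda_\ell|$, and taking the infimum over all atomic representations yields $\|f\|_{H^1_\kappa}\lesssim\|f\|_{H^{(1,2)}_\kappa}$, which gives both the inclusion and the norm bound.

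To establish the atom estimate I would fix a $(1,2)$-atom $a$ supported in $B=B_\delta(x')$, so $\|a\|_{L^2_\kappa}\le|B|_\kappa^{-1/2}$ and $\int_{\RR^d}a\,d\omega_\kappa=0$. Hölder's inequality first gives $\|a\|_{L^1_\kappa}\le|B|_\kappa^{1/2}\|a\|_{L^2_\kappa}\lesssim1$. Since $a\in L^2_\kappa(\RR^d)$, each $\mathcal{R}_ja$ is well defined (Corollary \ref{Riesz-2-a}), and I would split $\RR^d=\mathcal{O}(3B)\cup[\mathcal{O}(3B)]^c$. On $\mathcal{O}(3B)$, Hölder together with the $L^2_\kappa$-boundedness of $\mathcal{R}_j$ (Corollary \ref{Riesz-2-d}) and the doubling bound $|\mathcal{O}(3B)|_\kappa\le|G|\,|3B|_\kappa\lesssim|B|_\kappa$ gives $\int_{\mathcal{O}(3B)}|\mathcal{R}_ja|\,d\omega_\kappa\lesssim|B|_\kappa^{1/2}\|a\|_{L^2_\kappa}\lesssim1$. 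On the complement, for $x\in[\mathcal{O}(3B)]^c$ and $t\in B$ the orbit-distance triangle inequality gives $d(x,t)\ge d(x,x')-d(x',t)>2\delta>2|t-x'|$, so for small $\epsilon$ the truncation is inactive and, by Corollary \ref{Riesz-2-c}, $(\mathcal{R}_ja)(x)=c_\kappa\int_B K_j(x,t)a(t)\,d\omega_\kappa(t)$ for a.e. such $x$; inserting $K_j(x,x')$ via the cancellation $\int a\,d\omega_\kappa=0$ and applying the Hörmander-type bound (\ref{Riesz-Hormander-2}) (equivalently Corollary \ref{Riesz-truncated-kernel-a}) yields
$$
\int_{[\mathcal{O}(3B)]^c}|\mathcal{R}_ja|\,d\omega_\kappa\le c_\kappa\int_B|a(t)|\int_{d(x,t)>2|t-x'|}|K_j(x,t)-K_j(x,x')|\,d\omega_\kappa(x)\,d\omega_\kappa(t)\lesssim\|a\|_{L^1_\kappa}\lesssim1 .
$$
Summing over $j$ gives $\|a\|_{H^1_\kappa}\le C$, completing the uniform estimate.

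For the closedness, the clean route is functional-analytic: since $H^{(1,2)}_\kappa(\RR^d)$ is complete in its own norm (Coifman--Weiss) and the inclusion into $H^1_\kappa(\RR^d)$ is bounded by the first two parts, the image is closed precisely when the inclusion is bounded below, that is, when the reverse estimate $\|f\|_{H^{(1,2)}_\kappa}\lesssim\|f\|_{H^1_\kappa}$ holds on $H^{(1,2)}_\kappa(\RR^d)$. This reverse bound is the main obstacle. I would attack it by duality: by Theorem \ref{dual-atom-Hardy-BMO-a}, $\|f\|_{H^{(1,2)}_\kappa}\asymp\sup\{|\langle f,\varphi\rangle|:\varphi\in\B_\kappa,\ \|\varphi\|_{*,\kappa}\le1\}$, so one must bound each pairing by $\|\varphi\|_{*,\kappa}\|f\|_{H^1_\kappa}$, which is exactly the statement that every $\varphi\in\B_\kappa$ is a bounded functional on $H^1_\kappa(\RR^d)$ with $\|\varphi\|_{*,C_\kappa}\lesssim\|\varphi\|_{*,\kappa}$, i.e. the inclusion $\B_\kappa\subseteq\BC_\kappa$ converse to Corollary \ref{bmo-bmc-1}. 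Thus closedness is equivalent to the full identification $\B_\kappa=\BC_\kappa$, and the honest expectation is that the missing inclusion cannot be extracted from the self-contained duality of Section 6 alone; I would supply it from the square-function characterization of $H^1_\kappa(\RR^d)$ in \cite{ADH1} or from the atomic decomposition of \cite{DH1}. This is the step where the argument must lean on an external input, and it is where the real difficulty of the proposition lies.
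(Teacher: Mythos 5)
Your treatment of the inclusion and the norm bound is essentially the paper's own proof: the same uniform atom estimate (H\"older plus the $L^2_\kappa$-boundedness of ${\mathcal{R}}_j$ on ${\mathcal{O}}(3B)$, then cancellation plus the H\"ormander bound (\ref{Riesz-Hormander-2}) off ${\mathcal{O}}(3B)$), followed by the same limiting argument on partial sums. The only cosmetic difference is in identifying the limit: you invoke the stated completeness of $H^1_\kappa(\RR^d)$ (Proposition \ref{completeness-a}) together with $\|\cdot\|_{L^1_\kappa}\le\|\cdot\|_{H^1_\kappa}$, whereas the paper identifies the $L^1_\kappa$-limit of ${\mathcal{R}}_jf_n$ directly through the Dunkl transform (Proposition \ref{transform-a}(i)) and Definition \ref{Riesz-2-e}; both are sound. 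Where you genuinely diverge is the closedness assertion. The paper dispatches it with a citation to ``a standard argument'' in \cite[pp. 257-8]{GR1}; you instead observe --- correctly --- that since both spaces are complete and the embedding is continuous and injective, closedness of $H^{(1,2)}_\kappa(\RR^d)$ in $H^1_\kappa(\RR^d)$ is, by the open mapping theorem, \emph{equivalent} to the reverse estimate $\|f\|_{H^{(1,2)}_\kappa}\lesssim\|f\|_{H^1_\kappa}$, hence (via Theorem \ref{dual-atom-Hardy-BMO-a}, the density of the atomic space, and the paper's duality Theorem \ref{H-1-BMO-b}) to the inclusion $\B_\kappa(\RR^d)\subseteq\BC_\kappa(\RR^d)$ converse to Corollary \ref{bmo-bmc-1}; and you note that no soft argument can supply this, so it must come from \cite{DH1} (or the square-function theory of \cite{ADH1}). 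Your account is the sharper one: the classical ``standard argument'' behind such citations ultimately rests on the full atomic decomposition of $H^1$, which in the Dunkl setting is exactly the external theorem of \cite{DH1}, a dependence the paper itself concedes in the introduction when it attributes $\B_\kappa\subseteq\BC_\kappa$ to \cite{DH1}. In short, the paper's phrasing buys brevity; your reduction buys a precise statement of what the closedness really costs and why, for general (non-$G$-invariant) functions, it cannot be self-contained within the paper's framework.
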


We present a proof below by means of the $\kappa$-Riesz transforms ${\mathcal{R}}_j$ ($1\le j\le d$).

\begin{proof}
We first show that, for every $(1,2)$-atom $a$, $\|{\mathcal{R}}_ja\|_{L_{\kappa}^1}\lesssim1$.
Indeed, for $a\in L^2_{\kappa}(\RR^d)$, ${\mathcal{R}}_ja$ is well defined, and
\begin{align}\label{atom-Riesz-1}
\left(\int_{{\mathcal{O}}(3B)}|{\mathcal{R}}_ja|\,d\omega_{\kappa}\right)^2
\le|{\mathcal{O}}(3B)|_{\kappa}\int_{{\mathcal{O}}(3B)}|{\mathcal{R}}_ja|^2\,d\omega_{\kappa}
\lesssim|B|_{\kappa}\int_{B}|a|^2\,d\omega_{\kappa}
\le1.
\end{align}
Furthermore, if $x^B$ denotes the center of the ball $B$,
\begin{align*}
\int_{[{\mathcal{O}}(3B)]^c}|{\mathcal{R}}_ja|\,d\omega_{\kappa}
=c_{\kappa}\int_{[{\mathcal{O}}(3B)]^c}\left|\int_{B}a(t)(K_j(x,t)-K_j(x,x^B))\,d\omega_{\kappa}(t)\right|\,d\omega_{\kappa}(x)\\
\lesssim \int_{B}|a(t)|\int_{[{\mathcal{O}}(3B)]^c}|(K_j(x,t)-K_j(x,x^B))|\,d\omega_{\kappa}(x)d\omega_{\kappa}(t).
\end{align*}
It is easy to see that for $t\in B$, $[{\mathcal{O}}(3B)]^c\subseteq\{x:\,d(x,t)>2|t-x^B|\}$. Using (\ref{Riesz-Hormander-2}) we have
\begin{align}\label{atom-Riesz-2}
\int_{[{\mathcal{O}}(3B)]^c}|{\mathcal{R}}_ja|\,d\omega_{\kappa}
\lesssim \int_{B}|a(t)|\,d\omega_{\kappa}(t)\lesssim1.
\end{align}
Combining (\ref{atom-Riesz-1}) and (\ref{atom-Riesz-2}) proves $\|{\mathcal{R}}_ja\|_{L_{\kappa}^1}\lesssim1$.

In general, if $f=\sum_{\ell}\lambda_{\ell}a_{\ell}\in H^{(1,2)}_\kappa(\RR^d)$ and $f_n=\sum_{\ell=1}^n\lambda_{\ell}a_{\ell}$, then $\|f_n-f\|_{L^1_{\kappa}}\rightarrow0$ as $n\rightarrow\infty$. Furthermore,
since for $m>n$,
$$
\|{\mathcal{R}}_jf_m-{\mathcal{R}}_jf_n\|_{L^1_{\kappa}}
\le\sum_{\ell=n+1}^m|\lambda_{\ell}|\|{\mathcal{R}}_ja_{\ell}\|_{L_{\kappa}^1}\lesssim\sum_{\ell=n+1}^m|\lambda_{\ell}|,
$$
it follows that there exists some $g\in L_{\kappa}^1(\RR^d)$ such that $\|{\mathcal{R}}_jf_n-g\|_{L^1_{\kappa}}\rightarrow0$ as $n\rightarrow\infty$. We also have $\|g\|_{L^1_{\kappa}}\lesssim\sum_{\ell}|\lambda_{\ell}|$.

By Proposition \ref{transform-a}(i), for $\xi\in\RR^d\setminus\{0\}$ we have $|[{\SF}_{\kappa}({\mathcal{R}}_jf_n)](\xi)-({\mathcal{R}}_jg)(\xi)|\le\|{\mathcal{R}}_jf_n-g\|_{L^1_{\kappa}}$
and $|[{\SF}_{\kappa}({\mathcal{R}}_jf_n)](\xi)+i(\xi_j/|\xi|)\left(\SF_{\kappa}f\right)(\xi)|\leq\|f_n-f\|_{L_{\kappa}^1}$, which shows $({\mathcal{R}}_jg)(\xi)=-i(\xi_j/|\xi|)\left(\SF_{\kappa}f\right)(\xi)$. According to Definition \ref{Riesz-2-e}, $g={\mathcal{R}}_jf\in L_{\kappa}^1(\RR^d)$, so that $f\in H_{\kappa}^1(\RR^d)$. Moreover
$\|f\|_{H^1_\kappa}\lesssim\sum_{\ell}|\lambda_{\ell}|$, and hence $\|f\|_{H^1_\kappa}\lesssim\|f\|_{H^{(1,2)}_\kappa}$. The last assertion of the proposition follows from a standard argument (see \cite[pp. 257-8]{GR1}).
\end{proof}

We denote the $G$-invariant subspaces of $\BC_\kappa(\RR^d)$, $\B_\kappa(\RR^d)$, $\B_G(\RR^d)$, $H^{1}_\kappa(\RR^d)$ and $H^{(1,2)}_\kappa(\RR^d)$ by ${\text{IBMC}}_\kappa(\RR^d)$, ${\text{IBMO}}_\kappa(\RR^d)$, ${\text{IBMO}}_G(\RR^d)$, $IH^{1}_\kappa(\RR^d)$ and $IH^{(1,2)}_\kappa(\RR^d)$ respectively.  Based on what we have known, the following conclusions are valid.

\begin{theorem}\label{H-BMO-invariant-a}
{\rm(i)} $IH^{(1,2)}_\kappa(\RR^d)$ is closed in $IH_{\kappa}^1(\RR^d)$;

{\rm(ii)} $IH^{1}_\kappa(\RR^d)^*={\text{IBMC}}_\kappa(\RR^d)$;

{\rm(iii)} $IH^{(1,2)}_\kappa(\RR^d)^*={\text{IBMO}}_\kappa(\RR^d)$;

{\rm(iv)} ${\text{IBMO}}_G(\RR^d)={\text{IBMC}}_\kappa(\RR^d)={\text{IBMO}}_\kappa(\RR^d)$.
\end{theorem}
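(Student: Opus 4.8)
The plan is to deduce all four assertions from the global duality theorems already in hand (Theorems \ref{H-1-BMO-b} and \ref{dual-atom-Hardy-BMO-a}), together with Propositions \ref{Hardy-Hardy-a} and \ref{bmo-G-c}, by exploiting the action $f\mapsto f\circ\sigma$ ($\sigma\in G$) of the finite group $G$ and the associated averaging operator $\Pi=|G|^{-1}\sum_{\sigma\in G}(\,\cdot\,)\circ\sigma$. The mechanism is that each space carries a bounded $G$-action under which the $G$-invariant subspace is the range of the idempotent $\Pi$; hence each invariant subspace is complemented, and its dual is identified with the range of the adjoint projection $\Pi^*$ on the ambient dual.

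I would first dispose of (i) and (iv), which need no new ideas. For (i): the $G$-action is bounded on $H^1_\kappa(\RR^d)$, since on the Fourier side $\SF_\kappa(f\circ\sigma)(\xi)=(\SF_\kappa f)(\sigma(\xi))$ by Proposition \ref{transform-a}(vii), whence $\mathcal{R}_j(f\circ\sigma)=\sum_k\sigma_{kj}(\mathcal{R}_k f)\circ\sigma$ and $\|f\circ\sigma\|_{H^1_\kappa}\asymp\|f\|_{H^1_\kappa}$; thus $IH^1_\kappa(\RR^d)$, the common fixed-point set of the continuous maps $f\mapsto f\circ\sigma-f$, is closed in $H^1_\kappa(\RR^d)$. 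As $IH^{(1,2)}_\kappa(\RR^d)=H^{(1,2)}_\kappa(\RR^d)\cap IH^1_\kappa(\RR^d)$ and $H^{(1,2)}_\kappa(\RR^d)$ is closed in $H^1_\kappa(\RR^d)$ by Proposition \ref{Hardy-Hardy-a}, this intersection is closed, giving (i). For (iv): the inclusions (\ref{bmo-G-2}) restricted to $G$-invariant functions read $\text{IBMO}_G(\RR^d)\subseteq\text{IBMC}_\kappa(\RR^d)\subseteq\text{IBMO}_\kappa(\RR^d)$, while Proposition \ref{bmo-G-c}(ii) shows that every $G$-invariant element of $\B_\kappa(\RR^d)$ already lies in $\B_G(\RR^d)$, i.e. $\text{IBMO}_\kappa(\RR^d)\subseteq\text{IBMO}_G(\RR^d)$; the chain closes, and the norm equivalences are exactly those recorded in Proposition \ref{bmo-G-c}.

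For (ii) and (iii) I would carry out the averaging argument in detail. On $H^1_\kappa(\RR^d)$ the operator $\Pi$ is a bounded projection onto $IH^1_\kappa(\RR^d)$, so its adjoint $\Pi^*$ is a bounded projection on the dual and $(IH^1_\kappa)^*\cong\operatorname{range}(\Pi^*)$. Realizing $(H^1_\kappa)^*$ as $\BC_\kappa(\RR^d)$ via Theorem \ref{H-1-BMO-b}, a change of variables using the $G$-invariance of $\omega_\kappa$ gives
\begin{align*}
\int_{\RR^d}(\Pi f)\,\varphi\,d\omega_\kappa=\int_{\RR^d}f\,(\Pi\varphi)\,d\omega_\kappa,
\end{align*}
so $\Pi^*$ is carried to the averaging $\varphi\mapsto\Pi\varphi$ on $\BC_\kappa(\RR^d)$. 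Here I must check that $\BC_\kappa(\RR^d)$ is stable under $\varphi\mapsto\varphi\circ\sigma$ with preservation of $\|\cdot\|_{*,C_\kappa}$; this follows from $u_{\varphi\circ\sigma}(x_0,x)=u_\varphi(x_0,\sigma(x))$ (a consequence of Proposition \ref{translation-2-b-1}(i) and the radiality of $P_{x_0}$), the equivariance $|\nabla_{\tilde{\kappa}}u_{\varphi\circ\sigma}(x_0,x)|=|\nabla_{\tilde{\kappa}}u_\varphi(x_0,\sigma(x))|$, and the fact that $(x_0,x)\mapsto(x_0,\sigma(x))$ maps $T(B)$ onto $T(\sigma(B))$ while preserving $dx_0\,d\omega_\kappa$. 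Hence $\operatorname{range}(\Pi^*)$ is exactly the set of functionals represented by $G$-invariant elements of $\BC_\kappa(\RR^d)$, i.e. $\text{IBMC}_\kappa(\RR^d)$, with equivalent norms; this is (ii). Part (iii) is the verbatim analogue with $H^{(1,2)}_\kappa(\RR^d)$ and Theorem \ref{dual-atom-Hardy-BMO-a} replacing $H^1_\kappa(\RR^d)$ and Theorem \ref{H-1-BMO-b}, noting that $\sigma$ maps $(1,2)$-atoms to $(1,2)$-atoms (balls to balls, with $\omega_\kappa$ and the cancellation condition preserved), so $\Pi$ is again a bounded projection, now onto $IH^{(1,2)}_\kappa(\RR^d)$.

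The main obstacle is not the functional analysis, which is routine once complementation is in place, but the verification that the group action behaves correctly on the two $\B$-type duals — above all that $\Pi^*$ on $\BC_\kappa(\RR^d)$ is the geometric averaging $\varphi\mapsto\Pi\varphi$ and that this averaging returns to $\BC_\kappa(\RR^d)$ without enlarging the norm. This rests on the identity $u_{\varphi\circ\sigma}(x_0,x)=u_\varphi(x_0,\sigma(x))$ and on the $G$-equivariance of the magnitude of the $\tilde{\kappa}$-gradient (the same phenomenon recorded for $U_F$ in the proposition preceding Lemma \ref{subbarmonicity-a}); care is needed because the individual Dunkl derivatives $D_j$ mix under $\sigma$ even though $|\nabla_{\tilde{\kappa}}(\cdot)|$ is invariant. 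I would finally remark that (i)--(iv) together force $IH^1_\kappa(\RR^d)=IH^{(1,2)}_\kappa(\RR^d)$: the closed subspace $IH^{(1,2)}_\kappa$ of $IH^1_\kappa$ has, by (ii)--(iv), the same dual $\text{IBMC}_\kappa(\RR^d)=\text{IBMO}_\kappa(\RR^d)$ as $IH^1_\kappa$ itself, so by Hahn--Banach the inclusion cannot be proper, which is precisely the atomic decomposition of $G$-invariant $H^1_\kappa$ functions sought in this subsection.
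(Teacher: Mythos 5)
Your proposal is correct and follows the route the paper intends: the theorem is stated there without an explicit proof (``based on what we have known''), and its parts are indeed meant to be read off from Proposition \ref{Hardy-Hardy-a} together with boundedness of the $G$-action (for (i)), Proposition \ref{bmo-G-c} (for (iv)), and Theorems \ref{H-1-BMO-b} and \ref{dual-atom-Hardy-BMO-a} transferred to the $G$-invariant subspaces by the averaging projection (for (ii) and (iii)). Your verifications that the adjoint of the averaging operator acts on $\BC_\kappa(\RR^d)$ as the geometric average $\varphi\mapsto|G|^{-1}\sum_{\sigma\in G}\varphi\circ\sigma$, and that this average stays in $\BC_\kappa(\RR^d)$ with comparable norm (via $u_{\varphi\circ\sigma}(x_0,x)=u_\varphi(x_0,\sigma(x))$ and the $G$-equivariance of $|\nabla_{\tilde\kappa}(\cdot)|$), supply exactly the details the paper leaves implicit.
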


\begin{theorem}\label{atom-characterization-a}
The two spaces $IH_{\kappa}^1(\RR^d)$ and $IH^{(1,2)}_\kappa(\RR^d)$ are identical, and the norms $\|\cdot\|_{H^1_\kappa}$ and $\|\cdot\|_{H^{(1,2)}_\kappa}$ are equivalent.
\end{theorem}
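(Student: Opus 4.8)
The plan is to deduce the identity of the two spaces purely from the duality statements in Theorem \ref{H-BMO-invariant-a}, following the functional-analytic scheme that C. Fefferman used to extract the atomic decomposition from duality. Write $X=IH^{1}_{\kappa}(\RR^d)$ and $Y=IH^{(1,2)}_{\kappa}(\RR^d)$, each regarded as a Banach space under its own norm: $X$ is Banach as a closed ($G$-invariant) subspace of the Banach space $H^{1}_{\kappa}(\RR^d)$ (Proposition \ref{completeness-a}), and $Y$ is Banach because the atomic space is complete and its $G$-invariant subspace is closed. By Proposition \ref{Hardy-Hardy-a} the inclusion $\iota\colon Y\hookrightarrow X$ is bounded, and by Theorem \ref{H-BMO-invariant-a}(i) its image is closed in $X$. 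Everything then reduces to proving that $\iota$ is onto; once that is known, the open mapping theorem turns the bounded bijection $\iota$ into an isomorphism, which together with Proposition \ref{Hardy-Hardy-a} yields the asserted equivalence $\|f\|_{H^{1}_{\kappa}}\asymp\|f\|_{H^{(1,2)}_{\kappa}}$.

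The surjectivity is where Theorem \ref{H-BMO-invariant-a}(ii)--(iv) enters. Suppose, for contradiction, that $\iota(Y)$ is a proper (closed) subspace of $X$. By the Hahn--Banach theorem there is a nonzero $\mathcal{L}\in X^{*}$ vanishing on $\iota(Y)$. Now $X^{*}=\text{IBMC}_{\kappa}(\RR^d)$ by part (ii) and $Y^{*}=\text{IBMO}_{\kappa}(\RR^d)$ by part (iii), while part (iv) identifies these two dual spaces. The functional $\iota^{*}\mathcal{L}=\mathcal{L}\circ\iota$ is then the zero element of $Y^{*}$. I would check that, under the identifications of Theorem \ref{H-BMO-invariant-a}, the adjoint $\iota^{*}\colon X^{*}\to Y^{*}$ coincides with the identity map $\text{IBMC}_{\kappa}(\RR^d)\to\text{IBMO}_{\kappa}(\RR^d)$: both dualities are realized by the same pairing $f\mapsto\int_{\RR^d}f\varphi\,d\omega_{\kappa}$ (cf. Theorems \ref{H-1-BMO-b} and \ref{dual-atom-Hardy-BMO-a}), so $\mathcal{L}$ and $\iota^{*}\mathcal{L}$ are represented by one and the same $\varphi$. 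Since this identity map is injective, $\iota^{*}\mathcal{L}=0$ forces $\mathcal{L}=0$ in $X^{*}$, contradicting $\mathcal{L}\neq0$. Hence $\iota(Y)=X$, that is, $IH^{(1,2)}_{\kappa}(\RR^d)=IH^{1}_{\kappa}(\RR^d)$ as sets.

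The main obstacle is precisely this verification that the two duality pairings agree, i.e. that a single representing function $\varphi$ serves both realizations. I would establish it by evaluating $\mathcal{L}$ and $\iota^{*}\mathcal{L}$ on a subclass that is simultaneously dense in $X$ and in $Y$ --- the natural candidate being the $G$-invariant functions in $H^{1}_{\kappa,0}(\RR^d)=\SS(\RR^d)\cap L^{1}_{\kappa,00}(\RR^d)$, whose density in $H^{1}_{\kappa}(\RR^d)$ is Lemma \ref{density-a} and which also lie in, and are dense in, the atomic space. On such $f$ both functionals are given by $\int_{\RR^d}f\varphi\,d\omega_{\kappa}$ with the same $\varphi$, and the uniqueness clauses in the two duality theorems then pin down that $\iota^{*}$ acts as the identity modulo constants. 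The remaining points --- completeness of $Y$ and density of the common core --- are routine, after which the open mapping theorem finishes the proof.
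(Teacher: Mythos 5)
Your proposal is correct and takes essentially the same route as the paper: the paper identifies the dual of the quotient $IH_{\kappa}^1(\RR^d)/IH^{(1,2)}_\kappa(\RR^d)$ with the annihilator of $IH^{(1,2)}_\kappa(\RR^d)$ (your Hahn--Banach separation step in different dress), then uses Theorem \ref{H-BMO-invariant-a} together with the atomic duality of Theorem \ref{dual-atom-Hardy-BMO-a} to conclude that any representing function $\varphi\in{\text{IBMC}}_\kappa(\RR^d)={\text{IBMO}}_\kappa(\RR^d)$ of such a functional must vanish, so the annihilator is trivial and the spaces coincide with equivalent norms. Your explicit check that the two duality pairings are realized by one and the same $\varphi$ (via a common dense subclass) is a point the paper passes over in silence, but it is the same argument, not a different one.
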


Indeed, since the dual of the quotient space $IH_{\kappa}^1(\RR^d)/IH^{(1,2)}_\kappa(\RR^d)$ is isometrically isomorphic to (see \cite[Theorem 7.2]{Dur1}, for example)
$$
IH^{(1,2)}_\kappa(\RR^d)^{\bot}=\left\{{\mathcal{L}}\in IH_{\kappa}^1(\RR^d)^*:\,\, {\mathcal{L}}f=0\,\, \hbox{for all}\,\,f\in IH^{(1,2)}_\kappa(\RR^d)\right\},
$$
the unique function $\varphi\in {\text{IBMC}}_\kappa(\RR^d)={\text{IBMO}}_\kappa(\RR^d)$ associated to ${\mathcal{L}}\in IH^{(1,2)}_\kappa(\RR^d)^{\bot}$ satisfies $\int_{\RR^d}f\varphi\,d\omega_\kappa=0$ for all $f\in IH^{(1,2)}_\kappa(\RR^d)$.
But by Theorem \ref{dual-atom-Hardy-BMO-a}, this implies that $\varphi=0$, so that ${\mathcal{L}}=0$. Therefore $(IH_{\kappa}^1(\RR^d)/IH^{(1,2)}_\kappa(\RR^d))^*=\{0\}$, and then $IH^{(1,2)}_\kappa(\RR^d)=IH_{\kappa}^1(\RR^d)$ with equivalent norms.

\subsection*{Acknowledgement}

The authors would like to express their gratitude to Professor Liang Song since some of remarks in the final section were discussed with him.

\vskip 0.5cm

%
%
%
%
%
%
%
%
%
%
%
%
%

\end{document}